\documentclass[reqno]{amsart}
\usepackage{amsmath,amssymb}
\usepackage{graphicx,amssymb}
\usepackage[dvipsnames]{xcolor}
\usepackage[mathscr]{euscript}
\usepackage{hyperref}
\usepackage[normalem]{ulem}
\hypersetup{colorlinks,%
            citecolor=gray!5!orange,%
            filecolor=yellow,%
            linkcolor=black!30!blue,%
            urlcolor=blue}      
\usepackage[left=3cm, right=3cm, top=3cm, bottom=3cm]{geometry}
\usepackage{stmaryrd}
\usepackage{enumitem}
\usepackage{graphicx}
\usepackage{cancel}

\newtheorem{theorem}{Theorem}[section]
\newtheorem{lemma}[theorem]{Lemma}
\newtheorem{proposition}[theorem]{Proposition}
\newtheorem{corollary}[theorem]{Corollary}

\newtheorem{definition}[theorem]{Definition}
\newtheorem{remark}[theorem]{Remark}

\usepackage{calc}

\numberwithin{equation}{section}

\newcommand{\mc}[1]{{\mathcal #1}}

\newcommand{\mf}[1]{{\mathfrak #1}}
\newcommand{\mb}[1]{{\mathbf #1}}
\newcommand{\bb}[1]{{\mathbb #1}}

\DeclareMathOperator{\Tr}{Tr}
\newcommand{\ent}{\mathop{\mathrm{Ent}}\nolimits}

\newcommand{\ch}{\mathcal{H}}

\newcommand{\cl}{\mathcal{L}}

\newcommand{\E}{\mathbb{E}}

\newcommand{\N}{\mathbb{N}}
\newcommand{\R}{\mathbb{R}}

\newcommand{\W}{\mathbb{W}}
\newcommand{\X}{\mathbb{X}}

\newcommand{\Z}{\mathbb{Z}}
\newcommand{\LL}{\mathbb{L}}

\newcommand{\PP}{\mathbb{P}}

\newcommand{\wg}{{\widehat G}}

\newcommand{\wrho}{\widehat \rho}
\newcommand{\wm}{{\widehat {\mc M}}_0}
\newcommand{\wb}{\widehat b}
\newcommand{\wgamma}{\widehat \gamma}
\newcommand{\wpi}{{\widehat \pi}}

\newcommand{\vecte}{{e}}

\let\L=\Lambda

\let\s=\sigma

\newcommand{\<}{\langle}
\renewcommand{\>}{\rangle}

\newcommand{\1}{\,\rlap{\small 1}\kern.13em 1}

\newcommand{\sqr}[2]{{\vcenter{\hrule height.#2pt%
                      \hbox{\vrule width.#2pt height#1pt\kern#1pt%
                            \vrule width.#2pt}%
                      \hrule height.#2pt}}}

\renewcommand\footnotemark{}

\usepackage{mathtools}

\newcommand\numberthis{\addtocounter{equation}{1}\tag{\theequation}}

\newcommand \al {\alpha}

\newcommand \ga {\gamma}
\newcommand \Ga {\Gamma}

\newcommand \la {\lambda}
\newcommand \La {\Lambda}

\newcommand \grad {\nabla}

\definecolor{ForestGreen}{RGB}{34,139,34}

\newcommand{\what}[1]{\widehat{#1}}
\newcommand{\scal}[3]{\left\langle #2, #3 \right\rangle_{#1} }

\newcommand{\I}[1]{I_T^{\what E}\left( #1 \middle| \what\gamma\right)}

\newcommand{\norm}[2]{\left\lVert#2\right\rVert_{#1}}
\newcommand{\scall}[3]{\left\langle \left\langle #2, #3 \right\rangle \right\rangle_{#1} }
\newcommand{\DM}{\mathcal{D}([0,T],\mathcal{M}^2)}
\newcommand{\abs}[1]{\left\lvert #1 \right\rvert}
\newcommand{\Q}{\mathcal{Q}}
\renewcommand{\H}{\mathbb H_{0,-}(\Sigma(\what\pi))}
\newcommand{\Itilde}[1]{\tilde I_T^{\what E}(#1|\what \gamma)}
\newcommand{\vare}{\varepsilon}

\newcommand{\Mzero}{\widehat{\mathcal{M}}_0}
\newcommand{\Ea}{\mathfrak{E}_a}
\newcommand{\EaC}{\mathfrak{E}_{aC^{-1}}}
\renewcommand{\ent}[1]{\left\lfloor #1 \right\rfloor}

\makeatletter
\def\l@paragraph{\@tocline{4}{0pt}{1pc}{7pc}{}}
\def\l@subparagraph{\@tocline{5}{0pt}{1pc}{7pc}{}}
\makeatother
\begin{document}
\title[LDP for Boundary driven weakly asymmetric Blume-Capel model]
{Boundary driven weakly asymmetric Blume-Capel model: large deviations for mixed Dirichlet-Neumann boundary conditions}

\author[M. Mourragui and N. Pr\'evost]{Mustapha Mourragui and Nicolas Pr\'evost}
\address{LMRS, UMR 6085, Universit\'e de Rouen,
  \hfill\break\indent Avenue de l'Universit\'e, BP.12, Technop\^ole du
  Madril\-let, \hfill\break\indent
F76801 Saint-\'Etienne-du-Rouvray, France.}
\email{Mustapha.Mourragui@univ-rouen.fr}
\email{nicolas.prevost3@univ-rouen.fr}
\date{\today}


\keywords{Blume-Capel model,  Neumann condition, Dirichlet condition, Empirical density, Large deviations.}

\subjclass[2020]{ 82C22, 60F10, 82C35}

\date{\today}

\thispagestyle{empty}

\begin{abstract}
We consider the Blume-Capel spin model  on a finite cylinder with reservoirs at the boundary. A model with spin variable $\sigma$ taking values in $\{-1,0,1\}$, with the superposition of two dynamics: in the bulk, the spins evolve according to a weakly asymmetric dynamics; and the boundary dynamics follows a mechanism of creation, annihilation and spin flip, its action is accelerated differently on the left and on the right in a way to produce mixed boundary conditions. For the dynamics in the bulk, two quantities are conserved, the magnetization which corresponds to the sum of the spin values, and the concentration which corresponds to the sum of the squared spin values. We first establish, in the diffusive scaling, the hydrodynamic limit for this model which states that the couple of empirical measures (magnetization, concentration) converges to the solution of a system of coupled equations with mixed boundary conditions. Then we prove the associated dynamical large deviations principle.
\end{abstract}

\maketitle

\section{Introduction}\label{sec:intro}

We investigate a d-dimensional interacting particle system describing the evolution of three–state spins according to the boundary driven weakly asymmetric Blume–Capel model (BWABC). The boundary dynamics is modeled through a superposition of creation, annihilation and spin–flip mechanisms. The effect of reservoirs on the symmetric simple exclusion process (SSEP) has been thoroughly studied, see for instance \cite[\dots]{bdgjl3,blm09,fgln,fgn3}. The interacting particle system considered here differs substantially from the SSEP since two conserved quantities are involved, namely the magnetization and the concentration, and the corresponding hydrodynamic limit is described by a system of two coupled equations. Nevertheless, the methods developed in the previous works can be adapted to our framework in order to analyze the effect of such reservoirs.

The Blume-Capel model was originally introduced in \cite{kuh,kuh2,kuh3,b1,c1,beg} to describe the He$^3$-He$^4$ phase transition. A version of this model with long-range Kac interactions has been studied in \cite{mm1}, where the hydrodynamic limit was derived in infinite volume. More recently, several papers in mathematical physics have been devoted to numerical investigations and applications, such as the design of active layers for organic electronics; see for example \cite[\dots]{LMN,lcm,cjlmm,CLMM} and the references therein.

The system consists of spins with nearest-neighbor interactions on a lattice, where each spin variable takes values in $\{-1,0,+1\}$. In the framework of interacting particle systems, the weakly asymmetric Blume-Capel model (WABC) is formally described as follows. The spins evolve in the $d$-dimensional finite cylinder $\Lambda_N  \coloneqq \{-N,\dots,N\}\times \mathbb{T}^{d-1}_N$, 
where $N\geq 1$ is a scaling parameter and $\mathbb{T}^{d-1}_N$ denotes the discrete $(d-1)$-dimensional torus, that is, $\mathbb{T}^{d-1}_N=(\mathbb{Z}/N\mathbb{Z})^{d-1}$. The time evolution of the configuration is given by a continuous-time Markov process $(\sigma_t)_{t\in [0,T]}$ for a fixed $T>0$.

At the microscopic level, the only conserved quantities are the magnetization $\sum_x \sigma(x)$ and the concentration $\sum_x \sigma(x)^2$. The corresponding hydrodynamic equations are defined on the macroscopic domain $\Lambda := (-1,1)\times \mathbb{T}^{d-1}$, where $\mathbb{T}^{d-1}$ denotes the continuous $(d-1)$-dimensional torus, and are expressed in terms of the associated order parameters $m$ and $\phi$.

The WABC is defined through the formal Hamiltonian
\begin{equation}
\label{Ha1}
{\mathscr H}_N^{\what h}(\sigma)\, \coloneqq \, -\sum_{i=1}^2\sum_{x\in\Lambda_N}  \sigma(x)^i h_i\left(\frac{x}{N}\right)
-a_1\sum_{x\in \Lambda_N}\sigma(x)
-a_2\sum_{x\in \Lambda_N}\sigma(x)^2  ,
\end{equation}
 where $\what a \coloneqq (a_1,a_2) \in \R^2$,  $\what h \coloneqq (h_1,h_2)$ with 
\[
h_i(x) \coloneqq E^i\cdot x \coloneqq \sum_{k=1}^d E^i_k x_k, 
\qquad \text{for any } x \in \Lambda \text{ and } i=1,2,
\]
and $E^1, E^2 \in \mathbb{R}^d$ are two $d$-dimensional vectors.

Given the Hamiltonian \eqref{Ha1}, one can construct, in a standard way (see \cite{liggett}), the Markov process describing the WABC model. The transition rates are chosen so that the bulk dynamics satisfies a detailed balance condition with respect to the family of Gibbs measures associated with ${\mathscr H}_N^{\what h}$, parametrized by the chemical potentials $a_1, a_2\in \mathbb R$, for some fixed vectors $E^1, E^2 \in \mathbb R^d$.

The boundary dynamics, parametrized by a pair of functions $\what b\coloneqq(b_1,b_2)$ defined on the boundary of $\Lambda_N$, follows a mechanism of creation, annihilation and spin flip. Its action is accelerated or slowed down, with different intensities at the left and right boundaries, in order to produce mixed boundary conditions. The superposition of the bulk and boundary dynamics (BWABC) defines an \emph{irreducible} Markov jump process on the finite state space $\{-1,0,1\}^{\Lambda_N}$. By the general theory of finite-state Markov processes (see \cite{liggett}), the invariant measure $\mu_N^{SS}$ is unique but non-reversible. It characterizes the long-time behavior of the system. For the simple exclusion process, the stationary measure can be expressed as a product of matrices through the Matrix Product Ansatz (MPA), developed in \cite{derrida_large_2002}. In the case of the Blume-Capel model, however, to the best of our knowledge, such a Matrix Product Ansatz has not been constructed in the literature. The extension of the MPA to this three-state interacting particle system is therefore a natural and challenging open problem. We expect that such a construction should be possible and that it would provide an interesting direction for future work.

Our goal in the present paper is, for two given vectors $(E^1,E^2)$, to investigate the hydrodynamic limit and the large deviations of the BWABC model with mixed Dirichlet-Neumann boundary conditions. The extension to more general boundary conditions, such as Robin boundary conditions, is a natural and relevant direction to pursue. The hydrodynamic behavior can be obtained in a similar way, whereas the large deviation analysis presents additional difficulties due to the complexity of the boundary generator. This extension is currently being investigated and is the subject of ongoing work.

Hydrodynamic limits describe the macroscopic behavior of interacting particle systems. From a probabilistic point of view, they correspond to a law of large numbers for the empirical spatial densities of the conserved quantities. The limit is given by a deterministic trajectory characterized as the unique weak solution of a coupled system of partial differential equations.

Large deviations around the hydrodynamic limit quantify the asymptotic probability of observing atypical macroscopic evolutions. More precisely, they provide an exponential estimate for the probability that the empirical profiles deviate from the hydrodynamic behavior. In some sense, it states that the probability of deviating from the expected hydrodynamic limit goes exponentially fast to zero.

In recent years, many works \cite[\dots]{bdgjl3, blm09, flm, fgln,fgn3} have been devoted to boundary driven systems in bounded domains involving a single conserved quantity. In the present setting, two quantities play a role, namely the magnetization and the concentration. The nonequilibrium systems considered in those works are typically lattice gas models subject to boundary mechanisms of particle creation and annihilation, modeling exchange reservoirs.

A main difference with respect to most of the works mentioned above, lies in the fact that two conserved quantities are involved in the present model, which are further defined through signed measures. For instance, in the exclusion process, where the density is positive, the hydrodynamic equation typically involves the scalar mobility function $\chi(\rho)=\rho(1-\rho)$. In our setting, the mobility is described by a symmetric matrix-valued function, reflecting the coupling between the magnetization and the concentration. This structure, in addition to the non-convexity of the rate functional,  significantly increases the technical complexity of the analysis, in particular in the derivation of the large deviation functional. 

The paper is organized as follows. We begin by introducing our setting and describing the model in detail. In Section 3, we introduce the main concepts and present our main results. Section 4 is devoted to the hydrodynamic limit. Section 5, which is divided into five subsections, deals with dynamical large deviations. Finally, the Appendix is divided into five parts. The first part is devoted to some basic estimates that are used throughout the paper. The next three parts establish the uniqueness of the solution to the hydrodynamic equation and prove some of its properties that are needed to establish the large deviation lower bound. In the fifth and final part of the Appendix, we introduce a functional that allows us to set the large deviation rate functional equal to $+\infty$ for trajectories that are not absolutely continuous with respect to the Lebesgue measure.

\bigskip
\section{Description of the model}\label{sec:2}
Fix a positive integer $d\ge 1$.  Denote by $\Lambda$ the open set $(-1,1)
\times \bb T^{d-1}$  and by $\overline \L = [-1,1]
\times \bb T^{d-1}$ its closure, where $\bb T^{k}$ is the $k$-dimensional torus
$[0,1)^k$, and by $\Gamma \coloneqq \partial\Lambda$ the boundary of $\Lambda$: $\Gamma = \Ga^-\cup\Ga^+$ where $\Ga^\pm \coloneqq  \{(u_1,
\dots , u_d)\in \overline \L  : u_1 = \pm 1\}$. We equip  these sets with their Borel $\sigma$-algebra.

For an integer $N\ge 1$, denote by $\bb T_N^{d-1} \coloneqq \{0,\dots,
N-1\}^{d-1}$, the discrete $(d-1)$-dimensional torus of length $N$. Let us
call $\L_N \coloneqq \{-N,\ldots,N\} \times \bb T_N^{d-1}$, the
cylinder in $\bb Z^d$ of length $2N+1$ and basis $\bb T_N^{d-1}$ and let
$\Gamma_N \coloneqq \Gamma_N^-\cup \Gamma_N^+$ be the boundary of $\L_N$ where $\Gamma_N^\pm \coloneqq \{(x_1, \dots, x_{d}) \in \bb Z\times \bb T_N^{d-1}\,|\, x_1 =\pm
N\}$.  The elements of $\L_N$
are denoted by letters $x,y$ and the elements of $\overline\L$ by the
letters $u, v$.

We introduce a  spin model with reservoirs taking values in
$\{-1,0, 1\}$ on  $\Lambda_N$, that we call the
boundary driven weakly asymmetric Blume-Capel model (WABC).
The spin variable in the site
$x\in \Lambda_N$ is denoted by $\s(x)$  and the phase space is
$\X_N\coloneqq \{-1, 0, 1\}^{\Lambda_N}$. A configuration $\s\in \X_N$ is a function $\s :\Lambda_N \longrightarrow
\{-1,0,+1\}$.
For $x,y\in \Lambda_N$ and any function $f:\mathbb X_N \longrightarrow \R$,
define $\big(\nabla_{x,y}f\big)(\s)$  by
\begin{equation}\label{sauts-flips}
\big(\nabla_{x,y}f\big)(\sigma) \coloneqq f(\sigma^{x,y}) -f(\sigma)  ,  
\end{equation}
where $\s^{x,y}$ is a configuration obtained from $\s$ by
interchanging the value at $x$ and $y$ : 
\begin{equation*}
(\s^{x,y}) (z) := 
  \begin{cases}
        \s (y) & \textrm{ if \ } z=x ,\\
        \s (x) & \textrm{ if \ } z=y ,\\
        \s (z) & \textrm{ if \ } z\neq x,y .
  \end{cases}
\end{equation*}
Fix a couple of $d$-dimensional vectors $\what E \coloneqq (E^1,E^2) \in (\R^d)^2$ and the function $\what h \coloneqq (h_1,h_2): \La \longrightarrow \R^2$, defined for any $u\in\La $  and $i=1,2$ by  $h_i (u) \coloneqq \left( E^i \cdot u\right) = \sum_{k=1}^d E^i_k u_k$, where for $u,v \in \R^d$, $(u\cdot v)$ stands for the usual inner product in $\R^d$.

The total interaction energy among  spins is defined by the Hamiltonian ${\mathscr H}_N^{\hat h}(\sigma)\coloneqq {\mathscr H}_N^{\hat h,a_1,a_2}(\s)$ defined in \eqref{Ha1}.

The boundary driven (WABC) model is the Markov process on $\X_N$ whose generator
$\mf L_{N}\, :=\, \mf L_{\what E,b,N}$
can be decomposed as
\begin{equation}\label{eq:gen}
  \mf L_N \, :=\, N^2\cl_{\what E,N} \;+N^{2-\mathfrak{a}_l}\; L^-_{b,N}\;+N^{2-\mathfrak a_r}\; L^+_{b,N}  
  ,
  \end{equation}
where  $\mathfrak a_l \in (-\infty,1)$ and $\mathfrak a_r\in (1,+\infty)$.

The generator $\cl_{\what E,N},$   describes the bulk dynamics which preserves the magnetization and concentration, its action on functions $f: \X_N\to \R$ is then given by
$$
\left(\cl_{\what E, N}  f\right) (\s)= 
 \sum_{k=1}^d \sum_{x,x+\vecte_k\in\L_N} C_N^{\what E}  ({x,x+\vecte_k};\s)  \left[ f(\s^{x,x+\vecte_k}) -
f(\s) \right] ,
$$
with the  rate of exchange occupancies   $C_N^{\what E } $  given by 
\begin{equation}
\label{rate1}
C_N^{\what E}(x,y;\s)= 
\exp \left\{ - \frac{1}{2}  \big(\nabla_{x,y}{\mathscr H}^{\what h}_N (\s)\big)  \right\}.
\end{equation}

For any $\what E \in (\R^d)^2$, the operator $\cl_{\what E,N}$ is  self-adjoint w.r.t. the family of
Gibbs measures $\mu^{\what E,a_1,a_2}_{N}$ associated to the Hamiltonian \eqref {Ha1} with chemical potentials $(a_1,a_2)\in \R^2$: 
\begin{equation*}
\mu^{\what E,a_1,a_2}_{N}(\s)
= \frac 1{Z^{\what E, a_1,a_2}_{N}} \exp\{- {\mathscr H}^{\what h}_N(\s) \}, \qquad \s \in  \X_N ,
\end{equation*}
where ${Z^{\what E,a_1,a_2}_{N}}$ is the normalization constant. 
 This means that the   rates of the bulk dynamics   
$ 
\{ C_N^{\what E} (x,y;\s), \quad x,\,  y  \in \L_N,\, \ \s 
\in \X_N \}$,  
satisfies the detailed balance conditions:
$$ C_N^{\what E}(x,y;\s)=   e^{-  [{\mathscr H}^{\what h}_N (\s^{x,y}) -{\mathscr H}^{\what h}_N (\s)] } C_N^{\what E}(y,x;\s^{x,y}), \qquad x,y \in \La_N, \, \sigma \in \mathbb X_N. $$

In order to define the boundary dynamics, we need to introduce
a family of \textit{invariant probability measures}   for the symmetric Blume-Capel dynamics with  $\what E=\what 0$ generated by $\mathcal L_{\what 0 ,N}$. 
Since this exchange dynamics conserves magnetization and concentration, its invariant measures 
are parametrized by two chemical potentials and are product: 
for a vector-valued function $\widehat A = (a_1,a_2) \in [-1,1]\times[0,1]$, 
we define ${\bar \nu}_{\widehat A}^N$ as the
product measure on $\X_N$ with chemical potential $\widehat A$ given by
$$
d{\bar \nu}_{\widehat A}^N(\s)=Z_{ \widehat A}^{-1}\exp\Big\{ a_1\sum_{x\in\Lambda_N}
\s(x) + a_2\sum_{x\in\Lambda_N} \sigma(x)^2\Big\} ,
$$
where $Z_{\widehat A}$ is the normalization constant. For 
$(a_1,a_2)\in [-1,1]\times [0,1]$ let
$m=m(a_1,a_2)$ (resp. $\phi=\phi(a_1,a_2)$) be the expectation of
$\s(0)$ (resp. $\sigma(0)^2$) under ${\bar \nu}_{\widehat A}^N$:
$$
 m (a_1,a_2) \; =\;{\E}^{{\bar \nu}_{\widehat A}^N}\big(\s
    (0) \big)  ,\qquad 
    \phi(a_1,a_2) \; =\; {\E}^{{\bar \nu}_{\widehat A}^N}\big(\sigma(0)^2 \big) .  
$$
Observe that the function $\Phi$ defined on $(-1,1) \times  (0,1)$ by $\Phi(a,b)=(m,\phi)$ is a bijection from \mbox{$(-1,1) \times  (0,1)$} to ${\mb I}$, where
\begin{equation}\label{cC0}
{\mb I}=\Big\{ (m,\phi)\ :\ |m|< \phi<1 \Big\}.
\end{equation}
A simple computation shows that 
$\Phi^{-1}(m,\phi):=\Psi(m,\phi)=\displaystyle (\Psi_1(m,\phi),\Psi_2(m,\phi))$, where
$$
\Psi_1(m,\phi) =\frac{1}2\log \Big( \frac{m+\phi}{\phi-m}\Big) ,
\qquad \Psi_2(m,\phi) =\frac{1}2\log \Big( \frac{\phi^2-m^2}{4(1-\phi)^2}\Big) .
$$
For every $\widehat \rho=(m,\phi)\in {\mb I}$, we denote by
$\displaystyle \nu_{\widehat \rho}^N:= {\bar \nu}_{\Psi(\widehat \rho)}^N$ the product measure parametrized by $\Psi$,
such that
$$
m \; =\;{\E}^{\nu_{\widehat \rho}^N}\big[\s (0) \big], \qquad
  \phi \; =\; {\E}^{\nu_{\widehat \rho}^N}\big[\sigma(x)^2 \big]  .
$$


For any integer $0\leq l \leq +\infty$, 
  denote by  $\mathcal C^l (\overline \Lambda)$
(resp. $\mathcal C^l_c (\Lambda)$) the space of
$l$-continuously differentiable functions on $\overline \Lambda$ 
(resp. with compact support in $\Lambda$) with values in $\R$,
and by $\mathcal C_{0,-}^l (\overline \Lambda)$ the subset of $\mathcal C^l (\overline \Lambda)$ of functions vanishing at the left boundary $\Gamma^-$. For $l=0$,
we omit the subscript $l$ and denote simply $\mathcal C=\mathcal C^0$.

In order to define the dynamics at the boundary, we need to introduce a function $\what b$ defined as the trace on $\Ga$ of some  regular function $\widehat\theta=(\theta_1,\theta_2) : \overline\Lambda
\to \mathbf{I}$ in   $ (\mathcal C^2(\overline{\Lambda}))^2$ such that for all $u\in \overline \L$,
\begin{equation}\label{eq:cC}
\abs{\theta_1(u)}< \theta_2(u) <1.
\end{equation}

Notice that, since $\what\theta$ is continuous and takes its values in $\mathbf{I}$ , the previous condition on $\what\theta$ implies that for all $u\in \overline\Lambda$, 
\begin{equation}\label{eq:cC2}
0<   |\theta_1 (u)|+ c^*< \theta_2 (u) \leq  C^*< 1,
\end{equation}
for two positive constants $c^*,C^*$.

We denote by $\nu_{\widehat \theta}^N$, the product measure with varying profile $\widehat \theta$:
\begin{equation}\label{meas}
\nu_{\widehat \theta}^N (\sigma)
=Z_{\widehat \theta}^{-1}\exp\Big\{ \sum_{x\in\Lambda_N}\Psi_1(\theta_1(x/N),\theta_2(x/N))
\s(x) + \sum_{x\in\Lambda_N}\Psi_2(\theta_1(x/N),\theta_2(x/N)) \sigma(x)^2\Big\}  .
\end{equation}\\
Each marginal of $\nu_{\what \theta}^N$ is concentrated on the set $\{-1,0,1\}$, for all $x\in \La_N$ :

\[
    \nu_{\what \theta}^N\left(\sigma(x)=\pm1\right) = \frac{\theta_2\left(\frac{x}{N}\right) \pm \theta_1\left(\frac{x}{N}\right)}{2}, \qquad \nu_{\what \theta}^N\left(\sigma(x)=0\right) = 1 - \theta_2\left(\frac{x}{N}\right).
\]

The generator $L^-_{b,N}$ (resp. $L^+_{b,N}$) models the spins reservoir at the left boundary (resp. right boundary) of $\La_N$,  it is defined by
the infinitesimal generator of a Glauber process acting on $\Gamma_N$ as 
\begin{equation}\label{gen-bord}
\begin{aligned}
(L^\pm_{b,N} f)(\s) &\;=\;  \: \sum_{x \in \Gamma_N^\pm}\Big\{r_x^{(1,0),-}\big(\widehat b,\s\big)\big[ f(\s^{x,-})-f(\s)\big] +
r_x^{(0,1),+}\big(\widehat b,\s\big)\big[ f(\s^{x,+})-f(\s)\big]\Big\}\\
\  & \;\; +\; \: \sum_{x \in \Gamma_N\pm}\Big\{r_x^{(-1,0),+}\big(\widehat b,\s\big)\big[ f(\s^{x,+})-f(\s)\big] +
r_x^{(0,-1),-}\big(\widehat b,\s\big)\big[ f(\s^{x,-})-f(\s)\big]\Big\}\\
\  & \;\; +\; \: \sum_{x \in \Gamma_N\pm} r_x^{(-1,1)}\big(\widehat b,\s\big)\big[ f(T^x\s)-f(\s)\big] ,
\end{aligned}
\end{equation}
where for $x\in \Gamma_N$ and $\widehat\lambda=(m,\phi)\in {\mb I}$ the rates
$r_x\big(\widehat\lambda,\s)$ are given by
\begin{equation}\label{rate-b}
\begin{aligned}
  r_x^{(\pm 1,0),\mp}\big(\widehat\lambda,\s\big) &\;:=\;(1-\phi(x/N))\1_{\{\s(x)=\pm 1\}} ,\\
  r_x^{(0,\pm 1),\pm}\big(\widehat\lambda,\s\big) &\;:=\; \frac{\phi(x/N)\pm m(x/N)}{2}\1_{\{\s(x)=0\}} ,\\
  r_x^{(-1,1)}\big(\widehat \lambda,\s\big) & = \frac{\phi(x/N)- m(x/N)}{2} \1_{\{\s(x)=+ 1\}} +\frac{\phi(x/N)+ m(x/N)}{2}\1_{\{\s(x)=-1\}} ,
\end{aligned}
\end{equation}
and $T^x\s$ is the configuration  obtained from $\s$ by flipping the spin at site $x$:
\begin{equation*}
(T^x\sigma) (z) := 
  \begin{cases}
        -\s (x) & \textrm{ if \ } z=x,\\
        \s (z) & \textrm{ if \ } z\neq x ,
  \end{cases}
\end{equation*}
and $\s^{x,\pm}$ is defined as
\begin{equation*}
(\sigma^{x,\pm} ) (z) := 
  \begin{cases}
        \s (x)\pm 1\ \text{mod}\ 3 & \textrm{ if \ } z=x,\\
        \s (z) & \textrm{ if \ } z\neq x   .
  \end{cases}
\end{equation*}

Notice that in view of the
diffusive scaling limit, the generator has been speeded up by $N^2$.
We denote by $(\sigma_t)$ the Markov process on $\X_N$ with generator
$\mf L_{N}$.

Since the Markov process $(\sigma_t)_{t\in [0,T]}$ is irreducible, for each $N\ge 1$ and $\what E \coloneqq (E^1,E^2) \in (\R^d)^2$, there exists a unique invariant
measure $\mu^{SS}_N=\mu^{SS}_N(\what E ,\what b)$ in which we drop the dependence on $\what E$ and $\what b$ 
from the notation. Moreover, if $\what b$ is not constant, then
the invariant measure $\mu^{SS}_N$ is not-reversible and cannot be written in a simple form.

\begin{remark}
    It would be interesting to study the present dynamics with more general boundary conditions, in particular with non-reversible boundary dynamics. We expect that, provided suitable replacement lemmas at the boundary (in our case Lemma \ref{replacementGauche} and Lemma \ref{replacementDroite}) can be established for the corresponding boundary dynamics, the same results should remain valid for a broader class of boundary rates,  while the bulk part of the argument should remain essentially unchanged. Similar phenomena have been rigorously analyzed for the SSEP with non-reversible slow boundary dynamics  in \cite{erignoux_hydrodynamics_2020} and references therein. 
\end{remark}

\section{The results}\label{results} 
\subsection{Notation}
We fix $T>0$. 
For a metric space $\mathbb S$, denote $\mathcal{D}([0,T],\mathbb S)$ the space of càdlàg functions from $[0,T]$ to $\mathbb S$, endowed with the Skorokhod topology and its corresponding Borel $\sigma$-algebra. Given a probability measure $\mu$ on $\X_N$, 
the probability measure $\bb P^{N,\widehat b}_\mu$ on the path space 
$\mathcal{D}([0,T],  \X_N)$, 
is the law of $(\sigma_t)_{t\in [0,T]}$ with initial 
distribution  $\mu$. The  associated expectation
is  denoted by  $\bb E^{N,\widehat b}_\mu$.\\
We denote by $\mc M=\mc M (\L)$ the space of finite signed measures on $\L$, with total variation bounded by 2
endowed with the weak topology. For a finite signed measure $m(d u)$ and a continuous function $G\in \mc C(\L)$,  
we let $\< m(d u), G\>$ be the integral of $G$ with respect to
$m(d u)$.  

Given a configuration $\sigma$, we define the empirical measure 
$$\widehat \pi^N :=  \widehat \pi^N(\sigma)=\begin{pmatrix}
  \pi^{N,1}(\sigma) \\ \pi^{N,2}(\sigma)
\end{pmatrix} \in \mc M\times\mc M:= \mc M^2 , $$ 
where the empirical measures $\pi^{N,i} \in \mc M$ for $i=1,2$ are defined as
\begin{equation*} 
\pi^{N,1} \, =\, N^{-d}\sum_{x\in\L_N}\sigma(x)\, \delta_{x/N} ,\quad 
\pi^{N,2} \, =\, N^{-d}\sum_{x\in\L_N}\sigma(x)^2\, \delta_{x/N} ,
\end{equation*}
and $\delta_{u}$ is the Dirac measure concentrated on $u$. 

We  also  denote by $\widehat \pi^N$   the map from $\mathcal{D}([0,T],\X_N)$  to 
 $\mathcal{D}([0,T],\mathcal{M}^2)$  defined by 
$\widehat \pi^N (\sigma_{\cdot})_t=  \widehat \pi^N (\sigma_t)$ and denote by  
$\mathbb{Q}_{\mu}^{N,\widehat b}  =  \mathbb P^{N,\widehat b}_{\mu} \circ (\widehat \pi^N)^{-1} $ 
 the law of the process
$\big(\widehat \pi^N (\sigma_t )\big)_{t\in [0,T]}$. 

Let $\widehat{\mathcal{M}}_0$ be the subset of ${\mathcal{M}}^2$ of all couple of signed measures $\widehat \pi=(\pi^1,\pi^2)$ such that
$\pi^1(du)=m(u)du$ and $\pi^2(du)=\phi(u)du$ are absolutely continuous  with respect to the Lebesgue measure, with couple of densities $(m,\phi)$ in $\overline{\mathbf{I}}$:
\begin{equation*}
\widehat{\mathcal{M}}_0=\big\{\widehat \pi\in \mathcal {M}^2:\widehat \pi(du)\coloneqq(m(u)du,\phi(u)du) \;\; \hbox{ and } \;\;
(m,\phi)\in \bar {\mb I}\;  \hbox{ a.e.} \big\} ,
\end{equation*}
where 
\begin{equation}\label{cC0b}
 \bar {\mb I}\, =\, \Big\{ (m,\phi)\ :\ |m|\le \phi\le 1 \Big\}.
\end{equation}

For positive  integers $1\leq l,k \leq +\infty$, we denote by
$\mathcal C^{k,l} ([0,T]\times \overline \Lambda)$
(resp. $\mathcal C^{k,l}_{0,-} ([0,T]\times \overline \Lambda)$)
the space  of functions  from $[0,T]\times \overline\Lambda$ to $\R$
that are $k$-continuously differentiable in time and
$l$-continuously differentiable in space (resp.
and vanishing at the left boundary $\Gamma^-$ of $\L$). Similarly, we define
$\mathcal C^{k,l}_c ([0,T]\times \Lambda)$ as the subspace of
$\mathcal C^{k,l} ([0,T]\times \overline \Lambda)$
of functions with compact support in $[0,T]\times \L$.

Let $L^2(\L)$ be the Hilbert space of functions $G:\L
\to \R$ such that $\displaystyle \int_\L | G(u) |^2 du <\infty$ is equipped with
the inner product
\begin{equation*}
\<F,G\> =\int_\L F(u) \,  G (u) \, du .
\end{equation*}
The norm of $L^2(\L)$ is denoted by $\| \cdot \|_{L_2(\L)}$. 

For any  function 
$\widehat G = (G_1,G_2)\in (\mathcal C(\Lambda))^2$,
the integral of $\widehat G$ 
with respect to $\widehat \pi^N$, denoted by $\langle\widehat \pi^N, \widehat G\rangle $, is given by
\begin{equation*}
\langle\widehat \pi^N, \widehat G\rangle  =\, \sum_{i=1}^2 \langle\pi^{N,i}\, ,\, G_i \rangle .
\end{equation*}

For  $\widehat G = (G_1,G_2), \widehat H= (H_1,H_2) \in \big(L^2(\Lambda)\big)^2$,
$\langle \widehat G(\cdot),\widehat H(\cdot) \rangle$, also denotes the inner product:
\begin{equation}
  \langle \widehat G(\cdot), \widehat H(\cdot) \rangle 
  = \sum\limits_{i=1}^2 \langle G_i (\cdot),H_i(\cdot) \rangle = \sum_{i=1}^2 \int_
  \Lambda G_i(u)H_i(u) du  .\label{produitScalaireChapo}    
\end{equation}

For a smooth function $G: [0,T]\times \L \to \R$, $\partial_t G(t,u)$ 
represents the partial derivative with respect to the time variable $t$ and
for $1\le j\le d,k\ge 1$, $\partial_{e_j}^k G(t,u)$ stands for the $k$-th partial derivative 
in the direction $e_j$ with respect to the space variable $u$. 
The discrete gradient $\partial_{N,e_j}$ in the direction $e_j$ is defined for
$x,x+e_j \in \Lambda_N$ and $G:\Lambda \to \mathbb R$, by
$$
\partial_{N,e_j} G(x/N) = N\Big(G\Big(\frac{x+e_j}{N}\Big) - G\Big(\frac{x}{N}\Big)\Big) .
$$ 
The discrete Laplacian $\Delta_N$ and the Laplacian $\Delta$ are respectively defined for 
$G \in \mathcal C^2 (\Lambda )$, if $x,x\pm e_j\in\Lambda_N $ for $1\le j\le d$ and $u\in \Lambda$, by 
\begin{align*}
& \Delta_N G(x/N) = N^2 \sum\limits_{j=1}^d \Big[ G\Big(\frac{x+e_j}{N}\Big)
 + G\Big(\frac{x-e_j}{N}\Big) - 2 G\Big(\frac{x}{N}\Big) \Big] \ \ \mbox{ and } \ \ 
 \Delta G (u)= \sum\limits_{j=1}^d \partial_{e_j}^2 G(u) .
\end{align*}
For a vector valued function $\widehat G=(G_1,G_2)\in \big(\mathcal C^{1,2} ([0,T]\times \overline \Lambda)\big)^2$ and $t\in [0,T]$, we shall denote
$$
\Delta \widehat G_t =(\Delta G_{1,t}, \Delta G_{2,t}) ,\quad \partial_{e_k} \widehat G_t =(\partial_{e_k} G_{1,t}, \partial_{e_k} G_{2,t})
 ,\;\; k=1,\ldots, d\, \; \text{and}\;\; \partial_t \wg_t =(\partial_t G_{1,t}, \partial_t G_{2,t}) ,
$$
where $\partial_t$ stands for the time derivative.

Before stating our results, we need more notation. Let $\scall{}{\cdot}{\cdot}$ be the inner product of 
$L^2([0,T]\times\Lambda)$: for any $F,G\in L^2([0,T]\times\Lambda)$,
\begin{equation*}
 \scall{}{F}{G}  =\int_0^T \big<F_s,G_s\big>\, ds.
\end{equation*}
By abuse of notation, for $\what F=(F_1,F_2),\what G=(G_1,G_2)\in (L^2([0,T]\times \Lambda))^2$, we also denote
\begin{equation}\label{doubleProduitScall}
 \scall{}{\what F}{\what G}  =\sum_{i=1}^{2}\scall{}{ F_i}{ G_i}.
\end{equation}

Let $H^1(\L)$ be the Sobolev space of functions $F$ with
generalized derivatives $\nabla F=\big(\partial_{e_1} F,\cdots,\partial_{e_d} F \big)$
in $\big(L^2(\L)\big)^d$ endowed with the inner product
$\<\cdot, \cdot\>_{H^1}$, defined by
\begin{equation*}
\<F,G\>_{H^1} \coloneqq \< F, G \> +
\sum_{k=1}^d \<\partial_{e_k} F \, , \, \partial_{e_k} G \>.
\end{equation*}
The corresponding norm of the Hilbert space $H^1(\L)$  is denoted by
$\|\cdot\|_{H^1}$.
Denote by $\Tr$ the trace operator on such Sobolev spaces, that allows us
to define the value of an element $F$ in ${H}^1(\L)$ at the boundary.
The operator $\Tr$  is defined as a linear and continuous map, $\Tr : {H}^1(\L)\to L^2(\Gamma)$ such that $\Tr$  extends the classical trace, that is
$\Tr(F)=F_{|_\Gamma}$, for any $F\in {H}^1(\L)\cap {\mathcal C}(\bar \L)$.

Finally, for a Banach space $(\bb
B,\Vert\cdot\Vert_{\bb B})$ we denote by $L^2([0,T],\bb B)$
the Banach space of measurable functions $U:[0,T]\to\bb B$ for which
\begin{equation*}
\Vert U\Vert^2_{L^2([0,T],\bb B)} \;=\;
\int_0^T\Vert U_t\Vert_{\bb B}^2\, dt \;<\; \infty
\end{equation*}
holds.

\smallskip
\noindent 
\textbf{Warning:} \begin{itemize}
    \item[{\bf-}] By abuse of notation, for $\what a \coloneqq (a_1,a_2),\what c \coloneqq (c_1,c_2)\in \R^2$, $\what a \;\what c = \sum_{i=1}^2 a_i c_i$.
    \item[{\bf-}] By abuse of notation, if $\what \pi \coloneqq  (\pi^1,\pi^2) \in \mathcal D ([0,T],  \wm)$ is such that for almost all $t\in [0,T]$, $\pi_t^1$ and $\pi_t^2$ are absolutely continuous with respect to the Lebesgue 
measure with densities $\wpi_t (du)=(\rho_t^1(u)du,\rho^2_t(u)du)$, we shall write $\wpi_t =(\rho_t^1 ,\rho^2_t)\eqqcolon \what \rho_t$.
    \item[{\bf-}] For any function $F \in H^1(\La)$ and $r \in \Ga$, we denote $F(r)\coloneqq \Tr(F)(r)$. 
\end{itemize}

\subsection{Hydrodynamic limit}\label{SectionHydro}
We first describe \textit{the hydrodynamic equations}. Let $\widehat \gamma = (\gamma_1,\gamma_2) : \overline \Lambda \to {\mb I}$
be an initial profile, and denote by 
$\widehat \rho = (m,\phi) : [0,T] \times \overline \Lambda \to \bar {\mb I}$ 
a typical macroscopic trajectory.  We shall prove in Theorem 
 \ref{th-hy} below that the macroscopic evolution of  the  local
particle  density $\widehat \pi^N$  is described by  the following system of 
non-linear coupled equations
\begin{equation}\label{eq:1}
\left\{ \begin{array}{rl}
  \displaystyle \partial_t \what\rho &= \displaystyle \grad \cdot \left(\grad \what\rho -\frac{1}{2}\Sigma(\what\rho)\what E  \right ),\\  
  \displaystyle \what\rho|_{\Ga^-} &= \displaystyle \what b ,  \\
  \displaystyle \partial_{e_1}\what\rho|_{\Ga^+} &= \displaystyle \frac{1}{2} \Sigma(\what\rho) \what E_1  , \\[0.2cm]
  \what\rho_0 &= \what \gamma,
\end{array} \right.
\end{equation}
where $\Sigma(m,\phi):=\Sigma\, =\, \big( \Sigma_{i,j}\big)_{1\le i,j\le 2}$ is the mobility matrix:
\begin{equation}\label{mobility-matrix}
\Sigma(m,\phi)\, =2\,
\begin{pmatrix}
   \phi -m^2 & m(1-\phi) \\
   m(1-\phi) & \phi(1-\phi)
\end{pmatrix}
 .
\end{equation}
A simple computation shows that for $(m,\phi)\in \mathbf{I}$, the matrix $\Sigma(m,\phi)$ is positive-definite, with the inverse
\begin{equation*}\label{mobility-matrix-1}
\Sigma^{-1}(m,\phi)\, =\frac{1}{2(\phi^2-m^2)}\,
\begin{pmatrix}
   \phi & -m \\
  - m & \phi +\frac{\phi^2 -m^2}{1-\phi}
\end{pmatrix}
 .
\end{equation*}
In the sequel, we shall denote by $S^{+}_2$ (resp. $S^{++}_2$) the set of all positive semi-definite matrices (resp. positive-definite  matrices) of size $2\times 2$. We equip these sets with the following partial order : \begin{equation}\label{orderMatrices}
    A,B \in S^+_2, \qquad A\leq B \, \iff \, \scal{}{x}{Ax} \leq \scal{}{x}{Bx}, \; \forall x \in \R^2.
\end{equation}

For   $\widehat G \in (\mathcal C^{1,2}_{0,-}([0,T]\times \overline \L))^2$, $\what  \rho =(\rho_1,\rho_2) \in  \mc D([0,T], \what{\mc M}^0)$ and $\what \gamma : \overline\La \rightarrow \mathbf I$ denote 
\begin{equation}\label{weakAlternative}
  \begin{split}
   &\ell_{\wg}^{\what E}(\wrho \, | \, \what \gamma) := \big\langle \wrho_T, \wg_T \big\rangle 
  - \langle {\what \gamma}, \what G_0 \rangle
  - \int_0^{T} \!dt\, \big\langle \wrho_t, \partial_t \wg_t \big\rangle\\
  &\qquad\qquad\quad
  \, -\int_0^{T} \!dt\, \big\langle  \wrho_t , \Delta \wg_t \big\rangle
  \;-\; \sum_{i=1}^2  \int_0^T 
  \int_{\Gamma^-}  b_i(r)(\partial_{e_1}\what G_{i,t})(r)\, dS(r) \, dt  \\
  &\qquad\qquad\quad
  +\; \sum_{i=1}^2 \int_0^T 
  \int_{\Gamma^+}  \what \rho_{i,t}(r)(\partial_{e_1}\what G_{i,t})(r)\, dS(r) \, dt \\ 
  &\qquad\qquad\quad 
  - \frac{1}{2} \sum_{k=1}^d \int_0^T
  \scal{}{\partial_{e_k} \what G_t}{\Sigma(\what\rho_t) \what E_k } dt ,
  \end{split}
  \end{equation}
    where for $r$ in $\Ga$, $\what \rho(r) \coloneqq \Tr(\what \rho)(r) \coloneqq (\Tr(\rho_1)(r), \Tr(\rho_2)(r))$ and $dS(r)$ is an element of surface on $\Ga$.

 \begin{definition}Denote by  $\mc A _{[0, T]}   $ the set of all weak  solutions of the  
 hydrodynamic limit:
\begin{equation*}
\mc A _{[0, T]} = \Big\{  \wrho \in  \big(L^2\big([0,T], H^1(\La)\big)\big)^2 \; :\; 
\quad \forall  \wg \in \big(  \mathcal C^{1,2}_{0,-} ( [0,T]
 \times \L)\big)^2 \, ,\; {\ell}_{\wg}^{\what E}(\wrho)=0
   \Big\} ,
\end{equation*}
where we denote ${\ell}_{\wg}^{\what E}(\wrho) \coloneqq \ell_{\wg}^{\what E}(\wrho\, | \, \what \rho_0)$. \\
A weak solution to the  boundary value problem \eqref{eq:1} is a function 
$\widehat \rho :[0,T]\times \Lambda\to [-1,1]\times [0,1] $ 
satisfying (IB1) and (IB2) below:
\begin{itemize} 
\item[(IB1)] $\displaystyle \widehat \rho\in \mc A _{[0, T]}$,
\item[(IB2)]  $\what \rho_0 = \widehat \gamma $ a.e..
\end{itemize}    
\end{definition}
 
\begin{proposition}
    \label{th-hy}
    For any sequence of initial probability measures $(\mu_N)_{N\ge 1}$, the sequence of probability measures 
    $(\mathbb{Q}_{\mu_N}^{N,\wb})_{N\geq 1}$ 
    is weakly relatively compact and  all its converging
    subsequences converge to   some limit  $\mathbb{Q}^{*,\what b}$ that is concentrated on the set of paths $\what\pi = (\rho_1\, du,\rho_2\, du)\in \mathcal{D}([0,T],\what{\mathcal{M}}_0)$ such that $\what \rho\coloneqq (\rho_1,\rho_2)$ is a weak solution to the hydrodynamic equation in the sense that $\what \rho \in \mc A_{[0,T]}$. Furthermore, $\what \rho$ satisfies the following inequality  
    $$ \sum_{k=1}^d \scall{}{\partial_{e_k} \what \rho}{\Sigma(\what\rho)^{-1}\partial_{e_k} \what \rho} < + \infty.$$  Moreover, if the sequence of initial measures $(\mu_N)_{N\geq 1}$
is associated to some initial profile $\widehat\gamma=(\gamma_1,\gamma_2) : \overline \Lambda \to {\mb I}$, in the following sense:
    for any $\delta>0$ and for  any  function  $\wg\in \big(\mc C(\L)\big)^2$ 
    \begin{equation}\label{pfl}
    \lim_{N\to \infty}\mu_N\Big\{ \Big| \langle  \widehat\pi_N, \wg\rangle\, -\, \langle\widehat\gamma ,\wg\rangle  \Big| \ge \delta \Big\}=0 ,
    \end{equation}
    then the sequence of probability measures $(\mathbb{Q}_{\mu_N}^{N,\what b})_{N\geq 1}$ 
    converges to the Dirac measure  concentrated on the unique   weak  solution $\wrho(\cdot,\cdot)$ of the boundary value 
    problem \eqref{eq:1}. Accordingly, for any $t\in [0,T]$, 
    any $\delta>0$ and any function $\wg\in  \left(\mc C(\Lambda)\right)^2$
    $$
    \lim_{N\to \infty}\PP_{\mu_N}^{N,\wb}\Big\{ \Big| \langle  \widehat\pi_N(\sigma_t), \wg\rangle \, -\, 
    \langle\wrho(t,\cdot),\wg \rangle  
     \Big| \ge \delta \Big\}=0 .
    $$
    \end{proposition}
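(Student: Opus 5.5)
We follow the entropy method of Guo--Papanicolaou--Varadhan, adapted to two conserved quantities and to mixed boundary conditions. The reference measure is the product measure $\nu^N_{\what\theta}$ from \eqref{meas}; since $\what\theta$ satisfies \eqref{eq:cC2}, it charges every configuration with mass bounded below by $e^{-CN^d}$, so $H(\mu_N|\nu^N_{\what\theta})\le C N^d$ for every initial law $\mu_N$. The plan has five steps.

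\textbf{Step 1: tightness.} For $\wg\in(\mathcal C^2_{0,-}(\overline\La))^2$, consider Dynkin's martingale
\[
M^{N,\wg}_t=\langle\wpi^N_t,\wg\rangle-\langle\wpi^N_0,\wg\rangle-\int_0^t\mf L_N\langle\wpi^N_s,\wg\rangle\,ds .
\]
We compute $\mf L_N\langle\wpi^N,\wg\rangle$ explicitly. Expanding $C^{\what E}_N=1-\tfrac12\nabla_{x,y}\mathscr H+O(N^{-2})$, the bulk part yields:
\begin{itemize}
\item a term $\langle\wpi^N,\Delta_N\wg\rangle$;
\item a drift term $\tfrac12\sum_k N^{-d}\sum_x \partial_{N,e_k}\wg\,\cdot\,\what E_k$ multiplied by local functions whose mean under $\nu_{\what\rho}$ is $\Sigma(\what\rho)$. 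For instance, the $(1,1)$ entry comes from $\sigma(x)^2+\sigma(x+e_k)^2-2\sigma(x)\sigma(x+e_k)$, so it is quadratic in the spins;
\item boundary terms at $x_1=\pm N$ of order $N\,\partial_{N,e_1}G$ and $N\,G$.
\end{itemize}
The boundary generators contribute terms of the form $N^{1-\mathfrak a_{l,r}}\,N^{-(d-1)}\sum_{x\in\Gamma^\pm_N}G(x/N)\big(b(x/N)-\text{local}\big)$. On the right, $\mathfrak a_r>1$ makes these terms vanish. On the left, $G=0$ on $\Gamma^-$ removes the $N G$ part. The quadratic variation is $O(N^{-d})$. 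Aldous' criterion then gives relative compactness, and since $|\sigma|\le1$ every limit is supported on $\wm$ with densities in $\bar{\mb I}$.

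\textbf{Step 2: replacement lemmas.} Using the Dirichlet form bound $\tfrac{d}{dt}H\le -N^2 D_N+CN^d$, with the boundary generators reversible up to $O(N^{d})$ error relative to $\nu^N_{\what\theta}$, we obtain a time-integrated Dirichlet form bound $O(N^{d-2})$.

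\emph{Bulk.} The one-block and two-block estimates let us replace $\sigma(x)\sigma(x+e_k)$ and the other quadratic local functions by $\Sigma$ evaluated at the empirical averages $\what\rho^{\varepsilon N}(x)$ over boxes of size $\varepsilon N$.

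\emph{Boundary.} These are the counterparts of Lemmas \ref{replacementGauche}--\ref{replacementDroite}:
\begin{itemize}
\item Left boundary: the accelerated reservoir $N^{2-\mathfrak a_l}$ with $\mathfrak a_l<1$ forces $\sigma(x),\sigma(x)^2\approx b_1,b_2$ on $\Gamma_N^-$. This is what identifies the term $\int_{\Gamma^-}b\,\partial_{e_1}G$.
\item Right boundary: the boundary averages over $\Gamma_N^+$ are replaced by the traces $\what\rho(r)$, through $\varepsilon N$-box averages adjacent to $\Gamma_N^+$.
\end{itemize}
After these replacements, the limit of $M^{N,\wg}_T$ (which tends to zero) is exactly $\ell^{\what E}_{\wg}(\what\rho)$ in \eqref{weakAlternative}. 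The flux term $\tfrac12\Sigma\what E_1$ at $\Gamma^+$ combines with the bulk drift after summation by parts.

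\textbf{Step 3: energy estimate.} This step is needed to make sense of traces and to show $\what\rho\in L^2(H^1)$. We prove
\[
\mathbb E\Big[\sup_{\wg}\Big\{\scall{}{\what\rho}{\partial_{e_k}\wg}-c\,\scall{}{\partial... }{}\Big\}\Big]
\]
in the following precise form: $\sup_{\wg}\{\scall{}{\what\rho}{\partial_{e_k}\wg}-\tfrac c2\sum\scall{}{\wg}{\Sigma(\what\rho)\wg}\}<\infty$ over $\wg\in(\mathcal C^{0,1}_c)^2$. We bound this via Feynman--Kac and the variational formula for the largest eigenvalue, integrating by parts against $\nu_{\what\theta}$ and using that the local variance matrix of $(\sigma,\sigma^2)$ is $\Sigma/2$. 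Riesz representation then yields $\partial_{e_k}\what\rho=\Sigma(\what\rho)F_k$ with $\scall{}{F_k}{\Sigma F_k}<\infty$. This is exactly the finiteness of $\sum_k\scall{}{\partial_{e_k}\what\rho}{\Sigma^{-1}\partial_{e_k}\what\rho}$, and since $\Sigma$ is bounded it also gives $\what\rho\in L^2(H^1)^2$.

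\textbf{Step 4: uniqueness.} Uniqueness of weak solutions, from the appendix of the paper (assumed), combined with \eqref{pfl} fixing $\what\rho_0=\what\gamma$, gives convergence to a Dirac mass. Because the limit path is continuous in time, we obtain the fixed-$t$ statement.

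\textbf{Main obstacle.} We expect the main difficulty to be the boundary replacement at $\Gamma^+$ together with the nonlinear quadratic drift. The mobility terms are nonlinear and matrix-valued, which forces the full two-block argument rather than a linear replacement. At the right boundary the reservoir is too slow to help, so the trace $\what\rho(r)$ must be reached by a one-sided block average, with the Dirichlet-form cost controlled using only bulk bonds. We will first try the one-sided averages $N^{-(d-1)}\sum_{x\in\Gamma^+_N}(\varepsilon N)^{-1}\sum_{j<\varepsilon N}\sigma(x-je_1)$ and estimate their difference via a telescoping sum bounded by the bulk Dirichlet form.
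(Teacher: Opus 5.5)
Your proposal is correct and follows essentially the same route as the paper. The paper's steps are tightness from the Dynkin martingale with vanishing quadratic variation, the weighted energy estimate via Feynman--Kac and integration by parts against $\nu^N_{\what\theta}$, the bulk, left and right replacement lemmas to identify $\ell^{\what E}_{\wg}$, and uniqueness from Appendix \ref{Appendix:uniquenessPerturbe}; the only cosmetic difference is that you characterize $\mc Q$ by Riesz representation instead of the $\Sigma_\delta$-regularization and Fatou argument of Lemma \ref{l-energy1}. Two small precisions: in Step 3 the replacement of the local matrix $\Sigma^{\mathfrak h}$ by $\Sigma(\wpi^N * \what u_\vare)$ must be done inside the exponential, i.e.\ in superexponential form (the paper restricts to the set $B$ via Proposition \ref{replacementBulk}), not merely in probability as in your Step 2; and $|m|\le\phi$ for the limit densities comes from $|\sigma(x)|=\sigma(x)^2$, not just from $|\sigma|\le 1$.
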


    \subsection{Dynamical large deviations}
Fix an initial profile $\wgamma=(\gamma^1,\gamma^2) : \overline\La \rightarrow \mathbf I$.
We are interested on large deviations of the empirical measure $(\wpi^N(\sigma_t))_{t\in [0,T]}$ during the interval time $[0,T]$ 
and starting from the profile $\wgamma$.
Recall from \eqref{mobility-matrix} the definition of the mobility matrix. Define the energy functional 
${\mc E} ={\mc E}^{T,\what E} :\mathcal{D}([0,T],{{\mc M}}^2)\to [0,\infty]$ by
\begin{equation}
\label{1:Q}
{\mc E}(\pi^1,\pi^2)=
\begin{cases}
 {\mc Q} (\pi^1,\pi^2) &  \hbox{  if  } \ \ (\pi^1,\pi^2) \in \mathcal{D}([0,T], \wm),\\ 
+ \infty & \hbox{ otherwise,}
\end{cases}
\end{equation}
where the functional ${\mc Q}:\mathcal{D}([0,T],\wm)\to[0,\infty]$ is given for a trajectory $\wpi=(\pi^1,\pi^2) \in \mathcal{D}([0,T], \wm)$ with
$\wpi_t(du) =(m_t(u)du,\phi_t(u)du)$ for $t\in[0,T]$  by the formula 
\begin{align}
{\mc Q}(\what \pi) \coloneqq & \sum_{k=1}^d \mathcal{Q}_k(\what \pi) \nonumber \\
\coloneqq & \sum_{k=1}^d \sup
\Big\{  \int_0^T  \scal{}{\what\pi_t}{\partial_{e_k}  \what G_t} \, dt  
- \frac12\int_0^T \scal{}{\what G_t }{\Sigma(\what\pi_t) \what G_t} \, dt  \Big\},  \label{defQenergy}
\end{align}
in which the supremum is carried over all   $\what G \in
  ( \mathcal C^{\infty}_c([0,T]\times \L))^2$.  
We shall prove in Lemma \ref{l-energy1} that ${\mc Q}(\wpi)$ is finite if and only if 
$\wrho:=(m,\phi)\in \big(L^2\big([0,T], H^1(\La)\big)\big)^2$ and 
\[
  \sum_{k=1}^d \scall{}{\partial_{e_k}\what \rho}{\Sigma(\what \rho)^{-1} \partial_{e_k} \what \rho } < + \infty .
\]
In that case 
\begin{equation}\label {tm4}
\mc Q(\what\pi) = \frac{1}{2} \sum_{k=1}^d \scall{}{\partial_{e_k}\what \rho}{\Sigma(\what \rho)^{-1} \partial_{e_k} \what \rho }. 
\end{equation}
We shall prove in Lemma \ref{l-energy1}  that $ {\mc Q}$ is convex and lower semicontinuous.

Denote by $I_2=\begin{pmatrix}
    1 & 0 \\ 0 & 1
\end{pmatrix}$ the $2\times 2$ identity matrix and define the new energy functional $\Q_{I_2}:\DM \to[0,\infty]$ given by 
\[
    \Q_{I_2}(\what \pi) \coloneqq  \sum_{k=1}^d \sup
\Big\{  \int_0^T  \scal{}{\what\pi_t}{\partial_{e_k}  \what G_t} \, dt  
- \frac12\int_0^T \scal{}{\what G_t }{\what G_t} \, dt  \Big\} .  \label{defQI2energy}
\]
Notice that, for all $\what \pi \in \mathcal D ([0,T],\wm)$, $\Q_{I_2}(\what \pi)$ is finite if and only if the density $\what \rho$ of $\what \pi$ belongs to $\left( L^2([0,T],H^1(\La))\right)^2$. In this case 
\begin{equation}\label {energyI2}
\mc Q_{I_2}(\what\pi) = \frac{1}{2} \sum_{k=1}^d \scall{}{\partial_{e_k}\what \rho}{ \partial_{e_k} \what \rho }. 
\end{equation}

For each $\what G \in ( \mathcal C^{1,2}_{0,-}([0,T]\times \overline\La))^2$, let $J^{\what E}_{\what G}\coloneqq J_{\what G,\what \gamma,T}^{\what E}: \DM \longrightarrow \R\cup\{+\infty\}$ be the functional given by 
\[
  J_{\what G}^{\what E}(\what \pi) \coloneqq \left\{ \begin{array}{ll}
    \displaystyle\ell_{\what G}^{\what E}(\what \pi \, | \, \what \gamma) - \frac{1}{2} \sum_{k=1}^d \int_0^T \scal{}{\partial_{e_k} \what G_t}{\Sigma(\what \pi_t)  \partial_{e_k} \what  G_t } \, dt , \qquad & \text{if $\what\pi \in \mathcal{D}([0,T], \wm)$,} \\
    + \infty , \qquad  & \text{else,}
  \end{array} \right.
\]
where for $\what \pi \in \mathcal D ([0,T],\wm)$, $\ell_{\what G}^{\what E}(\what \pi \, | \, \what \gamma) \coloneqq \ell_{\what G}^{\what E}(\what \rho \, | \, \what \gamma) $. 
We then define the functional $\tilde I_T^{\what E}(\cdot|\what \gamma):\DM \longrightarrow [0,+\infty]$ by 
\[
  \tilde I_T^{\what E}(\what \pi|\what \gamma) \coloneqq \sup_{\what G\in ( \mathcal C^{1,2}_{0,-}([0,T]\times \La))^2} J_{\what G}^{\what E}(\what \pi).  
\] 
We can now define the rate function $I_T^{\what E}(\what \pi|\what \gamma):\DM\longrightarrow[0,+\infty] $ 
\[
  I_T^{\what E}(\what \pi|\what \gamma) \coloneqq \left\{ \begin{array}{ll}
    \tilde I_T^{\what E}(\what \pi|\what \gamma), \qquad &\text{if $\mathcal Q_{I_2}(\what \pi)<+\infty$,}\\
    + \infty, \qquad &\text{else.}
  \end{array} \right.  
\]

\begin{remark}
    Due to the non linearity of the last term of \eqref{weakAlternative}, the functional $\I{\cdot}$ is not convex, however we shall prove that the functional $\I{\cdot}$ is lower semicontinuous and has compact  level sets.
\end{remark}

\begin{theorem}
  Fix $T>0$ and an initial  profile $\what \ga : \La \longrightarrow \mathbf I$. Consider a sequence $(\sigma^N)_{N \geq 1}$ of configurations such that $(\delta_{\sigma^N})_{N \geq 1}$ is associated to $\what \ga$, in the sense \eqref{pfl}.  
  Then, the sequence of probability measures $\left(\mathbb Q_{\delta_{\sigma^N}}^{N,\what b}\right)_{N \geq 1}$ on $\DM$ satisfies a large deviation  principle with speed $N^d$ and rate function $I_T^{\what E}(\cdot |\what \ga)$,
  \begin{align*}
    \varlimsup_{N\to + \infty } \frac{1}{N^d} \log \mathbb Q_{\delta_{\sigma^N}}^{N,\what b}\left(\what \pi^N \in \mathcal C\right) &\leq - \inf_{\what \pi\in \mathcal C} I_T^{\what E}(\what \pi|\what \ga), \\
    \varliminf_{N\to + \infty } \frac{1}{N^d} \log\mathbb Q_{\delta_{\sigma^N}}^{N,\what b}\left(\what \pi^N \in \mathcal O\right) &\geq - \inf_{\what \pi\in \mathcal O} I_T^{\what E}(\what \pi|\what \ga), \\
  \end{align*}
  for any closed set $\mathcal C\subset \DM$ and open set $\mathcal O \subset \DM$. Moreover, the functional $\I{\cdot}$ is lower semicontinuous and has compact level sets.
\end{theorem}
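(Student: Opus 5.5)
We follow the strategy of Kipnis--Olla--Varadhan as adapted to boundary driven systems in \cite{bdgjl3,blm09,fgln}, working throughout with the product reference measure $\nu^N_{\what\theta}$ of \eqref{meas}.

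\textbf{Exponential martingales.} For $\what G\in(\mathcal C^{1,2}_{0,-}([0,T]\times\overline\La))^2$ we consider the mean-one martingale
\[
\mathbb M^N_t=\exp\Big\{N^d\big[\langle\what\pi^N_t,\what G_t\rangle-\langle\what\pi^N_0,\what G_0\rangle\big]-\int_0^t e^{-N^d\langle\what\pi^N_s,\what G_s\rangle}\big(\partial_s+\mathfrak L_N\big)e^{N^d\langle\what\pi^N_s,\what G_s\rangle}ds\Big\}.
\]
A Taylor expansion of the bulk exchange rates $C^{\what E}_N$ produces $N^d$ times the quantity
\[
\langle\what\pi^N,\Delta\what G\rangle+\tfrac12\sum_k\langle\partial_{e_k}\what G,\Sigma\,\what E_k\rangle+\tfrac12\sum_k\langle\partial_{e_k}\what G,\Sigma\,\partial_{e_k}\what G\rangle,
\]
together with boundary terms. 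To express these in terms of the empirical measure we need:
\begin{itemize}
\item the superexponential replacement lemma in the bulk, which replaces local functions such as $\sigma(x)\sigma(x+e_k)$ by their $\nu_{\what\rho}$-means evaluated at box averages;
\item Lemma \ref{replacementGauche}, which gives $\what\rho|_{\Ga^-}=\what b$ at superexponential precision, using the fast rate $N^{2-\mathfrak a_l}$ and the fact that $\what G$ vanishes on $\Ga^-$;
\item Lemma \ref{replacementDroite}, under which the slow right reservoir contributes nothing and the summation by parts yields the trace term $\what\rho(r)\partial_{e_1}\what G(r)$ on $\Ga^+$.
\end{itemize}
The Dirichlet form estimates behind these lemmas use $\nu^N_{\what\theta}$; the non-reversibility of $\nu^N_{\what\theta}$ for the bulk dynamics with $\what E\neq0$ costs only $O(N^{d-2}\cdot N^2)$, which is absorbed. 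Together these give $\mathbb M^N_T\approx\exp\{N^d J^{\what E}_{\what G}(\what\pi^N)\}$.

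\textbf{Upper bound.} We first prove exponential tightness. Next, superexponential energy estimates (the $\mathcal Q_{I_2}$ bound) show that trajectories with $\mathcal Q_{I_2}=\infty$, or that are not in $\mathcal D([0,T],\wm)$, carry super-exponentially small probability; this uses the functional of the Appendix that forces absolute continuity. The standard minimax argument over compact sets, combined with lower semicontinuity of each $J_{\what G}$, then gives the bound $-\inf_{\mathcal C}I_T^{\what E}$. Lower semicontinuity of $I$ follows since $\tilde I$ is a supremum of continuous functionals on $\{\mathcal Q\le a\}$. Compactness of the level sets follows from the estimate $\mathcal Q_{I_2}(\what\pi)\le C(I_T(\what\pi)+1)$, obtained by testing $\what G$ against $\what\rho$ (Lemma \ref{l-energy1}), together with equicontinuity in time derived from $J_{\what G}$.

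\textbf{Lower bound.} This is the main obstacle, because $I_T^{\what E}$ is not convex. The plan has three steps.
\begin{enumerate}
\item For $\what\pi$ with finite rate, show that $I_T(\what\pi)=\frac12\sum_k\langle\!\langle\partial_{e_k}\what H,\Sigma(\what\rho)\partial_{e_k}\what H\rangle\!\rangle$, where $\what\rho$ solves the perturbed equation with drift $\Sigma(\what\rho)\nabla\what H$ for some $\what H$ in a weighted $H^1$ space vanishing on $\Ga^-$ (a Riesz representation).
\item Show that the set $\Pi$ of trajectories that solve the perturbed equation with smooth $\what H$ and that stay strictly inside $\mathbf I$ after a short time is $I$-dense. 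This uses the Appendix results on uniqueness and regularity of the hydrodynamic equation, with approximation in three stages: first follow the hydrodynamic flow for time $\delta$, then convexly interpolate with $\what\theta$ to move away from $\partial\mathbf I$, then smooth $\what H$. At each stage we control the change in the rate through the quadratic form and the continuity of $\Sigma$. Non-convexity prevents the usual convex-combination shortcut, so the error estimates must be done by hand.
\item For $\what\pi\in\Pi$, tilt the dynamics by $\what H$: the weakly asymmetric process with the extra field obeys the hydrodynamic limit of Proposition \ref{th-hy} with $\what E$ replaced by $\what E+\nabla\what H$. Its relative entropy with respect to $\mathbb P^{N,\what b}_{\delta_{\sigma^N}}$, divided by $N^d$, converges to $I_T(\what\pi)$, and the standard entropy inequality then gives the lower bound.
\end{enumerate}
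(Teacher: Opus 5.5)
Your overall architecture is the paper's. That covers exponential martingales with the bulk and boundary replacement lemmas; the energy estimate and the functional $\mathfrak M_{\widehat J}$ for the upper bound; and Riesz representation, $I$-density and a tilted process with the relative entropy bound for the lower bound. But the density step fails as you describe it.

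The convex combination $(1-\varepsilon)\widehat\pi+\varepsilon\widehat\theta$ with the static profile $\widehat\theta$ starts from $(1-\varepsilon)\widehat\gamma+\varepsilon\widehat\theta\neq\widehat\gamma$. Proposition~\ref{lemma:problemePerturbe} shows that every path with finite rate lies in $\mathcal B^{\widehat b}_{\widehat\gamma}$. You can also see this directly: test $J^{\widehat E}_{\widehat G}$ with $\widehat G_t=\psi(t)\widehat g$, $\widehat g\in(\mathcal C^\infty_c(\Lambda))^2$, $\psi$ dropping from $1$ to $0$ near $t=0$, and scale $\widehat g$. So every interpolant has rate $+\infty$, and the approximation is useless. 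It also destroys the property, obtained in your first stage, that the path follows a flow on $[0,\delta]$.

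The paper interpolates instead with $\widehat\rho^0$, the solution of the decoupled heat equation \eqref{Eq:heat} with the same data $(\widehat\gamma,\widehat b)$. This keeps the initial and Dirichlet data. Since $\widehat\pi\in\mathcal A_1$ solves \eqref{Eq:heat} on $[0,\delta]$, uniqueness gives $\widehat\pi=\widehat\rho^0$ there, so the interpolant is unchanged on that interval. The maximum principle applied to $\phi^0$ and to $(\phi^0\pm m^0)/2$ (Lemmas~\ref{lemma:heatBORNEE} and~\ref{rho0Iepsilon}) gives $\widehat\rho^0_t\in\mathbf I_\lambda$, hence $\widehat\pi^\varepsilon_t\in\mathbf I_{\varepsilon\lambda}$, for $t\ge\delta$. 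This is why both the initial segment and the interpolation partner are taken along the heat flow with $\widehat E=\widehat 0$ rather than your field-driven hydrodynamic flow. The paper has no analogue of this maximum principle for the coupled nonlinear system. Interiority for $t\ge\delta$ is exactly what is needed to put $\widehat H^{\widehat\pi}$ in the unweighted space $L^2([0,T],H^1(\Lambda))$ before smoothing it.

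Three further corrections. First, your remark that non-convexity ``prevents the convex-combination shortcut'' is the reverse of what happens. The step works because of the matrix concavity of $\Sigma$ established in Lemma~\ref{l-energy1}, which gives $(1-\varepsilon)\Sigma(\widehat\pi)\le\Sigma(\widehat\pi^\varepsilon)$. Combined with the linearity of \eqref{Eq:heat}, this bounds the rate of $\widehat\pi^\varepsilon$ by a $\Sigma(\widehat\pi^\varepsilon)^{-1}$-weighted quadratic form with an integrable dominating function. Dominated convergence then gives $\limsup_{\varepsilon\to0}I_T^{\widehat E}(\widehat\pi^\varepsilon|\widehat\gamma)\le I_T^{\widehat E}(\widehat\pi|\widehat\gamma)$.

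Second, in the tilt, rates $C^{\widehat E}_N e^{-\nabla_{x,y}\mathscr H^{\widehat H}_N}$ produce the field $\widehat E+2\nabla\widehat H$, not $\widehat E+\nabla\widehat H$. That is what matches the drift $\Sigma(\widehat\rho)\nabla\widehat H$ of your first step. Since this field depends on space and time, its hydrodynamic limit and uniqueness (Proposition~\ref{th-hy-perturbe}, Appendix~\ref{Appendix:uniquenessPerturbe}) must be established separately rather than read off Proposition~\ref{th-hy}.

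Third, your upper-bound outline is acceptable, but the energy estimate (Lemma~\ref{lemma:energyEstimate}) is only exponential, not superexponential, and concerns mollified paths $\widehat\pi^N*\widehat u_\varepsilon$. The paper couples it to the martingale by H\"older's inequality with exponents $1+a$ and $(1+a)/a$, then lets $a\to0$.
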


    \section{Steps to prove hydrodynamic limit}

Following \cite{gpv} we divide the proof of Proposition \ref{th-hy}
in three steps: tightness of the sequence of measures
$(\Q_{\mu_N}^{N,\what b})_{N\ge 1}$, an
energy estimate to provide the needed regularity for functions in the support of any limit point of the sequence $(\Q_{\mu_N}^{N,\what b})_{N\ge 1}$, and identification of the support of limit point as weak
solution of the hydrodynamic equation \eqref{eq:1}. We then refer to \cite{kl} (Chapter 4  and Chapter 5), and \cite{fgn1}, which present arguments, by now standard, to deduce the hydrodynamic behavior of
the empirical measures from the preceding results and the uniqueness of the weak
solution to equation \eqref{eq:1}.

  We consider for $\what G \in \left( \mathcal C^{1,2}_{0,-}([0,T]\times \La)\right)^2$, the martingale $(M^{\what G,N}_t)_{t\in [0,T]}$
   with respect to the natural filtration associated with $(\eta_t)_{t\in [0,T]}$, defined for any $t\in [0,T]$ by
       \begin{align*}
      M^{\what G,N}_t \coloneqq \scal{}{\what \pi^N(\sigma_t)}{\what G_t} - \scal{}{\what \pi^N(\sigma_0)}{\what G_0} - \int_0^t \left(\partial_s + \mathfrak L_N \scal{}{\what \pi^N(\sigma_s)}{\what G_s}\right) \, ds .
    \end{align*}
     By Doob decomposition Theorem, $\big(M^{\what G,N}_t\big)^2 = N^{\what G,N}_t + \big< M^{\what G,N}_t\big>$, where 
   $N^{\what G,N}_t $ is a martingale and the associated quadratic variation $(\big<M^{\what G,N}_t\big>)_{t\in [0,T]}$ is given by 
    \begin{equation*}
   \big<M^{\what G,N}_t\big>   =  
      \int_0^t \left\{ \mathfrak L_N 
\Big(\scal{}{\what \pi^N(\sigma_s)}{\what G_s}\Big)^2 - 2 \scal{}{\what \pi^N(\sigma_s)}{\what G_s}\mathfrak L_N 
\scal{}{\what \pi^N(\sigma_s)}{\what G_s} \right\} d s .
    \end{equation*} 
     Computing the integral term of the martingale, we find
  \begin{equation}\label{martingale-0}
    \begin{aligned}
        M^{\what G,N}_t =& \scal{}{\what \pi^N(\sigma_t)}{\what G_t} - \scal{}{\what \pi^N(\sigma_0)}{\what G_0}  - \int_0^t \scal{}{\what \pi^N(\sigma_s)}{\partial_s \what G_s} \, ds \\
      & -   \int_0^t \scal{}{\what \pi^N(\sigma_s)}{\Delta \what G_s} \, ds - \int_0^T \frac{1}{N^{d-1}} \sum_{x\in \Ga_N^- } \sum_{i=1}^2 \sigma_s(x)^i \partial_{e_1} G_{i,s}\left(\frac{x}{N}\right) \, ds \\
      & + \int_0^T \frac{1}{N^{d-1}} \sum_{x\in \Ga_N^+ } \sum_{i=1}^2 \sigma_s(x)^i \partial_{e_1} G_{i,s}\left(\frac{x}{N}\right) \, ds \\
      & - \frac{1}{2} \sum_{k=1}^d \int_0^t \scal{}{\partial_{e_k} \what G_s}{\Sigma^{{\mathfrak{h}}}(\sigma_s) \what E_k} \, ds  + O_{N}(1),
    \end{aligned} 
    \end{equation}
    where $O_N(1)$ converges to $0$ when $N\to + \infty$.
    On the other hand,
    a simple computation shows that  the quadratic variation of $\big<M^{\what G,N}_t\big> $ is given by
      \begin{align*}
        \big<M^{\what G,N}_t\big> = &\int_0^t \frac{1}{N^{2d}}\sum_{x,x+e_k\in \La_N} \sum_{k=1}^d C_N^{\what E}(x,x+e_k;\sigma_s) \left( \sum_{i=1}^2 \partial_{N,e_k} G_{i,s} \left( \frac{x}{N} \right) \left[ \sigma_s(x)^i - \sigma_s(x+e_k)^i \right]\right)^2 \, ds  \\
        & + \int_0^t \frac{1}{N^{2(d-1)+\mathfrak a_r}} \sum_{x\in\Ga_N^+} \left( r_x^{(1,0),-}  (\what b, \sigma_s) + r_x^{(0,1),+}(\what b, \sigma_s) \right) \left[  G_{1,s} \left( \frac{x}{N} \right)  + G_{2,s} \left( \frac{x}{N} \right) \right]^2 \, dt  \\
        & + \int_0^t \frac{1}{N^{2(d-1)+\mathfrak a_r}} \sum_{x\in\Ga_N^+} \left( r_x^{(-1,0),+}  (\what b, \sigma_s) + r_x^{(0,-1),-}(\what b, \sigma_s) \right) \left[  G_{1,s} \left( \frac{x}{N} \right)  - G_{2,s} \left( \frac{x}{N} \right) \right]^2 \, dt  \\
        & + \int_0^t \frac{1}{N^{2(d-1)+\mathfrak a_r}} \sum_{x\in\Ga_N^+}  r_x^{(-1,1)}  (\what b, \sigma_s) \left[ 2 G_{1,s} \left( \frac{x}{N} \right)  \right]^2 \, dt  , 
    \end{align*}
    and thus its expectation vanishes as $N\to\infty$.  Therefore, by Doob’s maximal inequality, for any $\delta >0$,
\begin{equation}\label{doob-ineq}
    \lim_{N\to \infty}\PP_{\mu_N}^{N,\wb}\left(\sup_{0\le t\le T} \left|M^{\what G,N}_t \right|\ge \delta\right)=0.
\end{equation}   

Using the fact that $\what \pi^N$ has a total mass bounded by 2, \eqref{martingale-0} and \eqref{doob-ineq},
we get the following result. 
\begin{proposition} {\bf (Tightness)}
    The sequence of probability measures $(\mathbb Q^{N,\what b}_{\mu_N})_{N \geq 1}$ is tight.
\end{proposition}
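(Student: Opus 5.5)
To prove tightness we follow the standard scheme of \cite{kl} (Chapter 4). Since $\mc M^2$ carries the weak topology and every $\what\pi^N(\sigma)$ has total variation bounded by $2$, the state space is effectively compact. By the criterion of \cite{kl} (Proposition 4.1.7 together with Aldous' criterion), it is enough to show that for each fixed $\what G$ in a countable dense subset of $(\mathcal C^2_c(\La))^2$ (or of $(\mathcal C^2(\overline\La))^2$) the real-valued processes $\langle \what\pi^N(\sigma_t),\what G\rangle$ are tight in $\mathcal D([0,T],\R)$. The compact containment condition for these real processes holds trivially, because $|\langle\what\pi^N,\what G\rangle|\le 2\|\what G\|_\infty$. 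The whole proof therefore reduces to controlling the modulus of continuity. Equivalently, via Aldous' criterion, we need, for every $\varepsilon>0$,
\[
\lim_{\delta\to0}\limsup_{N\to\infty}\sup_{\tau,\,\theta\le\delta}\PP^{N,\wb}_{\mu_N}\Big(\big|\langle\what\pi^N(\sigma_{\tau+\theta}),\what G\rangle-\langle\what\pi^N(\sigma_\tau),\what G\rangle\big|>\varepsilon\Big)=0,
\]
where $\tau$ ranges over stopping times bounded by $T$.

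The plan is to choose $\what G$ time-independent and apply the martingale decomposition \eqref{martingale-0} to write the increment as $M^{\what G,N}_{\tau+\theta}-M^{\what G,N}_\tau$ plus an integral over $[\tau,\tau+\theta]$. By \eqref{doob-ineq}, more precisely by the bound on the expected quadratic variation together with the optional stopping theorem, the martingale increment tends to $0$ in probability uniformly in $\tau$. The integral term consists of the following pieces. The first is $\langle\what\pi^N,\Delta\what G\rangle$, which is bounded by $2\|\Delta\what G\|_\infty$. The second is the drift term involving $\Sigma^{\mathfrak h}(\sigma)$, which is bounded by a constant times $\|\nabla\what G\|_\infty|\what E|$ because spins are bounded. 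The remaining pieces are the boundary sums $N^{-(d-1)}\sum_{x\in\Ga_N^\pm}\sigma(x)^i\partial_{e_1}G_i(x/N)$; each is bounded by a constant times $\|\partial_{e_1}\what G\|_\infty$, since $|\Ga_N^\pm|=N^{d-1}$. Every term is thus bounded deterministically, so the integral over an interval of length $\theta\le\delta$ is at most $C(\what G)\,\delta$, and this goes to zero.

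The main obstacle is the rigorous derivation of \eqref{martingale-0} with uniformly bounded error. For time-dependent test functions in $\mathcal C^{1,2}_{0,-}$ the boundary generators are accelerated by $N^{2-\mathfrak a_l}$ and $N^{2-\mathfrak a_r}$, and this acceleration is dangerous on the left because $\mathfrak a_l<1$. To avoid it, I would first check tightness using $\what G\in(\mathcal C^2_c(\La))^2$. For such $\what G$ the boundary generators act trivially, since $G$ vanishes near $\Ga_N$. The summation by parts of $N^2\cl_{\what E,N}$ then produces only bulk terms, namely $\Delta_N$ plus the asymmetric drift obtained by expanding $C_N^{\what E}=1-\frac1{2N}(\cdots)+O(N^{-2})$, and these are bounded uniformly in $N$. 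Because $\mathcal C^2_c(\La)$ is dense in $\mathcal C(\overline\La)$ in the uniform topology on compacts, and the total masses are bounded by $2$, tightness for a dense countable family of compactly supported test functions suffices to conclude tightness in the weak topology on $\mc M^2$. I would also check that no mass escapes to the boundary in a way that breaks the approximation; this is immediate, because each boundary layer carries mass $O(N^{-1})$.
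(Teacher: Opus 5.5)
Your proof is correct. Its skeleton is the paper's: martingale decomposition, a deterministic bound on the integral term, vanishing quadratic variation via \eqref{doob-ineq}, and Aldous' criterion. The real difference is the class of test functions, and your reason for changing it is not accurate.

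The paper works directly with $\what G\in(\mathcal C^{1,2}_{0,-}([0,T]\times\overline\La))^2$, and for these the fast left reservoir is harmless. Indeed, $L^-_{b,N}$ only modifies $\sigma(x)$ for $x\in\Ga_N^-$, where $G_i(x/N)=0$. Hence $N^{2-\mathfrak a_l}L^-_{b,N}\langle\what\pi^N,\what G\rangle\equiv 0$, and the left reservoir also drops out of the quadratic variation. The slow right reservoir contributes $O(N^{1-\mathfrak a_r})$. The boundary sums coming from the bulk summation by parts are bounded by $C\|\partial_{e_1}\what G\|_\infty$, exactly as you observe in your second paragraph. The acceleration only causes trouble for test functions that do not vanish on $\Ga^-$.

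Your choice $\what G\in(\mathcal C^2_c(\La))^2$ removes all reservoir and boundary terms, so the computation is purely bulk. The cost is that such functions only control the vague topology on $\La$. Passing to the weak topology on $\overline\La$ then rests entirely on a uniform strip estimate, which you should state explicitly rather than saying that a boundary layer has mass $O(N^{-1})$. Since $|\sigma(x)|\le 1$, for all $t$ and all $\eta\in(0,1)$ one has $|\pi^{N,i}_t|(\{u:|u_1|\ge 1-\eta\})\le 2\eta+2/N$.

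Take a cutoff $\chi_\eta\in\mathcal C^\infty_c(\La)$ with $0\le\chi_\eta\le 1$ and $\chi_\eta=1$ on $\{|u_1|\le 1-\eta\}$. The strip estimate then gives, for every $\what G\in(\mathcal C(\overline\La))^2$ and uniformly in $N$,
$\sup_{t\le T}|\langle\what\pi^N_t,\what G-\chi_\eta\what G\rangle|\le C\|\what G\|_\infty(\eta+N^{-1})$.
This transfers tightness from your compactly supported projections to all projections.

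Functions in $\mathcal C_{0,-}$ are not uniformly dense in $\mathcal C(\overline\La)$ either, so the paper's one-line argument tacitly needs the same approximation near $\Ga^-$. Your route has the merit of making this step explicit and of avoiding any reservoir computation.
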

Next step consists on showing that all limit points of the sequence  $(\mathbb Q^{N,\what b}_{\mu_N})_{N \geq 1}$ are concentrated  in  $\mathcal{D}([0,T],\what{\mathcal{M}}_0)$ and $\left(L^2([0,T],H^1(\L))\right)^2$, this can be shown using results of Appendix \ref{energyEstimates} and adapting similar arguments used in Chapter 5, Section 7 of \cite{kl} : 
  \begin{proposition}{\bf (Regularity of macroscopic trajectories)}
      Let $\mathbb Q^{*,\what b}$ be a limit point of the sequence of probability measures $(\mathbb Q^{N,\what b}_{\mu_N})_{N\geq 1}$, then 
      \[
        {\mathbb Q^{*,\what b}}\left(\big\{ \what \pi \in \mathcal{D}([0,T],\what{\mathcal{M}}_0) \,:\,  \mathcal{Q}(\what \pi) < + \infty\big\} \right) = 1,
      \]
      in particular from Lemma \ref{l-energy1},
      \[
{\mathbb Q^{*,\what b}}\left( \mathcal{D}([0,T],\what{\mathcal{M}}_0) \cap \left(L^2([0,T], H^1(\La))\right)^2  \right) = 1.
      \]
    \end{proposition}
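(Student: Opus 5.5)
The plan has two parts: first show that any limit point is concentrated on absolutely continuous trajectories with densities in $\bar{\mathbf I}$, then prove the energy bound $\mathcal Q(\what\pi)<\infty$ almost surely. The second statement then follows from Lemma \ref{l-energy1}.

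\emph{Absolute continuity.} For any $G\in\mathcal C(\overline\La)$ and every configuration,
\[
\big|\langle \pi^{N,i},G\rangle\big|\le N^{-d}\sum_{x}|G(x/N)|\to \int_\La |G|\,du .
\]
Moreover $\langle \pi^{N,2},G\rangle \ge |\langle \pi^{N,1},G\rangle|$ for $G\ge0$, and $\langle \pi^{N,2},G\rangle\le N^{-d}\sum_x G(x/N)$. These inequalities define closed sets in $\DM$, uniformly in $t$, so they pass to any limit $\mathbb Q^{*,\what b}$. Hence $\pi^i_t\ll du$ for all $t$, with densities satisfying $|m|\le\phi\le1$ a.e. This gives concentration on $\mathcal D([0,T],\wm)$.

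\emph{Energy estimate.} Following Chapter 5, Section 7 of \cite{kl}, I would prove that there is a constant $K_0$ such that for every $k$ and every finite family $\what G^1,\dots,\what G^\ell\in(\mathcal C_c^\infty([0,T]\times\La))^2$,
\[
\limsup_{N}\ \mathbb E_{\mu_N}^{N,\what b}\Big[\max_{1\le j\le\ell}\Big\{\int_0^T\langle\what\pi^N_t,\partial_{e_k}\what G^j_t\rangle dt-\frac12\int_0^T\langle \what G^j_t,\Sigma(\what\rho^{N,\varepsilon}_t)\what G^j_t\rangle dt\Big\}\Big]\le K_0 .
\]
Here $\what\rho^{N,\varepsilon}$ is a local average of the empirical densities over boxes of size $\varepsilon N$. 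The estimate is established as follows.

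First, compare the law of the process with that of the process started from (or run under) the product measure $\nu^N_{\what\theta}$. The Radon--Nikodym/entropy cost is $O(N^d)$, since $\what\theta$ is bounded away from $\partial\mathbf I$ by \eqref{eq:cC2}. The Dirichlet form of the bulk and boundary generators with respect to $\nu^N_{\what\theta}$ is bounded as in Appendix \ref{energyEstimates}: the measure is not invariant, but $\what\theta$ is smooth and fixed to $\what b$ at $\Gamma^-$. The right boundary is slow ($\mathfrak a_r>1$), and since the $\what G^j$ have compact support, boundary terms are irrelevant.

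Next, by Feynman--Kac and the variational formula for the largest eigenvalue, the expectation reduces to bounding
\[
\sup_f\Big\{\langle V,f\rangle_{\nu}-\tfrac{N^2}{N^d}\,\mathfrak D(\sqrt f)\Big\}, \qquad V=\sum_x\partial_{e_k}\what G\,(x/N)\cdot(\text{spins}) .
\]
Writing the discrete derivative as a difference $\sigma(x)^i-\sigma(x+e_k)^i$, a change of variables $\sigma\mapsto\sigma^{x,x+e_k}$ together with Cauchy--Schwarz bounds $V$ by the Dirichlet form plus a quadratic term
\[
\frac12\sum_x \what G\cdot \Big(\text{conditional covariance of }(\sigma(x)-\sigma(x+e_k),\,\sigma(x)^2-\sigma(x+e_k)^2)\Big)\what G .
\]
Finally, the one- and two-block replacement lemmas replace the local covariance by $\Sigma(\what\rho^{N,\varepsilon})$, using that $\Sigma$ is the covariance matrix of $(\sigma,\sigma^2)$ under the product measure with those densities.

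\emph{Passage to the limit.} Letting $N\to\infty$, then $\varepsilon\to0$, and using continuity of $\Sigma$ together with the weak convergence, gives $\mathbb E^{*}[\max_j\{\cdots\}]\le K_0$ with $\Sigma(\what\pi_t)$. Monotone convergence over a dense countable family then yields $\mathbb E^*[\mathcal Q_k(\what\pi)]\le K_0$, so $\mathcal Q(\what\pi)<\infty$ almost surely. Lemma \ref{l-energy1} then gives $\what\rho\in(L^2([0,T],H^1))^2$.

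The main obstacle is the replacement step. It must replace the microscopic quadratic form by $\Sigma(\what\rho^{N,\varepsilon})$ inside a nonlinear, signed, matrix-valued term, uniformly for $f$ with Dirichlet form $O(N^{d-2})$. This requires a two-conserved-quantity equivalence of ensembles. I would first try to bound the microscopic covariance from above by a continuous function of block averages.
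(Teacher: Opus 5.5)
Your proposal is correct and takes essentially the paper's route: the paper proves this proposition by adapting Chapter 5, Section 7 of \cite{kl} on top of the energy estimate of Lemma \ref{lemma:energyEstimate}. That estimate uses the same ingredients you list: comparison with $\nu^N_{\what\theta}$, Feynman--Kac, the change of variables $\sigma\mapsto\sigma^{x,x-e_k}$, and Cauchy--Schwarz against the Dirichlet form. The one difference is that the paper does the replacement before Feynman--Kac, at the path level, by applying Proposition \ref{replacementBulk} to the three cylinder functions $\mathfrak h_1,\mathfrak h_2,\mathfrak h_3$ of \eqref{definitionDesPsi}, whose product-measure means are the entries of $\Sigma$. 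Since the quadratic term produced by Cauchy--Schwarz is $\langle \what G,\tau_x\Sigma^{\mathfrak h}\what G\rangle = G_1^2\,\tau_x\mathfrak h_1+2G_1G_2\,\tau_x\mathfrak h_2+G_2^2\,\tau_x\mathfrak h_3$, which is linear in these functions with deterministic coefficients, the step you flag as the main obstacle reduces to three standard scalar replacement lemmas.
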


Now, as a consequence, of the expressions \eqref{martingale-0}, \eqref{doob-ineq}, using the replacement lemmata stated in propositions  \ref{replacementBulk}, \ref{replacementGauche}, \ref{replacementDroite}, and following steps by now standard (see for example Chapter 2 Section 4 of \cite{kl}, Section 6 of \cite{fgn1} or Proposition 6 of \cite{msv}),  we prove that all macroscopic trajectories are weak solutions of hydrodynamic equation \eqref{eq:1}.
 \begin{proposition}{\bf (Identification of the limit equation)}
      If $\mathbb Q^{*,\what b}$ is a limit point of the sequence of probability measures $(\mathbb Q^{N,\what b}_{\mu_N})_{N\geq 1}$, then 
      \[
        \mathbb Q^{*,\what b}\left[ \what \pi  \in \mathcal{D}([0,T],\what{\mathcal{M}}_0) \quad \middle|\quad \forall \what G \in \left( \mathcal C^{1,2}_{0,-}([0,T]\times \La)\right)^2, \quad \ell^{\what E}_{\what G}(\what \rho) = 0   \right] = 1.
      \]  
    \end{proposition}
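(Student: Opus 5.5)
Fix a limit point $\mathbb Q^{*,\what b}$ along a subsequence and a test function $\what G\in(\mathcal C^{1,2}_{0,-}([0,T]\times\overline\La))^2$. The plan is to pass to the limit in the martingale decomposition \eqref{martingale-0}, replacing each microscopic term by a function of the empirical measure. By \eqref{doob-ineq}, $\sup_t|M^{\what G,N}_t|\to0$ in probability. So, for every $\delta>0$,
\[
\lim_{N\to\infty}\PP^{N,\wb}_{\mu_N}\big(|\text{RHS of \eqref{martingale-0} without } M^{\what G,N}|>\delta\big)=0 .
\]
The linear terms $\langle\what\pi^N_T,\what G_T\rangle$, $\langle\what\pi^N_0,\what G_0\rangle$, $\int\langle\what\pi^N,\partial_s\what G\rangle$ and $\int\langle\what\pi^N,\Delta\what G\rangle$ are continuous functionals of the path, up to the usual care at the jump times in the Skorokhod topology. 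For $\langle\what\pi^N_T,\what G_T\rangle$ this is handled by noting that limit points are concentrated on continuous paths, since jumps are of size $N^{-d}$.

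The three non-linear or boundary terms need the replacement lemmas.

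\emph{Bulk term.} The term $\Sigma^{\mathfrak h}(\sigma_s)$ is a local function. Proposition \ref{replacementBulk} replaces it by $\Sigma(\what\rho^{\varepsilon N}_s(x))$, where $\what\rho^{\varepsilon N}$ is the average over a box of size $\varepsilon N$. That average equals $\langle\what\pi^N,\iota_\varepsilon(\cdot-x/N)\rangle$ up to $O(1/N)$, which is a continuous function of $\what\pi^N$.

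\emph{Left boundary term.} Here $\mathfrak a_l<1$, so the reservoir is fast. Proposition \ref{replacementGauche} replaces $\sigma_s(x)^i$ for $x\in\Ga_N^-$ by $b_i(x/N)$. This produces the $\int_{\Ga^-}b_i\partial_{e_1}G_i$ term. The vanishing of $\what G$ on $\Ga^-$ removes the $N^{2-\mathfrak a_l}$ boundary generator contribution, which would otherwise blow up.

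\emph{Right boundary term.} Here $\mathfrak a_r>1$, so the boundary generator contributes $O(N^{1-\mathfrak a_r})\to0$. Proposition \ref{replacementDroite} replaces $\sigma_s(x)^i$ on $\Ga_N^+$ by the empirical average over a box $\{N-\varepsilon N,\dots,N\}\times$(small transversal box).

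After these replacements, the event in question is, up to errors vanishing as $N\to\infty$ then $\varepsilon\to0$, of the form $\{|\ell^{\what E,\varepsilon}_{\what G}(\what\pi^N)|>\delta\}$. Here $\ell^{\what E,\varepsilon}_{\what G}$ is a continuous functional on $\mathcal D([0,T],\mathcal M^2)$: $\ell^{\what E}_{\what G}$ with $\Sigma(\what\rho)$ replaced by $\Sigma(\what\pi*\iota_\varepsilon)$, and with the trace on $\Ga^+$ replaced by $\varepsilon^{-1}\int_{1-\varepsilon}^1\what\rho$. By the Portmanteau theorem (open set), $\mathbb Q^{*,\what b}(|\ell^{\what E,\varepsilon}_{\what G}|>\delta)=0$ for every $\varepsilon$.

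It remains to let $\varepsilon\to0$ on the support of $\mathbb Q^{*,\what b}$. By the regularity proposition, almost every path is in $\mathcal D([0,T],\wm)\cap(L^2([0,T],H^1(\La)))^2$. For such paths:
\begin{itemize}
\item $\what\rho*\iota_\varepsilon\to\what\rho$ in $L^2$ with values in $\bar{\mathbf I}$. Since $\Sigma$ is a bounded polynomial on $\bar{\mathbf I}$, dominated convergence gives convergence of the bulk term.
\item The averages near $\Ga^+$ converge to $\Tr(\what\rho)$ in $L^2([0,T]\times\Ga^+)$, by the $H^1$ regularity: $|\varepsilon^{-1}\int_{1-\varepsilon}^1\rho\,du_1-\rho(1)|\le\int_{1-\varepsilon}^1|\partial_{e_1}\rho|$, and Cauchy--Schwarz bounds this by $\sqrt\varepsilon\|\partial_{e_1}\rho\|$.
\end{itemize}
Hence $\ell^{\what E}_{\what G}(\what\rho)=0$ a.s. for each fixed $\what G$. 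Taking a countable dense family in $\mathcal C^{1,2}_{0,-}$, and using continuity of $\ell^{\what E}_{\what G}$ in $\what G$ for fixed $\what\rho\in L^2(H^1)$, the identity holds for all $\what G$ simultaneously.

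I expect the main obstacle to be the right boundary replacement (Proposition \ref{replacementDroite}) combined with the $\varepsilon\to0$ trace limit: there the reservoir does not fix the value, so one genuinely needs the energy estimate to identify the trace. Justifying the bulk replacement for the nonlinear, signed mobility $\Sigma^{\mathfrak h}$ via the one- and two-block estimates is the other delicate point, but it is delegated to Proposition \ref{replacementBulk}.
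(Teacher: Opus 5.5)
Your proposal is correct and follows essentially the paper's route. The paper only invokes the martingale decomposition \eqref{martingale-0}, Doob's inequality \eqref{doob-ineq} and the three replacement lemmas, and defers the passage to the limit to the standard arguments of \cite{kl,fgn1,msv}; you spell those steps out, including the use of the energy estimate to identify the trace on $\Gamma^+$.

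One small correction to the Portmanteau step. The replacement errors vanish only in the iterated limit $N\to\infty$ then $\varepsilon\to0$, so you get $\lim_{\varepsilon\to 0}\mathbb Q^{*,\what b}\bigl(|\ell^{\what E,\varepsilon}_{\what G}|>\delta\bigr)=0$, not vanishing for each fixed $\varepsilon$. Combined with the almost sure convergence $\ell^{\what E,\varepsilon}_{\what G}\to\ell^{\what E}_{\what G}$ that you establish, this still gives the claim.
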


    Finally, the proof of Theorem \ref{th-hy} is achieved by showing the uniqueness of a solution to the hydrodynamic equation \eqref{eq:1} (cf. Proposition \ref{Uniqueness}).

    \section{Large deviations principle}
   
    The goal of this section is to present the main steps for obtaining large-deviations results. We start by proving some properties of the rate functional.
  
    \subsection{Rate function}
In this subsection, we analyze  some topological properties of the set of finite rate functional. 
Let us remember that the definitions and properties of classical and weighted Sobolev spaces are given in Section \ref{sobolev}. 
The following result provides an explicit representation for the rate function when it is finite.
\begin{proposition}\label{lemma:problemePerturbe}
  Take $\what \pi\in\mathcal{D}([0,T],\wm)$ such that  $\I{\what \pi}$ is finite, then $\what \pi$ is uniquely determined by a function $\what H^{\what \pi}\in \H$ such that $\what \pi$ is a weak solution of the following boundary value problem: 
  \begin{equation} \label{Eq:perturbeR+Dlemma}
    \left\{
    \begin{array}{rl}
      \displaystyle \partial_t \what\rho &= \displaystyle \grad \cdot \left[\grad \what\rho -\frac{1}{2} \Sigma(\what\rho) \left(\what E + 2 \grad \what H^{\what\pi} \right) \right],\\  
      \displaystyle \what\rho|_{\Ga^-} &= \displaystyle \what b ,  \\
      \displaystyle \partial_{e_1}\what\rho|_{\Ga^+} &= \displaystyle \frac{1}{2} \Sigma(\what\rho) \left(\what E_1 + 2 \partial_{e_1} \what H^{\what \pi} \right)  , \\[0.2cm]
      \what\rho_0 &= \what \gamma.
    \end{array}
  \right.
  \end{equation}
 In this case, 
\[
    \I{\what \pi} = \frac12 \norm{1,\Sigma(\what \pi)}{\what  H^{\what \pi}}^2,
  \]
 and 
  $\what \pi\in \mathcal{B}_{\what \gamma}^{\what b}$, where 
  \[
  \mathcal{B}_{\what\gamma}^{\what b} \coloneqq \left\{ \what \pi \in \mathcal{D}([0,T],\wm)  ~\middle|~ \what \pi_0=\what \gamma; \quad \what \pi_t\vert_{\Ga^-} = \what b\vert_{\Ga^-} , \quad \forall t\in(0,T] \right\} .
\]       

In this statement, a macroscopic trajectory $({\what \pi}_t)_{t\in[0,T]}$ is said to be a weak solution  of equation  \eqref{Eq:perturbeR+Dlemma}, meaning that  ${\what \pi}_t = {\what \rho}_t\, du$, for almost all $t\in[0,T]$, with $\what \rho \in (L^2([0,T], H^1(\La)))^2$ and for all $\what G\in ( \mathcal C^{1,2}_{0,-}([0,T]\times \overline\La))^2$,  $\ell_{\what G}^{\what E, \what H^{\what \pi}}(\what\pi\, | \, \what \gamma)=0$, where 
\begin{equation}\label{linearPerturbe}
  \begin{split}
  &\ell_{\what G}^{\what E, \what H^{\what \pi}}(\what\pi\, | \, \what \gamma) := \ell^{\what E}_{\what G}(\what \pi \, | \, \what \gamma) - \sum_{k=1}^d \int_0^T \scal{}{\partial_{e_k}\what G_t}{\Sigma(\what \rho_t) \partial_{e_k}\what H_t} \, dt, 
  \end{split}
  \end{equation}
  where $\ell^{\what E}$ has been defined in \eqref{weakAlternative}.
  \end{proposition}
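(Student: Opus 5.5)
The plan is the standard Riesz representation argument, as in \cite{bdgjl3,fgn3}, adapted to the matrix mobility. Fix $\what\pi\in\mathcal D([0,T],\wm)$ with $\I{\what\pi}<\infty$. By definition of $I_T^{\what E}$, we then have $\Q_{I_2}(\what\pi)<\infty$, so $\what\rho\in(L^2([0,T],H^1(\La)))^2$ and all boundary traces in \eqref{weakAlternative} make sense.

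On $(\mathcal C^{1,2}_{0,-}([0,T]\times\overline\La))^2$, introduce the semi-inner product
\[
\scall{\Sigma(\what\pi)}{\what G}{\what F}\coloneqq\sum_{k=1}^d\int_0^T\scal{}{\partial_{e_k}\what G_t}{\Sigma(\what\rho_t)\partial_{e_k}\what F_t}\,dt .
\]
Quotient by the kernel and complete to obtain the Hilbert space $\H$; I assume this is the weighted Sobolev space of Section~\ref{sobolev}, with norm $\norm{1,\Sigma(\what\pi)}{\cdot}$. Since $\ell^{\what E}_{\what G}$ is linear in $\what G$, replacing $\what G$ by $\lambda\what G$ in the definition of $\tilde I$ and optimizing over $\lambda\in\R$ gives
\[
\ell^{\what E}_{\what G}(\what\pi\,|\,\what\gamma)^2\le 2\,\I{\what\pi}\,\norm{1,\Sigma(\what\pi)}{\what G}^2 .
\]
Hence $\what G\mapsto\ell^{\what E}_{\what G}(\what\pi|\what\gamma)$ vanishes on the kernel and extends to a bounded linear functional on $\H$. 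The Riesz theorem then gives a unique $\what H^{\what\pi}\in\H$ with $\ell^{\what E}_{\what G}(\what\pi|\what\gamma)=\scall{\Sigma(\what\pi)}{\what G}{\what H^{\what\pi}}$ for all test $\what G$. This identity is exactly $\ell_{\what G}^{\what E,\what H^{\what\pi}}(\what\pi|\what\gamma)=0$, that is, \eqref{Eq:perturbeR+Dlemma} in weak form.

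For the rate, write
\[
J^{\what E}_{\what G}(\what\pi)=\scall{\Sigma(\what\pi)}{\what G}{\what H^{\what\pi}}-\tfrac12\norm{1,\Sigma(\what\pi)}{\what G}^2 .
\]
Its supremum over a dense set of test functions equals $\tfrac12\norm{1,\Sigma(\what\pi)}{\what H^{\what\pi}}^2$, approached along test functions converging to $\what H^{\what\pi}$ in $\H$.

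Uniqueness of $\what\pi$ given $\what H$ (the phrase ``uniquely determined'') should follow from uniqueness of weak solutions of the perturbed equation. I would adapt the argument of Proposition~\ref{Uniqueness} in the appendix, treating the extra drift $\Sigma(\what\rho)\nabla\what H$ like the field term $\Sigma(\what\rho)\what E$. Since $\Sigma$ is Lipschitz on $\bar{\mathbf I}$ and $\nabla\what H$ is only $L^2$-weighted, this needs the same $H^{-1}$-type energy estimate as in the appendix.

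Membership in $\mathcal B_{\what\gamma}^{\what b}$ comes from choosing test functions. For the initial condition, take $\what G_t(u)=g(t)\what F(u)$ with $\what F$ compactly supported and $g$ concentrated near $0$. Since $\ell^{\what E}_{\what G}$ is then controlled by $\norm{1,\Sigma}{\what G}\to0$ while the time-derivative term produces $\<\what\rho_0-\what\gamma,\what F\>$, we get $\what\pi_0=\what\gamma$; weak continuity of the path at $0$ is needed here. For the Dirichlet condition on $\Ga^-$, take $\what G$ vanishing on $\Ga^-$ but with $\partial_{e_1}\what G$ arbitrary there, and localize to a boundary layer of width $\varepsilon$. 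Integrating by parts in the bulk term $-\int\<\what\rho,\Delta\what G\>$ (legitimate since $\what\rho\in L^2H^1$) produces $\int\!\!\int_{\Ga^-}(\Tr\what\rho-\what b)\,\partial_{e_1}\what G\,dS\,dt$. The remaining terms are $O(\sqrt\varepsilon)$: bound them by Cauchy--Schwarz using $\Sigma\le 2I_2$ and $\Q_{I_2}(\what\pi)<\infty$. Hence $\Tr\what\rho_t=\what b$ on $\Ga^-$ for a.e.\ $t$, and for all $t\in(0,T]$ after choosing the right representative.

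The main obstacle I expect is this boundary and uniqueness part, not the Riesz step. For uniqueness, the degeneracy of $\Sigma$ on $\partial\bar{\mathbf I}$ means $\nabla\what H$ need not be in $L^2$ unweighted. So I would first prove uniqueness assuming $\Sigma(\what\rho)\nabla\what H\in L^2$, which holds because $\Sigma\le 2I_2$ implies $\|\Sigma^{1/2}\nabla \what H\|_{L^2}<\infty$ and thus $\Sigma\nabla\what H\in L^2$. I would then run the appendix's energy argument with this $L^2$ drift bound.
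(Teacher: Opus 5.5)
Your Riesz step matches the paper's proof and is sound. This includes the scaling bound $\ell^{\what E}_{\what G}(\what\pi\,|\,\what\gamma)^2\le 2\,\I{\what\pi}\,\norm{1,\Sigma(\what\pi)}{\what G}^2$, which is exactly what the paper's ``we can easily check that this functional is continuous'' leaves implicit. The same holds for the identification $\I{\what\pi}=\frac12\norm{1,\Sigma(\what\pi)}{\what H^{\what\pi}}^2$ by Cauchy--Schwarz plus approximation. Your boundary-layer and initial-time test-function argument for $\what\pi\in\mathcal B^{\what b}_{\what\gamma}$ is also correct; the paper omits that part and cites Lemma~3.5 of \cite{bdgjl3}.

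The paragraph on uniqueness, however, is both unnecessary and wrong as sketched. In the statement, and in the paper's proof, ``uniquely determined'' means uniqueness of $\what H^{\what\pi}$ in $\H$, which the Riesz theorem already gives you. Note that $\H=\mathbb H_{0,-}(\Sigma(\what\pi))$ is built from $\what\pi$ itself, so asking whether $\what H$ determines $\what\pi$ is not even well posed at this generality.

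Your proposed route would also fail. The Gr\"onwall step in Proposition~\ref{Uniqueness} bounds $\|[\Sigma(\what\rho_1)-\Sigma(\what\rho_2)](\what E+2\nabla\what H)\|_{L^2}$ by $C\|\what\rho_1-\what\rho_2\|_{L^2}$. That bound needs the Lipschitz property of $\Sigma$ together with $\nabla\what H\in L^\infty$, which holds there because $\what H\in(\mathcal C^{1,2}_{0,-})^2$.

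Knowing $\Sigma(\what\rho)\nabla\what H\in L^2$ along the single trajectory $\what\rho$ gives no control of $[\Sigma(\what\rho_1)-\Sigma(\what\rho_2)]\nabla\what H$ in terms of $\|\what\rho_1-\what\rho_2\|_{L^2}$. Worse, for a competitor $\what\rho_2$ the product $\Sigma(\what\rho_2)\nabla\what H$ is not even defined. As an element of the completion, $\nabla\what H$ is determined only where $\Sigma(\what\rho)$ is nondegenerate; for example, it is arbitrary on $\{\phi=0\}$, where $\Sigma(\what\rho)=0$.

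Drop that paragraph. Uniqueness for the perturbed problem is a separate statement, and the paper proves it only for smooth $\what H$.
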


\begin{proof}
  With the definitions introduced in Section \ref{sobolev}, for $\what \pi \in \mathcal{D}([0,T],\wm)$ and $\what G \in ( \mathcal C^{1,2}_{0,-}([0,T]\times \overline\La))^2$, we can simply write $J_{\what G}^{\what E}(\what \pi)$ as 
\[
  J_{\what G}^{\what E}(\what \pi) = \ell_{\what G}^{\what E}(\what\pi\, | \, \what \gamma) - \frac12 \norm{1,\Sigma(\what \pi)}{\what G}^2.
\]
Let us consider $\what\pi\in\DM$  such that $\I{\what\pi}< + \infty$. Then by definition of $\I{\cdot}$, there exists a constant $C>0$ such that for all $\what G \in ( \mathcal C^{1,2}_{0,-}([0,T]\times \overline\La))^2$,
\[
  \ell_{\what G}^{\what E}(\what\pi\, | \, \what \gamma) - \frac{1}{2} \norm{1,\Sigma(\what \pi)}{\what G}^2 \leq C,
\]
with $\ell_{\cdot}^{\what E}(\what\pi\, | \, \what \gamma) : ( \mathcal C^{1,2}_{0,-}([0,T]\times \overline\La))^2 \longrightarrow \R $  being a linear functional. We can easily check that this functional is continuous for the topology of $\H$ and by density of $( \mathcal C^{1,2}_{0,-}([0,T]\times \overline\La))^2$ in $\H$, the functional $\ell_{\cdot}^{\what E}(\what\pi\, | \, \what \gamma)$ can be continuously extended  to $\H$. 
Now from the Riesz representation Theorem, there exists a unique $\what H^{\what \pi}\in \H$ such that for all $\what G \in \H$, 
\[
  \ell_{\what G}^{\what E}(\what\pi\, | \, \what \gamma) = \scall{1,\Sigma(\what \pi)}{\what H^{\what \pi}}{\what G},
\]
where $\scall{1,\Sigma(\what \pi)}{\cdot}{\cdot}$ is defined in \eqref{norme1S}.
This implies that the densities $({\what \rho}_t)_{t\in [0,T]}$ of $({\what \pi}_t)_{t\in [0,T]}$ is a weak solution of equation \eqref{Eq:perturbeR+Dlemma}.

Next, let us consider this particular $\what H^{\what \pi}$, we want to prove that $\I{\what \pi} = \frac12 \norm{1,\Sigma(\what \pi)}{\what  H^{\what \pi}}^2$.
  By approximating $\what H^{\what \pi}$ in $( \mathcal C^{1,2}_{0,-}([0,T]\times \overline\La))^2$, we get the lower bound (see Lemma 10.5.3 of \cite{kl}). For the reverse inequality, we simply use the inequality \eqref{inegaliteCarreScalaire} with $S=I_2$.
Finally, the proof of $\what \pi\in \mathcal{B}_{\what \gamma}^{\what b}$ when $\I{\what \pi}< + \infty$ is similar to the one of Lemma 3.5 in \cite{bdgjl3} and is therefore omitted.
\end{proof} 

\subsubsection{An alternative rate function}

Denote $\La_T^+\coloneqq (0,T) \times \left(\La\cup \Ga^+\right)$. We define for  $\what \pi \in \mathcal D([0,T], \wm)$ and  $\what G \in ( \mathcal C_c^\infty(\La_T^+))^2$  the functional 
\begin{equation} \label{defJJ}
  \mathbb J_{\what G}^{\what E}(\what \pi) \coloneqq - \scall{}{\what \pi}{\partial_t \what G} +   \sum_{k=1}^d \scall{}{\partial_{e_k} \what \pi - \frac{1}{2} \Sigma(\what \pi)\what E_k}{\partial_{e_k} \what G} - \frac{1}{2} \norm{1,\Sigma(\what \pi)}{\what G}^2 ,
\end{equation}
and we define the alternative rate function   $\mathbb I^{\what E}_T(\cdot|\what \ga) :\mathcal D([0,T], \wm) \cap \{ \Q_{I_2}(\what \pi) < + \infty\} \longrightarrow [0,+\infty]$ by 
\[
  \mathbb I^{\what E}_T(\what \pi |\what \gamma) \coloneqq \sup_{\what G\in ( \mathcal C_c^\infty(\La^+_T))^2} \mathbb J_{\what G}^{\what E}(\what \pi) . 
\]

The proofs of the two next lemmata are similar to the proofs of Lemma 4.4. and Lemma 4.5. in \cite{blm09} or arguments before equation (4.6) in \cite{flm} and are therefore omitted.

\begin{lemma}\label{lemma:continuitePi}
  Let $A>0$ and $\what J\in ( \mathcal C^2_{0,-}(\La))^2$. For each $\vare > 0$, there exists $\delta>0$, such that 
  \[
    \sup_{\substack{\what \pi \in \DM,\\ \Itilde{\what \pi}\leq A}} \sup_{\abs{s-r}\leq  \delta } \abs{\scal{}{\what \pi_s}{\what J} - \scal{}{\what \pi_r}{\what J}} \leq \vare .
  \]
  In particular, if $\Itilde{\what \pi}< + \infty$, then $\what \pi \in  \mathcal C([0,T],\wm)$. 
\end{lemma}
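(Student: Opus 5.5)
The idea is to test $J^{\what E}_{\what G}$ against time-localized test functions. Fix $\what J\in(\mathcal C^2_{0,-}(\La))^2$, $A>0$, a path $\what\pi$ with $\Itilde{\what\pi}\le A$, and times $0\le r<s\le T$. We cannot use the indicator $\1_{[r,s]}(t)\what J(u)$ directly, since it is not $\mathcal C^1$ in time. Instead, for small $\eta>0$ let $\psi_\eta:[0,T]\to[0,1]$ be a smooth function equal to $1$ on $[r+\eta,s-\eta]$, vanishing outside $[r,s]$, and with $|\psi_\eta'|\le 2/\eta$. Take $\what G_t(u)=\la\,\psi_\eta(t)\what J(u)$ with $\la\in\R$; it lies in $(\mathcal C^{1,2}_{0,-}([0,T]\times\overline\La))^2$ because $\what J$ vanishes on $\Ga^-$.

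Since $\what G_0=\what G_T=0$, expanding \eqref{weakAlternative} gives
\[
\ell^{\what E}_{\what G}(\what\pi|\what\gamma)=-\la\int_0^T\psi_\eta'(t)\scal{}{\what\pi_t}{\what J}\,dt-\la\int_0^T\psi_\eta(t)\,R_t\,dt,
\]
where $R_t$ collects four terms: the Laplacian term, the two boundary terms, and the drift term $\frac12\sum_k\scal{}{\partial_{e_k}\what J}{\Sigma(\what\pi_t)\what E_k}$. The $\Ga^-$ boundary term is bounded by $\|\what b\|_\infty\|\partial_{e_1}\what J\|_\infty$. The Laplacian and drift terms are bounded by a constant $C(\what J,\what E)$, because $\what\pi_t\in\wm$ has bounded density and $\Sigma$ is bounded on $\bar{\mathbf I}$. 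The $\Ga^+$ boundary term involves the trace of $\what\rho_t$, which is the delicate point. Here $\Itilde{\cdot}\le A$ is not used alone: since $I_T^{\what E}=\tilde I$ only on paths with $\Q_{I_2}<\infty$, the bound is applied within $\mathcal D([0,T],\wm)$. Alternatively, integrate by parts to rewrite $\int_{\Ga^+}\rho_i\,\partial_{e_1}J_i\,dS=\scal{}{\what\rho_t}{\partial_{e_1}^2\what J}+\scal{}{\partial_{e_1}\what\rho_t}{\partial_{e_1}\what J}$, using that $\partial_{e_1}J_i$ can be taken to vanish at $\Ga^-$ after adjusting $\what J$ by a cutoff. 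Then apply Cauchy--Schwarz in time, which turns the energy term into $\sqrt{s-r}\,\|\nabla\what\rho\|_{L^2}$. I expect this step to be the main obstacle, and I would handle it as in \cite{blm09}, bounding the boundary contribution by $C(s-r)^{1/2}(1+\Q_{I_2}(\what\pi))^{1/2}$ or simply by $C(s-r)$ when the trace is bounded, using $|\rho_i|\le1$ and the fact that traces of functions with values in $\bar{\mathbf I}$ stay in $\bar{\mathbf I}$.

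The quadratic term is $\frac12\norm{1,\Sigma(\what\pi)}{\what G}^2\le \la^2 C'(\what J)(s-r)$, again by boundedness of $\Sigma$. Now let $\eta\to0$. By right-continuity of $t\mapsto\scal{}{\what\pi_t}{\what J}$, the term $-\int\psi_\eta'\scal{}{\what\pi_t}{\what J}\,dt$ converges to $\scal{}{\what\pi_{s^-}}{\what J}-\scal{}{\what\pi_{r}}{\what J}$ (up to left limits, which we handle by letting $s$ vary). This yields
\[
\la\big(\scal{}{\what\pi_s}{\what J}-\scal{}{\what\pi_r}{\what J}\big)\le A+|\la|\,C(s-r)^{1/2}+\la^2C'(s-r).
\]
Choosing $\la=\pm(s-r)^{-1/2}$ gives $|\scal{}{\what\pi_s}{\what J}-\scal{}{\what\pi_r}{\what J}|\le (A+C+C')(s-r)^{1/2}$, which is uniform over the level set. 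Taking $\delta$ small then gives the first claim.

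For continuity, the estimate shows that $t\mapsto\scal{}{\what\pi_t}{\what J}$ has no jumps for every $\what J\in(\mathcal C^2_{0,-}(\La))^2$. This family separates finite measures on $\La$: it contains $(\mathcal C^2_c(\La))^2$, and the total masses are bounded by $2$. Since the weak topology on bounded sets of $\mathcal M$ is metrizable by a countable subfamily of such $\what J$, it follows that $\what\pi\in\mathcal C([0,T],\wm)$.
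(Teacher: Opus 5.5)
Your proposal is correct and is essentially the standard argument the paper defers to (Lemma 4.4 of \cite{blm09}): test $J^{\what E}_{\what G}\le A$ with $\what G_t(u)=\la\,\psi_\eta(t)\what J(u)$, let $\eta\to 0$ using right-continuity, and optimize over $\la$. Of your two treatments of the new $\Ga^+$ trace term, keep the one based on $|\Tr(\rho_{i,t})|\le 1$. It is legitimate whenever $\what\rho_t\in H^1(\La)$, which the definition of $\ell^{\what E}_{\what G}$ already presupposes, and it gives a bound $C(s-r)$ uniform over $\{\Itilde{\cdot}\le A\}$, whereas the Cauchy--Schwarz route would need an a priori bound on $\Q_{I_2}$ in terms of $A$, which you do not provide.
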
 

\begin{lemma}\label{lemma:Itilde}
  If $\what \pi \in \mathcal{B}^{\what b}_{\what \ga}$ and $\Q_{I_2}(\what \pi)< + \infty$, then 
  \[
    \I{\what \pi} = \mathbb{I}^{\what E}_T(\what \pi|\what \ga). 
  \]
\end{lemma}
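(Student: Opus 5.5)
The plan is to prove the two inequalities $\mathbb I^{\what E}_T(\what \pi|\what\gamma)\le \I{\what\pi}$ and $\I{\what\pi}\le \mathbb I^{\what E}_T(\what \pi|\what\gamma)$ separately. The bridge between $J^{\what E}_{\what G}$ and $\mathbb J^{\what E}_{\what G}$ is an integration by parts in space. Since $\Q_{I_2}(\what\pi)<\infty$, the density $\what\rho$ lies in $(L^2([0,T],H^1(\La)))^2$, so for $\what G\in(\mathcal C^{1,2}_{0,-}([0,T]\times\overline\La))^2$ we can rewrite $-\int_0^T\langle\what\rho_t,\Delta\what G_t\rangle dt$ as $\sum_k\scall{}{\partial_{e_k}\what\rho}{\partial_{e_k}\what G}$ plus boundary terms. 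On $\Ga^-$ the boundary term is $\sum_i\int_0^T\int_{\Ga^-} b_i\,\partial_{e_1}G_i\,dS\,dt$, because $\what\pi\in\mathcal B^{\what b}_{\what\gamma}$ gives $\Tr(\what\rho_t)=\what b$ on $\Ga^-$. It cancels exactly the corresponding term in \eqref{weakAlternative}. On $\Ga^+$ the boundary term is $-\sum_i\int\int_{\Ga^+}\rho_i\,\partial_{e_1}G_i$, which cancels the $\Ga^+$ term of \eqref{weakAlternative}. Hence for such $\what G$,
\[
J^{\what E}_{\what G}(\what\pi)=\langle\what\rho_T,\what G_T\rangle-\langle\what\gamma,\what G_0\rangle-\scall{}{\what\rho}{\partial_t\what G}+\sum_{k}\scall{}{\partial_{e_k}\what\rho-\tfrac12\Sigma(\what\rho)\what E_k}{\partial_{e_k}\what G}-\tfrac12\norm{1,\Sigma(\what\pi)}{\what G}^2 .
\]

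\emph{Inequality $\mathbb I\le\tilde I$.} Take $\what G\in(\mathcal C_c^\infty(\La_T^+))^2$. It vanishes near $t=0$, near $t=T$ and near $\Ga^-$, but not necessarily on $\Ga^+$. Such $\what G$ belongs to $(\mathcal C^{1,2}_{0,-})^2$ with $\what G_0=\what G_T=0$. The identity above then gives $J^{\what E}_{\what G}(\what\pi)=\mathbb J^{\what E}_{\what G}(\what\pi)$, and taking the supremum yields the inequality.

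\emph{Inequality $\tilde I\le\mathbb I$.} Fix $\what G\in(\mathcal C^{1,2}_{0,-}([0,T]\times\overline\La))^2$. We approximate it by $\what G^{\vare}=\what G\,\alpha_\vare(t)\beta_\vare(u_1)$, with two cutoffs. The time cutoff $\alpha_\vare$ equals $1$ on $[2\vare,T-2\vare]$ and vanishes near $0$ and $T$. The space cutoff $\beta_\vare$ equals $1$ for $u_1\ge-1+2\vare$ and vanishes near $u_1=-1$; a mollification gives $C^\infty$ regularity if needed. Then $\mathbb J^{\what E}_{\what G^\vare}(\what\pi)$ is compared with $J^{\what E}_{\what G}(\what\pi)$ term by term.

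\textbf{Time layers.} The term $-\scall{}{\what\rho}{\alpha_\vare'\what G}$ converges to $\langle\what\rho_T,\what G_T\rangle-\langle\what\gamma,\what G_0\rangle$. This uses the continuity $\what\pi\in\mathcal C([0,T],\wm)$ from Lemma~\ref{lemma:continuitePi}, together with $\what\pi_0=\what\gamma$. Lemma~\ref{lemma:continuitePi} applies only when $\tilde I<\infty$; otherwise we argue by contradiction, or treat the case $\mathbb I=\infty$ trivially.

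\textbf{Left boundary layer.} The extra term $\scall{}{\partial_{e_1}\what\rho}{\what G\,\beta_\vare'}$ has $\beta_\vare'\sim\vare^{-1}$ on a strip of width $\vare$. Because $\what G$ vanishes on $\Ga^-$, we have $|\what G|\le C\vare$ there, so the term is bounded by $C\int_0^T\int_{\{u_1<-1+2\vare\}}|\partial_{e_1}\what\rho|$, which tends to $0$ since $\partial_{e_1}\what\rho\in L^2$. The analogous term with $\Sigma(\what\rho)\what E_1$ vanishes the same way, since $\Sigma$ is bounded on $\bar{\mathbf I}$.

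\textbf{Quadratic term.} $\norm{1,\Sigma(\what\pi)}{\what G^\vare}^2\to\norm{1,\Sigma(\what\pi)}{\what G}^2$ by dominated convergence. The gradient pieces containing $\beta_\vare'$ are $O(1)$ on sets of vanishing measure, again because $\what G=O(\vare)$ near $\Ga^-$.

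I expect the main obstacle to be the left boundary: both in justifying the integration by parts with the trace identity $\Tr\what\rho=\what b$, and in controlling the cutoff terms near $\Ga^-$. The remaining steps are routine dominated-convergence arguments.
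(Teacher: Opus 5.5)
Your route is the standard one the paper points to (it omits the proof and refers to \cite{blm09}). The integration-by-parts identity for $J^{\what E}_{\what G}(\what\pi)$ using $\Tr\what\rho=\what b$ on $\Ga^-$ is correct, and so is the inclusion argument giving $\mathbb I^{\what E}_T(\what\pi|\what\gamma)\le\Itilde{\what\pi}$. Your estimates for the $\Ga^-$ layer and for the quadratic term are also correct.

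\textbf{The gap.} It lies in the time layers of the reverse inequality. Let $\what\pi_{T-}$ denote the left limit at $T$. With your cutoff, right-continuity at $0$ and existence of $\what\pi_{T-}$ (both automatic for c\`adl\`ag paths) give
\[
\lim_{\vare\to0}\mathbb J^{\what E}_{\what G^\vare}(\what\pi)=J^{\what E}_{\what G}(\what\pi)-\langle\what\pi_T-\what\pi_{T-},\what G_T\rangle .
\]
So what is needed is exactly $\what\pi_{T-}=\what\pi_T$. Lemma~\ref{lemma:continuitePi} gives this only when $\Itilde{\what\pi}<\infty$. The case $\mathbb I^{\what E}_T(\what\pi|\what\gamma)<\infty=\Itilde{\what\pi}$ is precisely the one the inequality $\tilde I\le\mathbb I$ must exclude. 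It is also the case used in Theorem~\ref{thm:sciI}, where $\Itilde{\what\pi}$ is not known a priori to be finite. ``Argue by contradiction'' has nothing to contradict here.

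\textbf{This case cannot be excluded under the stated hypotheses.} Let $\what\pi_t$ be the solution of \eqref{eq:1} for $t<T$. Let $\what\pi_T$ be a different profile in $\bar{\mathbf I}$ with trace $\what b$ on $\Ga^-$ (e.g.\ $\what\theta$, if it differs from the solution at time $T$).
\begin{itemize}
\item This path lies in $\mathcal B^{\what b}_{\what\gamma}$ and has $\Q_{I_2}<\infty$.
\item Since $\mathbb J$ only involves time integrals, $\mathbb I^{\what E}_T(\what\pi|\what\gamma)=0$.
\item Take $\what G_t=A\chi(t)\what F$ with $\what F\in(\mathcal C^2_{0,-}(\overline\La))^2$, $\langle\what\pi_T-\what\pi_{T-},\what F\rangle=1$, and $0\le\chi\le1$, $\chi(T)=1$, $\chi$ supported in $[T-\eta,T]$. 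The hydrodynamic solution satisfies $\ell^{\what E}_{\what G}=0$, so $J^{\what E}_{\what G}(\what\pi)\ge A-CA^2\eta$.
\end{itemize}
Choosing $\eta=1/(2CA)$ and letting $A\to\infty$ gives $\Itilde{\what\pi}=\infty$.

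\textbf{The fix.} The lemma needs continuity of $\what\pi$ at $T$ as an additional hypothesis; equivalently, read $\mathcal B^{\what b}_{\what\gamma}$ as a set of continuous paths. This is harmless for the paper's applications:
\begin{itemize}
\item In Lemmas~\ref{lemma:deriveetemporelle} and~\ref{lemma:majorerNormeParI}, $\Itilde{\what\pi}<\infty$, so Lemma~\ref{lemma:continuitePi} applies.
\item In Theorem~\ref{thm:sciI}, the limit $\what\pi$ is a Skorokhod limit of paths $\what\pi^n$ sharing the uniform modulus of continuity from Lemma~\ref{lemma:continuitePi}, hence it is continuous.
\end{itemize}
Once continuity at $T$ is assumed, your argument is complete. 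You then no longer need Lemma~\ref{lemma:continuitePi} at all, since only right-continuity at $0$ and left-continuity at $T$ enter.
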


\subsubsection{\texorpdfstring{$I^{\what E}_T(\cdot|\what\ga)$}{I(.|gamma)} lower semi-continuity}
    
     Let us define the linear functional $\partial_t \what \pi \in \left(L^2([0,T],\mathcal H_{0,-}^*(\La))\right)^2$ whose value at $\what G \in ( \mathcal C^\infty_c(\La_T^+))^2$, denoted by $\scall{}{\partial_t \what \pi}{\what G}$ is $- \scall{}{\what \pi}{\partial_t \what G}$.

    Remind from Subsection \ref{sobolev} the definition of $\mathcal H_{0,-}^*(\La)$, which is the dual space of $\mathcal{H}_{0,-}(\Lambda)$; from the density of $\left( \mathcal C_c^\infty(\La_T^+)\right)^2 $ in $\mathcal H_{0,-}(\La)$,  the norm $\norm{1,*}{\cdot}$ can be defined as in \eqref{norme1S} with $S=I_2$,
    \[
      \norm{1,*}{\what L}^2 \coloneqq 2 \sup_{\what G\in ( \mathcal C^{\infty}_{c}(\La_T^+))^2}\left\{ \scall{}{\what L}{\what G}  - \frac{1}{2} \norm{1,I_2}{\what G}^2 \right\}.
    \]
    \begin{lemma}\label{lemma:deriveetemporelle}
      For $\what \pi\in\mathcal D([0,T],\wm)$, we have 
      \[
        \norm{1,*}{\partial_t \what \pi}^2 \leq   C_\Sigma \I{\what \pi} + 4 \norm{\left(L^2([0,T],H^1(\La))\right)^2}{\what \pi}^2 + T C_{\what E,\Sigma},
      \]
      where $C_{\what E,\Sigma}$ is a constant depending on $\what E$ and $\Sigma$.
    \end{lemma}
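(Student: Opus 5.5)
We bound $\norm{1,*}{\partial_t \what\pi}^2$ through its variational formula: it suffices to show that for every $\what G\in(\mathcal C^\infty_c(\La_T^+))^2$,
\[
2\scall{}{\partial_t\what\pi}{\what G} - \norm{1,I_2}{\what G}^2 \le C_\Sigma \I{\what\pi} + 4\norm{}{\what\pi}^2_{(L^2([0,T],H^1))^2} + T C_{\what E,\Sigma}.
\]
We may assume $\I{\what\pi}<\infty$, since otherwise there is nothing to prove. Then Proposition \ref{lemma:problemePerturbe} gives $\what\pi\in\mathcal B^{\what b}_{\what\gamma}$ and $\Q_{I_2}(\what\pi)<\infty$ (in the convention $\I{\cdot}=+\infty$ otherwise), so Lemma \ref{lemma:Itilde} identifies $\I{\what\pi}=\mathbb I^{\what E}_T(\what\pi|\what\gamma)$. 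The advantage of $\mathbb J$ is that test functions $\what G\in(\mathcal C_c^\infty(\La_T^+))^2$ vanish at $t=0,T$ and on $\Ga^-$. Hence no initial datum or boundary terms appear, and $-\scall{}{\what\pi}{\partial_t\what G}=\scall{}{\partial_t\what\pi}{\what G}$ directly.

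Fix such a $\what G$ and a scaling parameter $a>0$, and apply the definition of $\mathbb I$ to $a\what G$:
\[
a\scall{}{\partial_t\what\pi}{\what G} \le \mathbb I^{\what E}_T(\what\pi|\what\gamma) - a\sum_k\scall{}{\partial_{e_k}\what\pi-\tfrac12\Sigma(\what\pi)\what E_k}{\partial_{e_k}\what G} + \tfrac{a^2}{2}\norm{1,\Sigma(\what\pi)}{\what G}^2 .
\]
We control the right-hand side in three pieces.

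\begin{itemize}
\item Since $(m,\phi)\in\bar{\mb I}$, the entries of $\Sigma$ are bounded, so $\Sigma(\what\pi)\le C_0 I_2$ in the order \eqref{orderMatrices}. This gives $\norm{1,\Sigma(\what\pi)}{\what G}^2\le C_0\norm{1,I_2}{\what G}^2$ (in $\norm{1,\cdot}{\cdot}$ only the gradients enter).
\item By Cauchy--Schwarz and Young, $a\sum_k\scall{}{\partial_{e_k}\what\pi}{\partial_{e_k}\what G}\le \frac{a^2}{4}\norm{1,I_2}{\what G}^2+\sum_k\norm{}{\partial_{e_k}\what\pi}^2$.
\item Similarly, $\frac a2\sum_k\scall{}{\Sigma(\what\pi)\what E_k}{\partial_{e_k}\what G}\le\frac{a^2}{4}\norm{1,I_2}{\what G}^2+ \frac{1}{4}\int_0^T\sum_k\|\Sigma\what E_k\|^2_{L^2}\,dt$, and the last term is at most $T C'_{\what E,\Sigma}$ because $\Sigma$ is bounded and $|\La|<\infty$.
\end{itemize}

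Collecting these bounds,
\[
a\scall{}{\partial_t\what\pi}{\what G}\le \I{\what\pi}+\norm{}{\what\pi}^2_{L^2H^1}+TC'+a^2\Big(\tfrac12+\tfrac{C_0}{2}\Big)\norm{1,I_2}{\what G}^2 .
\]
Now choose $a$ so that the quadratic term matches the variational formula for $\norm{1,*}{\cdot}$. Setting $\kappa:=1+C_0$, the choice $a=1/\kappa$ and multiplication by $2\kappa$ yield
\[
2\scall{}{\partial_t\what\pi}{\what G}-\norm{1,I_2}{\what G}^2\le 2\kappa\big(\I{\what\pi}+\norm{}{\what\pi}^2+TC'\big).
\]

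With these particular Young splittings the factor in front of $\norm{}{\what\pi}^2$ is $2\kappa$ rather than the stated $4$. To reach exactly $4$, I would place a weight $\epsilon$ in the Young inequality for the $\partial_{e_k}\what\pi$ term, so that this term contributes exactly $2$ after multiplication, and absorb the resulting larger constants into $C_\Sigma$ and $C_{\what E,\Sigma}$. Taking the supremum over $\what G$ then gives the claim.

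The main obstacle is not the computation but the justification that $\mathbb I$ may legitimately replace $I$. This uses the membership $\what\pi\in\mathcal B^{\what b}_{\what\gamma}$ from Proposition \ref{lemma:problemePerturbe}, which is needed to invoke Lemma \ref{lemma:Itilde}, together with the density of $(\mathcal C^\infty_c(\La_T^+))^2$ in the space defining $\norm{1,*}{\cdot}$.
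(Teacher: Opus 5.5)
Your proposal is correct and is essentially the paper's argument. The paper also starts from the variational formula for $\norm{1,*}{\partial_t\what\pi}$ and rewrites $\scall{}{\partial_t\what\pi}{\what G}$ through the weak formulation (via $\ell^{\what E}_{\what G}$ and two integrations by parts, rather than directly via $\mathbb J^{\what E}_{\what G}$). It then splits off the gradient and drift terms by Young's inequality and bounds the remaining supremum by a multiple of $\I{\what\pi}$, rescaling $\what G$ and using $\Sigma\le C_\Sigma I_2$; you rescale before splitting instead of after. Your weighted-Young adjustment of the constant in front of $\norm{}{\what\pi}^2$ also works, since any coefficient at most $4$ suffices.
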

    \begin{proof}
      Recalling the definition, 
      \[
        \norm{1,*}{\partial_t \what \pi}^2 = 2\sup_{\what G\in  ( \mathcal C^{\infty}_{c}(\La_T^+))^2}\left\{ \scall{}{\partial_t \what \pi}{\what G} - \frac{1}{2} \scall{1,I_2}{\what G}{\what G} \right\} .
      \]
      Without loss of generality, we can assume that $\I{\what \pi}$ and $\Q_{I_2}(\what \pi)$ are finite. Let $\what G\in  ( \mathcal C^{\infty}_{c}(\La_T^+))^2$, from two integration by parts, and after adding and substracting the corresponding terms, we have  
      \begin{align*}
        \scall{}{\partial_t \what \pi}{\what G} = \ell_{\what G}^{\what E}(\what \pi\, | \, \what \gamma) -   \sum_{k=1}^d \int_0^T \scal{}{\partial_{e_k} \what \pi_s}{\partial_{e_k} \what G_s} \, ds  + \frac12  \sum_{k=1}^d \int_0^T \scal{}{\Sigma(\what \pi_s) \what E_k}{\partial_{e_k} \what G_s} \, ds ,
      \end{align*}
      because $\I{\what \pi}$ is finite, thus $\what \pi\in \mathcal{B}_{\what \gamma}^{\what b}$ from Proposition \ref{lemma:problemePerturbe}. 
      Using the inequality \eqref{inegaliteCarreScalaire} with $S=4I_2$ (resp. $S=2I_2$) to bound the second (resp. third) term of the right hand side, we directly get
  \begin{align*}
    \frac12 \norm{1,*}{\partial_t \what \pi}^2 \leq & \sup_{\what G\in  ( \mathcal C^{\infty}_{c}(\La_T^+))^2}  \left\{ \ell_{\what G}^{\what E}(\what \pi\, | \, \what \gamma)- \frac14  \sum_{k=1}^d \int_0^T \scal{}{\partial_{e_k} \what G_s}{\partial_{e_k} \what G_s} \, ds \right\} \\
    &+ 2 \norm{\left(L^2([0,T],H^1(\La))\right)^2}{\what \pi}^2 + \frac{1}{2}  \sum_{k=1}^d \int_0^T \scal{}{\Sigma(\what \pi_s)\what E_k}{\Sigma(\what \pi_s)\what E_k} \, ds,
  \end{align*}
  where the last sum is bounded by $T C_{\what E,\Sigma}$ where $C_{\what E,\Sigma}$ a constant depending only on $\what E$ and $\Sigma$. There exists a constant $C_\Sigma$ such that $\Sigma \leq C_\Sigma I_2$, thus taking $\what G\rightarrow 2 C_\Sigma \what G$, we have from Lemma \ref{lemma:Itilde}
  \begin{align*}
    & \sup_{\what G\in  ( \mathcal C^{\infty}_{c}(\La_T^+))^2}  \left\{ \ell_{\what G}^{\what E}(\what \pi\, | \, \what \gamma)- \frac14  \sum_{k=1}^d \int_0^T \scal{}{\partial_{e_k} \what G_s}{\partial_{e_k} \what G_s} \, ds \right\} \\
    \leq & 2 C_\Sigma \sup_{\what G\in  ( \mathcal C^{\infty}_{c}(\La_T^+))^2}  \left\{ \ell_{\what G}^{\what E}(\what \pi\, | \, \what \gamma)-  \frac12  \sum_{k=1}^d \int_0^T \scal{}{\partial_{e_k} \what G_s}{\Sigma(\what \pi_s) \partial_{e_k} \what G_s} \, ds \right\} 
    =  2 C_\Sigma \I{\what \pi}.
  \end{align*}
  Summing up all together, we get the desired result.  \qedhere 
\end{proof}

    \begin{lemma}\label{lemma:majorerNormeParI}
      There exists a constant $C_1$ depending only on $\what E, \La, T$, such that 
      \[
         \norm{\left(L^2([0,T],H^1(\La))\right)^2}{\what\pi}^2 \leq C_1 \left( \I{\what \pi} + 1 \right),
      \]
      for all $\what \pi \in \mathcal D([0,T],\wm)$. 
    \end{lemma}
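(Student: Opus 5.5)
Without loss of generality I assume $\I{\what\pi}<+\infty$, since otherwise there is nothing to prove. By the definition of $I_T^{\what E}$ this gives $\Q_{I_2}(\what\pi)<\infty$, so $\what\rho\in(L^2([0,T],H^1(\La)))^2$. By Proposition~\ref{lemma:problemePerturbe}, $\what\pi\in\mathcal B^{\what b}_{\what\gamma}$, and $\what\rho$ is a weak solution of \eqref{Eq:perturbeR+Dlemma} with some $\what H=\what H^{\what\pi}$ satisfying $\frac12\norm{1,\Sigma(\what\pi)}{\what H}^2=\I{\what\pi}$.

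The plan is a standard energy estimate: test the perturbed equation against $\what\rho$ itself, or rather against $\what\rho-\what\theta$, where $\what\theta$ is the regular extension of the boundary datum $\what b$. The difference $\what\rho-\what\theta$ vanishes on $\Ga^-$ and is therefore an admissible test function in $\H$. Since it is not $\mathcal C^{1,2}$ in time, I will justify its use through the extended linear functional $\ell^{\what E}_\cdot(\what\pi|\what\gamma)$ on $\H$ from the Riesz argument, together with a time mollification, or equivalently through Lemma~\ref{lemma:deriveetemporelle} and the Lions–Magenes identity $\int_0^t\scall{}{\partial_s\what\rho}{\what\rho-\what\theta}\,ds=\frac12\|\what\rho_t-\what\theta\|^2-\frac12\|\what\gamma-\what\theta\|^2$.

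Carrying this out, I obtain, with $\what G=\what\rho-\what\theta$,
\begin{equation*}
\frac12\|\what\rho_T-\what\theta\|^2-\frac12\|\what\gamma-\what\theta\|^2+\sum_k\scall{}{\partial_{e_k}\what\rho}{\partial_{e_k}\what G}
=\frac12\sum_k\scall{}{\Sigma(\what\rho)\what E_k}{\partial_{e_k}\what G}+\sum_k\scall{}{\Sigma(\what\rho)\partial_{e_k}\what H}{\partial_{e_k}\what G}.
\end{equation*}
On the right boundary $\Ga^+$ the Neumann terms cancel with the flux terms in the weak formulation, so no boundary integral survives; this cancellation is exactly what the structure of \eqref{weakAlternative} encodes. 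I then bound the pieces as follows.
\begin{itemize}
\item The left side is at least $\|\nabla\what\rho\|^2-\scall{}{\nabla\what\rho}{\nabla\what\theta}$, minus a bounded constant, since $\what\rho,\what\gamma,\what\theta$ take values in $\bar{\mathbf I}$.
\item The $\what E$ term is controlled using the boundedness of $\Sigma$ on $\bar{\mathbf I}$ and Young's inequality, giving $\frac14\|\nabla\what G\|^2+C_{\what E}T$.
\item For the $\what H$ term, I use Cauchy–Schwarz in the $\Sigma$-weighted inner product together with $\Sigma\le C_\Sigma I_2$:
\begin{equation*}
\big|\scall{}{\Sigma\nabla\what H}{\nabla\what G}\big|\le \norm{1,\Sigma(\what\pi)}{\what H}\,\sqrt{C_\Sigma}\,\|\nabla\what G\|\le 4C_\Sigma\I{\what\pi}+\tfrac14\|\nabla\what G\|^2 .
\end{equation*}
\end{itemize}
Absorbing the $\frac14\|\nabla\what G\|^2$ terms, using $\|\nabla\what G\|\le\|\nabla\what\rho\|+\|\nabla\what\theta\|$ and the fact that $\what\theta\in\mathcal C^2$, yields $\int_0^T\|\nabla\what\rho_t\|^2dt\le C(\I{\what\pi}+1)$. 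The $L^2$ part of the $H^1$ norm is trivially at most $2|\La|T$ because the densities lie in $\bar{\mathbf I}$, which gives the claimed bound with $C_1$ depending on $\what E,\La,T$ (and on $\what\theta$).

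The main obstacle is rigorously justifying the choice $\what G=\what\rho-\what\theta$ as a test function, in particular obtaining the time-derivative identity with only $\partial_t\what\rho\in L^2([0,T],\mathcal H^*_{0,-})$. Lemma~\ref{lemma:deriveetemporelle} provides exactly this regularity, with the bound in terms of $\I{\what\pi}$ and the $H^1$ norm, and $\what\rho-\what\theta\in L^2(\mathcal H_{0,-})$. I would therefore invoke the standard Lions–Magenes lemma for the Gelfand triple $\mathcal H_{0,-}\subset L^2\subset\mathcal H^*_{0,-}$ to pass the approximation to the limit. The circularity is harmless: the right side of Lemma~\ref{lemma:deriveetemporelle} is finite because $\Q_{I_2}(\what\pi)<\infty$, and that bound is used only qualitatively.
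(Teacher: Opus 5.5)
Your proposal is correct and follows essentially the paper's route. Both arguments are an energy estimate obtained by testing against $\what\pi-\what\theta\in(\W^{1,2}_{[0,T],0,-})^2$, with the time-derivative term handled by the Lions--Magenes identity (Lemma~\ref{lemma:inegrationByPartsRoubicek}) using the regularity from Lemma~\ref{lemma:deriveetemporelle}, followed by $\Sigma\le C_\Sigma I_2$ and Young's inequality to absorb the gradient terms. The only cosmetic difference is that you use the Riesz representative $\what H^{\what\pi}$ and Cauchy--Schwarz in the $\Sigma$-weighted norm, whereas the paper uses the variational bound $\mathbb J^{\what E}_{\al\what G}(\what\pi)\le\I{\what\pi}$ (Lemma~\ref{lemma:Itilde} extended by density) with a free parameter $\al$; these are equivalent since $\norm{1,\Sigma(\what\pi)}{\what H^{\what\pi}}^2=2\I{\what\pi}$.
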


    \begin{proof}
      First, recall that from \eqref{defQI2energy}, if $\what \pi \in \mathcal D([0,T], \wm)$, $\Q_{I_2}(\what \pi) = \frac{1}{2} \norm{\left(L^2([0,T],H^1(\La))\right)^2}{\what\pi}^2$.
      If $\I{\what \pi}$ is not finite, the inequality is clear. Thus, let us consider $\what \pi\in \mathcal{D}([0,T],\wm)$ such that $\I{\what \pi}$ is finite, then by definition $\Q_{I_2}(\what \pi)$ is also finite. Recall the definition of the smooth function $\what \theta$ in \eqref{eq:cC}. From Proposition \ref{lemma:problemePerturbe}, for all $t\in [0,T]$, $\what \pi_t\vert_{\Ga^-} = \what b$, therefore from the identity $\Q_{I_2}(\what \pi)< + \infty$ and Lemma \ref{lemma:deriveetemporelle} 
\[
      \what \pi - \what \theta \in \left(\W_{[0,T],0,-}^{1,2}\right)^2,
\] 
where $\W_{[0,T],0,-}^{1,2}$ is defined in \eqref{W1,2}.
We now claim that, for all $\what G \in \left(L^2([0,T],\mathcal{H}_{0,-}(\La))\right)^2$ and $\al >0$, 
\begin{equation}\label{dtpiG}
  \scall{}{\partial_t \what \pi}{\what G} \leq \frac{1}{\al} \I{\what \pi} + \frac{\al}{2}\norm{1,\Sigma(\what \pi)}{\what G} - \sum_{k=1}^d \scall{}{\partial_{e_k} \what \pi - \frac{1}{2} \Sigma(\what \pi) \what  E_k}{\partial_{e_k} \what G} .
\end{equation}
  Indeed, we have $\what \pi \in \mathcal{B}_{\what \gamma}^{\what b}$ (cf. Proposition \ref{lemma:problemePerturbe}) and $\Q_{I_2}(\what \pi) < + \infty$, then from Lemma \ref{lemma:Itilde},  and by definition of $\scall{}{\partial_t \what \pi}{\cdot}$,
  \begin{align*}
    \I{\what \pi} =& \sup_{\what G\in \left(L^2([0,T],\mathcal{H}_{0,-}(\La))\right)^2} \bigg\{  \scall{}{\partial_t \what \pi}{ \what G} +  \sum_{k=1}^d \scall{}{\partial_{e_k} \what \pi - \frac{1}{2}\Sigma(\what \pi)\what E_k}{\partial_{e_k} \what G} - \frac{1}{2} \norm{1,\Sigma(\what \pi)}{\what G}^2  \bigg\} ,
  \end{align*}
  where we have used the density of $\left( \mathcal C^\infty_c(\La_T^+)\right)^2$ in $\left(L^2([0,T],\mathcal{H}_{0,-}(\La))\right)^2$ and the continuity of the function $\mathbb J_\cdot(\what \pi)$ defined in \eqref{defJJ}.
   Then, for all $\what G \in \left(L^2([0,T],\mathcal H_{0,-}(\La))\right)^2$, 
  \[
    \scall{}{ \partial_t \what \pi}{ \what G} +  \sum_{k=1}^d \scall{}{\partial_{e_k} \what \pi - \frac{1}{2}\Sigma(\what \pi)\what E_k}{\partial_{e_k} \what G} - \frac{1}{2} \norm{1,\Sigma(\what \pi)}{\what G}^2 \leq \I{\what \pi} .
  \]
  Now, for all $\al>0$, taking $\al \what G$ in place of $\what G$ and dividing the expression by $\al$ we get the claim.
Moreover, rearranging the terms in \eqref{dtpiG}, from the inequality \eqref{inegaliteCarreScalaire}, with $S = \al I_2$,
\[
  \sum_{k=1}^d \scall{}{\partial_{e_k} \what \pi}{\partial_{e_k} \what G} \leq \frac{1}{\al} \I{\what \pi} + \al \norm{1,\Sigma(\what \pi)}{\what G}^2 - \scall{}{\partial_t \what \pi}{\what G} + \frac{C}{\al} ,  
\]
where $C$ is a constant depending on $\what E,\La,T$.
We now apply the last inequality to the particular function $\what G \coloneqq \what \pi - \what \theta$ which belongs to $\left(L^2([0,T], \mathcal H_{0,-}(\La))\right)^2$ as we previously mentioned. 
The left hand side is equal to 
\[
  \sum_{k=1}^d \scall{}{\partial_{e_k}\what \pi}{\partial_{e_k} \what G} = \sum_{k=1}^d\scall{}{\partial_{e_k} \what \pi}{  \partial_{e_k} \what \pi} -\sum_{k=1}^d \scall{}{\partial_{e_k} \what \pi}{ \partial_{e_k} \what \theta} ,
\]
while using Lemma \ref{lemma:inegrationByPartsRoubicek} on the third term of the right hand side, 
\[
      \abs{\scall{}{\partial_t \what \pi}{\what \pi - \what \theta}} = \abs{\frac12\scal{}{\what\pi_T - \what \theta_T}{\what\pi_T - \what \theta_T} + \frac12\scal{}{\what\pi_0 - \what \theta_0}{\what\pi_0 - \what \theta_0}}\leq 16. 
\]
In addition, since there exists $C_\Sigma>0$ such that $\Sigma\leq C_{\Sigma}I_2$, we have $\norm{1,\Sigma(\what \pi)}{\what G}^2 \leq C_\Sigma \scall{}{\grad \what G}{\grad \what G}$. Collecting the above inequalities, we get 
\[
  \sum_{k=1}^d \scall{}{\partial_{e_k} \what \pi}{\partial_{e_k} \what \pi} \leq\sum_{k=1}^d \scall{}{\partial_{e_k} \what \pi}{ \partial_{e_k} \what \theta } + \frac1\alpha \I{\what \pi} + \alpha C_{\Sigma} \norm{\left(L^2([0,T],L^2(\La))\right)^2}{\grad \what G}^2+ 16 + \frac C\alpha .
\]
Using inequality \eqref{inegaliteCarreScalaire} again with $S=2 \al C_\Sigma I_2$ for the first term of the right hand side, and using inequality \eqref{BasicSommeCarre} for the $\norm{\left(L^2([0,T],L^2(\La))\right)^2}{\grad \what G}^2$ term, we have 
\[
  \norm{\left(L^2([0,T],L^2(\La))\right)^2}{\grad \what \pi}^2 \leq  \al C_\Sigma \norm{\left(L^2([0,T],L^2(\La))\right)^2}{\grad \what \pi}^2 + \frac{C_{\Sigma, \what \theta } + C}{\al} +  \frac1\alpha \I{\what \pi} + \alpha C_{\Sigma,\what \theta} + 16 ,
\] 
where the constants may have different values than before. Finally taking $\al= \frac{1}{2 C_\Sigma}$ we get the desired result. \qedhere
\end{proof}

    \begin{theorem}\label{thm:sciI}
      The function $\I{\cdot} : \DM \longrightarrow [0,+\infty]$ is lower semicontinuous with compact sublevel sets.
    \end{theorem}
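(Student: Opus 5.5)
It suffices to show that for each $A\ge 0$ the sublevel set $\mathcal E_A\coloneqq\{\what\pi\in\DM:\ \I{\what\pi}\le A\}$ is compact. Closedness of every sublevel set is exactly lower semicontinuity, and compactness then follows from relative compactness plus closedness.

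I will follow the approach of \cite{blm09,bdgjl3,flm}: take a sequence $(\what\pi^n)_n\subset\mathcal E_A$, show that it has a subsequence converging in $\DM$ to some $\what\pi$, and then show that $\I{\what\pi}\le \varliminf_n \I{\what\pi^n}$. Closedness comes from applying the second step to any sequence converging in $\mathcal E_A$.

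\emph{Relative compactness.} Since $\I{\what\pi^n}\le A<\infty$, we have $\what\pi^n\in\mathcal D([0,T],\wm)$, and by Lemma \ref{lemma:continuitePi} $\what\pi^n\in\mathcal C([0,T],\wm)$, with modulus of continuity $\sup_{|s-r|\le\delta}|\scal{}{\what\pi^n_s}{\what J}-\scal{}{\what\pi^n_r}{\what J}|\le\vare$ uniform in $n$, for each $\what J\in(\mathcal C^2_{0,-}(\La))^2$. Since $\mathcal M$ has total variation bounded by $2$, it is compact for the weak topology. Equicontinuity against a countable family $\what J$ dense in $(\mathcal C(\overline\La))^2$ therefore gives relative compactness in $\mathcal C([0,T],\mathcal M^2)\subset\DM$ by Arzel\`a--Ascoli. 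The restriction to functions vanishing on $\Ga^-$ is harmless, because mass near $\Ga^-$ is controlled by the $L^\infty$ bound on densities. Let $\what\pi$ be a limit point. Densities in $\bar{\mb I}$ form a weakly closed convex constraint, so $\what\pi\in\mathcal D([0,T],\wm)$.

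\emph{Strong convergence.} Lemma \ref{lemma:majorerNormeParI} gives $\sup_n\|\what\rho^n\|_{(L^2([0,T],H^1))^2}^2\le C_1(A+1)$. Lemma \ref{lemma:deriveetemporelle} then gives $\sup_n\|\partial_t\what\pi^n\|_{1,*}\le C$. Since $H^1(\La)\hookrightarrow L^2(\La)\hookrightarrow\mathcal H^*_{0,-}(\La)$ with the first embedding compact, the Aubin--Lions lemma yields a further subsequence with $\what\rho^n\to\what\rho$ strongly in $(L^2([0,T]\times\La))^2$ and weakly in $(L^2([0,T],H^1(\La)))^2$. In particular $\Q_{I_2}(\what\pi)<\infty$. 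Traces on $\Ga^+$ also converge strongly in $L^2([0,T]\times\Ga^+)$, by compactness of the trace from $H^1$ into $L^2(\Gamma)$ combined with the Aubin--Lions interpolation, or alternatively through the $\partial_{e_1}$ integration-by-parts form of the boundary term.

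\emph{Lower semicontinuity.} Fix $\what G\in(\mathcal C^{1,2}_{0,-}([0,T]\times\overline\La))^2$. In $J^{\what E}_{\what G}(\what\pi^n)$:
\begin{itemize}
\item the terms linear in $\what\rho^n$ pass to the limit by weak convergence;
\item the $\Ga^+$ trace term passes to the limit by the trace convergence above;
\item the nonlinear terms $\int\scal{}{\partial_{e_k}\what G}{\Sigma(\what\rho^n)\what E_k}$ and $\int\scal{}{\partial_{e_k}\what G}{\Sigma(\what\rho^n)\partial_{e_k}\what G}$ converge because $\Sigma$ is a polynomial, hence Lipschitz on the bounded set $\bar{\mb I}$, and $\what\rho^n\to\what\rho$ strongly in $L^2$.
\end{itemize}
Hence $J^{\what E}_{\what G}(\what\pi^n)\to J^{\what E}_{\what G}(\what\pi)$. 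Taking the supremum over $\what G$ gives $\Itilde{\what\pi}\le\varliminf_n\Itilde{\what\pi^n}\le A$. Together with $\Q_{I_2}(\what\pi)<\infty$, this gives $\I{\what\pi}\le A$.

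The main obstacle is the non-convexity caused by $\Sigma(\what\pi)$. Weak convergence alone would not allow passing to the limit in the quadratic and drift terms, so the strong $L^2$ compactness step, and its companion for the $\Ga^+$ trace, is the crux. This is exactly why the uniform bounds of Lemmas \ref{lemma:deriveetemporelle} and \ref{lemma:majorerNormeParI} were prepared. I would first try Aubin--Lions in the form with $L^2(H^1)$ and $L^2(\mathcal H^*_{0,-})$ bounds, handling the trace through a Lions-type interpolation $\|\Tr f\|^2_{L^2(\Gamma)}\le\vare\|f\|^2_{H^1}+C_\vare\|f\|^2_{L^2}$.
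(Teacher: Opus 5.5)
Your proof is correct, and its backbone matches the paper's. On a sublevel set, Lemmas \ref{lemma:majorerNormeParI} and \ref{lemma:deriveetemporelle} give uniform bounds in $(L^2([0,T],H^1(\La)))^2$ and on $\norm{1,*}{\partial_t\what\pi^n}$. Aubin--Lions then upgrades weak to strong convergence in $(L^2([0,T]\times\La))^2$, the Lipschitz continuity of $\Sigma$ handles the nonlinear terms, and Lemma \ref{lemma:continuitePi} gives compactness in $\DM$.

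The difference is which functional you pass to the limit in. The paper first shows that the limit lies in $\mathcal B^{\what b}_{\what\gamma}$: the initial datum follows from continuity of evaluation at time $0$, and the Dirichlet trace on $\Ga^-$ from Lemma \ref{Lemma:ConvergenceFaibleBord}. It then uses Lemma \ref{lemma:Itilde} to replace $\Itilde{\cdot}$ by $\mathbb I^{\what E}_T(\cdot|\what\gamma)$, whose test functions are compactly supported in $\La_T^+$. Since $\mathbb J^{\what E}_{\what G}$ contains no boundary traces and no time-endpoint evaluations, weak $L^2(H^1)$ plus strong $L^2$ convergence handle every term.

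You instead take the supremum of $J^{\what E}_{\what G}$ over the original class $\mathcal C^{1,2}_{0,-}$ directly. Because $\ell^{\what E}_{\what G}(\cdot\,|\,\what\gamma)$ already contains $\what\gamma$ and $\what b$, you never need $\mathcal B^{\what b}_{\what\gamma}$ or Lemma \ref{lemma:Itilde}. The price is that you must handle the $\Ga^+$ trace term and the terminal term.

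\begin{itemize}
\item For the $\Ga^+$ trace term, weak convergence of traces (continuity of $\Tr$, which is what Lemma \ref{Lemma:ConvergenceFaibleBord} provides) is enough, since the term is linear. Your interpolation argument for strong trace convergence is correct but unnecessary.
\item For the terminal term, your first bullet says the linear terms pass to the limit by weak convergence, but $\scal{}{\what\pi^n_T}{\what G_T}$ is not controlled by weak convergence in $(L^2([0,T]\times\La))^2$. It converges because evaluation at $T$ is continuous for the Skorokhod topology, or equivalently by the uniform convergence you obtained from Arzel\`a--Ascoli.
\end{itemize}
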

    \begin{proof}
      To show that $\I{\cdot}$ is lower semicontinuous, we have to show that for all $\la\geq0$, 
      \[
        E_\la \coloneq \left\{ \I{\what \pi} \leq \la \right\}
      \]
      is a closed set in $\DM$. Let $\la \geq 0$ and $\left(\what \pi^n\right)_{n\in\N} \in (E_\la)^\N$ such that $\what\pi^n \xrightarrow[n\to + \infty]{\DM} \what \pi \in \DM$. Let us show that $\what \pi$ belongs to $E_\la$. From the Skorokhod topology of $\DM$ we directly have that $\what \pi^n$ converges weakly to $\what \pi$ in  $\left(L^2([0,T]\times\La)\right)^2$.  From Lemma \ref{lemma:deriveetemporelle} and Lemma \ref{lemma:majorerNormeParI}, there exists a constant $C_\la \coloneqq C(\la,\Sigma,\La)$ such that for all $n\in\N$ 
      \[
        \norm{1,*}{\partial_t \what\pi^n}^2 +  \norm{\left(L^2([0,T],H^1(\La))\right)^2}{\what \pi^n}^2  \leq C_\la ,
      \]
      thus, from Lemma \ref{lemma:convergenceFaibleINTOforte}, $\what\pi^n$ converges strongly to $\what\pi$ in $\left(L^2([0,T]\times \La)\right)^2$. 

     To show that $\I{\what\pi} \leq \la$, it is enough to show that $\what \pi $ belongs to $\mathcal{B}^{\what b}_{\what \gamma}$,  that $\Q_{I_2}(\what \pi)< + \infty$ and that for all $G \in  \left( \mathcal C^\infty_c (\La_T^+)\right)^2$, $\mathbb{J}_{\what G}^{\what E}(\what \pi) \leq \la$, where $\mathbb{J}_{\what G}^{\what E}(\what \pi)$ is defined in \eqref{defJJ}, so that   
      \begin{align*}
        \I{\what \pi} =  \sup_{\what G\in \left( \mathcal C^\infty_c (\La_T^+)\right)^2} \underbrace{\bigg\{ - \scall{}{\what\pi}{\partial_t \what G} +  \sum_{k=1}^d \scall{ }{\partial_{e_k} \what \pi - \frac{1}{2} \Sigma(\what \pi  )\what E_k  }{\partial_{e_k} \what G} - \frac12 \norm{1,\Sigma(\what\pi)}{\what G}^2 \bigg\}}_{= \mathbb{J}_{\what G}^{\what E}(\what \pi)}\leq \la ,
      \end{align*}
      using Lemma \ref{lemma:Itilde}.

    We already mentioned that $\sup_{n} \norm{\left(L^2([0,T],H^1(\La))\right)^2}{\what \pi^n}$ is finite,  therefore, from the reflexivity of the Hilbert space $\left(L^2([0,T],H^1(\La))\right)^2$, there exists a subsequence $(\what \pi^{n_k})_{k\in\N}$ of $(\what \pi^n)$ such that $ \what \pi^{n_k}$ converges weakly in $\left(L^2([0,T],H^1(\La))\right)^2$ to some $\tilde \pi \in \left(L^2([0,T],H^1(\La))\right)^2$. But the weak convergence in $\left(L^2([0,T],H^1(\La))\right)^2$ implies, in particular, the weak  convergence in $\left(L^2([0,T]\times\La)\right)^2$, thus, by uniqueness of the weak limit we have that $\tilde \pi = \what \pi$, this implies that by uniqueness of a subsequential limit,  $\what\pi^n$ converges weakly to $\what \pi$ in $\left(L^2([0,T],H^1(\La))\right)^2$ and that $\Q_{I_2}(\what \pi) < + \infty$.

     We now need to prove that $\what \pi$ belongs to $\mathcal{B}^{\what b }_{\what \gamma}$. From the Skorokhod topology, for all $\what G\in \left( \mathcal C^\infty(\La)\right)^2$,  the function $\DM  \ni \tilde \pi \mapsto \scal{}{\tilde \pi_0}{G}$ is continuous. Therefore 
      \[
        \scal{}{\what \pi_0^n}{G} = \scal{}{\what \gamma}{\what G} \xrightarrow[n\to + \infty]{}  \scal{}{\what \gamma}{\what G} = \scal{}{\what \pi_0}{\what G},
      \]
      thus $\what \pi_0 = \what \gamma$. Furthermore, from the weak convergence of $\what \pi^n$ to $\what \pi$ in $\left(L^2([0,T],H^1(\La))\right)^2$ and the fact that $\what \pi^{n}\vert_{\Ga^-}=\what b$ from Proposition \ref{lemma:problemePerturbe}, applying Lemma \ref{Lemma:ConvergenceFaibleBord}, we get $\what \pi|_{\Ga^-} = \what b$ and $\what \pi \in \mathcal{B}^{\what b} _{\what \gamma}$. Hence, from Lemma \ref{lemma:Itilde},
      \[
        \I{\what \pi} = \mathbb{I}^{\what E}_T(\what \pi |\what \gamma)  = \sup_{\what G \in \left( \mathcal C^\infty_c (\La_T^+)\right)^2} \mathbb J_{\what G}^{\what E}(\what \pi).
      \]

Let's fix a function $\what G \in \left( \mathcal C^\infty_c (\La_T^+)\right)^2$, recall the definition of $\mathbb J^{\what E}_{\what G}$ in \eqref{defJJ}  and treat the three terms of $\mathbb{J}_{\what G}^{\what E}({\what {\pi}}^n)$ separately. From the strong convergence of $\what \pi^n$  to $\what \pi$ in $\left(L^2([0,T]\times\La)\right)^2$, we have 
      \begin{equation}\label{sci01}
        \scall{}{\what \pi^n}{\partial_t \what G} \xrightarrow[n\to + \infty]{} \scall{}{\what \pi}{\partial_t \what G}, 
      \end{equation}
    \begin{align}\label{sci1}
      \sum_{k=1}^d    \abs{\scall{}{\left[\Sigma(\what\pi^n) - \Sigma(\what \pi) \right]\what E_k }{\partial_{e_k}\what G}} \leq C_{\Sigma,\what G, \what E} \norm{\left(L^2([0,T]\times \La)\right)^2}{\what \pi^n - \what \pi}^2 \xrightarrow[n\to + \infty]{} 0 , 
      \end{align}
      and
      \begin{align}
        \abs{\norm{1,\Sigma(\what \pi^n)}{\what G}^2 - \norm{1,\Sigma(\what\pi)}{\what G }^2}& = \abs{ \sum_{k=1}^d\scall{}{\left[\Sigma(\what\pi^n) - \Sigma(\what \pi) \right] \partial_{e_k} \what G }{\partial_{e_k} \what G}}\\
        \ &\leq C_{\what G} \norm{\left(L^2([0,T]\times \La)\right)^2}{\what \pi ^n - \what \pi}^2 \xrightarrow[n\to + \infty]{} 0, \label{sci2}
      \end{align}
      where we used the Lipschitz continuity of $\Sigma$ and Cauchy-Schwarz inequality. For the remaining term  
      \begin{align}\label{sci3}
        \sum_{k=1}^d\scall{}{\partial_{e_k} \what \pi^n}{\partial_{e_k} \what G} \xrightarrow[n\to + \infty]{}\sum_{k=1}^d\scall{}{\partial_{e_k} \what \pi}{\partial_{e_k} \what G},
      \end{align}  
      using the weak convergence of $\what \pi^n$ to $\what \pi$ in $\left(L^2([0,T],H^1(\La))\right)^2$, implying the weak convergence of $\partial_{e_k} \what \pi^n$ to $\partial_{e_k} \what \pi $ in $\left(L^2([0,T]\times \La)\right)^2$ for all $k\in\{1,\dots,d\}$.      
      Since for all $n\in\N$,  $\mathbb{J}_{\what G}^{\what E}(\what \pi^n) \leq \la$,
      the limits \eqref{sci01}, \eqref{sci1}, \eqref{sci2}, and \eqref{sci3} yield 
      \[
      \lim_{n\to\infty}  \mathbb{J}_{\what G}^{\what E} (\what \pi^n)=\mathbb{J}_{\what G}^{\what E}(\what \pi) \leq \la,
      \]
for all $\what G \in \left(  \mathcal C_c^\infty(\La_T^+)\right)^2$. Thus the set $E_\la$ is closed and this completes the proof of the lower semi-continuity of the functional $\I{\cdot}$  in $\DM$.

      To prove that the sublevel sets are compact sets of $\DM$, we have to show that $E_\la$ is compact. Using the characterization of compact sets of $\DM$ stated in Section 4.1 of \cite{kl}, it suffices to use Lemma \ref{lemma:continuitePi} to get the desired result (see the proof of Theorem 4.2 from \cite{blm09}).
    \end{proof}

    \subsubsection{Comparison between \texorpdfstring{$I^{\what 0}_T(\cdot|\what\gamma)$}{I \hat{0}} and \texorpdfstring{$\I{\cdot}$}{I \hat{E} }}

    Setting $\what E = \left(0_{\R^d},0_{\R^d}\right)$ in the problem \eqref{eq:1}, one gets the following boundary value problem for the heat equation 
\begin{equation} \label{Eq:heat}
  \left\{
  \begin{array}{rl}
      \displaystyle\partial_{t}\what \rho & = \Delta \what \rho ,\\[0.2cm]
      \displaystyle\what \rho|_{\Gamma^-} & =\what b,\\[0.2cm]
      \displaystyle\partial_{e_{1}}\what \rho|_{\Gamma^{+}} & = \what  0, \\[0.2cm]
      \displaystyle\what \rho_0 & =\what \gamma.
  \end{array}
\right.
\end{equation}

\begin{lemma}\label{lemma:comparison}
  For $\what \pi\in\DM$, we have 
  \begin{align}
    \I{\what \pi} &\leq 2 I_T^{\what 0}(\what \pi|\what \gamma) + \frac14 \norm{\Sigma(\what \pi)}{\what E}^2 , \\
    I_T^{\what 0}(\what \pi|\what \gamma) &\leq 2 \I{\what \pi}  + \frac14 \norm{\Sigma(\what \pi)}{\what E}^2 . \label{eq:itzero}
  \end{align}
\end{lemma}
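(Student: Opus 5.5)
Since $\what E$ enters the functionals only through the drift term in \eqref{weakAlternative}, I would compare the two variational formulas directly. For $\what\pi\in\mathcal D([0,T],\wm)$ and any $\what G\in(\mathcal C^{1,2}_{0,-}([0,T]\times\overline\La))^2$, the definition \eqref{weakAlternative} gives
\[
\ell^{\what E}_{\what G}(\what\pi\,|\,\what\gamma)=\ell^{\what 0}_{\what G}(\what\pi\,|\,\what\gamma)-\frac12\sum_{k=1}^d\int_0^T\scal{}{\partial_{e_k}\what G_t}{\Sigma(\what\pi_t)\what E_k}\,dt .
\]
Here $\norm{\Sigma(\what\pi)}{\what E}^2$ is read as $\sum_k\int_0^T\scal{}{\what E_k}{\Sigma(\what\pi_t)\what E_k}dt$, the weighted norm of Section \ref{sobolev}. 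Both functionals carry the same constraint $\Q_{I_2}(\what\pi)<+\infty$. If $\what\pi\notin\mathcal D([0,T],\wm)$, or if $\Q_{I_2}(\what\pi)=+\infty$, both sides are $+\infty$. So it suffices to compare $\tilde I^{\what E}_T$ and $\tilde I^{\what 0}_T$ on trajectories with finite energy.

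For the cross term, I would apply the weighted Cauchy--Schwarz/Young inequality with the nonnegative matrix $\Sigma(\what\pi_t)$ pointwise. Writing $\scall{}{\cdot}{\cdot}$ for the space--time pairing,
\[
\frac12\Big|\sum_k\scall{}{\partial_{e_k}\what G}{\Sigma(\what\pi)\what E_k}\Big|\le \frac14\norm{1,\Sigma(\what\pi)}{\what G}^2+\frac14\norm{\Sigma(\what\pi)}{\what E}^2 .
\]
This is \eqref{inegaliteCarreScalaire} with $S$ chosen so the weights are $1/4$ and $1/4$. It follows that
\[
J^{\what E}_{\what G}(\what\pi)\le \ell^{\what 0}_{\what G}(\what\pi\,|\,\what\gamma)-\frac14\norm{1,\Sigma(\what\pi)}{\what G}^2+\frac14\norm{\Sigma(\what\pi)}{\what E}^2 .
\]
Next I would rescale, writing $\what G=2\what F$. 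By linearity of $\ell^{\what 0}$ in $\what G$, the first two terms become $2\big(\ell^{\what 0}_{\what F}(\what\pi\,|\,\what\gamma)-\frac12\norm{1,\Sigma(\what\pi)}{\what F}^2\big)=2J^{\what 0}_{\what F}(\what\pi)$. Taking the supremum over $\what G$, which is the same as the supremum over $\what F$ in the same test class, gives the first inequality. The second inequality is obtained by the symmetric argument: write $\ell^{\what 0}=\ell^{\what E}+\frac12\sum_k\int\scal{}{\partial_{e_k}\what G}{\Sigma\what E_k}$, apply the same Young bound, and rescale in the same way.

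The step I expect to need care is the Young inequality with the degenerate weight $\Sigma(\what\pi)$, since $\Sigma$ is only semidefinite on $\partial\mathbf I$. I would handle it pointwise: for $A\in S^+_2$, $|\scal{}{x}{Ay}|\le\scal{}{x}{Ax}^{1/2}\scal{}{y}{Ay}^{1/2}$, which only needs $A\ge0$. This avoids inverting $\Sigma$. The only other point to check is that the constant in front of $\norm{\Sigma(\what\pi)}{\what E}^2$ matches the normalisation of the norm in Section \ref{sobolev}. If that normalisation includes a factor $1/2$ or $2$, I would adjust the Young weights accordingly while keeping the rescaling factor $2$. Either way the right-hand side is finite, because $\Sigma$ is bounded on $\bar{\mathbf I}$.
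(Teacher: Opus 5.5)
Your proposal is correct and follows essentially the same route as the paper. You split $\ell^{\what E}_{\what G}=\ell^{\what 0}_{\what G}-\frac12\sum_k\scall{}{\partial_{e_k}\what G}{\Sigma(\what\pi)\what E_k}$, bound the cross term by a weighted Young inequality with weights $\frac14,\frac14$, and rescale $\what G\to 2\what G$ using linearity of $\ell^{\what 0}$ (and symmetrically for the second inequality). Your pointwise Cauchy--Schwarz for $\Sigma(\what\pi)\ge 0$ is exactly what justifies the paper's appeal to \eqref{inegaliteCarreScalaire} with $S=I_2$ (applied to $\sqrt{\Sigma}\,\partial_{e_k}\what G$ and $\sqrt{\Sigma}\,\what E_k$), including where $\Sigma$ degenerates on the boundary of $\bar{\mathbf I}$.
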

\begin{proof}
  If $\what \pi \notin \mathcal{D}([0,T]),\wm)$ or $Q_{I_2}(\what \pi) = + \infty$, the inequalities are clear, hence let us consider $\what \pi \in \mathcal{D}([0,T],\Mzero)$ with $\Q_{I_2}(\what \pi) < + \infty$. For $\what G \in \left( \mathcal C^{1,2}_{0,-}([0,T]\times \La)\right)^2$, we have 
  \[
    \abs{ \ell^{\what 0}_{\what G}(\what \pi\, | \, \what \gamma) - \ell^{\what E}_{\what G}(\what \pi\, | \, \what \gamma)} = \frac12 \abs{ \sum_{k=1}^d \scall{}{\partial_{e_k}\what G}{\Sigma(\what \pi)\what E_k}} ,
  \] 
  thus, using inequality \eqref{inegaliteCarreScalaire} with $S=I_2$, we get 
  \[
    J_{\what G}^{\what E}(\what \pi) \leq \sup_{\tilde G \in \left( \mathcal C^{1,2}_{0,-}([0,T]\times \La)\right)^2} \left\{ \ell^{\what 0}_{\tilde G}(\what \pi\, | \, \what \gamma) - \frac{1}{4}\norm{1,\Sigma(\what \pi)}{\tilde G}^2 \right\} + \frac14 \norm{\Sigma(\what\pi)}{\what E}^2.
  \] 
  It remains to take $\tilde G \rightarrow 2 \tilde G$ to obtain the first inequality of the Lemma. For the second inequality, we follow the same steps with $J^{\what 0}_{\what G}$ to get the result the same way.
\end{proof}

    \subsubsection{Perturbed proccess}\label{Subsection:PerturbedProcess}
    
    In this section, we consider a perturbation of the original process essential for the proof of the lower bound. 
    Let $\what H \in \left(  \mathcal C^{1,2}_{0,-}([0,T]\times \overline\La)\right)^2$, and introduce
    a new  Hamiltonian ${\mathscr H}_N^{\what H}$  defined through $\what H$ as
\begin{equation*}
\label{HaH}
{\mathscr H}_N^{\hat H}(\s)\, =\, -\sum_{i=1}^2\sum_{x\in\Lambda_N}  \sigma(x)^i H_i\left(\frac{x}{N}\right) .
\end{equation*}
    Define  the generator of a time inhomogeneous Markov process  
    \[
      \mathfrak L^{\what H}_N = N^2 \mathcal L_{\what E, N}^{\what H} + N^{2-\mathfrak{a}_l} L_{b,N}^{-,\what H} + N^{2-\mathfrak{a}_r} L_{b,N}^{+,\what H},
    \]
    defined  for any function $f : \mathbb{X}_N \longrightarrow \R$ by 
    \[
      \left(\mathcal L_{\what E, N}^{\what H} f \right)(\sigma) \coloneqq   \sum_{k=1}^d \sum_{x,x+e_k\in\La_N} C_N^{\what E, \what H}(x,x+e_k;\sigma) \left[ f(\sigma^{x,x+e_k}) - f(\sigma) \right], 
    \]
    \begin{equation}\label{gen-bord-perturbe}
      \begin{aligned}
      (L^{\pm,\what H}_{\what b,N} f)(\sigma)  \;\coloneqq\; & \: \sum_{x \in \Gamma_N^\pm} r_x^{(1,0),-}\big(\widehat b,\sigma\big) \exp\left\{ {\mathscr H}_N^{\hat H} \left(\sigma^{x,-}\right) - {\mathscr H}_N^{\hat H} \left(\sigma\right)
      \right\} \big[ f(\sigma^{x,-})-f(\sigma)\big] \\
      \  &  +\; \: \sum_{x \in \Gamma_N^\pm}
      r_x^{(0,1),+}\big(\widehat b,\sigma\big) \exp\left\{ {\mathscr H}_N^{\hat H} \left(\sigma^{x,+}\right) - {\mathscr H}_N^{\hat H} \left(\sigma\right) \right\} \big[ f(\sigma^{x,+})-f(\sigma)\big]\\
      \  &  +\; \: \sum_{x \in \Gamma_N\pm}r_x^{(-1,0),+}\big(\widehat b,\sigma\big) \exp\left\{{\mathscr H}_N^{\hat H} \left(\sigma^{x,+}\right) - {\mathscr H}_N^{\hat H} \left(\sigma\right)\right\} \big[ f(\sigma^{x,+})-f(\sigma)\big] \\ 
      \  &  +\; \:  \sum_{x \in \Gamma_N\pm}   r_x^{(0,-1),-}\big(\widehat b,\sigma\big) 
      \exp\left\{ {\mathscr H}_N^{\hat H} \left(\sigma^{x,-}\right) - {\mathscr H}_N^{\hat H} \left(\sigma\right)\right\} \big[ f(\sigma^{x,-})-f(\sigma)\big]\\
      \  &  +\; \: \sum_{x \in \Gamma_N\pm} r_x^{(-1,1)}\big(\widehat b,\sigma\big) \exp\left\{ {\mathscr H}_N^{\hat H} \left(T^x\sigma\right) - {\mathscr H}_N^{\hat H} \left(\sigma\right)\right\} \big[ f(T^x\sigma)-f(\sigma)\big] ,
      \end{aligned}
      \end{equation}
      where for $x,y\in \La_N$,  
      \[ C_N^{\what E,\what H}(x,y;\sigma)=
      C_N^{\what E}(x,y;\sigma) \exp\left\{ - \nabla_{x,y}{\mathscr H}_N^{\hat H} (\s)  \right\},
      \]
      and the rates $r_x^{(0,\pm1),\pm}, r_x^{(\pm1,0),\mp}, r_x^{(-1,1)}$ are defined in \eqref{rate-b}.
      Denote by $\mathbb P_{\mu}^{N,\what H}$ the probability measure on $\DM$ induced by the Markov process $(\sigma_t)_{t\in [0,T]}$ whose generator is $\mathfrak L^{\what H}_N$ starting from $\mu$ and we denote by $\mathbb Q^{N,\what H}_\mu$ the law of the process $(\what \pi^N(\sigma_t))_{t\in [0,T]}$. Computing the same martingale as the original process we get the following hydrodynamic result. 

      \paragraph{\sl{Hydrodynamic limit}}

Let $\widehat \gamma = (\gamma_1,\gamma_2) : \overline \Lambda \to {\mb I}$ 
be an initial profile, and denote by 
$\widehat \rho = (m,\phi) : [0,T] \times \overline \Lambda \to \bar {\mb I}$ 
a typical macroscopic trajectory.  We shall prove in Theorem 
 \ref{th-hy-perturbe} below that the macroscopic evolution of  the  local 
particle  density $\widehat \pi^N$ under this process, is described by  the following system of 
non-linear coupled equations
\begin{equation} \label{Eq:perturbeR+D-hydrodynamic}
  \left\{
  \begin{array}{rl}
    \displaystyle \partial_t \what\rho &= \displaystyle \grad \cdot \left[\grad \what\rho -\frac{1}{2}  \left(\what E + 2 \grad \what H \right)\Sigma(\what\rho) \right],\\  
    \displaystyle \what\rho|_{\Ga^-} &= \displaystyle \what b ,  \\
    \displaystyle \partial_{e_1}\what\rho|_{\Ga^+} &= \displaystyle \frac{1}{2}  \left(\what E_1 + 2 \partial_{e_1} \what H \right)\Sigma(\what\rho) , \\[0.2cm]
    \what\rho_0 &= \what \gamma.
  \end{array}
\right.
\end{equation}

For   $\widehat G \in ( \mathcal C^{1,2}_{0,-}([0,T]\times \L))^2$, $\wrho =(\rho_1,\rho_2) \in \mathcal{D}([0,T], \wm)$ denote 
\begin{equation}\label{weakAlternativeperturbe}
  \begin{split}
   &\ell_{\wg}^{\what H, \what E}(\wrho) :=  \ell^{\what E}_{\what G}(\what \rho\, | \, \what \rho_0) - \sum_{k=1}^d \int_0^T
  \scal{}{\partial_{e_k} \what G_t}{\Sigma(\what\rho_t)  \partial_{e_k} \what H } dt, 
  \end{split}
  \end{equation}
where $\ell^{\what E}_{\cdot}(\what \rho\, | \, \what \gamma)$ is the linear function defined in \eqref{weakAlternative}.

  \begin{definition}
      Denote by  $\mc A _{[0, T]}^{\what H}$  the set of all weak  solutions of the  
 hydrodynamic limit:
\begin{equation*}
\mc A^{\what H}_{[0, T]} = \Big\{  \wrho \in  \big(L^2\big([0,T], H^1(\La)\big)\big)^2 \; :\; 
\quad \forall  \wg \in \big(  \mathcal C^{1,2}_{0,-} ( [0,T]
 \times \L)\big)^2 \, ,\; {\ell}_{\wg}^{\what H, \what E}(\wrho)=0
   \Big\}.
\end{equation*}
A weak solution to the  boundary value problem \eqref{Eq:perturbeR+D-hydrodynamic} is a function 
$\widehat \rho =(\rho_1,\rho_2):[0,T]\times \Lambda\to [-1,1]\times [0,1] $ 
satisfying (IB1') and (IB2) below:
\begin{itemize} 
\item[(IB1')] $\displaystyle \widehat \rho\in \mc A _{[0, T]}^{\what H}$,
\item[(IB2)]  $\what \rho_0 = \widehat \gamma $ a.e..
\end{itemize} 
  \end{definition}

\begin{proposition}
    \label{th-hy-perturbe}
    For any sequence of initial probability measures $(\mu_N)_{N\ge 1}$, the sequence of probability measures 
    $(\mathbb{Q}_{\mu_N}^{N,\what H})_{N\geq 1}$  
    is weakly relatively compact in $\DM$ and  all its converging
    subsequences converge to   some limit  $\mathbb{Q}^{*,\wb,\what H}$ that is concentrated on the set of paths $\what\pi = (\rho_1\, du,\rho_2\, du)\in \mathcal{D}([0,T],\what{\mathcal{M}}_0)$ such that $\what \rho\coloneqq (\rho_1,\rho_2)$ is a weak solution to the hydrodynamic equation in the sense that $\what \rho \in \mc A^{\what H}_{[0,T]}$. Furthermore, $\what \rho$ satisfies the following inequality   
    $$ \sum_{k=1}^d \scall{}{\partial_{e_k} \what \rho}{\Sigma(\what\rho)^{-1}\partial_{e_k} \what \rho} < + \infty.$$  Moreover, if the sequence of initial measures $(\mu_N)_{N\geq 1}$
is associated to some initial profile $\widehat\gamma=(\gamma_1,\gamma_2) : \overline \Lambda \to {\mb I}$, in the sense of \eqref{pfl}, then the sequence of probability measures $(\mathbb{Q}_{\mu_N}^{N,\what H})_{N\geq 1}$ 
    converges to the Dirac measure  concentrated on the unique   weak  solution $\wrho(\cdot,\cdot)$ of the boundary value 
    problem \eqref{Eq:perturbeR+D-hydrodynamic}. Accordingly, for any $t\in [0,T]$, 
    any $\delta>0$ and any function $\wg\in  \left(\mc C(\Lambda)\right)^2$ 
    $$
    \lim_{N\to \infty}\PP_{\mu_N}^{N,\what H}\Big\{ \Big| \langle  \widehat\pi_N(\sigma_t), \wg\rangle \, -\, 
    \langle\wrho(t,\cdot),\wg \rangle  
     \Big| \ge \delta \Big\}=0.
    $$
    \end{proposition}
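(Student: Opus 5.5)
The plan follows the same three steps as the proof of Proposition \ref{th-hy}: tightness, an energy estimate, and identification of the limit. We then conclude with uniqueness of weak solutions to \eqref{Eq:perturbeR+D-hydrodynamic}.

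\textbf{Martingale decomposition.} For $\what G \in (\mathcal C^{1,2}_{0,-}([0,T]\times\overline\La))^2$ we write the Dynkin martingale $M^{\what G,N,\what H}_t$ under $\mathbb P^{N,\what H}_{\mu_N}$.

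In the bulk, the perturbed rate is $C_N^{\what E}(x,x+e_k;\sigma)\exp\{-\nabla_{x,x+e_k}\mathscr H^{\what H}_N(\sigma)\}$. Since $\nabla_{x,x+e_k}\mathscr H_N^{\what H}=O(1/N)$, a second-order Taylor expansion of the exponential turns $N^2\mathcal L^{\what H}_{\what E,N}\langle\what\pi^N,\what G\rangle$ into three pieces:
\begin{itemize}
\item the terms already present in \eqref{martingale-0};
\item an extra term $-\sum_k\langle\partial_{e_k}\what G,\Sigma^{\mathfrak h}(\sigma)\partial_{e_k}\what H\rangle$, where $\Sigma^{\mathfrak h}$ is the same local microscopic mobility function appearing in \eqref{martingale-0};
\item a remainder $O(N^{-1})$.
\end{itemize}
The point is that $\what E$ and $2\nabla\what H$ enter in exactly the same way, because both come from a linear Hamiltonian.

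At the boundaries, $G_i$ vanishes on $\Ga^-$. The perturbed rates on $\Ga_N^\pm$ differ from the original ones by factors $e^{O(1)}$, which are bounded since $\what H$ is bounded. Hence the same summation by parts as in \eqref{martingale-0} applies, and the extra $O(1)$ factors are absorbed by the scaling arguments. The quadratic variation is bounded as before, again because the rates are uniformly comparable to the unperturbed ones. Doob's inequality then gives \eqref{doob-ineq}, and tightness follows as for the unperturbed process.

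\textbf{Energy estimate and replacement lemmas.} These are the steps that in principle require new arguments, because they were proved under the original dynamics. We plan to transfer them using the Radon--Nikodym derivative $d\mathbb P^{N,\what H}_{\mu_N}/d\mathbb P^{N,\what b}_{\mu_N}$, which is an exponential martingale equal to $\exp\{N^d(\text{bounded functional})\}$.

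Together with the entropy inequality and the superexponential estimates underlying Propositions \ref{replacementBulk}, \ref{replacementGauche} and \ref{replacementDroite}, this gives the following. Any event of superexponentially small probability under the original process still has vanishing probability under the perturbed one. Since the bound in the energy estimate is exponential, it transfers in the same way. Alternatively, one can redo the Dirichlet-form computations directly for the perturbed generator. Its invariant reference measure $\nu^N_{\what\theta}$ is distorted only by bounded factors, so the Dirichlet form estimates change only by constants.

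This yields regularity: any limit point is concentrated on $\mathcal D([0,T],\wm)$ with finite $\mathcal Q$, so $\what\rho\in(L^2([0,T],H^1(\La)))^2$ by Lemma \ref{l-energy1}. The replacement lemmas allow us to:
\begin{itemize}
\item replace $\Sigma^{\mathfrak h}(\sigma)$ by $\Sigma(\what\rho^{\varepsilon N})$;
\item replace the boundary averages on $\Ga^-_N$ by $\what b$;
\item replace the boundary averages on $\Ga^+_N$ by the trace of $\what\rho$.
\end{itemize}
Letting $N\to\infty$ and then $\varepsilon\to0$ gives $\ell^{\what H,\what E}_{\what G}(\what\rho)=0$, so $\what\rho\in\mathcal A^{\what H}_{[0,T]}$.

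\textbf{Uniqueness.} To conclude, we need uniqueness of weak solutions of \eqref{Eq:perturbeR+D-hydrodynamic}. We expect this to be the main obstacle, since the drift $\Sigma(\what\rho)(\what E+2\nabla\what H)$ is now space- and time-dependent.

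We would adapt Proposition \ref{Uniqueness}. Take two solutions and let $\what w$ be their difference, which vanishes on $\Ga^-$. Use $\what w$ as a test function, justified by density and Lemma \ref{lemma:inegrationByPartsRoubicek}. The Laplacian contributes $-\|\nabla\what w\|^2$. The drift contributes $\langle\nabla\what w,(\Sigma(\what\rho)-\Sigma(\what\rho'))(\what E+2\nabla\what H)\rangle$. Since $\Sigma$ is Lipschitz on $\bar{\mathbf I}$ and $\nabla\what H$ is bounded, this drift term is at most $\frac12\|\nabla\what w\|^2+C\|\what w\|^2$. The Neumann terms on $\Ga^+$ cancel exactly with the boundary integrals in $\ell$. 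Gronwall's inequality then gives $\what w=0$.
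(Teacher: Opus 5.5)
Your proposal is correct and follows the paper's route. The paper's proof is only a pointer to the three steps of Proposition \ref{th-hy} (tightness, energy estimate, identification through the replacement lemmas), with uniqueness supplied by Proposition \ref{Uniqueness}, which is already stated for the perturbed problem \eqref{Eq:perturbeR+D-hydrodynamic}. Two of your sub-steps are carried out differently from what the paper has in mind, and both are sound.

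\textbf{Transfer of the estimates.} You move the superexponential replacement estimates and the energy estimate from $\mathbb P^{N,\what b}_{\mu_N}$ to $\mathbb P^{N,\what H}_{\mu_N}$ through the Radon--Nikodym derivative, rather than redoing the Dirichlet-form computations. This works because the bulk tilt is exactly $e^{F(\sigma')-F(\sigma)}$ with $F=N^d\langle\what\pi^N,\what H\rangle$. The density is therefore essentially the exponential martingale $\mathbb M^{\what H}_T$ of \eqref{MartExp}, whose logarithm is $O(N^d)$ uniformly in the path. It is worth making explicit that this uses $\what H=0$ on $\Ga^-$ in an essential way. The left reservoir runs at speed $N^{2-\mathfrak a_l}$ with $\mathfrak a_l<1$, so a nonzero tilt there would contribute a term of order $N^{d+1-\mathfrak a_l}\gg N^d$. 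On $\Ga^+$ the contribution is only $O(N^{d+1-\mathfrak a_r})=o(N^d)$.

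\textbf{Uniqueness.} The paper does not use the difference itself as a test function. It expands $\overline m_t,\overline\phi_t$ on the eigenbasis $(V_n)$ of Appendix \ref{AppendixBasisEigen}. Each time-independent $V_i$ vanishes on $\Ga^-$ and has zero normal derivative on $\Ga^+$, so it is an admissible test function and $t\mapsto\langle\overline m_t,V_i\rangle$ is differentiable with no boundary terms. The paper then bounds the truncated sums and lets the truncation go to infinity before applying Gronwall.

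Your direct energy argument via Lemma \ref{lemma:inegrationByPartsRoubicek} is shorter and does not need the explicit eigenbasis. It does require you first to extract from the weak formulation that $\what w\vert_{\Ga^-}=0$ and $\partial_t\what w\in(L^2([0,T],\mathcal H_{0,-}^*))^2$, i.e.\ $\what w\in(\W^{1,2}_{[0,T],0,-})^2$ with $\what w_0=0$. Both facts follow from $\what\rho\in(L^2([0,T],H^1(\La)))^2$ and the boundedness of $\Sigma$ on $\bar{\mathbf I}$; after that, your Lipschitz bound and Gronwall close the argument. For the same reason, uniqueness is not really the main obstacle you anticipated: the space--time dependence of the drift enters only through the Lipschitz constant of $\Sigma$ and $\|\nabla\what H\|_\infty$.
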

    The proof of this Proposition follows the same lines as the one of Proposition \ref{th-hy} and is therefore omitted.

\subsection{Exponential martingales}

In this section we introduce exponential martingales needed in both the proof of the upper bound and lower bound. For any $\what G \in ( \mathcal C^{1,2}_{0,-}([0,T]\times \overline\La))^2$, the mean one exponential martingale defined as 
\begin{equation}\label{MartExp}
        \mathbb M_t^{\what G} \coloneqq \exp \left\{ N^d \left[ \scal{}{\what \pi^N_t}{\what G_t} - \scal{}{\what \pi^N_0}{\what G_0} - \frac{1}{N^d}\int_0^T e^{-N^d\scal{}{\what\pi^N_s}{\what G_s}}\left(\partial_s + \mathfrak{L}_N\right) e^{N^d\scal{}{\what \pi^N_s }{\what G_s}} \, ds  \right] \right\}.
    \end{equation}
    Let us denote for $\delta,\vare >0$, the sets 
    \begin{align*}
      B^{\what G, \what{\mathfrak{h}}}_{N,\vare,\delta} &\coloneqq \bigcap_{i=1}^2 \bigcap_{j=1}^3 \left\{ \sigma \in \mathcal{D}([0,T], \mathbb X_N) ~\middle| ~ \abs{\int_0^T  \sum_{j=1}^3 V_{N,\vare}^{G_i, {\mathfrak{h}}_j}(t,\sigma_t)\, dt} \leq \delta  \right\}, \\
      E^{l,\what G}_{N,\delta} & \coloneqq \bigcap_{i=1}^2 \left\{  \sigma \in \mathcal{D}([0,T],\mathbb X_N) ~\middle|~  \abs{\int_0^T \frac{1}{N^{d-1}} \sum_{x\in \Ga^-_N} G_{i,s}\left(\frac{x}{N}\right) \left(\sigma_s(x)^i - b_i\left(\frac{x}{N}\right)\right) \, ds} \leq \delta  \right\},\\
      E^{r,\what G }_{N,\vare, \delta} & \coloneqq \bigcap_{i=1}^2 \left\{  \sigma \in \mathcal{D}([0,T],\mathbb X_N) ~\middle|~  \abs{\int_0^T \frac{1}{N^{d-1}} \sum_{x\in \Ga^+_N} G_{i,s}\left(\frac{x}{N}\right) \left(\sigma_s(x)^i - \sigma^{\ent{\vare N},i}(x)\right) \, ds} \leq \delta \right\},
    \end{align*}
    where $\what{\mathfrak{h}} =({\mathfrak{h}}_1,{\mathfrak{h}}_2,{\mathfrak{h}}_3)$ has already been defined in \eqref{definitionDesPsi}.  Then if we define the set 
  \begin{equation}\label{definitionR}
    \mathcal{R}_{N,\vare,\delta}^{\what G} \coloneqq B^{\what G, \what{\mathfrak{h}}}_{N,\vare,\delta} \cap E^{l,\what G}_{N,\delta} \cap E^{r,\what G }_{N,\vare, \delta},
  \end{equation}
  we can rewrite the martingale on the set  $\mathcal{R}_{N,\vare,\delta}^{\what G}$, as
  \begin{equation}\label{approxExpMart}
    \mathbb M_t^{\what G} \1_{\mathcal{R}_{N,\vare,\delta}^{\what G}}(\sigma) = \exp \left\{ N^d \left[ J^{\what E}_{\what G}(\what \pi^{\vare N}(\sigma)) + O_{\what G}\left(\frac{1}{N}\right) + O_{\what G}(\vare) + O(\delta) \right] \right\}.
  \end{equation}
    
\subsection{Lower bound}

    \subsubsection{\texorpdfstring{$\I{\cdot}$}{I(.|gamma)}-Density}
    In this section, we show that any trajectory $\what \pi \in \DM$ with finite rate function $\I{\what \pi}$ can be approximated by a sequence of smooth trajectories $(\what \pi^n)_{n\in\N}$ such that 
    \[
        \what \pi^n \xrightarrow[n\to + \infty]{\DM} \what \pi \qquad \text{and} \qquad \I{\what \pi^n} \xrightarrow[n\to + \infty]{} \I{\what \pi}.
    \]
    
    \begin{definition}
      A subset $\mathcal{A}$ is said to be $\I{\cdot}$-dense if for every $\what \pi\in\DM$ verifying $\I{\what \pi}< + \infty$, there exists a sequence $(\what \pi^n)_{n \in \N}\in\mathcal A ^{\N}$ such that $\what \pi^n$ converges to $\what \pi$ in $\DM$ and $\I{\what \pi^n}$ converges to $\I{\what \pi}$. 
    \end{definition}
    \begin{definition}
      Let $\mathcal{A}_1$ be the subset of $\DM$ consisting of trajectories $\what \pi$ verifying $\I{\what \pi}< + \infty$ and for which there exists $\delta >0$ such that $\what \pi$ is a weak solution of the heat equation \eqref{Eq:heat} in the time interval $[0,\delta]$. 
    \end{definition}
    \begin{lemma}
      The set $\mathcal{A}_1$ is $\I{\cdot}$-dense.
    \end{lemma}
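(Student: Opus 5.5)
Following the classical strategy (cf. Lemma 5.1 of \cite{bdgjl3} and \cite{blm09}), given $\what\pi$ with $\I{\what\pi}<+\infty$, we concatenate a short piece of the heat flow with a time-shifted copy of $\what\pi$. Let $\what\lambda$ be the weak solution of the heat equation \eqref{Eq:heat} with initial datum $\what\gamma$. Proposition \ref{th-hy} with $\what E=\what 0$ gives existence, and the uniqueness statement of the Appendix gives uniqueness. For $\delta>0$ define
\[
  \what\pi^\delta_t = \what\lambda_t \ \text{ for } t\in[0,\delta], \qquad \what\pi^\delta_t=\what\pi_{t-\delta}\ \text{ for } t\in[\delta,T].
\]
Continuity of $\what\pi$ (Lemma \ref{lemma:continuitePi}) and of $\what\lambda$ at time $0$ give $\what\pi^\delta\to\what\pi$ in $\DM$ as $\delta\to0$. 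By construction $\what\pi^\delta$ solves \eqref{Eq:heat} on $[0,\delta]$ and lies in $\mathcal B^{\what b}_{\what\gamma}$, because both pieces satisfy the Dirichlet condition on $\Ga^-$. It also has finite $\Q_{I_2}$, since $\what\lambda\in(L^2([0,T],H^1(\La)))^2$.

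The plan is to split the rate functional over the two time intervals. Via Lemma \ref{lemma:Itilde}, the rate function equals $\mathbb I^{\what E}_T$, a supremum of time integrals plus a pairing with $\partial_t\what\pi$. I expect it to be superadditive, and by the Riesz representation of Proposition \ref{lemma:problemePerturbe} actually additive:
\[
  \I{\what\pi^\delta} = I^{\what E}_{[0,\delta]}(\what\lambda|\what\gamma) + I^{\what E}_{[0,T-\delta]}(\what\pi|\what\gamma).
\]
The second term is at most $\I{\what\pi}$ and converges to it as $\delta\to0$ by monotone convergence of $\frac12\|\what H^{\what\pi}\|^2_{1,\Sigma}$ restricted to $[0,T-\delta]$. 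Lower semicontinuity (Theorem \ref{thm:sciI}) gives $\liminf_{\delta\to0}\I{\what\pi^\delta}\ge\I{\what\pi}$. It therefore suffices to show that the first term vanishes.

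For the first term I would use Lemma \ref{lemma:comparison} on $[0,\delta]$. Since $I^{\what 0}_{[0,\delta]}(\what\lambda|\what\gamma)=0$, because $\what\lambda$ solves the heat equation, we get
\[
  I^{\what E}_{[0,\delta]}(\what\lambda|\what\gamma)\le \tfrac14\norm{\Sigma(\what\lambda)}{\what E}^2.
\]
Here the norm is computed on $[0,\delta]$. It is bounded by $C_{\Sigma}|\what E|^2\delta$ because $\Sigma$ is bounded on $\bar{\mathbf I}$, so it tends to $0$.

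The main obstacle is rigor at the gluing time $t=\delta$. We need $\mathbb I^{\what E}_T$ to split additively, which requires the integration-by-parts formula for $\partial_t\what\pi^\delta$ (Lemma \ref{lemma:inegrationByPartsRoubicek}) across the junction, with continuity in time ensuring that no boundary term is created. We also need to check that $\what\lambda$ takes values in $\bar{\mathbf I}$, so that $\what\pi^\delta\in\mathcal D([0,T],\wm)$; this follows from the hydrodynamic limit, since empirical densities remain in $\bar{\mathbf I}$. If exact additivity is delicate, I would instead bound $\I{\what\pi^\delta}$ from above. Any test function $\what G$ on $[0,T]$ splits its contributions as $J$ on $[0,\delta]$ for $\what\lambda$ plus $J$ on $[\delta,T]$ for the shifted $\what\pi$, the latter being the shifted $J$ on $[0,T-\delta]$. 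This gives $\I{\what\pi^\delta}\le C\delta+\I{\what\pi}$, which together with lower semicontinuity suffices.
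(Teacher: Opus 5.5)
Your construction has a genuine gap at the gluing time. The path $\what\pi^\delta$ jumps at $t=\delta$: its left limit is $\what\lambda_\delta$, while $\what\pi^\delta_\delta=\what\pi_0=\what\gamma$. In general $\what\lambda_\delta\neq\what\gamma$; equality can only hold if $\what\gamma$ is stationary for \eqref{Eq:heat}. By Lemma \ref{lemma:continuitePi}, every path with finite rate function is continuous in time, so in fact $\I{\what\pi^\delta}=+\infty$ for every $\delta>0$. Concretely, if you split $\ell^{\what E}_{\what G}(\what\pi^\delta\,|\,\what\gamma)$ at $t=\delta$, the boundary terms do not cancel and an extra term $\scal{}{\what\gamma-\what\lambda_\delta}{\what G_\delta}$ survives. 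Take $\what G$ of amplitude $A$, supported in a time window of width $A^{-3}$ around $\delta$. This term is then of order $A$, while every other contribution, including the quadratic penalty, is $o(A)$. So neither your additivity identity nor the fallback bound $\I{\what\pi^\delta}\le C\delta+\I{\what\pi}$ holds. The continuity that you say ensures ``no boundary term is created'' is exactly what fails.

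The missing idea is to steer the path back to $\what\gamma$ before restarting $\what\pi$. This is the construction of Lemma 5.1 in \cite{flm}, to which the paper defers. Set $\what\pi^\delta_t=\what\lambda_t$ on $[0,\delta]$, $\what\pi^\delta_t=\what\mu_t\coloneqq\what\lambda_{2\delta-t}$ on $[\delta,2\delta]$, and $\what\pi^\delta_t=\what\pi_{t-2\delta}$ on $[2\delta,T]$. This path is continuous, so the rate function is subadditive over the three pieces. Your estimates for the first piece (cost at most $C\delta$ via Lemma \ref{lemma:comparison}) and for the last piece (at most $\I{\what\pi}$) go through unchanged.

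The new ingredient is the cost of the time-reversed heat flow $\what\mu$. Since $\partial_t\what\mu=-\Delta\what\mu$ with the same boundary conditions, integrating by parts shows that the linear part of $J^{\what E}_{\what G}$ on $[\delta,2\delta]$ equals $\sum_k\scall{}{2\partial_{e_k}\what\mu-\frac12\Sigma(\what\mu)\what E_k}{\partial_{e_k}\what G}$. Hence its cost is at most $4\sum_k\int_0^\delta\scal{}{\partial_{e_k}\what\lambda_t}{\Sigma(\what\lambda_t)^{-1}\partial_{e_k}\what\lambda_t}\,dt+C\delta$. This tends to $0$ because the $\Sigma^{-1}$-weighted energy of $\what\lambda$ is finite on $[0,T]$: that is the inequality in Proposition \ref{th-hy} with $\what E=\what 0$, equivalently $\Q(\what\lambda)<\infty$ by Lemma \ref{l-energy1}. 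The plain $L^2([0,T],H^1(\La))$ regularity you invoke would not suffice here, since $\Sigma(\what\lambda_t)^{-1}$ need not be bounded for $t$ near $0$. You then get $\varlimsup_{\delta\to0}\I{\what\pi^\delta}\le\I{\what\pi}$, and lower semicontinuity (Theorem \ref{thm:sciI}) finishes the argument.
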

    \begin{proof}
      The proof is very similar to the proof of Lemma 5.1 in \cite{flm} and is therefore omitted.\qedhere
    \end{proof}

    For $\vare >0$, we define the space 
    \[
  \mathbf{I}_{ \vare} \coloneqq \left\{ (m,\phi) \in[-1,1]\times[0,1] ~\middle|~ 
  \abs{m} + \vare \leq \phi   \leq 1 - \vare  \right\} .
\]
    \begin{definition}
      Denote by $\mathcal{A}_2$ the subset of $\mathcal{A}_1$ of all trajectories $\what\pi$ verifying that for all $\delta\in (0,T]$, there exists $\vare>0$ such that $\what\pi_t(u) \in  \mathbf{I}_{\vare}$ for almost all $(u,t)\in \overline\La\times [\delta,T]$. 
    \end{definition}
    \begin{lemma}
      The set $\mathcal{A}_2$ is $\I{\cdot}-$dense.
    \end{lemma}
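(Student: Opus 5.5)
Given $\what\pi\in\mathcal A_1$ with $\I{\what\pi}<+\infty$, I would approximate it by convex combinations with the solution of the heat equation \eqref{Eq:heat}. Let $\what\rho^{\,0}$ be the weak solution of \eqref{Eq:heat} with initial datum $\what\gamma$ and boundary data $\what b$. For $\varepsilon\in(0,1)$ set
\[
\what\pi^{\varepsilon}\coloneqq (1-\varepsilon)\what\pi+\varepsilon\what\rho^{\,0}.
\]
This trajectory starts at $\what\gamma$, equals $\what b$ on $\Ga^-$, and is a weak solution of the heat equation on $[0,\delta]$ because both pieces are (linear equation, Neumann datum $0$ on $\Ga^+$). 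So $\what\pi^\varepsilon\in\mathcal B^{\what b}_{\what\gamma}\cap\mathcal A_1$, provided $\I{\what\pi^\varepsilon}<\infty$. Since $\gamma_2\ge|\gamma_1|+c$ and $\gamma_2\le 1-c$, and $\what b$ satisfies \eqref{eq:cC2}, a parabolic maximum-principle argument for each linear combination shows that on $[\delta',T]\times\overline\La$ the heat solution stays in some $\mathbf I_{\kappa(\delta')}$. Here I would apply the scalar maximum principle to $\phi\pm m$ and to $1-\phi$, which solve the same heat equation with data in $(0,1)$, for positive times. Convexity of $\bar{\mathbf I}$ and the affine structure of the constraints $\phi\pm m\ge0$, $1-\phi\ge 0$ then give $\what\pi^\varepsilon_t\in\mathbf I_{\varepsilon\kappa(\delta')}$ for $t\ge\delta'$, which is the defining property of $\mathcal A_2$. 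Convergence $\what\pi^\varepsilon\to\what\pi$ in $\DM$ as $\varepsilon\to0$ is immediate.

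The main step is $\I{\what\pi^\varepsilon}\to\I{\what\pi}$. By lower semicontinuity (Theorem \ref{thm:sciI}), $\liminf\I{\what\pi^\varepsilon}\ge\I{\what\pi}$, so only the $\limsup$ is needed. Because the rate function is not convex, I would work with the representation of Proposition \ref{lemma:problemePerturbe} and Lemma \ref{lemma:Itilde}, writing $\I{\what\pi^\varepsilon}=\sup_{\what G}\mathbb J^{\what E}_{\what G}(\what\pi^\varepsilon)$. The terms $-\scall{}{\what\pi}{\partial_t\what G}+\sum_k\scall{}{\partial_{e_k}\what\pi}{\partial_{e_k}\what G}$ are linear in $\what\pi$. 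The drift and quadratic terms are not, and I would split them using the concavity of $\Sigma$ on $\bar{\mathbf I}$ in the sense of \eqref{orderMatrices}. Indeed, $\Sigma(m,\phi)$ has quadratic form $2[\phi(x_1+x_2)^2-(mx_1+\phi x_2)^2+\dots]$, and one checks that $\what\rho\mapsto\scal{}{x}{\Sigma(\what\rho)x}$ is concave, being linear minus a square of a linear function. Hence
\[
\Sigma(\what\pi^\varepsilon)\ge(1-\varepsilon)\Sigma(\what\pi)+\varepsilon\Sigma(\what\rho^{\,0}).
\]
Writing $\what\pi^\varepsilon$ as a solution of \eqref{Eq:perturbeR+Dlemma}, its flux is $(1-\varepsilon)\Sigma(\what\pi)\grad\what H^{\what\pi}$ plus a drift correction
\[
\tfrac12\bigl[\Sigma(\what\pi^\varepsilon)-(1-\varepsilon)\Sigma(\what\pi)-\varepsilon\Sigma(\what\rho^{\,0})\bigr]\what E-\tfrac{\varepsilon}{2}\Sigma(\what\rho^{\,0})\what E,
\]
a term that is $O(\varepsilon)$ pointwise. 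Taking the Riesz representative as in Proposition \ref{lemma:problemePerturbe}, I would bound
\[
\I{\what\pi^\varepsilon}\le\sup_{\what G}\Bigl\{(1-\varepsilon)\scall{}{\Sigma(\what\pi)\grad\what H^{\what\pi}}{\grad\what G}+R_\varepsilon(\what G)-\tfrac12\norm{1,\Sigma(\what\pi^\varepsilon)}{\what G}^2\Bigr\},
\]
where $R_\varepsilon$ is the drift correction. Applying \eqref{inegaliteCarreScalaire}, then concavity, $\Sigma(\what\pi^\varepsilon)\ge(1-\varepsilon)\Sigma(\what\pi)$, and Cauchy–Schwarz gives
\[
\I{\what\pi^\varepsilon}\le(1+C\sqrt{\varepsilon})(1-\varepsilon)\I{\what\pi}+C\sqrt\varepsilon.
\]

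The expected obstacle is controlling $R_\varepsilon$ in the norm $\norm{1,\Sigma(\what\pi^\varepsilon)}{\cdot}$. The correction is $O(\varepsilon)$ in $L^\infty$, but where $\Sigma(\what\pi^\varepsilon)$ degenerates its dual norm could blow up. I would handle this exactly as in Lemma \ref{lemma:comparison}: the correction is of the form $\Sigma(\cdot)\what E$ with a matrix dominated by $C\,\Sigma(\what\pi^\varepsilon)$, using $\Sigma(\what\rho^{\,0})\le C\Sigma(\what\pi^\varepsilon)/\varepsilon$ together with the linear-in-$\varepsilon$ prefactor. Its squared dual norm is then bounded by $C\varepsilon\norm{\Sigma}{\what E}^2$, which yields the claimed $\limsup\le\I{\what\pi}$. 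If that domination fails near $t=0$, I would use that both trajectories solve the heat equation on $[0,\delta]$, so the contribution there is zero.
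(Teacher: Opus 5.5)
Your argument is correct, and its skeleton is the paper's. Both use the convex combination $\what\pi^\vare=(1-\vare)\what\pi+\vare\what\rho^{0}$, the maximum principle for $\phi^0\pm m^0$ and $1-\phi^0$ to put $\what\pi^\vare$ in $\mathbf I_{\vare\kappa}$ on $[\delta',T]$, lower semicontinuity for the $\liminf$, and concavity of $\Sigma$ as the key tool.

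The difference is in the $\limsup$. The paper bounds $\I{\what\pi^\vare}$ by $\frac12\sum_k\int_0^T\langle\what{\mathbb F}_{k,s},\Sigma(\what\pi^\vare_s)^{-1}\what{\mathbb F}_{k,s}\rangle\,ds$ and concludes by dominated convergence. There $(1-\vare)\Sigma(\what\pi)\le\Sigma(\what\pi^\vare)$ is used only to build the dominating function, and finiteness of $\I{\what\pi^\vare}$ is proved separately via the $A_1+A_2$ splitting and Lemma \ref{lemma:comparison}. You instead split the flux into $(1-\vare)\Sigma(\what\pi)\partial_{e_k}\what H^{\what\pi}$ plus a correction, and bound each dual norm by Cauchy--Schwarz. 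This gives the explicit bound $(1-\vare)\I{\what\pi}+O(\sqrt\vare)$, yielding finiteness and the $\limsup$ in one stroke. It also never needs the a.e. limit of the integrand, which the paper calls clear but which needs an argument where $\Sigma(\what\pi)$ is singular. The paper's version is shorter to write down.

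A few details to fix:
\begin{itemize}
\item The correction has the opposite sign to what you wrote. It is $-\frac12 D\what E_k$ with $D=\Sigma(\what\pi^\vare)-(1-\vare)\Sigma(\what\pi)$; this is harmless since you only use norms.
\item The control of the correction is cleanest stated as follows. By concavity and positivity $0\le D\le\Sigma(\what\pi^\vare)$, and by Lipschitz continuity $D=O(\vare)$ uniformly. Cauchy--Schwarz in the $D$-semi-inner product then bounds its squared dual norm by $\frac14\sum_k\int_0^T\langle\what E_k,D\what E_k\rangle\,dt=O(\vare)$.
\item That bound holds at every time, so your fallback near $t=0$ is unnecessary. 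As stated it is also wrong: on $[0,\delta]$ one has $\grad\what H^{\what\pi}=-\frac12\what E$. That interval therefore contributes $\frac18\sum_k\int_0^\delta\langle\what E_k,\Sigma(\what\pi^\vare_s)\what E_k\rangle\,ds$, not zero.
\item The uniform margin you assume on $\what\gamma$ is not a hypothesis, and you do not need it. The strong maximum principle, as in Lemma \ref{rho0Iepsilon}, gives the bound on $[\delta',T]$ from the boundary data alone.
\end{itemize}
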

    \begin{proof}
      From the previous density result, it suffices to show that for every $\what \pi\in \mathcal{A}_1$ there exists a sequence $(\what \pi^n)_{n\in\N} \in (\mathcal{A}_2)^\N$ such that $\what \pi^n$ converges to $\what \pi$ in $\DM$ and $\I{\what \pi^n}$ converges to $\I{\what \pi}$.  For this purpose, let $\what \pi \in \mathcal{A}_1$, and let $\what \rho^0\eqqcolon (m^0,\phi^0)$ be the solution of the heat equation \eqref{Eq:heat}. For $\vare \in (0,1)$, we define 
      \[
        (m^\vare,\phi^\vare )\coloneqq\what \pi^\vare\coloneqq (1-\vare)\what \pi + \vare \what \rho^0.
      \]
      It is clear that $\what \pi^\vare \xrightarrow[\vare\to 0]{\DM} \what \pi$. Let us first show that $\what \pi^\vare$ belongs to $\mathcal{A}_2$ for all $\vare>0$. From Lemma \ref{rho0Iepsilon}, for all $\delta>0$, there exists $\la\in(0,1)$ such that for almost all $(u,t)\in \overline\La\times [\delta,T]$, 
          \[
            \abs{m^0_t(u)} + \lambda \leq \phi^0_t(u) \leq 1 - \la.
          \]
          We can check that for the same $\delta$, we have for almost all $(u,t)\in \overline\La\times [\delta,T]$ that $\what \pi^\vare_t(u)\in \mathbf{I}_{\vare\la}$. We now show that $\Q_{I_2}(\what \pi^\vare) < + \infty$. By convexity of $\Q_{I_2}$,
          \[
            \Q_{I_2}(\what \pi^\vare) \leq (1-\vare) \Q_{I_2}(\what \pi) + \vare \Q_{I_2}(\what \rho^0) \leq \Q_{I_2}(\what \pi) + \Q_{I_2}(\what \rho^0), 
          \]
          which is finite because $\I{\what \pi}$ is also finite and $Q_{I_2}(\what \rho^0)$ is finite from Proposition \ref{th-hy}. Next, because $\what \pi\in\mathcal{A}_1$, by definition, there exists $\delta > 0$ such that $\what\pi^\vare$ is the solution to the heat equation \eqref{Eq:heat} in $[0,\delta]$. To conclude that $\what \pi^\vare \in \mathcal A_2$ is is enough to show that $\I{\what \pi^\vare}< + \infty$.  Splitting the supremum we can use the following estimation 
          \begin{align*}
            \I{\what \pi^\vare} \leq A_1 + A_2,
          \end{align*}
          where 
          \begin{align*}
            A_1 (\what \pi^\vare) \coloneqq \sup_{\what G\in \left( \mathcal C^{1,2}_{0,-}([0,T]\times \La)\right)^2} \bigg\{  \int_0^T \scal{}{\partial_s \what \pi^\vare_s}{\what G_s} \, ds &+ \sum_{k=1}^d \int_0^T \scal{}{\partial_{e_k}\what \pi^\vare_s}{\partial_{e_k}\what G_s}\, ds \\
            & - \frac{1}{4} \sum_{k=1}^d\int_0^T \scal{}{ \partial_{e_k}\what  G_s}{\Sigma(\what\pi^\vare_s) \partial_{e_k}\what  G_s} \, ds \bigg\} , 
          \end{align*}
          and 
          \begin{align*}
            A_2(\what\pi^\vare) \coloneqq \sup_{\what G\in \left( \mathcal C^{1,2}_{0,-}([0,T]\times \La)\right)^2} \bigg\{ \frac12 \sum_{k=1}^d \int_0^T \scal{}{\partial_{e_k}\what G_s }{\Sigma(\what\pi^\vare_s) \what E_k} \, ds - \frac{1}{4} \sum_{k=1}^d \int_0^T \scal{}{ \partial_{e_k}\what  G_s}{\Sigma(\what\pi^\vare_s) \partial_{e_k}\what  G_s} \, ds \bigg\} .
          \end{align*}
          Using the classical inequality \eqref{inegaliteCarreScalaire} with $S=I_2$, we easily bound $A_2(\what \pi^\vare)$ by some constant $C_{T,\what E,\Sigma}$ depending on $T$, $\what E$, $\Sigma$. For $A_1(\what\pi^\vare)$,  by the concavity of $\Sigma(\cdot)$ (see the proof of Lemma \ref{l-energy1}), using that $\what \rho^0$ is the solution to the heat equation \eqref{Eq:heat} in $[0,T]$ and because $\Sigma(\what \rho^0)$ is a positive matrix,  
      \begin{align*}
        A_1(\what\pi^\vare) \leq & (1-\vare) A_1(\what\pi).
      \end{align*}
      Replacing $\what G$ by $ 2 \what G$ in the supremum, we obtain 
      \begin{align*}
        A_1(\what\pi^\vare) \leq 2 (1-\vare)I^{\what 0}_T(\what \pi | \what \gamma ) \leq 2 I^{\what 0}_T(\what \pi|\what \gamma) \leq 4 \I{\what \pi} +  \frac{1}{2}\norm{\Sigma(\what \pi)}{\what E}^2,
      \end{align*}from the estimation \eqref{eq:itzero} of Lemma \ref{lemma:comparison}. 
These estimates prove that $\I{\what\pi^\vare}$ is finite and $\hat\pi^\vare$ belongs to $\mathcal A_2$ for all $\vare >0$.
      
      The only thing left to show is that $\I{\what \pi^\vare}$ converges to $\I{\what \pi}$. From the lower semi-continuity of $\I{\what \cdot}$, it is enough to show that $\varlimsup_{\vare\to 0}\I{\what \pi^\vare} \leq \I{\what \pi}$. Let $\delta >0$ such that $\what\pi^\vare$ is the solution to the heat equation \eqref{Eq:heat} in $[0,\delta]$.
        From Lemma \ref{lemma:problemePerturbe}, there exists $\what H\in \H$ such that $\what \pi$ is the solution to the problem \eqref{Eq:perturbeR+Dlemma}, thus  
    \begin{align*}
      \int_0^T \scal{}{\partial_s \what \pi^\vare_s}{\what G_s} \, ds  =   - \sum_{k=1}^d \int_0^T \scal{}{\partial_{e_k}\what \pi^\vare_s}{ \partial_{e_k}\what G_s} \, ds + \frac{(1-\vare)}{2} \sum_{k=1}^d \int_0^T \scal{}{\what E_k + 2 \partial_{e_k}\what H_s}{\Sigma(\what \pi_s)\partial_{e_k}\what G_s} \, ds.
    \end{align*}
    Recall the definition of $\ell^{\what E}$ in \eqref{weakAlternative}, performing an integration by parts allows us to rewrite  
    \begin{align*}
      \ell_{\what G}^{\what E}(\what \pi^\vare\, | \, \what \gamma) =  \frac{(1-\vare)}{2}\sum_{k=1}^d  \int_0^T \scal{}{\Sigma(\what \pi_s)\left[\what E_k + 2 \partial_{e_k}\what H_s \right]}{\partial_{e_k} \what  G_s} \, ds - \frac{1}{2} \sum_{k=1}^d \int_0^T \scal{}{\Sigma(\what \pi^\vare_s) \what E_k}{\partial_{e_k}\what  G_s} \, ds ,
    \end{align*}
    and the rate functional  as  
    \begin{align*}
      \I{\what \pi^\vare} =& \sup_{\what G\in \left( \mathcal C^{1,2}_{0,-}([0,T]\times \La)\right)^2} \bigg\{ \sum_{k=1}^d \int_0^T \scal{}{\what{\mathbb{F}}_{k,s}(\what \pi^\vare)  }{\partial_{e_k}\what  G_s} \, ds  - \frac{1}{2} \sum_{k=1}^d  \int_0^T \scal{}{\partial_{e_k}\what G_s}{\Sigma(\what \pi^\vare_s)\partial_{e_k}\what  G_s} \, ds \bigg\} \\
      \leq & \frac12 \sum_{k=1}^d  \int_0^T \scal{}{\what{\mathbb{F}}_{k,s}(\what \pi^\vare)}{\Sigma(\what \pi^\vare_s)^{-1}  \what{\mathbb{F}}_{k,s}(\what \pi^\vare)} \, ds,
    \end{align*}
    where the inequality \eqref{inegaliteCarreScalaire} is used and the functional $\what{\mathbb F}_{k,s}$ is given by 
    \[
      \what{\mathbb{F}}_{k,s}(\what \pi^\vare) \coloneqq \frac{(1-\vare)}{2} \Sigma(\what\pi_s) \left[\what E_k+2\partial_{e_k}\what  H_s\right] - \frac{1}{2} \Sigma(\what \pi_s^\vare) \what E_k.
    \]
    On the other hand, from Proposition \ref{lemma:problemePerturbe}, we also have 
    \[
      \I{\what \pi}  = \frac{1}{2} \sum_{k=1}^d  \int_0^T \scal{}{\partial_{e_k} \what H_s}{\Sigma(\what \pi_s)\partial_{e_k}\what  H_s} \,ds .
    \] 
    Therefore, to conclude the proof, it suffices to show that for all $k\in \{1,\dots,d\}$, 
    \[
      \varlimsup_{\vare\to 0} \frac12  \int_0^T \scal{}{\mathbb{F}_{k,s}(\what \pi^\vare)}{\Sigma(\what \pi^\vare_s)^{-1}  \mathbb{F}_{k,s}(\what \pi^\vare)} \, ds \leq  \frac{1}{2}   \int_0^T \scal{}{\partial_{e_k}\what H_s}{\Sigma(\what \pi_s)\partial_{e_k}\what  H_s} \,ds .
    \]
    It is clear that 
    \begin{align*}
        \lim_{\vare\to 0}& \left( \what{\mathbb F}_{k,s}(\what\pi^\vare)(u), \Sigma(\what \pi^\vare_s(u))^{-1}\what{\mathbb F}_{k,s}(\what\pi^\vare)(u)\right)_{\R^2}  = \left(\partial_{e_k} \what H_s(u),\Sigma(\what\pi_s(u))\partial_{e_k} \what H_s(u)\right)_{\R^2} \quad \text{a.e. in $[0,T]\times \La$}, 
      \end{align*}
      where $(\cdot, \cdot )_{\R^2}$ stands for the usual inner product of $\R^2$. Next we need to dominate the left hand side uniformly in $\vare$ by an integrable function in order to use the dominated convergence theorem. From the triangular inequality and the following inequality $\left(a+b+c\right)^2 \leq 3\left( a^2 + b^2 +c^2 \right)$ for $a,b,c\in\R$, this term is bounded by
      \begin{align*}
        &\frac{(1-\vare)^2}{4} \left|{\sqrt{\Sigma(\what \pi^\vare_s(u))^{-1}} \Sigma(\what \pi_s(u)) \what E_k}\right|_{\R^2}^2 \\
        &+ (1-\vare)^2 \left|{\sqrt{\Sigma(\what \pi^\vare_s(u))^{-1}} \Sigma(\what \pi_s(u)) \partial_{e_k} \what H_s(u)}\right|_{\R^2}^2 + \frac{1}{4} \left|{\sqrt{\Sigma(\what\pi^\vare_s(u))}\what E_k}\right|_{\R^2}^2 ,
      \end{align*}
      where $|\cdot|_{\R^2}$ stands for the norm associated to the inner product $(\cdot,\cdot)_{\R^2}$.
      Now from the concavity and positivity of $\Sigma$ (cf. Lemma \ref{l-energy1} for more details), we have 
      \[
        (1-\vare) \Sigma(\what\pi_s(u)) \leq (1-\vare) \Sigma(\what\pi_s(u)) + \vare \Sigma(\what \rho^0_s(u)) \leq \Sigma(\what \pi^\vare_s(u)) ,
      \]
      thus, we can bound the previous expression further by 
      \begin{align*}
        \left|{\sqrt{\Sigma(\what\pi_s(u))} \what E_k}\right|_{\R^2}^2 + \left|{\sqrt{\Sigma(\what\pi_s(u))} \partial_{e_k}\what H_s(u)}\right|_{\R^2}^2 + C_\Sigma \left|{\what E_k}\right|_{\R^2}^2 .
      \end{align*}
      This completes the proof. \qedhere
    \end{proof} 

    \begin{definition}
      Denote by $\mathcal{A}_3$ the subset of trajectories $\what \pi\in \DM$ that are solutions of the boundary problem \eqref{Eq:perturbeR+Dlemma} for some $\what H \in \left( \mathcal C^{1,2}_{0,-}([0,T]\times \La)\right)^2$.
    \end{definition}
    \begin{lemma} \label{lemma:A3dense}
      The set $\mathcal{A}_3$ is $\I{\cdot}$-dense.
    \end{lemma}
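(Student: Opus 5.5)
By the previous density lemma it suffices to approximate any $\what\pi\in\mathcal A_2$ by elements of $\mathcal A_3$, with convergence in $\DM$ and convergence of the rate functions. Fix $\what\pi\in\mathcal A_2$ and let $\what H=\what H^{\what\pi}\in\H$ be given by Proposition \ref{lemma:problemePerturbe}. Thus $\what\pi$ solves \eqref{Eq:perturbeR+Dlemma} with this $\what H$, and $\I{\what\pi}=\frac12\norm{1,\Sigma(\what\pi)}{\what H}^2$. Since $\what\pi$ solves the heat equation \eqref{Eq:heat} on $[0,\delta]$, we may take $\grad\what H$ to vanish there, in the sense that $\Sigma(\what\pi)\grad\what H=0$ on $[0,\delta]$. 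On $[\delta,T]$ the profile takes values in $\mathbf I_\vare$, so $\Sigma(\what\pi)$ is uniformly elliptic there, and the weighted norm is equivalent to the unweighted $L^2([\delta,T],\mathcal H_{0,-})$ norm. Therefore $\what H$ belongs to $\left(L^2([0,T],\mathcal H_{0,-}(\La))\right)^2$, vanishes near $t=0$, and can be approximated in that space by smooth functions $\what H^n\in\left(\mathcal C^{1,2}_{0,-}([0,T]\times\overline\La)\right)^2$ that also vanish on $[0,\delta/2]$. We obtain these by time–space mollification, after extending across $\Ga^+$ by reflection and keeping the Dirichlet condition at $\Ga^-$.

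For each $n$, let $\what\pi^n$ be the weak solution of \eqref{Eq:perturbeR+D-hydrodynamic} with $\what H^n$ and initial datum $\what\gamma$. Existence comes from Proposition \ref{th-hy-perturbe}, since the hydrodynamic limit of the perturbed process produces a solution; uniqueness follows as in Proposition \ref{Uniqueness}. Then $\what\pi^n\in\mathcal A_3$. By Proposition \ref{lemma:problemePerturbe} applied to $\what\pi^n$, uniqueness of the Riesz representative gives $\I{\what\pi^n}=\frac12\norm{1,\Sigma(\what\pi^n)}{\what H^n}^2$, which is bounded uniformly in $n$. Lemmas \ref{lemma:deriveetemporelle} and \ref{lemma:majorerNormeParI} then give uniform bounds on $\norm{1,*}{\partial_t\what\pi^n}$ and on the $L^2([0,T],H^1)$ norm. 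By Lemma \ref{lemma:convergenceFaibleINTOforte}, together with Lemma \ref{lemma:continuitePi} for equicontinuity and the compactness of the level sets (Theorem \ref{thm:sciI}), the sequence $(\what\pi^n)$ is relatively compact in $\DM$ and strongly compact in $L^2([0,T]\times\La)$, and it is weakly compact in $L^2([0,T],H^1)$.

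Take any limit point $\tilde\pi$. We pass to the limit in the weak formulation $\ell^{\what E,\what H^n}_{\what G}(\what\pi^n|\what\gamma)=0$. The nonlinear terms $\Sigma(\what\pi^n)\what E_k$ and $\Sigma(\what\pi^n)\partial_{e_k}\what H^n$ converge because $\Sigma$ is Lipschitz and bounded, $\what\pi^n\to\tilde\pi$ strongly in $L^2$, and $\grad\what H^n\to\grad\what H$ strongly in $L^2$. The boundary trace term at $\Ga^+$ passes to the limit by Lemma \ref{Lemma:ConvergenceFaibleBord}. Hence $\tilde\pi$ solves \eqref{Eq:perturbeR+Dlemma} with $\what H$. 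Uniqueness for this linear-drift perturbed problem, proved by the same energy/Gronwall argument as Proposition \ref{Uniqueness} with $\grad\what H\in L^2$, gives $\tilde\pi=\what\pi$, so the whole sequence converges. Finally,
\[
\norm{1,\Sigma(\what\pi^n)}{\what H^n}^2-\norm{1,\Sigma(\what\pi)}{\what H}^2
=\sum_k\scall{}{\partial_{e_k}\what H^n}{\Sigma(\what\pi^n)\partial_{e_k}\what H^n}-\sum_k\scall{}{\partial_{e_k}\what H}{\Sigma(\what\pi)\partial_{e_k}\what H}\to0 .
\]
This follows by splitting into $\Sigma(\what\pi^n)(\grad\what H^n-\grad\what H)$, which tends to zero because $\Sigma$ is bounded and $\grad\what H^n\to\grad\what H$ in $L^2$, and $(\Sigma(\what\pi^n)-\Sigma(\what\pi))\grad\what H$, which tends to zero by dominated convergence along a.e.-convergent subsequences. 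Therefore $\I{\what\pi^n}\to\I{\what\pi}$.

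The main obstacle is the uniqueness step in the limit identification, since a priori $\grad\what H$ is only in $L^2$. Uniqueness of weak solutions of \eqref{Eq:perturbeR+Dlemma} with a merely $L^2$ drift $\Sigma(\what\rho)\grad\what H$ is delicate because $\Sigma$ is nonlinear in $\what\rho$. Here the property $\what\pi\in\mathcal A_2$ is used crucially: on $[\delta,T]$, ellipticity in $\mathbf I_\vare$ lets us run the $H^{-1}$-type energy estimate using the Lipschitz bound $|\Sigma(\rho)-\Sigma(\rho')|\le C|\rho-\rho'|$, while on $[0,\delta]$ the equation is the heat equation. If this is insufficient, the first fallback is to choose $\what H^n$ so that it also converges in $L^\infty$ on compact subsets of time. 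A more robust fallback is to bypass uniqueness: the lower semicontinuity of Theorem \ref{thm:sciI} gives $\I{\tilde\pi}\le\liminf\I{\what\pi^n}$, and we then identify $\tilde\pi$ with $\what\pi$ using the Riesz characterization.
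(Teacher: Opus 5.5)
Your architecture is the paper's: reduce to $\mathcal A_2$, take the Riesz representative $\what H^{\what\pi}$, place it in $\left(L^2([0,T],H^1(\La))\right)^2$, approximate by smooth $\what H^n$, use compactness of level sets, pass to the limit in the weak formulation, identify the limit by uniqueness, and finish with dominated convergence. Two steps, however, fail as written.

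\emph{First gap: $\grad\what H^{\what\pi}$ on $[0,\delta]$.} It does \emph{not} vanish there. Trajectories in $\mathcal A_1$ solve the heat equation \eqref{Eq:heat}, i.e.\ the problem with $\what E=\what 0$, whereas \eqref{Eq:perturbeR+Dlemma} carries the drift $\frac12\Sigma(\what\rho)\what E$. If $\Sigma(\what\pi)\grad\what H^{\what\pi}=0$ on $[0,\delta]$, then $\what\pi$ would solve both \eqref{eq:1} and \eqref{Eq:heat} there. Hence $\sum_k\int_0^\delta\scal{}{\partial_{e_k}\what G_t}{\Sigma(\what\rho_t)\what E_k}\,dt=0$ for all test functions, which fails in general. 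For instance, whenever $\what E_1\neq\what 0$ it forces $\Sigma(\what\rho)\what E_1=0$ on $\Ga^+$. The perturbation must cancel the drift: the paper records $\grad\what H^{\what\pi}=-\frac12\what E$ on $[0,\delta]\times\La$. It is this boundedness, not vanishing, that yields the unweighted $L^2$ bound on $[0,\delta]$. In particular, approximants vanishing on $[0,\delta/2]$ cannot converge to $\what H^{\what\pi}$ in $\left(L^2([0,T],H^1(\La))\right)^2$. The fix is to drop that requirement, which you never use later, and take any smooth approximating sequence in $\left(\mathcal C^{1,2}_{0,-}([0,T]\times\overline\La)\right)^2$, as the paper does.

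\emph{Second gap: the identification $\tilde\pi=\what\pi$.} You correctly flag this as delicate, but the body of your proof asserts that the Gronwall argument of Proposition \ref{Uniqueness} works with $\grad\what H\in L^2$, and it does not. That argument bounds $\norm{L^2(\La)}{[\Sigma(\what\rho_{1})-\Sigma(\what\rho_{2})](\what E_k+2\partial_{e_k}\what H)}$ by $C\norm{L^2(\La)}{\what\rho_1-\what\rho_2}$, which uses $\sup\abs{\partial_{e_k}\what H}<\infty$. Note that Proposition \ref{Uniqueness} is stated for $\what H\in\left(\mathcal C^{1,2}_{0,-}([0,T]\times\overline\La)\right)^2$; the paper simply invokes it at this step. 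None of your proposed repairs supplies the missing bound:
\begin{itemize}
\item Ellipticity of $\Sigma(\what\pi)$ on $[\delta,T]$ is beside the point. The term to control is $\int\abs{\tilde\pi-\what\pi}^2\abs{\grad\what H^{\what\pi}}^2$, which is not dominated by $C\norm{L^2(\La)}{\tilde\pi-\what\pi}^2$ for a merely square-integrable $\grad\what H^{\what\pi}$, whatever lower bound $\Sigma$ has.
\item Choosing $\what H^n$ with better convergence does not help, since the limit equation involves $\what H^{\what\pi}$ itself.
\item The lower-semicontinuity route is circular. It gives only $\I{\tilde\pi}\le\lim_j\I{\what\pi^{n_j}}=\frac12\norm{1,\Sigma(\tilde\pi)}{\what H^{\what\pi}}^2$. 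The Riesz characterization merely restates that $\tilde\pi$ solves the same perturbed problem as $\what\pi$, which is exactly where uniqueness is needed.
\end{itemize}
To close the step you need one of two things. Either a genuine uniqueness result for \eqref{Eq:perturbeR+Dlemma} with an $L^2$ gradient $\grad\what H$. Or extra regularity of $\what H^{\what\pi}$, such as a bounded gradient. That could plausibly be obtained by inserting a space--time smoothing step into the density chain, so that $\what H^{\what\pi}$ solves a uniformly elliptic problem with smooth coefficients; the argument of Proposition \ref{Uniqueness} would then apply.
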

    \begin{proof}
      From the previous density result, it suffices to show that for every $\what \pi\in \mathcal{A}_2$, there exists a sequence $(\what\pi^n)_{n\in\N} \in \left(\mathcal{A}_3\right)^{\N}$ such that $\what\pi^n$ converges to $\what\pi$ in $\DM$ and $\I{\what\pi^n}$ converges to $\I{\what\pi}$. For this purpose, let $\what\pi\in\mathcal{A}_2$, $\delta >0$, $\vare\in (0,1)$ such that $\what \rho|_{[0,\delta]}$ is the solution to the heat equation \eqref{Eq:heat} and $\what \pi \in \mathbf{I}_\vare$ almost everywhere in $[\delta , T] \times \La$. From Proposition \ref{lemma:problemePerturbe}, there exists $\what H^{\what\pi} \in \H$ such that $\what\pi$ is the weak solution of the boundary problem \eqref{Eq:perturbeR+Dlemma} for $\what H^{\what\pi}$, we claim that $\what H^{\what\pi}$ belongs to $L^2\left([0,T],H^1(\La)\right)$.
    Indeed, on the time interval $[0,\delta]$, $\what \pi$ solves the heat equation \eqref{Eq:heat}, by uniqueness of $\what H^{\what \pi}$, we have  $\grad \what H^{\what \pi} = - \frac{1}{2} \what  E$ a.e. in $[0,\delta] \times \La$. 
      While on the time interval $[\delta, T]$ we have 
      \begin{align*}
        \int_\delta^T \int_\La \abs{\partial_{e_k} \what H^{\what \pi,s}(u)}_{\R^2}^2 \, du \, ds \leq C_\vare \norm{\H}{\what H^{\what \pi}}^2 ,
      \end{align*}
      because $\what \pi \in \mathbf I_{\vare}$ a.e. in $[\delta,T]\times \La$, hence $\Sigma(\what \pi)^{-1} \leq C_{\vare } I_2$ a.e. in $[\delta , T] \times \La$. This implies that 
      \[
        \norm{\left( L^2([0,T],H^1(\La)\right)^2}{\what H^{\what \pi}}^2 \leq \delta \abs{\La} \sum_{k=1}^d\abs{\what E_k}_{\R^2}^2 + d C_\vare \norm{\H}{\what H^{\what \pi}}^2,
      \]
      this proves the claim.
      
    Let us consider a sequence $(\what H^n)_{n\in\N}$ of functions in $\left( \mathcal C^{1,2}_{0,-}([0,T]\times \overline\La)\right)^2$ verifying 
\[
  \what H^n \xrightarrow[n\to + \infty]{\left(L^2([0,T],H^1(\La)\right)^2)} \what H^{\what \pi} ,
\]
and define the sequence $(\what \pi^n)_{n\in\N}$, where for all $n\in\N$, $\what \pi^n$  is the weak solution of the problem \eqref{Eq:perturbeR+Dlemma} for the function $\what H^n$. It is clear that $\what \pi^n$ belongs to $\mathcal{A}_3$ for all $n\in\N$. First, we want to show that $\what \pi^n$ converges to $\what \pi$ in $\DM$. 
From Proposition \ref{lemma:problemePerturbe}, 
\[
  \I{\what \pi^n} = \frac{1}{2} \norm{1,\Sigma(\what \pi^n)}{\what  H^n}^2 \leq C_\Sigma \norm{\left(L^2([0,T], H^1(\La))\right)^2}{\what H^n} ^2,
\]
which is uniformly bounded in $n$ because $\what H^n$ converges to $\what H^{\what\pi}$ in $\left(L^2([0,T],{H}^1(\La))\right)^2$, thus, there exists a constant $C > 0$ such that  $\I{\what \pi^n} \in [0,C]$ for all $n\in\N$. But the set 
\[
  E_C\coloneqq \left\{ \tilde \pi \in \DM ~ \middle| ~  \I{\tilde \pi} \in [0,C]\right\},
\]
is a compact set in $\DM$ from Theorem \ref{thm:sciI}. Thus, from Lemma \ref{lemma:deriveetemporelle}, Lemma \ref{lemma:majorerNormeParI} and Corollary \ref{lemma:convergenceFaibleINTOforte}, there exists a subsequence $(\what \pi^{n_j})_{j\in\N}$ converging to some trajectory $\tilde \pi$ in $\DM$ and $(L^2([0,T]\times\La))^2$. Moreover by the reflexivity of $\left( L^2([0,T],H^1(\La))\right)^2$, $\partial_{e_k} \what \pi^{n_j}$ converges weakly to $\partial_{e_k} \tilde \pi$  in $\left( L^2([0,T]\times \La)\right)^2$. From this sequence we can extract a subsequence that converges almost everywhere, by abuse of notation we continue to denote it by $(\what \pi^{n_j})_{j\in\N}$.

For every $\what G\in \left( \mathcal C^{1,2}_{0,-}([0,T]\times \La)\right)^2$, with an integration by parts and because $\what \pi^{n_j} \in \mathcal B^{\what b}_{\what \gamma}$ using Proposition \ref{lemma:problemePerturbe}, we have
\begin{align*}
  \scal{}{\what \pi_T^{n_j}}{\what G_T }   =&  \scal{}{\what \gamma}{\what G_0} +   \int_{0}^T\scal{}{\what   \pi_s^{n_j}}{\partial_{t}\what G_s} ds  - \sum_{k=1}^d  \int_{0}^T \scal{}{\partial_{e_k} \what \pi_s^{n_j}}{ \partial_{e_k}\what  G_s} ds \\
& +\frac12 \sum_{k=1}^d \int_{0}^T \scal{}{\Sigma(\what \pi^{n_j}_s)\left[ \what E_k+2\partial_{e_k}\what  H^{n_j}_s\right]}{\partial_{e_k} \what  G_s} ds.
  \end{align*}
  Hence, using the convergence of $\what \pi^{n_j}$ to $\tilde \pi$ in $\DM$, the term on the left hand side converges to the same term with $\tilde \pi$ in place of $\what \pi^{n_j}$, and the same applies for the second term of the right hand side. For the third term on the right hand side, we use the weak convergence of $\partial_{e_k}\what \pi^{n_j}$ to  $\partial_{e_k}\tilde \pi$ in $(L^2([0,T],\La))^2$ mentioned a few lines before. And finally for the last term,  we use the convergence of $\what \pi^{n_j}$ to $\tilde\pi$ and $\what H^{n_j}$ to $\what H^{\what \pi}$ in $(L^2([0,T],\La))^2$, in addition to the Dominated Convergence Theorem since $\what \pi^{n_j}$ converges almost everywhere to $\tilde\pi$ and $\Sigma$ is Lipschitz continuous. All these convergences give us  
\begin{align*}
  \scal{}{\tilde\pi_T}{\what G_T }   =&  \scal{}{\what \gamma}{\what G_0} +   \int_{0}^T\scal{}{\tilde   \pi_s}{\partial_{t}\what G_s} ds  - \sum_{k=1}^d  \int_{0}^T \scal{}{\partial_{e_k} \tilde\pi_s}{ \partial_{e_k}\what  G_s} ds \\
& +\frac12 \sum_{k=1}^d \int_{0}^T \scal{}{\Sigma(\tilde \pi_s)\left[ \what E_k+2\partial_{e_k}\what  H^{\what \pi}_s\right]}{\partial_{e_k} \what  G_s} ds.
  \end{align*}
    Using Proposition \ref{lemma:problemePerturbe}, knowing that $\I{\tilde \pi}<+ \infty$ because $\tilde \pi \in E_C$, we have $\tilde \pi \in \mathcal{B}_{\what \ga}^{\what b}$. Therefore, applying an integration by parts we get

\begin{align*}
  \scal{}{\tilde \pi_T}{\what G_T} = &\scal{}{\what \gamma}{\what G_0} + \int_0^T \scal{}{\tilde \pi_s}{\partial_t \what G_s} \, ds + \int_0^T \scal{}{\tilde \pi_s}{\Delta \what G_s} \, ds  \\
  & + \frac{1}{2} \sum_{k=1}^d \int_0^T \scal{}{\Sigma(\tilde \pi_s)\left[ \what E_k + 2 \partial_{e_k}\what H^{\what \pi}_s\right]}{\partial_{e_k} \what G_s} \, ds \\
  & -  \int_0^T \int_{\Ga^+} \scal{}{\tilde \pi_s(r)}{\partial_{e_1}\what G_s(r)} \, dS(r) \, ds  + \int_0^T \int_{\Ga^-} \scal{}{\what b(r)}{\partial_{e_1}\what G_s(r)} \, dS(r) \, ds .
\end{align*}
Thus, $\tilde \pi$ is the weak solution of \eqref{Eq:perturbeR+Dlemma} for the function $\what H^{\what \pi}$. By uniqueness of such a solution stated in Appendix \ref{Appendix:uniquenessPerturbe}, we have $\tilde \pi = \what 
\pi$ and by uniqueness of subsequential limit in a compact set, $\what \pi^n$ converges to $\what \pi$ in $\DM$ and $\left(L^2([0,T]\times \La)\right)^2$. Finally, from the dominated convergence theorem 
\begin{align*}
    \lim_{j\to+\infty} \I{\what\pi^{n_j}} =& \lim_{j\to + \infty} \scall{}{\grad \what H^{n_j}}{\Sigma(\what \pi^{n_j}) \grad \what H^{n_j}} \\
    = & \frac{1}{2} \scall{}{\grad \what H}{\Sigma(\what \pi) \grad \what H} = \I{\what \pi} ,
\end{align*}
because $ \grad \what H^{n_j} \xrightarrow[]{j\to + \infty} \grad \what H $ in $\left( L^2([0,T],H^1(\La)\right)^2$ and $\what \pi^{n_j} \xrightarrow[]{j\to +\infty} \what \pi$ a.e. in $[0,T]\times \La$.\qedhere
\end{proof}

\subsection{Proof of lower bound}

We need to show that for any open set $\mathcal{O}$ of $\DM$, 
    \[
      \varliminf_{N\to + \infty} \frac{1}{N^d}\log \mathbb Q_{\delta_{\sigma^N}}^{N,\what b}(\mathcal O) \geq - \inf_{\what \pi \in \mathcal{O}} \I{\what \pi},
    \]
    which is equivalent to show that for any open set $\mathcal{O}$ of $\DM$ and any $\what \pi \in \mathcal{O}$ 
    \begin{equation}\label{lower1}
      \varliminf_{N\to + \infty} \frac{1}{N^d}\log \mathbb Q_{\delta_{\sigma^N}}^{N,\what b}(\mathcal O) \geq -  \I{\what \pi}.
    \end{equation}
    To this end, let us consider $\mathcal{O}$ an open set of $\DM$ and let $\what \pi \in \mathcal{O}$. Then, if $\I{\what \pi} = + \infty$, the inequality is clear. Else, $\I{\what\pi} < + \infty$ and by the $\I{\cdot}$-density of $\mathcal{A}_3$ stated in Lemma \ref{lemma:A3dense}, there exists a sequence $(\what \pi^{\what H_n})_{n\in\N} \in (\mathcal{A}_3)^{\mathbb N}$ such that 
    \begin{align*}
      \what \pi^{\what H_n} \; &\xrightarrow[n \to + \infty]{\DM}  \;\what \pi , \\
      \I{\what \pi^{\what H_n}} \; & \xrightarrow[n\to + \infty]{\hphantom{\DM}} \; \I{\what \pi}.
    \end{align*}   
    Using classical arguments (cf. Theorem 10.5.4 in \cite{kl}),  to prove \eqref{lower1}, it is enough to show 
    \begin{equation}\label{lower2}
       \varlimsup_{N \to + \infty} \frac{1}{N^d} H\left( \mathbb P^{N,\what H_n}_{\delta_{\sigma^N}} \middle| \mathbb P^{N,\what b}_{\delta_{\sigma^N}} \right) = \I{\what \pi^{\what H_n}}, 
    \end{equation}
    where $H\left(  \mathbb P^{N,\what H_n}_{\delta_{\sigma^N}} \middle|  \mathbb P^{N,\what b }_{\delta_{\sigma^N}} \right)$ stands for the relative entropy given by 
    \[
        H\left(  \mathbb P^{N,\what H_n}_{\delta_{\sigma^N}} \middle|  \mathbb P^{N,\what b }_{\delta_{\sigma^N}} \right) = \int \log \left( \frac{d  \mathbb P^{N,\what H_n}_{\delta_{\sigma^N}}}{d   \mathbb P^{N,\what b }_{\delta_{\sigma^N}}}\right) \, d   \mathbb P^{N,\what H_n}_{\delta_{\sigma^N}} .
    \]
    Using the explicit formula \eqref{MartExp} of the exponential martingale, the replacement lemmata \ref{replacementBulk}, \ref{replacementGauche}, \ref{replacementDroite} and its expression as a function of the empirical density \eqref{approxExpMart}, for all $\delta >0$ and $\vare >0$,
    \begin{align*}
      \frac{1}{N^d} H\left(  \mathbb P^{N,\what H_n}_{\delta_{\sigma^N}} \middle|  \mathbb P^{N,\what b }_{\delta_{\sigma^N}} \right) =& \mathbb E^{ \mathbb P^{N,\what H_n}_{\delta_{\sigma^N}}}\left[ J_{\what H_n}^{\what E}(\what \pi^{\vare N}(\sigma)) \right] + o_{N}(1) + O_{\what H_n}(\vare) + O(\delta) .
    \end{align*}
    Taking the limit $N\to + \infty$, using the convergence of $\mathbb Q^{N,\what H_N}_{\delta_{\sigma^N}}$ to $\delta_{\what \pi^{\what H_n}}$ by Proposition \ref{th-hy-perturbe}, and taking $\vare \to 0$ and $\delta \to 0$, we get 
    that 
    \[
      \varlimsup_{N\to + \infty} \frac{1}{N^d} H\left(  \mathbb P^{N,\what H_n}_{\delta_{\sigma^N}} \middle| \mathbb P^{N,\what b }_{\delta_{\sigma^N}} \right) = J_{\what H_n}^{\what E}(\what \pi^{\what H_n}).
    \]
    Taking into account that $\what \pi^{\what H_n}$ is a weak solution to the problem \eqref{Eq:perturbeR+D-hydrodynamic}, we obtain 
    \[
      \varliminf_{N\to + \infty} \frac{1}{N^d}\log \mathbb Q_{\delta_{\sigma^N}}^{N,\what b}(\mathcal O) \geq - \I{\what \pi^{\what H_n}}.
    \]
    To conclude the proof of the lower bound, it remains to let $n\to + \infty$.

    \subsection{Upper bound}
    For $a\geq 0 $, denote by $\Ea:\DM \longrightarrow [0,+\infty]$  the functional defined by 
    \[
      \Ea(\what \pi) \coloneqq \left\{ \begin{array}{ll}
        \Itilde{\what\pi} + a(1+a) \tilde{\mathcal Q}(\what \pi),\qquad &\text{if $\what \pi \in \mathcal{D}([0,T],\what{\mathcal{M}}_0)$,} \\
        + \infty , \qquad & \text{else,}
      \end{array} \right.
    \] 
    where  
    \[
      \tilde{\mathcal{Q}}(\what\pi) \coloneqq \sup_{k = 1,\dots, d} \sup_{\what H \in \left( \mathcal C^\infty_c([0,T]\times \La)\right)^2}  \tilde{\mathcal{Q}}_{\what H,k} (\what \pi),
    \]
    and for $\what H \in \left( \mathcal C^\infty_c([0,T]\times \La)\right)^2$ and $k\in \{1,\dots,d\}$,
    \[
      \tilde{\mathcal{Q}}_{\what H,k} (\what \pi) \coloneqq \scall{}{\what \pi}{\partial_{e_k}  \what H} - 2 \scall{}{\what H}{\Sigma(\what \pi) \what H} .
    \]
Notice that, for all $\what\pi \in \DM$,
\[
\Ea(\what \pi) = \Itilde{\what\pi} + a(1+a) {\mathcal E}(\what \pi),
\]
where the functional ${\mathcal E}$ is defined in \eqref{1:Q}.

The proof of the upper bound relies on the following proposition.
\begin{proposition}\label{prop:upperBound}
  Let $\mathcal{K}$ be a compact set of $\DM$. There exists a positive constant $C$ such that for any $a\in (0,1]$, and  any sequence $(\sigma^N)_{N \geq 1}$ of configurations such that $(\delta_{\sigma^N})_{N \geq 1}$ is associated to $\what \ga$, in the sense of \eqref{pfl}, 
  \[
    \varlimsup_{N\to + \infty} \frac{1}{N^d} \log \mathbb Q_{\delta_{\sigma^N}}^{N,\what b}(\mathcal{K}) \leq - \frac{1}{1+a} \inf_{\what\pi \in \mathcal{K}} \Ea(\what \pi) + a C (T+1).
  \]
\end{proposition}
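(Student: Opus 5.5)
The plan is the standard exponential-martingale argument with Hölder's inequality, combined with an energy estimate. Here $a$ plays the role of the Hölder exponent parameter, which explains the factor $\frac{1}{1+a}$ and the $a(1+a)$ weight in front of $\tilde{\mathcal Q}$.

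Fix $\what G\in(\mathcal C^{1,2}_{0,-}([0,T]\times\overline\La))^2$, $k\in\{1,\dots,d\}$ and $\what H\in(\mathcal C^\infty_c([0,T]\times\La))^2$. Introduce a second mean-one exponential martingale $\mathbb M^{\what H,k}_T$ built from the functional $\scall{}{\what\pi^N}{\partial_{e_k}\what H}$. Its exponent is, up to $o(1)$ and the replacement errors, $N^d[\tilde{\mathcal Q}_{\what H,k}(\what\pi^{\vare N})+O(1)]$. This is obtained from the energy estimate of Appendix \ref{energyEstimates}: the Feynman–Kac/variational bound produces $\scall{}{\what\pi}{\partial_{e_k}\what H}-c\,\scall{}{\what H}{\Sigma(\what\pi)\what H}$ plus a constant $CT$ coming from the boundary and the weak asymmetry.

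For a measurable set $\mathcal K'$, write
\[
\mathbb Q^{N,\what b}_{\delta_{\sigma^N}}(\mathcal K')=\mathbb E\Big[(\mathbb M^{\what G}_T)^{\frac1{1+a}}(\mathbb M^{\what G}_T)^{-\frac1{1+a}}\1_{\mathcal K'\cap\mathcal R^{\what G}_{N,\vare,\delta}}\Big]+\mathbb P\big((\mathcal R^{\what G}_{N,\vare,\delta})^c\big).
\]
Insert the factor $(\mathbb M^{a\what H,k}_T)^{\frac{a}{1+a}}(\mathbb M^{a\what H,k}_T)^{-\frac{a}{1+a}}$ and apply Hölder with exponents $1+a$ and $\frac{1+a}{a}$. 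Since both martingales have mean one, we obtain
\[
\frac1{N^d}\log\mathbb Q(\mathcal K')\le-\frac1{1+a}\inf_{\mathcal K'}\Big\{J^{\what E}_{\what G}+a(1+a)\,\tilde{\mathcal Q}_{\what H,k}\Big\}+aC(T+1)+O(\vare)+O(\delta).
\]
The $\mathcal R$-complement is superexponentially small by the replacement lemmata \ref{replacementBulk}, \ref{replacementGauche}, \ref{replacementDroite}. Using \eqref{approxExpMart}, we can then replace $\what\pi^{\vare N}$ by $\what\pi^N$ through continuity of the smoothed functionals in the weak topology, and let $N\to\infty$, then $\vare,\delta\to0$. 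The $\mathcal M^2\setminus\wm$ part is handled because $\what\pi^N$ always has densities in $\bar{\mathbf I}$ after smoothing, so limits lie in $\mathcal D([0,T],\wm)$.

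The remaining step is a minimax. The functionals $\what\pi\mapsto J^{\what E}_{\what G}(\what\pi)+a(1+a)\tilde{\mathcal Q}_{\what H,k}(\what\pi)$ are lower semicontinuous (indeed continuous, after smoothing by $\what\pi^{\vare}$, on $\wm$-valued paths). Cover the compact $\mathcal K$ by finitely many open sets $\mathcal O_j$ on which the bound holds for a choice $(\what G_j,\what H_j,k_j)$ nearly attaining the supremum at some point. Use $\limsup\frac1{N^d}\log\sum_j\le\max_j$ and the standard lemma (Lemma A2.3.3 in \cite{kl}) to exchange inf and sup. This yields the infimum over $\mathcal K$ of $\sup_{\what G}J^{\what E}_{\what G}+a(1+a)\sup_{\what H,k}\tilde{\mathcal Q}_{\what H,k}=\Ea$.

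The main obstacle is the second martingale. One has to show that
\[
\frac1{N^d}\log\mathbb E\Big[\exp\big\{N^d a(1+a)\tilde{\mathcal Q}_{\what H,k}\big\}\Big]\le a\,C(T+1)
\]
with $C$ independent of $a$ and $\what H$, and with the quadratic term carrying precisely $\Sigma$ (not merely $I_2$). Here the first attempt is the Feynman–Kac formula together with a variational bound over densities relative to $\nu^N_{\what\theta}$. The Dirichlet-form cost of the boundary generators at the two different speeds $N^{2-\mathfrak a_l}$ and $N^{2-\mathfrak a_r}$ must be absorbed, and the weak-asymmetry error must be bounded by $CN^d$. An integration by parts in the bulk gives the term $\scall{}{\what H}{\Sigma\what H}$ via the expectation of $(\sigma(x)^i-\sigma(x+e_k)^i)(\sigma(x)^j-\sigma(x+e_k)^j)$ under the local equilibrium. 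The fact that $\what H$ has compact support avoids boundary terms.
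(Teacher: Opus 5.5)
Your architecture is the paper's: H\"older with exponents $1+a$ and $\frac{1+a}{a}$, the exponential martingale $\mathbb M^{\what G}_T$ on the replacement set $\mathcal R$, an energy bound for the complementary factor, and the minimax lemma over a finite cover of $\mathcal K$. However, the ``second mean-one exponential martingale'' does not exist. The exponential martingale attached to a test function $\what F$ has exponent $N^d J^{\what E}_{\what F}$ (on $\mathcal R$, up to small errors; see \eqref{approxExpMart}), whose quadratic part is $\norm{1,\Sigma(\what\pi)}{\what F}^2$. The energy functional $\tilde{\mathcal Q}_{\what H,k}$, with its local term $2\scall{}{\what H}{\Sigma(\what\pi)\what H}$, is not such an exponent; it can only be controlled through the Dirichlet form. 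So the step ``since both martingales have mean one'' fails as written.

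What is available is the Feynman--Kac exponential-moment bound of Lemma \ref{lemma:energyEstimate}, which your last paragraph correctly sketches. The error $aC(T+1)$ appears precisely because this factor has expectation $e^{O(N^d)}$ rather than one. Redo the bookkeeping accordingly. Apply H\"older to $\mathbf 1_{\mathcal K\cap\mathcal R}\,e^{-aN^d\tilde{\mathcal Q}_{\what H,k}}\cdot e^{aN^d\tilde{\mathcal Q}_{\what H,k}}$, with $\tilde{\mathcal Q}_{\what H,k}$ evaluated at $\what\pi^N*\what u_\vare$, and insert $\mathbb M^{\what G}_T$ only in the first factor. The second factor then contributes $\frac{a}{1+a}\frac1{N^d}\log\mathbb E[e^{(1+a)N^d\tilde{\mathcal Q}_{\what H,k}}]$. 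The estimate you need is therefore $\frac1{N^d}\log\mathbb E[e^{(1+a)N^d\tilde{\mathcal Q}_{\what H,k}}]\le C(T+1)$, uniformly in $a\in(0,1]$ and $\what H$. Your version, with $a(1+a)$ in the exponent, does not match your split; it only helps through its case $a=1$ combined with Jensen.

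This is also where $a\le 1$ enters. The scaling $V^\delta_{\what G}=V^1_{\delta\what G}$ in the energy estimate requires the multiplier $1+a$ not to exceed the coefficient $\delta_0=2$ in front of $\scall{}{\what H}{\Sigma(\what\pi)\what H}$. Finally, nothing needs to be absorbed at the boundary. Since $\what\theta|_{\Gamma}=\what b$, one has $\scal{\nu^N_{\what\theta}}{L^\pm_{b,N}f}{f}=-\mathcal D^\pm_{b,N}(f,\nu^N_{\what\theta})\le 0$, and these terms are simply dropped.

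Where you genuinely depart from the paper is the treatment of paths outside $\mathcal D([0,T],\wm)$. The paper adds $\mathfrak M_{\what J}$ to the H\"older weight, which costs only $C_{\what J}/N$ on empirical measures, and recovers $\Ea=+\infty$ off $\mathcal D([0,T],\wm)$ through Lemma \ref{supMfiniD0TM}. Your route also works, and is more elementary, once stated properly:
\begin{itemize}
\item $\mathcal D([0,T],\wm)$ is closed in $\DM$.
\item Every empirical path $\what\pi^N(\sigma_\cdot)$ lies within a distance $\epsilon_N\to 0$ of it, uniformly in $\sigma_\cdot$. To see this, spread each mass $N^{-d}\sigma(x)^i\delta_{x/N}$ uniformly over the cube of side $1/N$ centred at $x/N$, intersected with $\Lambda$; this gives densities $(\sigma(x),\sigma(x)^2)\in\bar{\mathbf I}$.
\item Hence each point of $\mathcal K\setminus\mathcal D([0,T],\wm)$ has a neighbourhood of $\mathbb Q^{N,\what b}_{\delta_{\sigma^N}}$-probability zero for $N$ large, and these neighbourhoods can be added to the finite cover.
\end{itemize}
This keeps the exponential moment to be estimated a pure energy term.
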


\begin{proof}
  Fix two functions $\what  G \in \left( \mathcal C^{1,2}_{0,-}([0,T] \times \La)\right)^2$, $\what H \in \left( \mathcal C^\infty_c([0,T]\times \La)\right)^2$ and positive functions $J_1,J_2 \in  \mathcal C^\infty(\La) $ .  
  For a local function ${\mathfrak{h}}:\{0,1\}^{\Z} \longrightarrow \R$,  $i\in \{1,2\}$, $c,\vare >0$, and a function $G \in  \mathcal C^{1,2}_{0,-}([0,T] \times \La)$ ,  let $\mathcal R^{G,H}_{N,\vare,c}$ be the set 
  \[
    \mathcal R^{\what G,\what H}_{N,\vare,c} \coloneqq \bigcap_{k=1}^d \Big(  \mathcal{R}^{\partial_{e_k} \what H}_{N,\vare,c} \cap \mathcal{R}^{\what G}_{N,\vare,c}\Big),
  \]
  where the set $\mathcal{R}^{\cdot}_{N,\vare,\delta}$ are defined in  \eqref{definitionR}. By the superexponential estimates stated in Propositions \ref{replacementBulk}, \ref{replacementGauche} and \ref{replacementDroite}, and the inequality \eqref{inegaliteLOG},  it is enough to prove that, for every $0<a \leq 1$ 
  \[
    \varlimsup_{c\to 0} \varlimsup_{\vare \to 0} \varlimsup_{N \to + \infty} \frac{1}{N^d} \log \mathbb Q_{\delta_{\sigma^N}}^{N,\what b}(\mathcal{K} \cap \mathcal R^{\what G,\what H}_{N,\vare,c}  ) \leq -\frac{1}{1+a} \inf_{\what \pi \in \mathcal{K}} \Ea(\what \pi) + a C(T+1). 
  \]
  Let us define the functional $A^{\what H,\what J}_{\vare}:\DM \longrightarrow \R$ by 
  \[
    A^{\what H,\what J}_{\vare}(\what \pi) \coloneqq \sup_{1\leq k \leq d}\tilde{\mathcal{Q}}_{\what H,k}(\what \pi * \what u_\vare)  + \mathfrak M_{\what J}(\what\pi) ,
  \] 
  where $\mathfrak M$ is defined in \eqref{definitionM} (cf. Appendix \ref{Miscellaneous})
    and $\what J \coloneqq (J_1,J_2)$. From Hölder's inequality with $p=1+a$ and $q=\frac{1+a}{a} < +\infty$ because $a\neq 0$ , we have  
  \begin{align*}
    \frac{1}{N^d} \log \mathbb Q_{\delta_{\sigma^N}}^{N,\what b}(\mathcal{K} \cap \mathcal R^{\what G,\what H}_{N,\vare,c} ) \leq& \frac{1}{(1+a)N^d} \log \mathbb E_{\delta_{\sigma^N}}^{N,\what b}\left[ \1_{\mathcal{K}\cap \mathcal R^{\what G,\what H}_{N,\vare,c}} e^{-a(1+a)N^d A^{\what H,\what J}_{\vare}(\what \pi)} \right] \\
    &+ \frac{a}{(1+a)N^d} \log \mathbb E_{\delta_{\sigma^N}}^{N,\what b}\left[ e^{(1+a)N^d A^{\what H,\what J}_{\vare}(\what \pi)} \right].
  \end{align*}
  From its definition, $\mathfrak M_{\what J}$ is uniformly bounded by $\frac{C_{\what J}}{N}$ for some positive constant $C_{\what J}$. Therefore using Lemma \ref{lemma:energyEstimate} and inequality \eqref{inegaliteLOG},  we get
  \[
    \varlimsup_{\vare \to 0} \varlimsup_{N\to + \infty} \frac{a}{(1+a)N^d} \log \mathbb E_{\delta_{\sigma^N}}^{N,\what b}\left[ e^{(1+a)N^d A^{\what H,\what J}_{\vare}(\what \pi)} \right] \leq a C_1 (T+1),
  \]
  because $(1+a)\leq 2 = \delta_0$ and  $1\leq 1+a$. 
  
  It now remains to show that 
  \[
    \varlimsup_{c\to 0 }\varlimsup_{\vare \to 0} \varlimsup_{N\to + \infty} \frac{1}{(1+a)N^d} \log \mathbb E_{\delta_{\sigma^N}}^{N,\what b}\left[ \1_{\mathcal{K}\cap \mathcal R^{\what G,\what H}_{N,\vare,c}} e^{-a(1+a)N^d A^{\what H,\what J}_{\vare}(\what \pi)} \right]  \leq - \frac{1}{1+a}\inf_{\what \pi \in \mathcal K} \Ea(\what \pi) .
  \]
   Considering the expressions of the mean one exponential martingale in \eqref{MartExp}, \eqref{approxExpMart} and applying in our setting the argument, by now standard, done in Chapter 10, Section 4 of \cite{kl}, we get  
  \begin{align*}
    &\frac{1}{(1+a)N^d} \log \mathbb E_{\delta_{\sigma^N}}^{N,\what b}\left[ \1_{\mathcal{K}\cap \mathcal R^{\what G,\what H}_{N,\vare,c}} e^{-a(1+a)N^d A^{\what H,\what J}_{\vare}(\what \pi)} \right] \\
    \leq & \frac{1}{(1+a)} \sup_{\what \pi \in \mathcal{K}}\left( -  \left[ J_{\what G}^{\what E} (\what \pi^\vare) + a(1+a)A^{\what H,\what J}_{\vare}(\what \pi)\right]  \right) + O_{\what G}\left(\frac{1}{N}\right) + O_{\what G}(\vare) + O_{\what J}\left(\frac{1}{N}\right) + O_{\what J}(\vare)  + O(c).
  \end{align*}
  Letting $N\to + \infty$,  $\vare \to 0$ and  $c\to 0$, we get that 
  \begin{align*}
    &\varlimsup_{c\to 0 }\varlimsup_{\vare \to 0}\varlimsup_{N\to + \infty}\frac{1}{(1+a)N^d} \log \mathbb E_{\delta_{\sigma^N}}^{N,\what b}\left[ \1_{\mathcal{K}\cap \mathcal R^{\what G,\what H}_{N,\vare,c}} e^{-a(1+a)N^d  A^{\what H,\what J}_{\vare}(\what \pi)} \right] \\
    \leq & \frac{1}{(1+a)} \varlimsup_{\vare \to 0} \sup_{\what \pi \in \mathcal{K}} \left( -  \left[ J_{\what G}^{\what E} (\what \pi^\vare) + a(1+a)  A^{\what H,\what J}_{\vare}(\what \pi)\right]  \right) \\
    \leq & \frac{1}{(1+a)}  \sup_{\what \pi \in \mathcal{K}} \left( - J_{\what G}^{\what E} (\what \pi) - a(1+a)\varliminf_{\vare \to 0} A^{\what H,\what J}_{\vare}(\what \pi)  \right).
  \end{align*}
    In the last inequality, we used the compactness of $\mathcal{K}$, the lower semicontinuity of $J^{\what E}_{\what G} (\cdot)$ and  $A^{\what H,\what J}_{\vare}(\cdot)$ for every $\what G$, $\what H$, $\what J$ and $\vare$ and Minmax lemma (cf. Lemma 3.2, Appendix 2 in \cite{kl}). Optimizing over $\what G, \what H, \what J$, and using again Minmax lemma, the last term is further bounded by
  \begin{align*}
    &  - \frac{1}{(1+a)} \inf_{\what \pi \in \mathcal{K}} \left(   \Itilde{\what \pi} + a(1+a) \left[ \sup_{\what H} \varliminf_{\vare \to 0} \sup_{k = 1,\dots, d} \tilde{\mathcal{Q}}_{\what H,k}(\what \pi * \what u_\vare) + \sup_{\what J} \mathfrak M_{\what J}(\what \pi)\right]    \right).  \end{align*}
  Furthermore, $\sup_{\what J} \mathfrak M_{\what J}(\what\pi)$ is finite if and only if $\what \pi \in \mathcal D([0,T],\what{\mathcal{M}}_0)$ (see Appendix \ref{supMfiniD0TM}). Thus, if $\what \pi \notin \mathcal{D}([0,T],\widehat{\mathcal{M}}_0)$
    \begin{align*}
      &\Itilde{\what \pi} + a(1+a) \left[ \sup_{\what H} \varliminf_{\vare \to 0} \sup_{k = 1,\dots, d}\tilde{\mathcal{Q}}_{\what H,k}(\what \pi * \what u_\vare) + \sup_{\what J} \mathfrak{M}_{\what J}(\what \pi)\right]  = + \infty = \Ea (\what \pi).
    \end{align*}
      Otherwise, for $\what \pi \in \mathcal{D}([0,T],\widehat{\mathcal{M}}_0)$, $\sup_{\what J} \mathfrak{M}_{\what J}(\what \pi) =0$ from Lemma \ref{supMfiniD0TM}. Moreover, 
    from the lower semi-continuity of $\sup_{k = 1,\dots, d} \tilde{\mathcal{Q}}_{\what H,k}(\cdot)$, for $\what \pi \in  \mathcal{D}([0,T],\widehat{\mathcal{M}}_0)$, we get
     
    \begin{align*}
      \Itilde{\what \pi} + a(1+a) \left[ \sup_{\what H} \varliminf_{\vare \to 0} \sup_{k = 1,\dots, d} \tilde{\mathcal{Q}}_{\what H,k}(\what \pi * \what u_\vare) + \sup_{\what J} \mathfrak{M}_{\what J}(\what \pi)\right] 
      & \geq \Ea (\what \pi) .
    \end{align*}
    This concludes the proof. \qedhere
\end{proof}

Let us define  for $\what \pi \in \DM$ and $a\geq 0$,  the functionals
\begin{align*}
    \mathcal{E}_{I_2}(\what \pi) \coloneqq & \left\{ \begin{array}{ll}
        \mathcal{Q}_{I_2}(\what \pi), & \qquad \text{if $\what \pi \in \mathcal{D}([0,T], \wm)$,}  \\
        + \infty,  & \qquad \text{else,} 
    \end{array}    \right. \\
    \Ea^{I_2}(\what \pi) \coloneqq & \left\{ \begin{array}{ll}
        \Itilde{\what\pi} + a(1+a) \mathcal{E}_{I_2}(\what \pi) ,\qquad &\text{if $\what \pi \in \mathcal{D}([0,T],\what{\mathcal{M}}_0)$,} \\
        + \infty , \qquad & \text{else.}
      \end{array} \right.
\end{align*}
\begin{corollary}\label{coro:upperBound}
  Let $\mathcal{K}$ be a compact set of $\DM$. There exists a positive constant $C \geq 1$ depending on $\Sigma$ such that for any $a\in (0,1]$, and  any sequence $(\sigma^N)_{N \geq 1}$ of configurations such that $(\delta_{\sigma^N})_{N \geq 1}$ is associated to $\what \ga$ in the sense of \eqref{pfl},  
  \[
    \varlimsup_{N\to + \infty} \frac{1}{N^d} \log \mathbb Q_{\delta_{\sigma^N}}^{N,\what b}(\mathcal{K}) \leq - \frac{1}{1+a} \inf_{\what\pi \in \mathcal{K}} \EaC^{I_2}(\what \pi) + a C (T+1).
  \]
\end{corollary}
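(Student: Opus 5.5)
The corollary will follow from Proposition \ref{prop:upperBound} by a pointwise comparison of the functionals $\Ea$ and $\EaC^{I_2}$. The constant $C\ge 1$ will be chosen so that $\Sigma(\what\rho)\le C I_2$ on $\bar{\mb I}$, which is possible because the entries of $\Sigma$ are bounded there. I will show that for every $\what\pi\in\DM$,
\[
\EaC^{I_2}(\what \pi)\le \Ea(\what\pi).
\]
Granting this, $\inf_{\mathcal K}\EaC^{I_2}\le \inf_{\mathcal K}\Ea$, and Proposition \ref{prop:upperBound} gives the claim with the same constant $C(T+1)$ term (enlarged if necessary so that $C\ge1$ serves both purposes).

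For the comparison, first take $\what\pi\notin\mathcal D([0,T],\wm)$. Then both functionals equal $+\infty$, and there is nothing to prove. Otherwise both contain the same term $\Itilde{\what\pi}$, so it suffices to show
\[
aC^{-1}(1+aC^{-1})\,\mathcal Q_{I_2}(\what\pi)\le a(1+a)\,\mathcal Q(\what\pi).
\]
Since $C\ge1$, the left coefficient is at most $C^{-1}a(1+a)$. It therefore suffices to prove $\mathcal Q_{I_2}(\what\pi)\le C\,\mathcal Q(\what\pi)$, which I will get from the variational formulas directly. For $\what G\in(\mathcal C_c^\infty([0,T]\times\La))^2$, replace $\what G$ by $\what G/C$ in the supremum defining $\mathcal Q_k$. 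Using $\scal{}{\what G}{\Sigma(\what\pi)\what G}\le C\scal{}{\what G}{\what G}$, this gives
\[
\mathcal Q_k(\what\pi)\ \ge\ \frac1C\int_0^T\scal{}{\what\pi_t}{\partial_{e_k}\what G_t}dt-\frac{1}{2C^2}\int_0^T C\scal{}{\what G_t}{\what G_t}dt
=\frac1C\Big(\int_0^T\scal{}{\what\pi_t}{\partial_{e_k}\what G_t}dt-\frac12\int_0^T\scal{}{\what G_t}{\what G_t}dt\Big).
\]
Taking the supremum over $\what G$ and summing over $k$ yields $\mathcal Q(\what\pi)\ge C^{-1}\mathcal Q_{I_2}(\what\pi)$, as required.

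The only point that needs care is the identification $\Ea=\Itilde{\cdot}+a(1+a)\mathcal E$ on $\mathcal D([0,T],\wm)$. The paper notes this after the definition of $\Ea$; it uses $\sup_k$ versus $\sum_k$ and the factor $2$ in $\tilde{\mathcal Q}_{\what H,k}$. If these normalizations differ from $\mathcal Q$, I would absorb the discrepancy into $C$, which depends only on $d$ and the bound on $\Sigma$. The same scaling argument, applied to $\tilde{\mathcal Q}$ directly instead of $\mathcal Q$, still gives $\mathcal Q_{I_2}\le C'\tilde{\mathcal Q}$ with $C'$ depending on $d$ and $\sup\Sigma$. That is the main bookkeeping obstacle; the rest is immediate.
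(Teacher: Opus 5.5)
Your proposal is correct and follows the paper's proof. Choose $C\ge 1$ with $\Sigma\le C I_2$, rescale the test function to get $\mathcal Q_{I_2}\le C\,\mathcal Q$, deduce the pointwise bound $\EaC^{I_2}\le\Ea$, and apply Proposition~\ref{prop:upperBound} with $C$ taken as the maximum of the two constants. Your caveat about the normalization of $\tilde{\mathcal Q}$ is also well taken: it is a supremum over $k$ with a factor $2$, so in fact $\tilde{\mathcal Q}=\frac14\max_k\mathcal Q_k\ge\frac1{4d}\mathcal Q$ rather than equal to $\mathcal Q$, and running the same rescaling directly on $\tilde{\mathcal Q}$, as you propose, handles this at the cost of a constant that also depends on $d$.
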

\begin{proof}
    Without loss of generality, take $C_{\Sigma}\geq 1$ such that $\Sigma(\what \pi) \leq C_\Sigma I_2$ for all $\what \pi \in \mathcal{D}([0,T],\wm)$, thus for $a\geq 0$ and $\what \pi \in \mathcal{D}([0,T],\wm)$
\[
    a(1+a)\Q_{I_2}(\what \pi) \leq  aC_{\Sigma}(1+aC_{\Sigma})  \Q(\what \pi).
\]
Therefore, for all $\what \pi \in \DM$ and $a\geq 0$, $\Ea(\what \pi) \geq \mathfrak E_{aC_{\Sigma}^{-1}}^{I_2}(\what \pi)$ giving us the result using Proposition \ref{prop:upperBound} and taking the $\max$ between $C$ and $C_\Sigma$.
\end{proof}

\begin{proof}[Proof of the upper bound for compact sets] Let $\mathcal{K}$ be a compact set of $\DM$. First, if 
\linebreak\mbox{$\inf_{\what \pi \in \mathcal{K}} {\mathfrak E}_{C^{-1}}^{I_2} (\what \pi) = + \infty$}  then, for all $\what \pi \in \mathcal{K}$, $\Itilde{\what \pi} = + \infty$ or $\mathcal{E}_{I_2}(\what \pi) = + \infty$ and the upper bound is satisfied using Corollary \ref{coro:upperBound} because $\inf_{\what \pi \in \mathcal{K}} \I{\what \pi}= + \infty$. Suppose now that $\inf_{\what \pi \in \mathcal{K} }  {\mathfrak E}_{C^{-1}}^{I_2} (\what \pi) < + \infty$, then there exists $\what \pi \in \mathcal{K}$
such that $\Itilde{\what \pi} =\I{\what \pi}< +\infty$ and $\mathcal{E}_{I_2}(\what \pi)< + \infty$, in this case $\inf_{\what \pi \in \mathcal{K}} {\mathcal E}_{I_2}(\what \pi) < +\infty$ and for $a\in (0,1]$
    \[
      \inf_{\what \pi \in \mathcal{K}}\EaC^{I_2}(\what \pi) = \inf_{ \substack{\what \pi \in \mathcal{K} \\ {\mathfrak E}_{C^{-1}}^{I_2} (\what \pi) < + \infty}} \left[\I{\what \pi} + a C^{-1}(1+aC^{-1}) {\mathcal{E}_{I_2}}(\what \pi)\right].
    \]
In view of  Corollary  \ref{coro:upperBound}, we get
      \begin{align*}
        & \varlimsup_{N\to + \infty} \frac{1}{N^d} \log \mathbb Q_{\delta_{\sigma^N}}^{N,\what b}(\mathcal K) 
         \\
         \leq&  - \frac{1}{(1+a)} \inf_{ \substack{\what \pi \in \mathcal{K} \\ {\mathfrak E}_{C^{-1}}^{I_2} (\what \pi) < + \infty}}    \I{\what \pi} - \frac{aC^{-1}(1+aC^{-1})}{1+a} \inf_{ \substack{\what \pi \in \mathcal{K} \\ {\mathfrak E}_{C^{-1}}^{I_2} (\what \pi) < + \infty}} {\mathcal{E}_{^{I_2}}}(\what \pi) + a C(T+1).
      \end{align*}
 Letting $a\to 0 $, we finally get,
\[
 \varlimsup_{N\to + \infty} \frac{1}{N^d} \log \mathbb Q_{\delta_{\sigma^N}}^{N,\what b}(\mathcal K) \le - \inf_{ \substack{\what \pi \in \mathcal{K} \\ {\mathfrak E}_{C^{-1}}^{I_2} (\what \pi) < + \infty}}     \I{\what \pi}
 \le - \inf_{\what \pi \in \mathcal{K}}   \I{\what \pi},
\]
that concludes the proof of the upper bound for compact sets.\qedhere 
\end{proof}    

To pass from compact sets to closed sets, we use the exponential tightness for the sequence $(\mathbb Q_{\delta_{\sigma^N}}^{N,\what H})_{N\geq 1}$, which is now standard (cf. Chapter 10, Section 4 of \cite{kl}).

    \appendix

    \section{Basic tools}
    \subsection{Basic inequalities}
    For any $a,b\in\R$ and $\al>0$, 
    \begin{equation}
      (a+b)^2 \leq 2a^2 + 2b^2, \qquad ab \leq \frac{\al }{2}a^2 + \frac{1}{2\al}b ^2. \label{BasicSommeCarre}
    \end{equation}
  For any $A,B\in\R^2$ and $S\in S_2^{++}(\R)$, 
  \begin{equation}
    \scal{}{A}{B} \leq \frac{\norm{2}{S A}^2}{2} + \frac{\norm{2}{S^{-1}B}^2}{2} \qquad \text{and} \qquad \scal{}{A}{B} \leq \frac{\scal{}{A}{S A}}{2} + \frac{\scal{}{B}{S^{-1} B}}{2} ,\label{inegaliteCarreScalaire}
  \end{equation}
  moreover, there exists a unique matrix $\sqrt{S}$ in $S_2^{++}(\R)$ such that $S = \left(\sqrt{S}\right)^2$.
  
  Indeed, the first inequality can be proven easily by using the Cauchy-Schwarz inequality and using \eqref{BasicSommeCarre} with $\al=1$ after noticing that $\scal{}{A}{B} = \scal{}{SA}{S^{-1}B}$ for any $S\in S_2^{++}(\mathbb R)$. The second inequality is just another way to write the first one with $\sqrt{S}$ in place of $S$. 
    
    For any sequences $(a_N)_{N\in\N},(b_N)_{N\in\N}$ of positive real numbers,
    \begin{equation}
      \lim_{N\to + \infty} \frac{1}{N^d} \log(a_N + b_N) \leq \max\left( \varlimsup_{N\to + \infty} \frac{1}{N^d} \log(a_N), \varlimsup_{N\to + \infty} \frac{1}{N^d} \log(b_N) \right). \label{inegaliteLOG}    
    \end{equation}
  
\subsection{Sobolev spaces}\label{sobolev}
\subsubsection{Classical Sobolev spaces and Trace operator}

For $0\leq l \leq + \infty$, let us remind that  $\mathcal C^{l}_{0,-} (\overline \Lambda)$ stands for
the subspace  of functions in $\mathcal C^{l} (\overline \Lambda)$ vanishing at the left boundary $\Gamma^-$ of $\L$.
Denote by  ${\ch}_{0,-}(\Lambda)$ the closure of $\mathcal C^{l}_{0,-} (\overline \Lambda)$ in $H^1(\L)$. This is the subspace of $H^1(\L)$
consisting of all functions with zero trace on $\Gamma^-$:
\begin{equation}
 {\ch}_{0,-}(\Lambda)\, \coloneqq\,\Big\{F\in  H^1(\L)  \; :\; \text{Tr}(F) = 0 \ \text{on} \ \Gamma^-\Big\}.
\end{equation}
A simple computation shows that, for any $F\in {\ch}_{0,-}(\Lambda)$,
\begin{equation}\label{equiv-norm}
 \|F\|_{L^2(\La)}\le \sqrt{2}\,  \|\partial_1 F\|_{L^2(\La)}\, \le\, \sqrt{2}\,  \|\nabla F\|_{L^2(\La)}:=\sqrt{2}\sqrt{\sum_{k=1}^d \|\partial_{e_k} F\|_{L^2(\La)}^2}\, 
\end{equation} 
and therefore, $\|F\|_{{\ch}_{0,-} (\Lambda)}\coloneqq  \|\nabla F\|_{L^2(\La)}$ defines a norm on ${\ch}_{0,-}(\Lambda)$, which is equivalent to that of $H^1(\L)$. Moreover, ${\ch}_{0,-}(\Lambda)$ is a Hilbert space for the inner product
$$
\big\< F,G{\big\>}_{{\ch}_{0,-}(\Lambda)} =\sum_{k=1}^d \big\< \partial_{e_k} F,\partial_{e_k}G\big\>.
$$
  
Denote by ${\ch}_{0,-}^*(\Lambda)$ the dual of ${\ch}_{0,-}(\Lambda)$, which is a Hilbert
space equipped with its  norm that we denote by $\|{\cdot}\|_{0,-,*} $. Since $\mathcal C^{1}_{0,-} (\overline \Lambda)$ is dense in ${\ch}_{0,-}(\Lambda)$, 
\begin{equation}
\label{eq:norm-1}
\|  H \|^2_{0,-,*} \;=\; \sup_{ G \in \mathcal C^{1}_{0,-} (\overline \Lambda)}
\Big\{ 2\<H ,G \>- \int_{\Lambda}  \nabla G (u) \cdot \grad   G (u)   du \Big \} .
\end{equation}
In this formula, $\<H,G\>\coloneqq \<H,G\>_{{\ch}_{0,-}^*,{\ch}_{0,-}}$ stands for the
value of the linear form $H$ at $G$.

\begin{remark} {\ }\\
    \begin{itemize}
     \item[(i)] $\mathcal{H}_{0,-}$ is
        continuously embedded in  $L^2(\Lambda)$, that is the mapping $i_{\mathcal{H}_{0,-}}: H\in\mathcal{H}_{0,-} \mapsto H\in L^2(\Lambda)$ is continuous. 
        \item[(ii)] $\mathcal{H}_{0,-}$ is compactly embedded
        in  $L^2(\Lambda)$, that is,  any bounded sequence of ${\mathcal{H}_{0,-}}$ has a subsequence which converges in $L^2(\Lambda)$, (cf. Theorem 1.21 in  \cite{Roubi}).
        \item[(iii)] $L^2(\Lambda)$ is continuously embedded in  $\mathcal{H}_{0,-}^*$. Moreover, for $H\in L^2(\Lambda)$
        \begin{equation}
            \scal{}{H}{G} \coloneqq \<H,G\>_{{\ch}_{0,-}^*,{\ch}_{0,-}}=\int_\Lambda H(u)G(u) du ,\quad \text {for any} \; G\in \mathcal{H}_{0,-}, \label{crochetDual}
        \end{equation}
        where the indices in the scalar product indicate the spaces paired by the duality.
    \end{itemize}
\end{remark}

 \subsubsection{Weighted Sobolev spaces}\label{wss}
We denote for a bounded positive semi-definite matrix  function $S : [0,T]\times \La \longrightarrow S^+_2$ where we recall that $S^+_2$ has been defined in Section \ref{SectionHydro}, the semi-inner product defined on $(L^2([0,T]\times\Lambda))^2$,
\begin{equation}\label{inner-weit0}
   \scall{S}{\what H}{\what G} \coloneqq \int_0^T  \scal{}{\what H_t}{S_t \what G_t}  \, dt,
\end{equation}
where $\scal{}{\cdot}{\cdot}$ is the inner product defined in \eqref{produitScalaireChapo}.
Denote by $\LL^2(S)$  the Hilbert space induced by the set
$(L^2([0,T]\times\Lambda))^2$
endowed with semi-inner product \eqref{inner-weit0}. “Induced” means that we first declare two functions $\what G , \what H\in (L^2([0,T]\times\Lambda))^2$ to be equivalent if $\scall{S}{\what H -\what G}{\what H-\what G}=0$ and then we complete the quotient space with
respect to the inner product. Denote by $\norm{S}{\cdot}$ the norm associated to the inner product $\scall{S}{\cdot}{\cdot}$. For the matrix identity $S=I_2$, this inner product is already defined in \eqref{doubleProduitScall}: $\scall{I_2}{\cdot}{\cdot} = \scall{}{\cdot}{\cdot}$. For simplicity, when $S=I_2$, we denote $\LL^2\coloneqq \LL^2(I_2)$.\\
Notice that, since $S$ is bounded, $( \mathcal C^{1,2}_{c}((0,T)\times \La))^2$ and $( \mathcal C^{1,2}_{0,-}([0,T]\times \La))^2$ are dense in $\LL^2(S)$. 

Similarly, we consider the semi-inner product defined on $\left(L^2([0,T],{\ch}_{0,-}(\Lambda))\right)^2$ by
\begin{equation}\label{norme1S}
    \scall{1,S}{\what G}{\what  H} \coloneqq   \sum_{k=1}^d \scall{S}{\partial_{e_k} \what  G}{\partial_{e_k} \what H},
\end{equation}
and denote by ${\mathbb H}_{0,-}(S)$ the Hilbert space induced by the set
$\left(L^2([0,T],{\ch}_{0,-}(\Lambda))\right)^2$
endowed with the inner product $\scall{1,S}{\cdot}{\cdot}$.  We denote by $\norm{1,S}{\cdot}$ the associated norm. 

Let ${\mathbb H}_{0,-}^*(S)$ be the dual of ${\mathbb H}_{0,-}(S)$, which is
a Hilbert space equipped with its norm $\Vert \cdot \Vert_{1,S,*}$. Notice that, since $S$ is bounded, $( \mathcal C^{1,2}_{0,-}([0,T]\times \overline \La))^2$ is dense in ${\mathbb H}_{0,-}(S)$ and
\begin{equation*}
\|\what{L}\|^2_{1,S,*} \;=\; \sup_{\what{G}\in \left(  \mathcal C_{0,-}^{1,2}([0,T]\times \overline{\La})\right)^2}
\Big\{ 2 \<\!\<\what{L}, \what{G} \>\!\> \;-\;
\Vert \what{G} \Vert^2_{1, S} \Big \}\;.
\end{equation*}
In this formula, $\<\!\<\what{L}, \what{G} \>\!\>$ stands for the value of the
linear form $\what{L}$ at $\what{G}$. When $S=I_2$, we denote simply ${\mathbb H}_{0,-}(S)$ and ${\mathbb H}_{0,-}^*(S)$ respectively by ${\mathbb H}_{0,-}$ and ${\mathbb H}_{0,-}^*$.

Denote by $\W_{[0,T]}^{1,2}\coloneqq\W^{1,2}([0,T]; H^1(\La),\mathcal H_{0,-}^*)$, the Sobolev space of functions $H\in H^1(\La)$ with time derivative $\partial_t H\in {\mathbb H}_{0,-}^*$:
\begin{equation}\label{W1,2}
   \W^{1,2}_{[0,T]} =\left\{ H\in L^2([0,T]; H^1(\La))\quad \text{such that}\quad   \partial_t H\in L^2([0,T]; \mathcal H_{0,-}^*)\right\}. 
\end{equation}
Similarly, we denote $\W_{[0,T],0,-}^{1,2}\coloneqq\W^{1,2}([0,T]; \mathcal H_{0,-},\mathcal H_{0,-}^*)$. 
\begin{proposition}\label{compact-embeding}
    ${\W}^{1,2}_{[0,T]}$ is compactly embedded
        in  $L^2([0,T],L^2(\La))$. 
\end{proposition}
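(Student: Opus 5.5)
This is an Aubin--Lions--Simon type compactness result, and I would prove it through the Lions--Aubin lemma applied to the Gelfand triple
\[
H^1(\La)\hookrightarrow L^2(\La)\hookrightarrow \mathcal H_{0,-}^*(\La).
\]
The embedding $H^1(\La)\hookrightarrow L^2(\La)$ is compact (Rellich; $\La=(-1,1)\times\bb T^{d-1}$ is a bounded Lipschitz domain, as in Remark (ii) and Theorem 1.21 of \cite{Roubi}). The embedding $L^2(\La)\hookrightarrow\mathcal H_{0,-}^*(\La)$ is continuous by Remark (iii), since $|\scal{}{H}{G}|\le \|H\|_{L^2}\|G\|_{L^2}\le \sqrt2\|H\|_{L^2}\|G\|_{\mathcal H_{0,-}}$ by \eqref{equiv-norm}. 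It is also injective, because $\mathcal C^\infty_c(\La)\subset\mathcal H_{0,-}(\La)$ is dense in $L^2$.

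With these embeddings in place, the Aubin--Lions lemma with exponents $p=q=2$ says exactly that $\{H\in L^2([0,T];H^1): \partial_t H\in L^2([0,T];\mathcal H_{0,-}^*)\}$ embeds compactly into $L^2([0,T];L^2(\La))$. That is the claimed statement.

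If I want a self-contained argument instead of citing the lemma, I would proceed as follows.

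First, Ehrling's lemma. For every $\eta>0$ there is $C_\eta$ with
\[
\|v\|_{L^2}\le \eta\|v\|_{H^1}+C_\eta\|v\|_{\mathcal H^*_{0,-}}.
\]
This follows by contradiction using the compactness of $H^1\hookrightarrow L^2$ and the injectivity of $L^2\hookrightarrow\mathcal H^*_{0,-}$.

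Second, take a bounded sequence $(H_n)$ in $\W^{1,2}_{[0,T]}$. By reflexivity, extract a subsequence converging weakly in $L^2([0,T];H^1)$, with $\partial_t H_n$ converging weakly in $L^2([0,T];\mathcal H^*_{0,-})$. Subtract the limit, so that $H_n\rightharpoonup 0$. It then suffices, by Ehrling's lemma, to show $H_n\to0$ strongly in $L^2([0,T];\mathcal H^*_{0,-})$.

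Third, I would prove this strong convergence, which is the main technical step. Write $H_n(t)=H_n(s)+\int_s^t\partial_tH_n$ in $\mathcal H^*_{0,-}$; this is legitimate because $H_n$ is absolutely continuous with values in $\mathcal H^*_{0,-}$. Two consequences follow:
\begin{itemize}
\item $\|H_n(t)-H_n(s)\|_{\mathcal H^*_{0,-}}\le |t-s|^{1/2}\|\partial_t H_n\|_{L^2(\mathcal H^*_{0,-})}$, so the $H_n$ are equicontinuous in $\mathcal H^*_{0,-}$.
\item Averaging over $s$ near $t$ gives a uniform bound on $\sup_t\|H_n(t)\|_{\mathcal H^*_{0,-}}$.
\end{itemize}
For fixed $t$, the local averages $\frac1\delta\int_t^{t+\delta}H_n\,ds$ are bounded in $H^1$ and converge weakly to $0$. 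Compactness of $H^1\hookrightarrow\mathcal H^*_{0,-}$ (a composition of a compact and a continuous embedding) then upgrades this to strong convergence in $\mathcal H^*_{0,-}$. Combining with the equicontinuity estimate gives $H_n(t)\to0$ in $\mathcal H^*_{0,-}$ for each $t$. Dominated convergence, using the uniform sup bound, then yields $H_n\to0$ in $L^2([0,T];\mathcal H^*_{0,-})$.

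Finally, Ehrling's lemma gives
\[
\|H_n\|_{L^2L^2}\le \eta\, C+C_\eta\|H_n\|_{L^2\mathcal H^*_{0,-}},
\]
and letting $n\to\infty$ and then $\eta\to0$ concludes.

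I expect no other real obstacle beyond the pointwise-in-time convergence in the third step, which is standard once the continuity embedding is in place.
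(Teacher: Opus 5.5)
Your proposal is correct and matches the paper's proof. The paper likewise just invokes the Aubin--Lions lemma (Lemma 7.7 in \cite{Roubi}) for the triple $H^1(\La)\subset L^2(\La)\subset\mathcal H_{0,-}^*$, with the first embedding compact and the second continuous. Your self-contained sketch, combining Ehrling's inequality with pointwise-in-time strong convergence in $\mathcal H_{0,-}^*$ obtained via local time averages, is the standard proof of that lemma and is sound.
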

\begin{proof} The proof follows from 
Aubin and Lions Lemma (see Lemma 7.7 in \cite{Roubi}),
    since $H^1(\La)\subset L^2(\L)\subset \mathcal H_{0,-}^*$ with continuous embedding $L^2(\L)\to \mathcal H_{0,-}^*$ and
compact embedding $H^1(\La)\to L^2(\L)$ (cf. Theorem 1.21 in \cite{Roubi}).
\end{proof}

The following result is a direct consequence of this Proposition.
\begin{corollary}\label{lemma:convergenceFaibleINTOforte}
      Let $(\what \pi^n)_{n\in\N}$ be a sequence of functions in $\left(L^2([0,T]\times \La)\right)^2$ converging weakly in $\left(L^2([0,T]\times \La)\right)^2$ to $\what \pi$. If there exists a constant $C_0$ such that for all $n\in\N$,
      \[
  \norm{\left(L^2([0,T],H^1(\La))\right)^2}{\what \pi^n}^2 + \norm{1,*}{\partial_t \what \pi^n}^2  \leq C_0,
\] 
      then $\what \pi^n$ converges strongly to $\what \pi$ in $\left(L^2([0,T]\times \La)\right)^2$.
\end{corollary}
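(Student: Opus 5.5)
The plan is to reduce the Corollary to Proposition \ref{compact-embeding} by a standard subsequence argument. The embedding there is stated for scalar functions, but it applies verbatim componentwise to the pairs in $(\W^{1,2}_{[0,T]})^2$.

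First, the assumed bound places every $\what \pi^n$ in a bounded subset of $(\W^{1,2}_{[0,T]})^2$. Indeed, $\norm{(L^2([0,T],H^1(\La)))^2}{\what\pi^n}^2\le C_0$ controls the space part. The quantity $\norm{1,*}{\partial_t\what\pi^n}^2\le C_0$ is exactly the norm of $\partial_t\what\pi^n$ in $(L^2([0,T],\mathcal H_{0,-}^*(\La)))^2$. To see this, I would use the density of $(\mathcal C^\infty_c(\La_T^+))^2$ in $(L^2([0,T],\mathcal H_{0,-}(\La)))^2$, together with the fact that on $\mathcal H_{0,-}$ the norm $\norm{1,I_2}{\cdot}$ coincides with the $L^2([0,T],\mathcal H_{0,-})$-norm. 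So $\sup_n\norm{(\W^{1,2}_{[0,T]})^2}{\what\pi^n}<\infty$.

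Next I argue by contradiction. Suppose $\what\pi^n$ does not converge strongly to $\what\pi$ in $(L^2([0,T]\times\La))^2$. Then there are $\eta>0$ and a subsequence $(\what\pi^{n_j})$ with $\norm{(L^2([0,T]\times\La))^2}{\what\pi^{n_j}-\what\pi}\ge\eta$ for all $j$. This subsequence is still bounded in $(\W^{1,2}_{[0,T]})^2$. By the compact embedding of Proposition \ref{compact-embeding}, a further subsequence converges strongly in $(L^2([0,T]\times\La))^2$ to some limit $\tilde\pi$. Strong convergence implies weak convergence, and the whole sequence converges weakly to $\what\pi$, so uniqueness of weak limits gives $\tilde\pi=\what\pi$. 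This contradicts the lower bound $\eta$, and therefore the full sequence converges strongly.

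The only point needing care is the identification of $\norm{1,*}{\cdot}$, defined via a supremum over compactly supported test functions in $\La_T^+$, with the dual norm used in the definition of $\W^{1,2}_{[0,T]}$. This is where the density statement is used. Everything else is the routine use of compactness together with the uniqueness of weak limits.
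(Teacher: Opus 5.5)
Your proof is correct and follows the same route as the paper, which simply calls the corollary a direct consequence of the compact embedding in Proposition~\ref{compact-embeding}. You have filled in the details the paper leaves implicit: the identification of $\norm{1,*}{\cdot}$ with the $L^2([0,T],\mathcal H_{0,-}^*(\La))$ dual norm via density, and the subsequence argument with uniqueness of weak limits.
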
  

\begin{lemma}\label{lemma:inegrationByPartsRoubicek}
  $\W_{[0,T],0,-}^{1,2}$ is continuously embedded in $ \mathcal C([0,T],L^2(\La))$ and for all $H,G\in \W_{[0,T],0,-}^{1,2}$ and $0\leq t_1\leq t_2 \leq T$, 
  \[
    \scal{}{ H_{t_2}}{G_{t_2}} - \scal{}{ H_{t_1}}{G_{t_1}} =\int_{t_1}^{t_2}\scal{}{\partial_s H_s}{ G_s}\, ds + \int_{t_1}^{t_2}\scal{}{H_s}{\partial_s G_s}\, ds ,
  \]
  where $\scal{}{\cdot}{\cdot}$ in the time integrals stands for the duality pairing defined in \eqref{crochetDual}.
\end{lemma}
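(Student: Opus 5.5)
The statement to prove is Lemma \ref{lemma:inegrationByPartsRoubicek}: $\W_{[0,T],0,-}^{1,2}$ embeds continuously into $\mathcal C([0,T],L^2(\La))$, and the integration by parts formula holds. This is the classical Lions--Magenes result for the Gelfand triple $\mathcal H_{0,-}(\La)\subset L^2(\La)\subset \mathcal H_{0,-}^*(\La)$. I will verify that we are in that setting and then run the standard density argument.

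First, the triple structure. By \eqref{equiv-norm}, $\mathcal H_{0,-}$ is a Hilbert space continuously embedded in $L^2(\La)$. It is also dense in $L^2(\La)$, since it contains $\mathcal C^\infty_c(\La)$. Identifying $L^2$ with its dual through \eqref{crochetDual} therefore gives continuous dense embeddings $\mathcal H_{0,-}\hookrightarrow L^2\hookrightarrow \mathcal H_{0,-}^*$. Moreover, for $H\in L^2$ and $G\in\mathcal H_{0,-}$, the duality pairing coincides with the $L^2$ inner product.

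Second, smooth functions. Let $H,G\in \mathcal C^1([0,T],\mathcal H_{0,-})$. Then $t\mapsto \langle H_t,G_t\rangle$ is $\mathcal C^1$ and its derivative is $\langle \partial_t H_t,G_t\rangle+\langle H_t,\partial_t G_t\rangle$, so the formula holds. Taking $G=H$ gives $\|H_t\|^2_{L^2}=\|H_s\|^2_{L^2}+2\int_s^t\langle\partial_r H_r,H_r\rangle\,dr$. Bound the integral by $\|\partial_t H\|_{L^2(\mathcal H^*_{0,-})}\|H\|_{L^2(\mathcal H_{0,-})}$, average over $s\in[0,T]$, and use $\|H_s\|_{L^2}\le C\|H_s\|_{\mathcal H_{0,-}}$. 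This yields
\[
\sup_{t\in[0,T]}\|H_t\|^2_{L^2(\La)}\le C_T\Big(\|H\|^2_{L^2([0,T],\mathcal H_{0,-})}+\|\partial_t H\|^2_{L^2([0,T],\mathcal H_{0,-}^*)}\Big).
\]

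Third, density. I will show that $\mathcal C^1([0,T],\mathcal H_{0,-})$ is dense in $\W^{1,2}_{[0,T],0,-}$. This is the main technical step. Extend $H$ beyond $[0,T]$ by reflection in time at both endpoints; the reflection preserves membership in $L^2(\mathcal H_{0,-})$ with derivative in $L^2(\mathcal H_{0,-}^*)$. Then mollify in time only. Because the mollification acts only in time, the values stay in $\mathcal H_{0,-}$ and the zero trace on $\Gamma^-$ is untouched. Time derivatives commute with convolution, so both the function and its derivative converge in the respective norms.

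With density in hand, the uniform bound above shows that approximating sequences are Cauchy in $\mathcal C([0,T],L^2)$. Hence every $H\in\W^{1,2}_{[0,T],0,-}$ has a continuous representative, and the embedding is continuous. Finally, pass to the limit in the formula for smooth $H,G$. The boundary terms converge by the uniform $L^2$ convergence. The integrals converge because each is a continuous bilinear pairing of $L^2(\mathcal H_{0,-}^*)$ with $L^2(\mathcal H_{0,-})$. This proves the identity for all $H,G\in \W_{[0,T],0,-}^{1,2}$ and $0\le t_1\le t_2\le T$.

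Alternatively, one can cite Lemma 7.3 of \cite{Roubi} directly once the Gelfand triple has been verified.
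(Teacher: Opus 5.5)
Your proposal is correct and follows the paper's route: the paper's proof is a one-line appeal to Lemma 7.3 of \cite{Roubi} for the triple $\mathcal H_{0,-}\subset L^2(\Lambda)\subset \mathcal H_{0,-}^*$. You check that this is a genuine Gelfand triple (including the density of $\mathcal H_{0,-}$ in $L^2(\Lambda)$, which the paper leaves implicit) and then reproduce the standard proof of the cited lemma: the smooth case, the uniform $L^2$ bound, and density obtained by reflecting and mollifying in time only.
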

\begin{proof}
  The proof follows from Lemma 7.3 of \cite{Roubi} because $\mathcal{H}_{0,-} \subset L^2(\La) \subset \mathcal{H}_{0,-}^*$.
\end{proof}

    \begin{remark}
    With the notations above, for all $\what \pi\in \mathcal{D}([0,T], \wm) $ such that  $\mathcal{Q}(\what \pi)$ is finite,  
\[
    \mathcal{Q}(\what\pi) = \frac12 \scall{1, \Sigma(\what \pi)^{-1}}{\what \pi}{\what \pi}, \qquad \text{and} \qquad \Q_{I_2}(\what \pi) = \frac{1}{2} \scall{}{\grad \what \pi}{\grad \what \pi}.
\]  
 Similarly if $\I{\what \pi}$ is finite, then 
 \[
    \I{\what \pi} =  \frac12\norm{1,\Sigma(\what\pi),*}{\ell^{\what E}(\what \pi \, | \, \what \gamma)}^2 ,
 \] 
 where $\ell^{\what E}(\what \pi\, | \, \what \gamma) : \what G \longmapsto \ell^{\what E}_{\what G}(\what \pi\, | \, \what \gamma) $ is the linear functional defined in \eqref{weakAlternative}.       
    \end{remark}

We conclude this subsection with the next useful result concerning the convergence of the Trace:

\begin{lemma}\label{Lemma:ConvergenceFaibleBord}
    Let $(\what \pi^n)_{n\in \N}$ be a sequence in $(L^2([0,T],H^1(\La)))^2$. If the sequence $(\what \pi^n)_{n\in\N}$ converges weakly to $\what \pi$ in $(L^2([0,T],H^1(\La))^2$ then $(\Tr(\what\pi^n))_{n\in\N}$ converges weakly to  $\Tr(\what \pi)$ in $(L^2([0,T]\times \Ga))^2$. 
\end{lemma}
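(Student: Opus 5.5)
The plan is to show that the trace operator, viewed on the time-dependent space $(L^2([0,T],H^1(\La)))^2$, is a bounded linear map into $(L^2([0,T]\times\Ga))^2$, and then to use the fact that bounded linear operators between Hilbert spaces are weak-weak continuous. Since everything is componentwise, it suffices to treat one component, so I work with scalar functions $F\in L^2([0,T],H^1(\La))$.

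\textbf{Step 1: boundedness.} Define $\mathcal T: L^2([0,T],H^1(\La))\to L^2([0,T]\times\Ga)$ by $(\mathcal T F)_t \coloneqq \Tr(F_t)$ for almost every $t$. The map $t\mapsto \Tr(F_t)$ is strongly measurable with values in $L^2(\Ga)$, since it is the composition of the measurable map $t\mapsto F_t\in H^1(\La)$ with the continuous linear map $\Tr$. We then identify $L^2([0,T],L^2(\Ga))$ with $L^2([0,T]\times\Ga)$. By continuity of $\Tr:H^1(\La)\to L^2(\Ga)$ there is $C_{\Tr}$ with $\|\Tr F_t\|_{L^2(\Ga)}\le C_{\Tr}\|F_t\|_{H^1}$. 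Squaring and integrating in time gives
\[
\|\mathcal T F\|_{L^2([0,T]\times\Ga)}\le C_{\Tr}\|F\|_{L^2([0,T],H^1(\La))},
\]
so $\mathcal T$ is linear and bounded.

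\textbf{Step 2: weak continuity.} Fix $\Phi\in L^2([0,T]\times\Ga)$. The functional $F\mapsto \int_0^T\int_\Ga \Phi_t(r)\,(\mathcal T F)_t(r)\,dS(r)\,dt$ is a continuous linear functional on $L^2([0,T],H^1(\La))$; equivalently, it equals $\langle \mathcal T^*\Phi, F\rangle$, where $\mathcal T^*$ is the Hilbert adjoint. Weak convergence of $\pi^n$ to $\pi$ in $L^2([0,T],H^1(\La))$ therefore gives
\[
\int_0^T\!\!\int_\Ga \Phi\,\Tr(\pi^n) \;\to\; \int_0^T\!\!\int_\Ga \Phi\,\Tr(\pi),
\]
which is exactly weak convergence of $\Tr(\pi^n)$ to $\Tr(\pi)$ in $L^2([0,T]\times\Ga)$. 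Applying this to each of the two components yields the lemma.

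\textbf{Main point to check.} There is no real obstacle. The only technical item is the measurability and identification in Step 1: the time-sliced trace must coincide with the object written $\Tr(\what\pi)$ in the paper. I would handle this by checking the identity first on the dense class $\mathcal C^{1,2}([0,T]\times\overline\La)$, where the trace is the classical restriction, and then extending to all of $L^2([0,T],H^1(\La))$ by density together with the bound from Step 1.
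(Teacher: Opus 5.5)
Your proof is correct, but it takes a different route from the paper. The paper never introduces the time-sliced trace operator $\mathcal T$. Instead, for smooth test functions $\what H$ vanishing on $\Ga^+$, it uses the integration-by-parts (Green) identity
\[
\int_0^T\!\!\int_{\Ga^-}\what\pi^n_t(r)\,\what H_t(r)\,dS(r)\,dt=-\sum_{k=1}^d\Big\{\scall{}{\what\pi^n}{\partial_{e_k}\what H}+\scall{}{\partial_{e_k}\what\pi^n}{\what H}\Big\}.
\]
This rewrites the boundary pairing as a bulk functional of $\what\pi^n$ and $\nabla\what\pi^n$, which passes to the limit under weak convergence in $(L^2([0,T],H^1(\La)))^2$. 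The same identity for $\what\pi$ then identifies the limit.

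Your argument instead rests on the abstract fact that the bounded linear map $\mathcal T$ is weak-to-weak continuous. This buys two things. First, it covers the whole boundary $\Ga$ at once; the paper's test functions vanish on $\Ga^+$, so it only controls the trace on $\Ga^-$, although that is all Theorem~\ref{thm:sciI} uses. Second, it gives convergence against every $\Phi\in L^2([0,T]\times\Ga)$ directly. The paper tests only against a dense class of smooth functions, so it tacitly needs a uniform bound on $\|\Tr(\what\pi^n)\|_{L^2([0,T]\times\Ga)}$ to conclude weak $L^2$ convergence. That bound is exactly your Step~1 combined with the boundedness of weakly convergent sequences.

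The paper's route buys concreteness: an explicit formula adapted to the cylinder geometry, with no need for the identification $L^2([0,T],L^2(\Ga))\simeq L^2([0,T]\times\Ga)$. However, the Green identity for $H^1$ functions itself relies on the same trace continuity you invoke.
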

\begin{proof}
    For all $\what H\in \left( \mathcal C^\infty(\overline\La)\right)^2$ such that $\what H|_{\Ga^+} = \what 0 $,  
      \[
        \int_0^T \int_{\Ga^-} \what \pi_t^n(r) \what H_t(r) \, dr \, dt  = - \sum_{k=1}^d\left\{ \scall{}{\what \pi^n}{\partial_{e_k}\what H} + \scall{}{\partial_{e_k} \what \pi^n}{\what H} \right\} . 
      \] 
      From the weak convergence of $\what \pi^n$ to $\what \pi$ in $\left(L^2([0,T],H^1(\La))\right)^2$, this converges to  
      \begin{align*}
        \lim_{n\to +\infty} \int_0^T \int_{\Ga^-} \what \pi_t^n(r) \what H_t(r) \, dr \, dt  =  - \sum_{k=1}^d\left\{ \scall{}{\what \pi}{\partial_{e_k}\what H} + \scall{}{\partial_{e_k} \what \pi}{\what H} \right\} . 
      \end{align*}
        But we also have
\begin{align*}
        \int_0^T \int_{\Ga^-} \what \pi_t(r) \what H_t(r) \, dr \, dt  =  - \sum_{k=1}^d\left\{ \scall{}{\what \pi}{\partial_{e_k}\what H} + \scall{}{\partial_{e_k} \what \pi}{\what H} \right\},
      \end{align*}
      which concludes the proof. 
\end{proof}
    
    \subsection{Superexponential estimates (Replacement lemmas)} 
    \subsubsection{Dirichlet form estimates}
      To prove the hydrodynamic behavior of the system we follow the entropy and Dirichlet forms method
introduced in \cite{gpv}. Since the invariant measure is
not explicitly known, we compute the Dirichlet form of the state of the process with respect to a
product measure with slowly varying profile $\theta$. We prove that, provided
$\theta$ is smooth enough and takes the prescribed value $\what b$ at the right boundary, the rate to which
the entropy increases is of the order of the volume $N^d$, and for finite time $T$ this implies
only a modification of the constant multiplying $N^d$.
The Dirichlet form and entropy estimates, in the bulk rely on easeful summations and integrations by parts,  and on the following inequality (see for example Section 3.1 of \cite{mo1}).
\begin{equation}
  R_N(x,x\pm e_k;\sigma) \coloneqq   1 - \frac{\nu_{\what \theta}^N(\sigma^{x,x\pm e_k})}{\nu_{\what\theta}^N(\sigma)} \leq  \frac{C_2}{N} , \label{RNinegaliteN}
\end{equation}
for some positive constant $C_2$ and where the measure $\nu^N_{\what \theta}$ have been defined in \eqref{meas}.
    
    Denote for any probability measure $\nu$ and positive function $h\in L^2(\nu)$, the functional 
    \[
      \mathcal{D}_{0,N}(h,\nu) \coloneqq \frac 12 \sum_{k=1}^d \sum_{x,x+e_k\in\La_N} \int \left[h(\sigma^{x,x+e_k}) - h(\sigma)\right]^2 \, d \nu(\sigma) ,
    \]
    and the functional 
    \begin{align*}
      \mathcal{D}^\pm_{b,N}(h,\nu) \coloneqq& \frac{1}{2} \sum_{x\in \Ga_N^\pm} \int \left( r_x^{(1,0),-}(\what b , \sigma) + r_x^{(0,-1),-}(\what b , \sigma)\right)\left[ h(\sigma^{x,-}) - h(\sigma) \right]^2 \, d\nu(\sigma) \\
      & + \frac{1}{2} \sum_{x\in \Ga_N^\pm} \int \left( r_x^{(0,1),+}(\what b , \sigma) + r_x^{(-1,0),+}(\what b , \sigma)\right)\left[ h(\sigma^{x,+}) - h(\sigma) \right]^2 \, d\nu(\sigma) \\
      & + \frac{1}{2} \sum_{x\in \Ga_N^\pm} \int r_x^{(-1,1)}(\what b , \sigma) \left[ h(\sigma^{x,-}) - h(\sigma) \right]^2 \, d\nu(\sigma) .
    \end{align*}
  The proof of the next Lemma is similar to the one of \cite{mo1} (see Lemma 3.3) and use the following change of variable formulas :

    For any functions $f,g \in L^2(\nu_{\what \theta}^N)$, for any $k\in\{1,\dots,d\}$ and any sites $x,x+e_k \in \La_N$, 
        \begin{align*}
            \int  f(\sigma) g(\sigma^{x,x+e_k}) \, d \nu_{\what \theta}^N(\sigma)  & = \int  f(\sigma^{x,x+e_k})   g(\sigma)  [1-R_N(x,x+e_k;\sigma)] \, d \nu_{\what \theta}^N  (\sigma),  \\
            \int  r_x^{(1,0),-}(\what b, \sigma)  f(\sigma) g(\sigma^{x,-}) \, d \nu_{\what \theta}^N(\sigma)  & = \int   r_x^{(0,1),+}(\what b, \sigma) f(\sigma^{x,+})   g(\sigma)  \, d \nu_{\what \theta}^N  (\sigma),  \\
            \int r_x^{(-1,0),+}(\what b, \sigma) f(\sigma) g(\sigma^{x,+})  \, d \nu_{\what \theta}^N(\sigma)  & = \int  r_x^{(0,-1),-}(\what b, \sigma) f(\sigma^{x,-})   g(\sigma)  \, d \nu_{\what \theta}^N  (\sigma),  \\
            \int r_x^{(-1,1)}(\what b, \sigma) f(\sigma) g(T^x\sigma)  \, d \nu_{\what \theta}^N(\sigma)  & = \int  r_x^{(-1,1)}(\what b, \sigma)  f(T^x\sigma)   g(\sigma)  \, d \nu_{\what \theta}^N  (\sigma).
        \end{align*}
  
    \begin{lemma}\label{DirichelEstimatesInterieur}
      There exists a positive constant $C$ depending on $\what\theta$  such that for any $a>0$ and positive function $f\in L^2(\nu_{\what \theta}^N)$ 
      \begin{align*}
        \scal{\nu_{\what \theta}^N}{\mathcal L_{\what E,N}f}{f} & \leq - (1-a)\mathcal{D}_{0,N}(f,\nu_{\what \theta}^N) + \frac{C}{a}N^{d-2} \norm{L^2(\nu_{\what \theta}^N)}{f}^2 ,\\ 
        \scal{\nu_{\what \theta}^N}{L^\pm_{b,N}f}{f} & = - \mathcal{D}^\pm_{b,N}(f,\nu_{\what \theta}^N),
      \end{align*}
      where the function $\what \theta$ is defined in \eqref{eq:cC}.
    \end{lemma}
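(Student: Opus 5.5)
We prove the two estimates separately, following the scheme of Lemma 3.3 of \cite{mo1}. For the boundary part, we write $\scal{\nu_{\what \theta}^N}{L^\pm_{b,N}f}{f}$ as a sum over $x\in\Ga_N^\pm$ of five terms, one for each transition $\sigma\mapsto\sigma^{x,-}$, $\sigma^{x,+}$, $T^x\sigma$ with the rates \eqref{rate-b}. The key observation is that on $\Ga_N^\pm$ the profile $\what\theta$ has trace $\what b$. Hence the marginal of $\nu_{\what\theta}^N$ at $x$ is exactly the one for which the boundary rates satisfy detailed balance. This is what the four change of variable formulas stated just before the lemma express: the rate of each move, weighted by $\nu_{\what\theta}^N$, is matched by the rate of its reverse move. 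We therefore pair each term $\int r(\sigma) f(\sigma)[f(\sigma')-f(\sigma)]\,d\nu$ with its reverse and symmetrize, using $ab-a^2=-\frac12(a-b)^2+\frac12(b^2-a^2)$. The $\frac12(b^2-a^2)$ contributions cancel exactly by the change of variables, and what remains is $-\frac12\int r[f(\sigma')-f(\sigma)]^2\,d\nu$. Summing over the pairs $(1,0)\leftrightarrow(0,1)$, $(-1,0)\leftrightarrow(0,-1)$ and the flip gives exactly $-\mathcal D^\pm_{b,N}$, with no error term.

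For the bulk part, we fix a bond $(x,x+e_k)$ and first replace $C_N^{\what E}(x,x+e_k;\sigma)=e^{-\frac12\nabla_{x,x+e_k}\mathscr H_N^{\what h}}$. Since $\nabla_{x,x+e_k}\mathscr H_N^{\what h}=O(1/N)$ (the fields are linear with slope $E^i$), we have $C_N^{\what E}=1+O(1/N)$, and the weak asymmetry is of the same type as the profile error. We then write
\[
\int C f(\sigma)[f(\sigma^{x,x+e_k})-f(\sigma)]\,d\nu = -\frac12\int C[f(\sigma^{x,x+e_k})-f(\sigma)]^2\,d\nu+\frac12\int C[f(\sigma^{x,x+e_k})^2-f(\sigma)^2]\,d\nu .
\]
The first term, with $C\ge 1-C'/N$, gives $-\mathcal D_{0,N}$ up to a factor $(1-O(1/N))$. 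In the second term we use the first change of variable formula with $C(x,y;\sigma^{x,y})$ related to $C(x,y;\sigma)$ by detailed balance. This rewrites it as $\frac12\int f(\sigma)^2\,[\,\tilde C(\sigma)-C(\sigma)\,]\,d\nu$, where $\tilde C(\sigma)=C(\sigma^{x,x+e_k})(1-R_N)$.

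The main obstacle is that a naive bound using \eqref{RNinegaliteN} gives $O(1/N)$ per bond. Over $\sim N^d$ bonds this yields $N^{d-1}\|f\|^2$, which after multiplying by $N^2$ in the generator is too large by a factor $N$; the claimed bound requires order $N^{d-2}$. To gain the extra $1/N$ we expand $\tilde C-C$ to first order. It equals $\sigma$-dependent differences of the form $\frac1N\sum_i(\partial_{e_k}\Psi_i(\what\theta)-E^i_k)\,(\sigma(x+e_k)^i-\sigma(x)^i)+O(N^{-2})$, that is, a constant coefficient times an antisymmetric gradient $g(\sigma^{x,x+e_k})-g(\sigma)$ with $g$ local. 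For this term we write $\int f^2\,[g(\sigma^{x,x+e_k})-g(\sigma)]\,d\nu$ and apply the change of variables once more to move the exchange onto $f^2$. This costs another $O(1/N)$ factor from $R_N$, plus a term $\int g\,[f^2(\sigma)-f^2(\sigma^{x,x+e_k})]\,d\nu$. We bound the latter by factoring $f^2(\sigma)-f^2(\sigma')=(f(\sigma)-f(\sigma'))(f(\sigma)+f(\sigma'))$ and applying Young's inequality with parameter $a$: this gives $a\cdot\frac12\int[f(\sigma')-f(\sigma)]^2\,d\nu+\frac{C}{aN^2}\int f^2\,d\nu$. Summing over bonds yields $-(1-a)\mathcal D_{0,N}+\frac{C}{a}N^{d-2}\|f\|^2_{L^2(\nu_{\what\theta}^N)}$. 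The remaining $O(N^{-2})$ terms sum directly to $CN^{d-2}\|f\|^2$, and boundary bonds contribute at most $N^{d-1}\cdot N^{-2}$, which is also admissible. The delicate point is that $\what\theta\in\mathcal C^2$, which controls the second-order remainders uniformly thanks to \eqref{eq:cC2}.
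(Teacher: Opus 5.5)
Your proof is correct. The boundary half is exactly the paper's argument: since $\what\theta|_{\Gamma}=\what b$, the boundary change-of-variable formulas stated before the lemma are detailed balance of the boundary rates for $\nu^N_{\what\theta}$, so symmetrizing each pair of reverse moves gives $-\mathcal D^\pm_{b,N}$ with no error.

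In the bulk, your decomposition differs from the one the paper points to (Lemma 3.3 of \cite{mo1} with the $R_N$ formula). Write $\sigma'=\sigma^{x,x+e_k}$ and $\nu=\nu^N_{\what\theta}$.

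\textbf{Your route.} The split $ab-a^2=-\frac12(a-b)^2+\frac12(b^2-a^2)$ leaves $\frac12\int\kappa f^2\,d\nu$, where $\kappa=\tilde C-C$, with no gradient of $f$. You recover one by Taylor expanding $\kappa$ and changing variables a second time. This is valid but unnecessary. Indeed $\kappa(\sigma')\nu(\sigma')=-\kappa(\sigma)\nu(\sigma)$, which gives exactly $\int\kappa f^2\,d\nu=\frac12\int\kappa\,[f(\sigma)^2-f(\sigma')^2]\,d\nu$, and Young's inequality with $|\kappa|\le c/N$ then concludes.

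\textbf{The standard route.} Write $C=1+(C-1)$ and bound $\int(C-1)f(\sigma)[f(\sigma')-f(\sigma)]\,d\nu$ directly by Young. Then apply the $R_N$ formula to the whole integrand $\int f(\sigma)[f(\sigma')-f(\sigma)]\,d\nu$ and average with the original. This yields $-\frac12\int[f(\sigma')-f(\sigma)]^2d\nu-\frac12\int R_N f(\sigma')[f(\sigma)-f(\sigma')]\,d\nu$, the same kind of manipulation as in \eqref{eq:estimeeEnergiePreuve}. Every error term now carries a factor $f(\sigma')-f(\sigma)$ with an $O(1/N)$ coefficient. Young's inequality with parameter $a$ then gives $-(1-a)\mathcal D_{0,N}+\frac{C}{a}N^{d-2}\|f\|^2_{L^2(\nu)}$ for every $a>0$. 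It uses only \eqref{RNinegaliteN} and $|C-1|\le c/N$, with no expansion and no second derivatives of $\what\theta$.

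\textbf{A step you omit.} Keeping $C$ inside $-\frac12\int C[f(\sigma')-f(\sigma)]^2d\nu$ leaves $-(1-c/N-a)\mathcal D_{0,N}$. The excess $\frac{c}{N}\mathcal D_{0,N}$ can be of order $N^{d-1}\|f\|^2$, so it cannot be placed in the error term. When $N\ge 2c/a$, run your bound with $a/2$ to absorb it into $a\mathcal D_{0,N}$. When $N<2c/a$, use instead the crude bound $\mathcal D_{0,N}(f)\le KN^d\|f\|^2$ together with $1/a>N/(2c)$. The standard route avoids this issue altogether.
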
    

    \subsubsection{Replacement lemma in the bulk} 
    
    For a positive integer $\ell$ and $x\in\La_N$, denote
\[
  \La_\ell(x) \coloneqq \La_{N,\ell}(x) \coloneqq \left\{ y\in \La_N ~ \middle|~ \norm{\infty}{y-x} \leq \ell\right\} ,
\]
where $\norm{\infty}{\cdot}$ represents the sup norm in $\R^d$: for $x\in\R^d$, $\norm{\infty}{x} = \max_{i}\abs{x_i}$.
When $x=0$, we shall denote $\La_\ell(0)$ simply by $\La_\ell$.
Fix a configuration $\sigma \in \mathbb X_N$, denote the empirical mean density on the box $\La_\ell(x)$ by $\what \sigma^\ell(x)$:
\[
  \what\sigma^\ell(x) \coloneqq ( \sigma^{\ell,1}(x) ,  \sigma^{\ell,2}(x)) \coloneqq \left(\frac{1}{\abs{\La_\ell(x)}} \sum_{y\in\La_\ell(x)} \sigma(y),\frac{1}{\abs{\La_\ell(x)}} \sum_{y\in\La_\ell(x)} \sigma(y)^2 \right).  
\]
For a cylinder function $\mathfrak{h}$, that is a function on $\{0,1\}^\Z$ depending on $\sigma(x)$, $x\in\Z$ only through finitely many $x$, denote by $\tilde{\mathfrak{h}}(m,\phi)$ the expectation of $\mathfrak{h}$ with respect to $\nu_{(m,\phi)}^N$, the  product measure with density $(m,\phi)$:
\[
  \tilde{\mathfrak h}(m,\phi) \coloneqq \E^{\nu_{(m,\phi)}^N}(\mathfrak{h}(\sigma)).  
\]
Further, denote for $G\in  \mathcal C([0,T]\times \overline\La)$ and $\vare > 0$, 
\[
  V_{N,\vare}^{G,\mathfrak{h}}(s,\sigma) \coloneqq \frac{1}{N^d} \sum_{x\in\La_N} G_s\left(\frac{x}{N}\right) \left[ \tau_x \mathfrak{h}(\sigma) - \tilde{\mathfrak{h}}(\what\sigma^{\ent{\vare N}}(x)) \right]  ,
\]
where the sum is carried over all $x$ such that the support of $\tau_x \mathfrak{h}$ belongs to $\La_N$ and $\ent{\cdot}$ denotes the lower integer part.

The following propositions are essential, but we omit their proofs, since they are similar to those of \cite{flm}(see Proposition 6.2), \cite{mo1}(see Proposition 3.6),\cite{fgn1} (see Lemma 5.2 and Lemma 5.4),\cite{msv}(see Proposition 2,3 and 4 of Subsection 3.2).

\begin{proposition} \label{replacementBulk}
  Let $\left(\mu_N\right)_{N\geq 1}$ be a sequence of probabilities on $\mathbb X_N$ and $G\in  \mathcal C^{1,2}([0,T]\times \overline\La)$. For every $\delta >0$, 
  \[
    \varlimsup_{\vare \to 0 } \varlimsup_{N\to + \infty} \frac{1}{N^d} \log \mathbb P_{\mu_N}^{N,\what b} \left[ 
      \abs{\int_0^T V^{G,\mathfrak{h}}_{N,\vare }(s,\sigma_s) \, ds } > \delta
    \right] = -\infty.  
  \]
\end{proposition}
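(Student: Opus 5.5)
The plan is the classical superexponential replacement scheme of Kipnis--Olla--Varadhan, run with the reference product measure $\nu_{\what\theta}^N$ from \eqref{meas} in place of an invariant measure. The starting measure is arbitrary, so first bound $d\mathbb P_{\mu_N}^{N,\what b}/d\mathbb P_{\nu_{\what\theta}^N}^{N,\what b}$ by $\sup_\sigma \mu_N(\sigma)/\nu_{\what\theta}^N(\sigma)\le e^{C_{\what\theta}N^d}$. This is finite by \eqref{eq:cC2}, since every marginal of $\nu_{\what\theta}^N$ gives mass at least $c^*/2$ to each spin value. After this it suffices to prove the estimate under $\mathbb P_{\nu_{\what\theta}^N}^{N,\what b}$. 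By symmetry, treat the event $\int_0^T V\,ds>\delta$. Chebyshev's inequality with parameter $\gamma N^d$ gives the bound $e^{-\gamma\delta N^d}\,\mathbb E[\exp\{\gamma N^d\int_0^T V\,ds\}]$. The Feynman--Kac formula then bounds $\frac{1}{N^d}\log$ of the expectation by
\[
\int_0^T \sup_f\Big\{\gamma\langle V^{G,\mathfrak h}_{N,\vare}(s,\cdot),f\rangle_{\nu_{\what\theta}^N} + N^{-d}\langle \mathfrak L_N\sqrt f,\sqrt f\rangle_{\nu_{\what\theta}^N}\Big\}\,ds,
\]
where the supremum runs over densities $f$ with respect to $\nu_{\what\theta}^N$.

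Next apply Lemma \ref{DirichelEstimatesInterieur} with $a=1/2$. The boundary pieces are nonpositive and can be dropped. The bulk piece gives
\[
N^{2-d}\langle\mathcal L_{\what E,N}\sqrt f,\sqrt f\rangle\le -\tfrac12 N^{2-d}\mathcal D_{0,N}(\sqrt f,\nu_{\what\theta}^N)+2C .
\]
The constant $2C$ is harmless once divided by $\gamma$. So the task reduces to showing that, for each fixed $\gamma$,
\[
\varlimsup_{\vare\to0}\varlimsup_{N\to\infty}\sup_f\Big\{\langle V,f\rangle-\frac{N^{2-d}}{2\gamma}\mathcal D_{0,N}(\sqrt f)\Big\}\le 0 .
\]
Letting $\gamma\to\infty$ afterwards then gives $-\infty$.

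This variational bound is proved through the usual one-block and two-block estimates. Only densities with $\mathcal D_{0,N}(\sqrt f)\le C\gamma N^{d-2}$ matter. For these, the local Dirichlet form on a box of side $\ell$ (one block) or $\vare N$ (two blocks) is small after averaging over translations. Three technical points need care. First, $\nu^N_{\what\theta}$ is not translation invariant. The ratio bound \eqref{RNinegaliteN} controls the error in the change of variables $\sigma\mapsto\sigma^{x,x+e_k}$ at order $1/N$ per bond, and since $\what\theta$ is smooth, on a box of side $\vare N$ it can be replaced by the constant-profile measure $\nu^N_{\what\theta(x/N)}$ at a cost of $O(\vare)$ in relative entropy per site. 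Second, path moves connect distant sites in the two-block step. Third, $\mathfrak h$ and $G$ are bounded, so one can truncate and pass to the limit. One then projects $f$ onto the hyperplanes where both $\sum_{y\in\La_\ell}\sigma(y)$ and $\sum_{y\in\La_\ell}\sigma(y)^2$ are fixed. On each hyperplane the restricted measure is canonical and uniform over the admissible configurations, the exchange dynamics is irreducible, and the equivalence of ensembles for the three-state product measure (two conserved quantities, local CLT in two dimensions) replaces $\mathfrak h$ by $\tilde{\mathfrak h}(\what\sigma^\ell)$. The two-block step replaces $\what\sigma^\ell$ by $\what\sigma^{\ent{\vare N}}$ using the uniform continuity of $\tilde{\mathfrak h}$ on the compact set $\bar{\mathbf I}$.

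I expect the main obstacle to be the two-block estimate near the left and right boundaries together with the inhomogeneous reference measure. Boxes of size $\vare N$ centered within $\vare N$ of $\Ga_N$ stick out of the cylinder. I would handle them by restricting the sum in $V$ to $x$ at distance greater than $2\vare N$ from $\Ga_N$, which costs $O(\vare\|G\|_\infty\|\mathfrak h\|_\infty)$ and vanishes as $\vare\to0$. I would also check carefully that the $1/N$ error from \eqref{RNinegaliteN}, summed over a path of length $\vare N$, remains controlled by the Dirichlet form bound $\gamma N^{d-2}$.
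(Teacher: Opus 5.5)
Your proposal is correct and is essentially the standard argument the paper appeals to; the paper omits the proof and cites \cite{flm,mo1,fgn1,msv}. Both use reduction to the reference measure $\nu_{\what\theta}^N$ at cost $e^{CN^d}$, exponential Chebyshev plus Feynman--Kac, the bulk Dirichlet-form bound of Lemma \ref{DirichelEstimatesInterieur} with the nonpositive boundary forms dropped, and then one-block/two-block estimates with equivalence of ensembles on the two-parameter hyperplanes. One minor slip: under a marginal of $\nu_{\what\theta}^N$ the value $0$ has mass $1-\theta_2\ge 1-C^*$ rather than at least $c^*/2$, but the bound $\nu_{\what\theta}^N(\sigma)\ge e^{-CN^d}$ you need still holds.
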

\subsubsection{Replacement lemma at the left-hand side boundary}
\begin{proposition}\label{replacementGauche}
  Let $\left(\mu_N\right)_{N\geq 1}$ be a sequence of probabilities on $\mathbb X_N$ and $ G\in  \mathcal C^{1,2}([0,T]\times \overline\La)$. For every $\delta >0$, $i\in\{1,2\}$ and $\mathfrak{a}_l\in(-\infty,1)$,
  \[
    \varlimsup_{N\to + \infty} \frac{1}{N^d}\log\mathbb P_{\mu_N}^{N,\what b}\left[ \abs{\int_0^T \frac{1}{N^{d-1}} \sum_{x\in\Ga_N^-} G_{s}\left(\frac{x}{N}\right) \left(\sigma_s(x)^i - b_i\left(\frac{x}{N}\right)\right) \, ds } > \delta \right] = - \infty .   
  \]
\end{proposition}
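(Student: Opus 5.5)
The plan is to prove the superexponential estimate at the left boundary, where the reservoir is accelerated by $N^{2-\mathfrak a_l}$ with $2-\mathfrak a_l>1$. It follows the standard Feynman--Kac route, as in Lemma 5.4 of \cite{fgn1} or \cite{flm}.

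\textbf{Reduction to an initial measure.} First I replace $\mu_N$ by the product measure $\nu^N_{\what\theta}$. Since $\nu^N_{\what\theta}$ charges every configuration with probability at least $e^{-CN^d}$, this costs only a factor $e^{CN^d}$, which is harmless on the superexponential scale.

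\textbf{Exponential Chebyshev and Feynman--Kac.} Handling $\pm$ separately, for any $A>0$ it suffices to prove
\[
\varlimsup_{N\to\infty}\frac1{N^d}\log \mathbb E_{\nu^N_{\what\theta}}\Big[\exp\Big\{AN^d\int_0^T W_N(s,\sigma_s)\,ds\Big\}\Big]\le C_0
\]
with $C_0$ independent of $A$, where
\[
W_N(s,\sigma)=N^{-(d-1)}\sum_{x\in\Ga_N^-}G_s(x/N)\big(\sigma(x)^i-b_i(x/N)\big).
\]
Then let $A\to\infty$. By Feynman--Kac, the exponent is bounded by
\[
\int_0^T \sup_f\Big\{AN^d\langle W_N(s,\cdot),f\rangle_{\nu^N_{\what\theta}}+\langle \mathfrak L_N\sqrt f,\sqrt f\rangle_{\nu^N_{\what\theta}}\Big\}ds ,
\]
with the supremum over densities $f$. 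Lemma \ref{DirichelEstimatesInterieur} bounds the generator term above by
\[
-(1-a)N^2\mathcal D_{0,N}(\sqrt f)+CN^d-N^{2-\mathfrak a_l}\mathcal D^-_{b,N}(\sqrt f)-N^{2-\mathfrak a_r}\mathcal D^+_{b,N}(\sqrt f).
\]
We drop the bulk and right-boundary Dirichlet forms, since they are nonpositive.

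\textbf{Key step: the boundary term.} Because $\what\theta|_{\Ga^-}=\what b$, the site marginal of $\nu^N_{\what\theta}$ at $x\in\Ga_N^-$ is exactly the reversible measure of the left Glauber dynamics. Hence $\int(\sigma(x)^i-b_i(x/N))\,d\nu^N_{\what\theta}=0$. Then $\int(\sigma(x)^i-b_i)f\,d\nu$ can be rewritten through the change-of-variable identities for the rates. For instance, for $i=2$ one writes $\sigma(x)^2-\phi$ as a combination of creation and annihilation moves and uses
\[
\int r^{(1,0),-}_x f(\sigma)\,d\nu=\int r^{(0,1),+}_x f(\sigma^{x,+})\,d\nu ,
\]
and similarly for $i=1$ using the flips. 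This exhibits the term as a sum of expressions of the form $\int r_x(\sigma)[f(\sigma^{x,\pm})-f(\sigma)]\,d\nu$. We then use $f(\eta)-f(\sigma)=(\sqrt f(\eta)-\sqrt f(\sigma))(\sqrt f(\eta)+\sqrt f(\sigma))$ and Cauchy--Schwarz with a weight $\beta$. This gives
\[
AN^d\langle W_N,f\rangle\le AN\|G\|_\infty\Big(\beta\,\mathcal D^-_{b,N}(\sqrt f)+\frac{C}{\beta}N^{d-1}\Big).
\]
The choice $\beta=N^{1-\mathfrak a_l}/(A\|G\|_\infty)$ makes the first piece absorbed by $-N^{2-\mathfrak a_l}\mathcal D^-_{b,N}$. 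What remains is $CA^2\|G\|^2_\infty N^{d+\mathfrak a_l-1}=o(N^d)$, since $\mathfrak a_l<1$. The total is therefore $\le CN^d+o(N^d)$, with $C$ independent of $A$, which concludes the proof.

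The main obstacle is this boundary step. It requires verifying the mean-zero property and the detailed-balance rewriting for the three-state Glauber rates \eqref{rate-b}, separately for $i=1$ and $i=2$, since the spin flip $T^x$ enters only the magnetization. The strict inequality $\mathfrak a_l<1$ is exactly what produces the $o(N^d)$ remainder.
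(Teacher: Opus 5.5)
Your proposal is correct and is the standard Feynman--Kac/boundary Dirichlet-form argument that the paper relies on without writing out (it omits the proof and refers to \cite{flm}, \cite{mo1}, \cite{fgn1}, \cite{msv}): reduce to $\nu^N_{\what\theta}$, absorb $AN\|G\|_\infty\beta\,\mathcal D^-_{b,N}$ into $-N^{2-\mathfrak a_l}\mathcal D^-_{b,N}$, and use $\mathfrak a_l<1$ so that the $A^2N^{d-1+\mathfrak a_l}$ remainder is negligible. One precision on the key step: for $i=1$ the flips alone do not suffice, because the configurations with $\sigma(x)=0$ carry weight $-b_1(1-b_2)$ and flips never reach them. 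The cleanest fix is that the left reservoir at $x$ just resamples $\sigma(x)$ at rate one from the marginal $p_x$ of $\nu^N_{\what\theta}$, i.e.\ $L_xf=E_{p_x}[f]-f$; hence $\int(\sigma(x)^i-b_i)f\,d\nu^N_{\what\theta}=-\int(\sigma(x)^i-b_i)\,L_xf\,d\nu^N_{\what\theta}$ for both $i=1,2$, and your Cauchy--Schwarz step then applies verbatim.
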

\subsubsection{Replacement lemma at the right-hand side boundary}
\begin{proposition}\label{replacementDroite}
  Let $\left(\mu_N\right)_{N\geq 1}$ be a sequence of probabilities on $\mathbb X_N$ and $ G\in  \mathcal C^{1,2}([0,T]\times \overline\La)$. For every $\delta >0$, $i\in \{1,2\}$ and $\mathfrak{a}_r>1$, 
  \[
    \varlimsup_{\vare \to 0 }\varlimsup_{N\to + \infty} \frac{1}{N^d}\log\mathbb P_{\mu_N}^{N,\what b}\left[ \abs{\int_0^T \frac{1}{N^{d-1}} \sum_{x\in\Ga_N^+} G_{s}\left(\frac{x}{N}\right) \left(\sigma_s(x)^i - \what\sigma_s^{\ent{\vare N},i}(x)\right) \, ds } > \delta \right] = - \infty .   
  \]
\end{proposition}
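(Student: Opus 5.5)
We must prove Proposition \ref{replacementDroite}: replacing $\sigma_s(x)^i$ at the right boundary $\Ga_N^+$, where the reservoir is slowed down ($\mathfrak a_r>1$), by its average over a box of size $\ent{\vare N}$, with superexponentially small error.

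We follow the entropy--Dirichlet form scheme behind Proposition \ref{replacementBulk}. By Chebyshev's exponential inequality and Feynman--Kac,
\[
\frac{1}{N^d}\log \mathbb P_{\mu_N}^{N,\what b}\Big[\pm\int_0^T W_N(s,\sigma_s)\,ds>\delta\Big]\le -A\delta+C_0+\frac1{N^d}\int_0^T \lambda_{N,s}\,ds ,
\]
where $W_N(s,\sigma)=N^{1-d}\sum_{x\in\Ga_N^+}G_s(x/N)\big(\sigma(x)^i-\sigma^{\ent{\vare N},i}(x)\big)$. Here $\lambda_{N,s}$ is the largest eigenvalue of $AN^dW_N(s,\cdot)+\mathfrak L_N$ in $L^2(\nu_{\what\theta}^N)$, and $C_0$ absorbs the $O(N^d)$ entropy of $\mu_N$ with respect to $\nu^N_{\what\theta}$. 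It then suffices to show that, for every $A>0$, $\varlimsup_{\vare\to0}\varlimsup_N N^{-d}\sup_s\lambda_{N,s}\le C$ with $C$ independent of $A$, and then let $A\to\infty$.

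Since the right boundary generator is a nonpositive operator up to a factor (Lemma \ref{DirichelEstimatesInterieur} gives $\langle L^+_{b,N}f,f\rangle=-\mathcal D^+_{b,N}\le 0$), we simply drop it. We also drop the left one. Lemma \ref{DirichelEstimatesInterieur} with $a=1/2$ then yields
\[
\lambda_{N,s}\le \sup_f\Big\{AN^d\!\int W_N f^2\,d\nu^N_{\what\theta}-\tfrac{N^2}{2}\mathcal D_{0,N}(f,\nu^N_{\what\theta})\Big\}+CN^d .
\]
This is exactly why $\mathfrak a_r>1$ matters: no Dirichlet boundary value is imposed at $\Ga_N^+$, so we only need the bulk exchange to equilibrate the boundary spin with its neighbourhood.

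We write $\sigma(x)^i-\sigma^{\ell,i}(x)$ with $\ell=\ent{\vare N}$ as an average over $y$ in the box, intersected with $\La_N$ and hence one-sided in $e_1$, of $\sigma(x)^i-\sigma(y)^i$. Each such difference is a telescoping sum along a lattice path $x=z_0,\dots,z_m=y$ of length $m\le d\ell$, with $\sigma(z_j)^i-\sigma(z_{j+1})^i$. For each bond we use the change of variables formula with error $R_N=O(1/N)$ from \eqref{RNinegaliteN}:
\[
\int(\sigma(z)^i-\sigma(z')^i)f^2\,d\nu\le \frac{\beta}{2}\int(f(\sigma^{z,z'})-f)^2d\nu+\frac{C}{\beta}\int f^2d\nu+\frac CN .
\]
Summing over $x\in\Ga_N^+$ with weight $AN^{1-d}N^d=AN$ and over paths, each bond is used at most $O(\ell)$ times in the averaging, so the Dirichlet contribution is $\lesssim A\beta\,\ell\,\mathcal D_{0,N}\cdot N$. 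We choose $\beta=N/(A\ell)$ up to a constant, so that this is absorbed by $\tfrac{N^2}{2}\mathcal D_{0,N}$. The remaining term is then $AN\cdot N^{d-1}\cdot \ell\cdot C/\beta\sim A^2\ell^2 N^{d-1}/N\cdot N^{0}$, i.e. of order $A^2\vare^2N^{d}\cdot N^{0}$ times the normalisation. After dividing by $N^d$ it gives $O(A^2\vare)$ (with $\ell\sim\vare N$), plus $O(A/N)$ from the $R_N$ errors, which vanish as $N\to\infty$ and then $\vare\to0$.

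The main obstacle is this path-counting and scaling bookkeeping, done so that the error is $o_\vare(1)$ uniformly in $A$ only after $\vare\to0$ for fixed $A$. I would organise it by averaging over $y$ first, as in the one-block-free two-block estimate of \cite{fgn1}, Lemma 5.4, using straight paths along $e_1$ into the bulk. A second technical point is that the boundary box is truncated at $\Ga^+$, but this only changes constants.
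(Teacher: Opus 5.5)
Your overall scheme (exponential Chebyshev bound, reduction to $\nu^N_{\widehat\theta}$, Feynman--Kac, discarding both boundary Dirichlet forms, telescoping along lattice paths and absorbing into $\frac{N^2}{2}\mathcal D_{0,N}$) is the standard one behind the references the paper defers to (\cite{fgn1,msv}), and it does prove the statement. However, the step you yourself single out as the main obstacle is done incorrectly, and as written the argument does not close.

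If each bond really carried weight $O(\ell)$, absorbing the Dirichlet part into $\frac{N^2}{2}\mathcal D_{0,N}$ would force $\beta\lesssim N/(A\ell)$. The leftover would then be
\[
AN\cdot N^{d-1}\cdot\ell\cdot\frac{C}{\beta}\;\gtrsim\;A^2\ell^2N^{d-1}\asymp A^2\varepsilon^2N^{d+1},
\]
that is, $A^2\varepsilon^2N\to\infty$ after dividing by $N^d$. Your next lines drop a factor $N$, and then a factor $\varepsilon$, to arrive at $O(A^2\varepsilon)$. Similarly, the $R_N$ errors are not $O(A/N)$. There are about $N^{d-1}\ell$ weighted bond terms, each carrying $AN\cdot C/N$, which gives $O(A\varepsilon)$ after normalisation. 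This is harmless, but it is why the order of limits $N\to\infty$, then $\varepsilon\to0$, then $A\to\infty$ matters.

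What makes the estimate close is that, after the normalisation $|\Lambda_\ell(x)|^{-1}$, each bond carries weight $O(1)$, not $O(\ell)$. With coordinate-by-coordinate canonical paths,
\[
\sum_{x\in\Gamma_N^+}|\Lambda_\ell(x)|^{-1}\,\#\{y\in\Lambda_\ell(x):\ b\in\gamma(x,y)\}\le K_d
\]
for every bond $b$. The total weighted path length, of order $N^{d-1}\ell$, is spread without concentration over the order $N^{d-1}\ell$ bonds of the layer $\{N-\ell\le x_1\le N\}$. For instance, in $d=1$ the bond $(z,z+1)$ gets weight $(z-N+\ell+1)/(\ell+1)\le 1$.

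Alternatively, first replace the transverse part of the box average deterministically, by summation by parts on the torus, at cost $O(\varepsilon\|\nabla G\|_\infty)$. Then only straight $e_1$-paths with weights $\le 1$ remain; this is what your ``straight paths along $e_1$'' actually requires when $d\ge 2$. Either way, $\beta\asymp N/A$ suffices for absorption. The leftover is then $AN^d\ell\,(CA/N+C/N)\asymp (A^2+A)\varepsilon N^d$, which vanishes as $N\to\infty$ and then $\varepsilon\to0$, for each fixed $A$.

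Two smaller points:
\begin{itemize}
\item The passage from $\mu_N$ to $\nu^N_{\widehat\theta}$ must use the pointwise bound $\mu_N\le e^{C_0N^d}\nu^N_{\widehat\theta}$, valid since $\nu^N_{\widehat\theta}(\sigma)\ge e^{-C_0N^d}$. An entropy bound would not preserve superexponential smallness.
\item $\mathfrak a_r>1$ plays no role in this lemma, since the right boundary Dirichlet form is discarded whatever $\mathfrak a_r$ is. The hypothesis only matters when identifying the Neumann condition from the martingale.
\end{itemize}
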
  
    
    \subsection{Energy estimates}\label{energyEstimates}
  We prove in this subsection an energy estimate which is one of the main ingredients in the proof of the hydrodynamic limit and large deviations. It allows to prove for the hydrodynamic limit, that the macroscopic trajectories are in  $\left(L^2([0,T],H^1(\Lambda))\right)^2$ (cf Theorem \ref{th-hy}) and to exclude paths with infinite energy in the large deviation regime.
For $\delta >0$, define
     \[
        \tilde{\mathcal{Q}}^\delta(\what\pi) \coloneqq \sum_{k=1}^d  \tilde{\mathcal{Q}^\delta_{k}}(\what \pi),
    \]
    where
    \[
      \tilde{\mathcal{Q}^\delta_{k}}(\what \pi) \coloneqq
        \sup_{\what G \in \left( \mathcal C^{\infty}_c([0,T]\times \La)\right)^2}
      \left\{ {\tilde{\mathcal{Q}}^\delta_{{\what G},k}}\right\},
    \]
    and for $\what G \in \left( \mathcal C^{\infty}_c([0,T]\times \La)\right)^2$, $k\in\{1,\dots,d\}$
    \[
    {\tilde{\mathcal{Q}}^\delta_{{\what G},k}} \coloneqq
    \scall{}{\what \pi}{\partial_{e_k} \what G} - \delta \scall{}{\what G}{\Sigma(\what \pi)\what G} .
    \]
  Remind the definition of $\Q$ given by \eqref{defQenergy} and notice that, for any $\delta>0$
    \[
        \mathcal{Q}(\what \pi) = \sum_{k=1}^d {\tilde{\mathcal{Q}}^{\frac 12}_{k}}(\what \pi) \qquad \text{and } \qquad {\tilde{\mathcal{Q}}}^\delta(\what\pi) \coloneqq \sum_{k=1}^d {\tilde{\mathcal{Q}}^\delta_{k}}(\what \pi) = \frac{1}{2\delta} \mathcal{Q}(\what \pi),
    \]
    We shall denote $\mathcal{Q}_k \coloneqq \tilde{\mathcal{Q}}_k^{\frac 12} $ so that $\mathcal{Q} = \sum_{k=1}^d \mathcal{Q}_k$.

    \begin{lemma}\label{l-energy1}
        Let $\what\pi\in\mathcal{D}([0,T],\what{\mathcal{M}}_0)$. Then,  $\Q(\what\pi)$ is finite if and only if $\what \pi \in \left(L^2([0,T],H^1(\La))\right)^2$ and 
         
        \[
          \sum_{k=1}^d \scall{}{\partial_{e_k}\what \pi}{ \Sigma(\what \pi)^{-1}\partial_{e_k}\what \pi} < + \infty .
        \]
      In that case 
      \[
        \Q(\what \pi) = \frac 12 \sum_{k=1}^d \scall{}{\partial_{e_k}\what \pi}{ \Sigma(\what \pi)^{-1}\partial_{e_k}\what \pi} .
      \] 
      Moreover, the functional $\Q(\cdot)$ is convex on $\mathcal D([0,T],\what{\mathcal{M}}_0)$. 
    \end{lemma}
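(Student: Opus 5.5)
The plan is to follow the standard route for energy functionals of this type, as for exclusion processes, with a pointwise matrix version of the Legendre transform replacing the scalar one. The key observation is that the integrand is a Legendre transform. For fixed $(m,\phi)\in\bar{\mb I}$ and $v\in\R^2$ we have
\[
\sup_{w\in\R^2}\Big\{ (v,w)_{\R^2} - \tfrac12 (w,\Sigma(m,\phi) w)_{\R^2}\Big\} = \tfrac12 (v,\Sigma(m,\phi)^{-1}v)_{\R^2}.
\]
This holds whenever $\Sigma$ is invertible. On the boundary of $\bar{\mb I}$, where $\Sigma$ is only positive semi-definite, the supremum equals $+\infty$ unless $v$ lies in the range of $\Sigma$, and there it is given by the pseudo-inverse. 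We adopt this convention for $\Sigma^{-1}$ on degenerate points.

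\textbf{Finite $\Q$ implies regularity and the formula.} Suppose $\Q(\what\pi)=\sum_k\Q_k(\what\pi)<\infty$. Since $\Sigma\le C_\Sigma I_2$, each $\Q_k(\what\pi)$ dominates the analogous supremum with $\Sigma$ replaced by $C_\Sigma I_2$. Hence the linear functional
\[
\what G\mapsto \int_0^T\scal{}{\what\pi_t}{\partial_{e_k}\what G_t}\,dt
\]
is bounded by $C\,\norm{\LL^2(\Sigma(\what\pi))}{\what G}\le C'\norm{\LL^2}{\what G}$ on $(\mathcal C^\infty_c([0,T]\times\La))^2$. By Riesz representation in $\LL^2$, there is $\what F_k\in (L^2([0,T]\times\La))^2$ with $\scall{}{\what\pi}{\partial_{e_k}\what G}=-\scall{}{\what F_k}{\what G}$. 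Thus $\partial_{e_k}\what\rho=\what F_k$ weakly, and $\what\rho\in (L^2([0,T],H^1(\La)))^2$.

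The supremum defining $\Q_k$ then becomes
\[
\sup_{\what G}\Big\{-\scall{}{\partial_{e_k}\what\rho}{\what G}-\tfrac12\scall{}{\what G}{\Sigma(\what\rho)\what G}\Big\}.
\]
By density of $(\mathcal C^\infty_c)^2$ in $\LL^2(\Sigma)$, this equals half the squared dual norm of $\partial_{e_k}\what\rho$ in $\LL^2(\Sigma)$. It is identified pointwise, via the Legendre formula, with $\tfrac12\scall{}{\partial_{e_k}\what\rho}{\Sigma(\what\rho)^{-1}\partial_{e_k}\what\rho}$.

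The inequality $\le$ is immediate from the second inequality in \eqref{inegaliteCarreScalaire} with $S=\Sigma$, applied pointwise. For $\ge$, we choose near-optimisers $\what G=-(\Sigma(\what\rho)+\eta I_2)^{-1}\partial_{e_k}\what\rho\,\1_{K}$, truncated to a set $K$ where $|\partial_{e_k}\what\rho|\le M$. These are bounded measurable functions, and we approximate them in $\LL^2$ by smooth compactly supported ones. Letting first the approximation converge, then $M\to\infty$ and $\eta\to0$, monotone convergence recovers the full integral, including the value $+\infty$ on degenerate sets. This gives both the formula and the converse. Conversely, if $\what\rho\in (L^2H^1)^2$ with finite weighted integral, an integration by parts turns the supremum into the same expression, which is then bounded by the Legendre inequality.

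\textbf{Convexity.} Each map
\[
\what\pi\mapsto \int_0^T\scal{}{\what\pi_t}{\partial_{e_k}\what G_t}\,dt-\tfrac12\int_0^T\scal{}{\what G_t}{\Sigma(\what\pi_t)\what G_t}\,dt
\]
is convex, provided $\what\pi\mapsto -(w,\Sigma(m,\phi)w)$ is convex for every fixed $w$, that is, provided $\Sigma$ is matrix-concave. Computing directly,
\[
\tfrac12(w,\Sigma w)=w_1^2(\phi-m^2)+2w_1w_2m(1-\phi)+w_2^2\phi(1-\phi).
\]
The Hessian in $(m,\phi)$ is the matrix $\begin{pmatrix}-2w_1^2&-2w_1w_2\\-2w_1w_2&-2w_2^2\end{pmatrix}=-2\,w w^{\!\top}$, which is negative semi-definite. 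Hence $\Sigma$ is concave on $\bar{\mb I}$. A supremum of convex functionals is convex, so $\Q$ is convex.

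The main obstacle is the $\ge$ direction near the boundary of $\bar{\mb I}$, where $\Sigma$ degenerates and $\Sigma^{-1}$ blows up. The $\eta$-regularisation combined with monotone convergence is what I expect to handle it, together with checking measurability of the truncated optimisers.
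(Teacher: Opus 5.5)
Your proposal is correct and follows essentially the same route as the paper. You use the domination $\Sigma\le C_\Sigma I_2$ to get $\what\rho\in(L^2([0,T],H^1(\La)))^2$, then integration by parts and density to pass to $L^2$ test functions, then the regularised optimiser $(\Sigma(\what\rho)+\eta I_2)^{-1}\partial_{e_k}\what\rho$, and finally the Hessian $-2ww^{\top}$ for concavity; the paper's regularisation is instead $\Sigma_\delta=\Sigma+2\delta\,\mathrm{diag}(1,1+\delta)$, and it passes to the limit with Fatou's lemma rather than monotone convergence. Your truncation to $\{|\partial_{e_k}\what\rho|\le M\}$ is harmless but unnecessary: $(\Sigma+\eta I_2)^{-1}\le \eta^{-1}I_2$ and $\partial_{e_k}\what\rho\in L^2$ already make the test function square integrable.
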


    \begin{proof} 
      First, consider $\what \pi \in \left(L^2([0,T], H^1(\La))\right)^2$ such that 
      \[
          \sum_{k=1}^d \scall{}{\partial_{e_k}\what \pi}{ \Sigma(\what \pi)^{-1}\partial_{e_k}\what \pi} < + \infty .
        \]
        Let $k$ be an integer in $\{1,\dots,d\}$.
      Then for $\what G \in \left( \mathcal C^\infty_c ([0,T]\times \La)\right)^2$, we can apply an integration by parts to get, using inequality \eqref{inegaliteCarreScalaire} with $S = \Sigma(\what\pi)$,
      \begin{align*}
          \scall{}{\what \pi}{\partial_{e_k}  \what G} = - \scall{}{\partial_{e_k}  \what \pi}{ \what G} \leq \frac{1}{2} \scall{}{\partial_{e_k} \what \pi}{\Sigma(\what\pi)^{-1} \partial_{e_k} \what \pi} + \frac12 \scall{}{\what G}{\Sigma(\what \pi) \what G} .
      \end{align*}
      Hence we directly get 
      \[
          \mathcal{Q}(\what \pi ) \leq \frac 12 \sum_{k=1}^d \scall{}{\partial_{e_k}\what \pi}{ \Sigma(\what \pi)^{-1}\partial_{e_k}\what \pi} < + \infty .
        \]
      Conversely, assume that $\Q(\what \pi)$ is finite. Because $\what \pi$ is bounded and $\Sigma$ is smooth, there exists a constant $C_\Sigma$ depending only on $\Sigma$ such that 
      \[
        \Sigma(\what \pi) \leq C_\Sigma I_2,
      \]
      where we recall that the partial order on $S^+_2$ has been defined in \eqref{orderMatrices}.
      Thus for $k\in \{1,\dots,d\}$, taking $G\longrightarrow C_\Sigma G$, we have
      \begin{align*}
          \sup \left\{ \scall{}{\what \pi}{\partial_{e_k} \what G} - \scall{}{\what G}{ \what G} \right\}
         & = C_\Sigma \sup \left\{ \scall{}{\what \pi}{\partial_{e_k} \what G} - \scall{}{\what G}{C_\Sigma I_2 \what G} \right\} \\
   \ &     \leq   C_\Sigma \sup \left\{ \scall{}{\what \pi}{\partial_{e_k} \what G} - \scall{}{\what G}{\Sigma(\what \pi) \what G} \right\} \\
        \ & = \frac{1}{2}C_\Sigma(\what \pi) \Q(\what \pi) < + \infty ,
      \end{align*}
      where the supremum is taken over $\what G \in \left( \mathcal C_c^\infty([0,T]\times \La)\right)^2$.
      Therefore $\partial_{e_k}\what \pi$ belongs to $\left(L^2([0,T]\times \La)\right)^2$ for all $k\in \{1,\dots,d\}$, thus   $\what \pi \in \left(L^2([0,T],H^1(\La))\right)^2$. We can now perform an integration by parts to get
      \[
        \Q(\what \pi) = \sum_{k=1}^d \sup_{\what G \in \left( \mathcal C_c^\infty([0,T]\times \La)\right)^2} \left\{ \scall{}{\partial_{e_k} \what \pi}{ \what G} - \frac{1}{2}\scall{}{\what G}{\Sigma(\what \pi) \what G} \right\},
      \]
      but because $\left( \mathcal C_c^\infty([0,T]\times \La)\right)^2$ is dense in $\left(L^2([0,T]\times \La)\right)^2$, the supremum of the previous expression can be taken over all functions $\what G \in \left(L^2([0,T]\times \La)\right)^2$. Let $k\in \{1,\dots,d\}$, we now need to take the right function $\what G$. Consider $0<\delta <1$ and denote
      \[
        \Sigma_\delta(m,\phi) \coloneqq 2 \begin{pmatrix}
          \phi - m^2 + \delta & m(1-\phi) \\ m(1-\phi) &  (\phi+\delta)(1-\phi + \delta) 
        \end{pmatrix},
      \]  
      then we have $\Sigma_\delta(\what \pi) = \Sigma(\what \pi) + M_\delta(\what \pi)$ where 
      \[
        M_\delta(\what \pi) \coloneqq 2 \delta \begin{pmatrix}
          1 & 0 \\ 0 & 1+\delta
        \end{pmatrix}
      \]
      is a positive-definite matrix bounded below by $2\delta I_2$. Therefore, because $\Sigma(\what\pi)$ is a positive semi-definite matrix, we have $\Sigma_\delta(\what \pi) \geq \Sigma(\what \pi)$, in particular if we denote 
      \[
        \Q_{\delta,k}(\what \pi) \coloneqq \sup_{\what G \in \left(L^2([0,T]\times \La)\right)^2} \left\{ \scall{}{\partial_{e_k} \what \pi}{ \what G} - \frac{1}{2}\scall{}{\what G}{\Sigma_\delta(\what \pi) \what G} \right\},
      \] 
      we have $$\Q_{\delta,k}(\what \pi) \leq \sup_{\what G \in \left( \mathcal C_c^\infty([0,T]\times \La)\right)^2} \left\{ \scall{}{\partial_{e_k} \what \pi}{ \what G} -\frac{1}{2} \scall{}{\what G}{\Sigma(\what \pi) \what G} \right\}. $$ 
      Moreover, a simple computation shows that
      \begin{align*}
        \scall{}{\Sigma_\delta(\what \pi)^{-1} \partial_{e_k} \what \pi}{\Sigma_\delta(\what\pi)^{-1}\partial_{e_k} \what \pi} \leq \frac{1}{4\delta^2} \scall{}{ \partial_{e_k} \what  \pi}{\partial_{e_k} \what \pi},
      \end{align*}
      which is finite because $\what \pi \in \left(L^2([0,T],H^1(\La))\right)^2$. Thus we can take as a test function
     $\what G \coloneqq \Sigma_\delta(\what \pi)^{-1}\partial_{e_k} \what \pi \in \left(L^2([0,T]\times \La)\right)^2$, that yealds,
      \[
        \frac{1}{2} \scall{}{\partial_{e_k} \what \pi}{\Sigma_\delta(\what\pi)^{-1} \partial_{e_k} \what \pi} \leq \Q_{\delta,k}(\what \pi) \leq\sup_{\what G \in \left( \mathcal C_c^\infty([0,T]\times \La)\right)^2} \left\{ \scall{}{\partial_{e_k} \what \pi}{ \what G} - \frac{1}{2}\scall{}{\what G}{\Sigma(\what \pi) \what G} \right\} . 
      \]
      We conclude the proof by letting $\delta \xrightarrow{} 0 $ using Fatou's lemma which can be applied because $\Sigma_\delta(\what \pi)$ is positive-definite and summing in $k$.

      The convexity of the functional $\Q$ follows from the concavity of the function 
      \[
          (m,\phi)\in \overline{\mathbf  I}\mapsto \scal{}{(x,y)^t}{\Sigma(m,\phi) (x,y)^t},
    \] 
    for any $(x,y)\in \R^2$. Indeed, we have
      \[
        \scal{}{(x,y)^t}{\Sigma(m,\phi) (x,y)^t} = 2 (\phi - m^2) x^2  + 4 m(1-\phi) xy +  2 \phi(1-\phi) y^2 .
      \]
      Computing the Hessian matrix we get 
      \[
        H(\Sigma(m,\phi)) = \begin{pmatrix}
          -4 x^2 & -4xy \\ -4xy & -4 y^2
        \end{pmatrix},
      \]
      which has a zero determinant and a negative trace, therefore it has a zero eigenvalue and a negative eigenvalue, implying that the Hessian is negative semi-definite, thus the function is concave and $\Q$ is convex.
    \end{proof}
    
    Now, introduce the following approximation of the identity on $\La$:
    
    \[
        u_\vare(x) \coloneqq \frac{1}{(2\vare)^d}\1_{ [-\vare,\vare]^{d}}(x).
    \]
    Note that for $\vare>0$,  $x=(x_1,\dots,x_d) \in \La_N$ with $x_1  \in \{ \ent{-N(1-\vare)}, \dots, \ent{N(1-\vare)}\}$, $y\in \Ga^-_N$ and $z\in \Ga^+_N$, 
    \[
      \what \sigma^{\vare N}(x)= \frac{(2\vare N)^d }{(2\vare N+1)^d } \left( \what \pi * \what u_\vare   \right)\left(\frac{x}{N}\right),
    \] 
    where $\what u_\vare \coloneqq (u_\vare,u_\vare)$  and $\what   \pi * \what u_\vare \coloneqq (\pi_1 * u_\vare, \pi_2*u_\vare)$. 

    Notice that for all $\what\pi \in \DM$, $\what \pi * \what u_\vare$ belongs to $\mathcal D ([0,T], \what {\mathcal M}_0)$.
    
    \begin{lemma}\label{lemma:energyEstimate}
      There exists a constant $C_1$ depending only on $\what b$ such that for all $\delta_0 > 0$, for all $\delta \in [0,\delta_0]$, for all $(\mu_N )_{N\in\N}$ sequence of probabilities on $\mathbb X_N$, for all $k\in \{1,\dots,d\}$  and for all $\what G \in \left( \mathcal C^\infty_c([0,T]\times \La)\right)^2$,
      \[
        \varlimsup_{\vare \to 0 } \varlimsup_{N\to + \infty} \frac{1}{N^d} \log  \E_{\mu_N}^{N,\what b}\left[\exp\left( \delta N^d \tilde\Q^{\delta_0}_{\what G,k}(\what \pi_N * \what u_\vare)  \right) \right] \leq C_1 (T+1) .
      \] 
    \end{lemma}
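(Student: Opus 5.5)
Since the expectation is exponential and the bound does not depend on the initial measure, I will work under the reference product measure $\nu^N_{\what\theta}$ and control the time integral by a Feynman--Kac variational formula. For a fixed $\vare$, write the time integral in $\tilde\Q^{\delta_0}_{\what G,k}(\what\pi_N*\what u_\vare)$ as $\int_0^T W_N(s,\sigma_s)\,ds$, where
\[
W_N(s,\sigma)=\frac{1}{N^d}\sum_{x}\Big[\what\sigma(x)\cdot\partial_{e_k}\what G_s^\vare(x/N)-\delta_0\,\what G_s(x/N)\cdot\Sigma(\what\sigma^{\ent{\vare N}}(x))\what G_s(x/N)\Big],
\]
with $\what G^\vare=\what G*u_\vare$ after moving the convolution onto $\what G$; the error from the $(2\vare N)^d/(2\vare N+1)^d$ factor and boundary effects is $O(1/N)$. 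The initial measure is handled by $d\delta_{\sigma}/d\nu^N_{\what\theta}\le e^{CN^d}$, which holds because $\what\theta$ takes values in $\mathbf I$ and so $\nu^N_{\what\theta}(\sigma)\ge e^{-CN^d}$ by \eqref{eq:cC2}. This reduces the statement to bounding $\frac1{N^d}\log\E_{\nu^N_{\what\theta}}[\exp(\delta N^d\int_0^TW_N\,ds)]$, which contributes the $C_1$ part of $C_1(T+1)$. The Feynman--Kac formula bounds this by
\[
\frac{1}{N^d}\int_0^T\sup_f\Big\{\delta N^d\langle W_N(s,\cdot)\rangle_{f}+\langle\mathfrak L_N\sqrt f,\sqrt f\rangle_{\nu^N_{\what\theta}}\Big\}ds,
\]
where the supremum runs over densities $f$ with respect to $\nu^N_{\what\theta}$.

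For the generator term I use Lemma \ref{DirichelEstimatesInterieur} with $a=1/2$. The boundary parts are nonpositive, so
\[
\langle\mathfrak L_N\sqrt f,\sqrt f\rangle\le -\tfrac{N^2}{2}\mathcal D_{0,N}(\sqrt f,\nu^N_{\what\theta})+CN^d .
\]
The $CN^d$ term yields the $C_1T$ contribution. It remains to show that, uniformly in $f$,
\[
\delta N^d\langle W_N\rangle_f\le \tfrac{N^2}{2}\mathcal D_{0,N}(\sqrt f)+o(N^d)+N^d\,r(\vare)\qquad\text{with }r(\vare)\to0 .
\]
To this end I perform a discrete summation by parts in the linear term. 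This is legitimate because $\what G$ has compact support in $\La$, so no boundary terms appear, and it gives
\[
-\frac{1}{N^{d+1}}\sum_x \partial_{N,e_k}\what G^\vare(x/N)\cdot N\big[\what\sigma(x+e_k)-\what\sigma(x)\big]
\]
up to $O(1/N)$. Under $f\,d\nu^N_{\what\theta}$, each difference $\langle\sigma(x+e_k)^i-\sigma(x)^i\rangle_f$ is handled by the change of variables $\sigma\mapsto\sigma^{x,x+e_k}$ together with \eqref{RNinegaliteN}. This writes it as $\int(\sigma(x+e_k)^i-\sigma(x)^i)(f(\sigma)-f(\sigma^{x,x+e_k}))/2\,d\nu+O(1/N)$, and then factoring $f(\sigma)-f(\sigma^{x,x+e_k})=(\sqrt f(\sigma)-\sqrt f(\sigma^{x,x+e_k}))(\sqrt f(\sigma)+\sqrt f(\sigma^{x,x+e_k}))$ allows the weighted Cauchy--Schwarz inequality \eqref{inegaliteCarreScalaire}. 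The weight must be chosen with care: the square-root difference is absorbed into $\frac{N^2}{2}\mathcal D_{0,N}$, and the remainder has the form
\[
\frac{C\delta^2}{N^d}\sum_x\Big\langle\big|(\what\sigma(x+e_k)-\what\sigma(x))\cdot\what G\big|^2\Big\rangle_f ,
\]
which, after summing over the two-site configurations, is a local quadratic form in $\what G$ of the type $\what G\cdot\Sigma^{\rm loc}\what G$.

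The main obstacle is matching this local quadratic form with $\delta_0\,\what G\cdot\Sigma(\what\sigma^{\ent{\vare N}})\what G$. The first step is a computation: the exchange-averaged conditional variance of $(\sigma(x+e_k)-\sigma(x),\,\sigma(x+e_k)^2-\sigma(x)^2)$ under the one-site marginals equals $\Sigma$ evaluated at the local densities, in the sense that $\E[(\sigma(y)^i-\sigma(x)^i)(\sigma(y)^j-\sigma(x)^j)]=2\,\mathrm{Cov}$. One checks that $\mathrm{Cov}(\sigma,\sigma^2)$ under a product measure with density $(m,\phi)$ reproduces $\Sigma(m,\phi)$ as in \eqref{mobility-matrix}. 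Choosing the Cauchy--Schwarz constant so that the coefficient is at most $\delta_0$ requires $\delta\le\delta_0$; the constant in front of $\Sigma$ is $\delta^2/\delta_0$-type, which is compatible with $\delta\le\delta_0$. Then the replacement of the local cylinder function by $\tilde{\mathfrak h}(\what\sigma^{\ent{\vare N}})$ uses the one- and two-block estimates underlying Proposition \ref{replacementBulk}, applied inside the variational formula with the Dirichlet-form budget. This costs $o(N^d)$ as $N\to\infty$ and then $\vare\to0$. Letting $N\to\infty$ and then $\vare\to0$ yields the bound $C_1(T+1)$, and $C_1$ depends only on $\what b$ through the choice of $\what\theta$.
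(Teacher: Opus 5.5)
Your proof is correct in outline, but it is organized differently from the paper's.

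The paper does the replacement \emph{before} the Feynman--Kac step. It introduces the two-site functions $\mathfrak h_1,\mathfrak h_2,\mathfrak h_3$, for which $\tilde\Sigma^{\mathfrak h}=\Sigma$. It restricts to the event $B$ on which $\int_0^T\sum_j V^{\what G,\mathfrak h_j}_{N,\vare}\,dt$ is bounded, and discards $B^\complement$ using Proposition \ref{replacementBulk} and \eqref{inegaliteLOG}. On $B$ it trades $\langle\what G_t,\Sigma(\what\pi_N*\what u_\vare)\what G_t\rangle$ for its microscopic counterpart built on $\tau_x\Sigma^{\mathfrak h}(\sigma)$, at a cost of order one. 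It then uses $\delta\le\delta_0$ to lower the quadratic coefficient from $\delta\delta_0$ to $\delta^2$, and the scaling $V^\delta_{\what G}=V^1_{\delta\what G}$, so the variational problem becomes purely microscopic. Because $\tau_x\Sigma^{\mathfrak h}(\sigma)$ is exactly the outer product of the gradient vector, the Cauchy--Schwarz remainder coincides with $I_2$ configuration by configuration, and the choices $a=2$, $\beta=1/2$ close the estimate exactly.

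You instead keep $\Sigma(\what\sigma^{\ent{\vare N}}(x))$ inside the supremum over densities and do the replacement there. That requires the one- and two-block estimates in their Dirichlet-form version, uniformly over densities, relative to the non-invariant measure $\nu^N_{\what\theta}$. This is what underlies Proposition \ref{replacementBulk}, but it is not a statement the paper makes available. The paper's route is more modular: the replacement lemma is used as a black box and the variational step is an exact computation. Yours avoids the event splitting, at the price of reopening the block estimates.

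Two points need tightening.

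First, the Dirichlet-form budget. At $\delta=\delta_0$, your Cauchy--Schwarz step (with the weight chosen so the square-root differences are absorbed into $\frac{N^2}{2}\mathcal D_{0,N}$) leaves a remainder with coefficient exactly $\delta^2=\delta\delta_0$. It therefore uses up the whole budget from Lemma \ref{DirichelEstimatesInterieur} with $a=1/2$, leaving nothing for the block estimates. There are two easy fixes:
\begin{itemize}
\item take $a<1/2$ in that lemma, which only enlarges the $\what G$-independent constant $C/a$;
\item or give Cauchy--Schwarz a fraction $1-\eta$ of the budget and the block estimates the remaining $\eta$, then let $\eta\to0$ after $N\to\infty$ and $\vare\to0$. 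The excess quadratic term is $O\big(\eta\,\delta^2\,T\,\|\what G\|_\infty^2\big)$, which vanishes in that limit.
\end{itemize}

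Second, your summation-by-parts display is wrong as written. The linear term should read $N^{1-d}\sum_x\what G^\vare(x/N)\cdot[\what\sigma(x-e_k)-\what\sigma(x)]+O(1/N)$, with $\what G^\vare$ rather than $\partial_{N,e_k}\what G^\vare$. This is the form your subsequent steps actually use.
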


    \begin{proof}
      Assume without loss of generality that $\vare$ is small enough so that the support of $\what G$ is contained in $[0,T]\times [-1+\vare,1-\vare] \times \mathbb T^{d-1}$. Recall the definition of $\what \theta$, since for all $\sigma_N\in \mathbb X_N$, $\nu^N_{\what\theta}(\sigma_N) \geq \exp\left(-C_1' N^d\right)$ for some finite constant $C_1'$ only depending on $\what\theta$, it is enough to prove the Lemma with $\E_{\nu^N_{\what\theta}}^{N,\what b}$ in place of $\E_{\mu_N}^{N,\what b}$.  

      Set the functions 
      \begin{align*} 
        &\mathfrak h_1(\sigma) \coloneqq \left[ \sigma(-e_k  ) - \sigma(0) \right]^2, \\
        &\mathfrak h_2(\sigma) \coloneqq \left[ \sigma(-e_k  ) - \sigma(0) \right]\left[ \sigma(-e_k )^2 - \sigma(0)^2 \right], \numberthis \label{definitionDesPsi}\\
        & \mathfrak{h}_3(\sigma) \coloneqq \left[ \sigma(-e_k  )^2 - \sigma(0)^2 \right]^2,
      \end{align*}
      and the matrix 
      \[
        \Sigma^{\mathfrak{h}}(\sigma) \coloneqq \begin{pmatrix}
          {\mathfrak{h}}_1(\sigma) & {\mathfrak{h}}_2(\sigma) \\ {\mathfrak{h}}_2(\sigma) & {\mathfrak{h}}_3(\sigma)
        \end{pmatrix}.
      \]
      Note that 
      \[
        \tilde\Sigma^{\mathfrak{h}}(a_1,a_2) \coloneqq \E^{\nu^N_{(a_1,a_2)}}\left(\Sigma^{\mathfrak{h}}\right) = \Sigma(a_1,a_2) .
      \]
      Denote $\what{\mathfrak{h}} \coloneqq \left({\mathfrak{h}}_1,{\mathfrak{h}}_2,{\mathfrak{h}}_3\right)$.
      For fixed $\delta_0 >0$,  define 
      \[
        B \coloneqq B^{\what G, \what{\mathfrak{h}}}_{N,\vare,\delta_0} \coloneqq \left\{ \sigma \in \mathcal{D}([0,T], \mathbb X_N) ~\middle| ~ \abs{\int_0^T  \sum_{j=1}^3 V_{N,\vare}^{\what G, {\mathfrak{h}}_j}(t,\sigma_t)\, dt} \leq \frac{1}{2\delta_0^2}  \right\}, 
      \]
      where 
      \[
        V^{\what G , {\mathfrak{h}}}_{N,\vare}(t,\sigma) \coloneqq \frac{1}{N^d}\sum_{i=1}^2 \sum_{x\in\La_N} G_{i,t}\left(\frac{x}{N}\right) \left[ \tau_x{\mathfrak{h}}(\sigma)- \tilde {\mathfrak{h}}(\sigma^{\vare N}(x)) \right]. 
      \]
      For all $\delta >0$, we have,
      \begin{align*}
        &\E^{N,\what b}_{\nu^N_{\what\theta}} \left[ \exp\left( \delta N^d \tilde \Q_{\what G , k}^{\delta_0}(\what \pi_N * \what u_\vare) \right)\right] \\
        =& \E^{N,\what b}_{\nu^N_{\what\theta}} \left[ \exp\left( \delta N^d \tilde \Q_{\what G , k}^{\delta_0}(\what \pi_N * \hat u_\vare) \right) \1_{B}\right] + \E^{N,\what b}_{\nu^N_{\what\theta}} \left[ \exp\left( \delta N^d \tilde \Q_{\what G , k}^{\delta_0}(\what\pi_N * \what u_\vare) \right) \1_{B^\complement}\right] .
      \end{align*}
      The second term diverges to $-\infty$ using Proposition \ref{replacementBulk}, thus using inequality \eqref{inegaliteLOG}, to prove the lemma, it suffices to show that, for any $0\le\delta\le \delta_0$,
      \[
        \varlimsup_{\vare \to 0 } \varlimsup_{N\to + \infty} \frac{1}{N^d} \log  \E^{N,\what b}_{\nu^N_{\what\theta}}\left[\exp\left( \delta N^d  \tilde\Q^{\delta_0}_{\what G,k}(\what \pi_N * \what u_\vare)  \right) \1_{B} \right] \leq C_1 (T+1) ,
      \] 
      for some positive constant $C_1$ which does not depend on $G$ and $\delta$.
      Now recalling the definition of $\tilde\Q^{\delta_0}_{\what G,k}$, we find from a discrete integration by parts 
      \begin{align*}
        &\int_0^T \scal{}{\what \pi_N * \what u_\vare}{\partial_k \what G_t}\, dt  =  \int_0^T \frac{1}{N^{d-1}} \sum_{x\in\La_N} \sum_{i=1}^2 G_{i,t}\left(\frac{x}{N}\right) \left[ \sigma_t(x-e_k)^i - \sigma_t(x)^i \right] \, dt + o_\vare(1) + o_N(1),  
      \end{align*}
      where $\lim_{\vare \to 0 } o_\vare(1) =0$ and $\lim_{N\to + \infty} o_N(1) = 0$. Moreover for the second term of $\tilde \Q$, using the fact that $\sigma_\cdot$ belongs to $B$, 
      \[
        \delta_0 \int_0^T \scal{}{\what G_t}{\Sigma(\what\pi^N_t * \what u_\vare) \what G_t} \, dt \geq  \delta_0 \int_0^T \frac{1}{N^d}\sum_{x\in\La_N} \scal{}{\what G_t\left(\frac{x}{N}\right) }{ \tau_x \Sigma^{{\mathfrak{h}}}(\sigma_t) \what G_t\left(\frac{x}{N}\right)} \, dt - \delta_0 O_{\what G^2}(N,\vare) - \frac{1}{\delta_0},
      \]
      where $\lim_{N \to +\infty } O_{\what G^2}(N,\vare) =0$.
      Therefore, to conclude the proof, it is enough to show that 
      \[
  \varlimsup_{N\to + \infty} \frac{1}{N^d} \log \E^{N,\what b}_{\nu^N_{\what\theta}}\left[ \exp\left( N^d \int_0^T V_{\hat G} ^\delta (t,\sigma_t) \, dt  \right) \right] \leq C_1 T  , 
\]
for any $\delta \leq \delta_0$, where 
\[
      V_{\what G}^\delta(t,\sigma) = \delta \frac{1}{N^{d-1}} \sum_{i=1}^2 \sum_{x\in\La_N} G_{i,t}\left(\frac{x}{N}\right) \left[   \sigma(x-e_k)^i - \sigma(x)^i  \right]  - \delta^2 \frac{1}{N^d}\sum_{x\in\La_N} \scal{}{\what G_t\left(\frac{x}{N}\right) }{ \tau_x \Sigma^{{\mathfrak{h}}}(\sigma) \what G_t\left(\frac{x}{N}\right)}.
\]
      But $V_{\what G}^\delta = V_{\delta \what G}^1$, therefore, to prove the Lemma, it suffices to show that, for any $\widehat G\in ({\mathcal C}_c^{\infty}([0,T]\times\Lambda))^2$,
      \[
  \varlimsup_{N\to + \infty} \frac{1}{N^d} \log \E^{N,\what b}_{\nu^N_{\what\theta}}\left[ \exp\left( N^d \int_0^T V_{\hat G} ^1 (t,\sigma_t) \, dt  \right) \right] \leq C_1 T  ,
\]
for some constant $C_1$ independent of $\what G$.
 Using the Feynman-Kac formula, 
    \begin{align*}
      &\frac{1}{N^d} \log \E^{N,\what b}_{\nu^N_{\what\theta}}\left[ \exp\left( N^d \int_0^T V_{\hat G} ^1 (t,\sigma_t) \, dt  \right) \right] \\
      \leq & \int_0^T \sup_{\norm{L^2(\nu_{\what\theta}^N)}{f}^2=1} \left\{ \scal{\nu_{\what\theta}^N}{V_{\what G}^1(t,\sigma)}{f^2} + \scal{\nu_{\what\theta}^N}{N^{2-d}\cl_{\what E,N}f+N^{2-d-\mathfrak{a}_l} L^-_{b,N}f+N^{2-d-\mathfrak{a}_r} L^+_{b,N}f}{f} \right\} \, dt .
    \end{align*}
    Thus, from the Dirichlet estimates stated in Lemma \ref{DirichelEstimatesInterieur}, there exists a positive constant $c$ only depending on $\what\theta$ such that for any $\beta>0$, 
    \begin{align*}
      &\scal{\nu_{\what\theta}^N}{N^{2-d}\cl_{\what E,N}f+N^{2-d-\mathfrak{a}_l} L^-_{b,N}f+N^{2-d-\mathfrak{a}_r} L^+_{b,N}f}{f} \leq  - N^{2-d}(1-\beta) \mathcal{D}_{0,N}(f,\nu_{\what\theta}^N) + \frac{c}{\beta}\norm{L^2(\nu_{\what\theta}^N)}{f}^2 .    
    \end{align*} 
    It now remains to show that for some fixed $\beta$
\[
  \varlimsup_{N\to + \infty} \int_0^T \sup_{\norm{L^2(\nu_{\what\theta}^N)}{f}^2=1} \left\{ \scal{\nu_{\what\theta}^N}{V_{\what G}^1(t,\sigma)}{f^2}  - N^{2-d}(1-\beta) \mathcal{D}_{0,N}(f,\nu_{\what\theta}^N)  \right\} \leq C_1 T.
\]
From the definition of $V$, we can split 
\[
  \scal{\nu_{\what\theta}^N}{V_{\what G}^1(t,\sigma)}{f^2} = I_1 - I_2,
\]
where 
\[
    I_1 \coloneqq  \frac{1}{N^{d-1}} \sum_{i=1}^2 \sum_{x\in\La_N} G_{i,t}\left(\frac{x}{N}\right) \int  \left[  \sigma(x-e_k)^i - \sigma(x)^i  \right]f^2(\sigma) \, d\nu_{\what\theta}^N(\sigma),
\]
and 
\[
    I_2 \coloneqq \frac{1}{N^d}\sum_{x\in\La_N}  \int \scal{}{\what G_t\left(\frac{x}{N}\right) }{ \tau_x \Sigma^{{\mathfrak{h}}}(\sigma) \what G_t\left(\frac{x}{N}\right)} f^2(\sigma) \, d\nu_{\what\theta}^N(\sigma) .
\]
Estimating $I_1$ in terms of $I_2$ and $\mathcal{D}_{0,N}(f,\nu_{\what\theta}^N)$, using the change of variable $\sigma \mapsto \sigma^{x,x-e_k}$  
\begin{align}
  \hspace{-0.15cm} I_1 &= \frac{1}{2} \frac{1}{N^{d-1}} \sum_{i=1}^2\sum_{x\in\La_N}  G_{i,t}\left(\frac{x}{N}\right) \int \left[  \sigma(x-e_k)^i - \sigma(x)^i \right] \left[ f^2(\sigma) - f^2(\sigma^{x,x-e_k}) \right] \, d\nu^N_{\what\theta}(\sigma) \nonumber  \\
     & \ \ +  \frac{1}{2} \frac{1}{N^{d-1}}\sum_{i=1}^2 \sum_{x\in\La_N}  G_{i,t}\left(\frac{x}{N}\right) \int \left[  \sigma(x-e_k)^i - \sigma(x)^i \right]f^2(\sigma^{x,x-e_k}) R_N(x,x-e_k;\sigma)  \, d\nu^N_{\what\theta}(\sigma) \label{eq:estimeeEnergiePreuve},
\end{align}
where $R_N(x,x-e_k;\sigma)$ are defined in \eqref{RNinegaliteN}. Using inequality \eqref{BasicSommeCarre}, the first sum of \eqref{eq:estimeeEnergiePreuve} is bounded, for any $a>0$ by 
  \begin{align*}
    &\frac{aN^{2-d}}{4} \mathcal{D}_{0,N}(f,\nu_{\what\theta}^N) + \frac{1}{4a} \sum_{x\in\La_N} \int \left[ f(\sigma) + f(\sigma^{x,x-e_k})\right]^2 \scal{}{\what G_t\left(\frac{x}{N}\right)}{\tau_x \Sigma(\sigma)\what G_t \left(\frac{x}{N}\right)} \, d\nu_{\what\theta}^N(\sigma) \\ 
    \leq &      \frac{aN^{2-d}}{4} \mathcal{D}_{0,N}(f,\nu_{\what\theta}^N) + \frac{I_2}{a} +  \frac{C_{\what G}}{aN} , 
  \end{align*}
  where the last expression is obtained using the inequality \eqref{BasicSommeCarre}, the fact that $\what G$ is smooth with a compact support and the inequality \eqref{RNinegaliteN}. For the second sum, using \eqref{BasicSommeCarre} and \eqref{RNinegaliteN} again, the sum is bounded by 
  \[
    a C_3 + \frac{I_2}{a} . 
  \]
  Taking into account all our inequalities, we showed that 
  \[
    I_1 \leq \frac{aN^{2-d}}{4} \mathcal{D}_{0,N}(f,\nu_{\what\theta}^N) + \frac{2 I_2}{a} + \frac{C_{\what G}}{aN}  + a C_3 . 
  \]
  With $a=2$  and $\beta=1/2$, we get the desired result.
    \end{proof}

    \begin{corollary}\label{energieEstimee}
      Let $\left(\what G_i\right)_{i\in\N}$ be a sequence of functions in $\left( \mathcal C^{\infty}_0([0,T]\times \La)\right)^2$ and $(\mu_N)_{N\geq 1}$ a sequence of probabilities on $\mathbb X_N$. There exists a constant $C_1$ depending only on $\what b$ such that for all $j\in\N$, $k\in\{1,\dots,d\}$, $\delta_0 >0$ and $\delta \in [0,\delta_0]$, 
      \[
        \varlimsup_{\vare \to 0 } \varlimsup_{N\to + \infty} \frac{1}{N^d} \log  \E_{\mu_N}^{N,\what b}\left[\exp\left( \delta N^d  \max_{0\leq i \leq j} \tilde\Q^{\delta_0}_{\what G_i,k}(\what \pi_N * \what u_\vare)  \right) \right] \leq C_1 (T+1) .
      \] 
    \end{corollary}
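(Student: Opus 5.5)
We deduce the corollary from Lemma \ref{lemma:energyEstimate} by bounding the exponential of a maximum by a sum of exponentials. For fixed $j$, $k$, $\delta_0$ and $\delta\in[0,\delta_0]$, since $\delta\ge 0$ we have pointwise on the path space
\[
\exp\Big( \delta N^d \max_{0\le i\le j} \tilde\Q^{\delta_0}_{\what G_i,k}(\what\pi_N*\what u_\vare)\Big) \le \sum_{i=0}^{j} \exp\Big( \delta N^d \tilde\Q^{\delta_0}_{\what G_i,k}(\what\pi_N*\what u_\vare)\Big).
\]
Taking expectations under $\E^{N,\what b}_{\mu_N}$ then reduces the left-hand side of the corollary to the expectation of a finite sum of $j+1$ terms. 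Each term is exactly of the form treated in Lemma \ref{lemma:energyEstimate} with $\what G=\what G_i$.

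Next we apply inequality \eqref{inegaliteLOG}, iterated $j$ times, or equivalently the bound $\log\sum_{i=0}^j a_i\le \log(j+1)+\max_i\log a_i$. This gives
\[
\frac{1}{N^d}\log \E^{N,\what b}_{\mu_N}\big[\cdots\big] \le \frac{\log(j+1)}{N^d} + \max_{0\le i\le j}\frac{1}{N^d}\log \E^{N,\what b}_{\mu_N}\Big[\exp\big(\delta N^d \tilde\Q^{\delta_0}_{\what G_i,k}(\what\pi_N*\what u_\vare)\big)\Big].
\]
The term $\log(j+1)/N^d$ vanishes as $N\to\infty$ because $j$ is fixed.

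Since the maximum is over a finite index set, $\varlimsup_N$ of a maximum equals the maximum of the $\varlimsup_N$, and the same holds for $\varlimsup_{\vare\to0}$. Lemma \ref{lemma:energyEstimate} bounds each of the $j+1$ double limsups by $C_1(T+1)$. The key point is that $C_1$ depends only on $\what b$ (through $\what\theta$), not on $\what G_i$, $\delta$ or $\delta_0$, so the bound is uniform over the finitely many $i$. This yields the claim with the same constant $C_1$.

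No step is a genuine obstacle. The only thing to check is that the constant in Lemma \ref{lemma:energyEstimate} is indeed independent of the test function, and its proof explicitly reduces to $V^1_{\what G}$ with a constant independent of $\what G$. One should also note that if $\what G_i\in(\mathcal C^\infty_0)^2$ is read as $(\mathcal C^\infty_c)^2$, then $\vare$ can be taken small enough that the supports of all $j+1$ functions lie in $[-1+\vare,1-\vare]\times\mathbb T^{d-1}$ simultaneously.
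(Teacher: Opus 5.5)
Your proposal is correct and follows the paper's argument. The paper's proof is the one-line remark that the corollary follows from Lemma~\ref{lemma:energyEstimate} and inequality \eqref{inegaliteLOG}. That is exactly your argument: bound the exponential of a finite maximum by a sum of $j+1$ exponentials, then use that the constant $C_1$ in the lemma does not depend on $\what G$.
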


    \begin{proof}
 The proof is a direct consequence of Lemma \ref{lemma:energyEstimate} and inequality \eqref{inegaliteLOG}. 
    \end{proof}

    \section{Hilbert basis of eigensolutions}
    \label{AppendixBasisEigen}

    The family of functions $\left(V_{n}\right)_{n\in \N^d}$ defined for $n\coloneqq (n_1,\dots,n_d)\in \N^d$  by   
    \begin{align}\label{A-Vn}
      V_{(n_1,\dots,n_d)}(x_1,\dots,x_d) \coloneqq \sin\left(\left(n_1 + \frac{1}{2}\right) \frac{\pi}{2}(x_1+1)\right) \prod_{i=2}^d \sqrt{2} \sin\left(\pi (n_i+1) x_i \right),
    \end{align}
    is a Hilbert basis of $L^2(\La)$. Furthermore it is a family of solutions of the problem 
    \begin{align*}
      \left\{ \begin{array}{rl}
        - \Delta \phi &= \la \phi, \\
        \phi\vert_{\Ga^-} &= 0 , \\ 
        \partial_{e_1} \phi \vert_{\Ga^+} &= 0 ,
      \end{array}  \right.
    \end{align*}
    with 
    \[
    \la_{(n_1,\dots,n_d)} = \left( n_1 + \frac12\right)^2\frac{\pi^2}{4} + \sum_{i=2}^d \pi^2 (n_i+1)^2,
    \]
    and we can write $\la_{(n_1,\dots,n_d)} = \sum_{i=1}^d \la_{(n_1,\dots,n_d),i}$ with $\la_{(n_1,\dots,n_d),1} = \left( n_1 + \frac12\right)^2\frac{\pi^2}{4}$ and $\la_{(n_1,\dots,n_d),i} = \pi^2 (n_i+1)^2$ for $i\neq 1$.
    Moreover, the family of functions $\left( \frac{\partial_{e_i}V_{n}}{\la_{(n_1,\dots,n_d),i}}  \right)_{(n_1,\dots,n_d)\in \N^d}$ is a Hilbert basis of $L^2(\La)$. Furthermore, the family of functions $\left(U_{n_1,\dots,n_d}\right)_{(n_1,\dots,n_d)\in \N^d}\coloneqq \left( \frac{\partial_{e_1}V_{n_1,\dots,n_d}}{\la_{(n_1,\dots,n_d),1}}  \right)_{(n_1,\dots,n_d)\in \N^d}$ is a family of solutions to the problem  
    \begin{align*}
      \left\{ \begin{array}{rl}
        - \Delta \phi &= \la \phi, \\
        \partial_{e_1}\phi\vert_{\Ga^-} &= 0 , \\ 
         \phi \vert_{\Ga^+} &= 0 ,
      \end{array}  \right.
    \end{align*}
    and allows us to prove that for any $f,g \in \mathcal H_{0,-}$, 
    \[
    \sum_{i=1}^d \scal{}{\partial_{e_i} f}{\partial_{e_i} g} = \sum_{n\in\N^d} \la_{n} \scal{}{f}{V_{n}}\scal{}{g}{V_{n}}.
    \]
    Finally, the family $\left(V_{(n_2,\dots,n_d)}\right)_{(n_2,\dots,n_d) \in \N^{d-1}}$ defined by 
    \begin{align*}
      V_{(n_2,\dots,n_d)}(x_2,\dots,x_d) \coloneqq  \prod_{i=2}^d \sqrt{2} \sin\left(\pi (n_i+1) x_i \right), 
    \end{align*}
    is also a Hilbert basis on $L^2(\Ga)$.   

\section{Uniqueness}\label{Appendix:uniquenessPerturbe}

We show in this Appendix the uniqueness of the perturbed problem \eqref{Eq:perturbeR+D-hydrodynamic}. The uniqueness of the hydrodynamic limit \eqref{eq:1} is then an immediate corollary.

\begin{proposition}\label{Uniqueness}
      For any $\what E\in \left(\R^d\right)^2$ and $\what H \in \left(  \mathcal C^{1,2}_{0,-}([0,T]\times \overline \La)\right)^2$, there exists a unique weak solution of  \eqref{Eq:perturbeR+D-hydrodynamic}.
    \end{proposition}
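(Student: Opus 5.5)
Existence will not be proved by a separate PDE argument. It comes from Proposition \ref{th-hy-perturbe}: tightness, the energy estimate and the identification of limit points show that any limit point of $(\mathbb Q^{N,\what H}_{\mu_N})$, started from product measures $\nu^N_{\what\gamma}$ associated to $\what\gamma$, is concentrated on weak solutions in $\mc A^{\what H}_{[0,T]}$ with $\what\rho_0=\what\gamma$. The main work is therefore uniqueness.

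For uniqueness, take two weak solutions $\what\rho^1,\what\rho^2$, both in $(L^2([0,T],H^1(\La)))^2$, with values in $\bar{\mb I}$, the same initial datum $\what\gamma$ and the same trace $\what b$ on $\Ga^-$. Set $\what w=\what\rho^1-\what\rho^2$. Subtracting the weak formulations \eqref{weakAlternativeperturbe} gives, for every $\what G\in(\mathcal C^{1,2}_{0,-})^2$,
\[
\scal{}{\what w_T}{\what G_T}-\int_0^T\scal{}{\what w_t}{\partial_t\what G_t}dt+\sum_{k=1}^d\int_0^T\scal{}{\partial_{e_k}\what w_t}{\partial_{e_k}\what G_t}dt=\frac12\sum_{k=1}^d\int_0^T\scal{}{\partial_{e_k}\what G_t}{\big[\Sigma(\what\rho^1_t)-\Sigma(\what\rho^2_t)\big](\what E_k+2\partial_{e_k}\what H_t)}dt .
\]
The Neumann boundary terms on $\Ga^+$ cancel after integrating back by parts. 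The $\Ga^-$ terms cancel because the two traces agree.

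A direct $L^2$ energy method would use $\what w$ itself as test function. This needs $\partial_t\what w\in L^2([0,T],\mathcal H^*_{0,-})$, and that can be read off the identity above since its right-hand side is bounded in $\H^*$-type norms. The resulting energy identity from Lemma \ref{lemma:inegrationByPartsRoubicek} is
\[
\tfrac12\|\what w_t\|_{L^2}^2+\int_0^t\|\nabla\what w_s\|^2ds=\tfrac12\sum_k\int_0^t\scal{}{\partial_{e_k}\what w_s}{[\Sigma(\what\rho^1_s)-\Sigma(\what\rho^2_s)](\what E_k+2\partial_{e_k}\what H_s)}ds .
\]
The Lipschitz continuity of $\Sigma$ on $\bar{\mb I}$ and the boundedness of $\nabla\what H$ (since $\what H\in\mathcal C^{1,2}$) bound the right side by $C\int_0^t\|\nabla\what w_s\|\,\|\what w_s\|ds\le\frac12\int_0^t\|\nabla\what w_s\|^2ds+C'\int_0^t\|\what w_s\|^2ds$. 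Absorbing the gradient term and applying Gronwall gives $\what w\equiv0$.

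The delicate point is justifying that $\what w$ is an admissible test function. It lies in $\W^{1,2}_{[0,T],0,-}$ and the weak formulation is only stated for smooth $\what G$. I would handle this by density of $(\mathcal C^{1,2}_{0,-})^2$ in $\W^{1,2}_{[0,T],0,-}$, together with the continuity of every term of the identity in that topology. If this is awkward, the fallback is to work with the Hilbert basis $(V_n)$ of Appendix \ref{AppendixBasisEigen}. Testing with $G_t=\psi(t)V_n$ and using the spectral identity for $\sum_i\scal{}{\partial_{e_i}f}{\partial_{e_i}g}$ reproduces the same energy identity coefficientwise, in the style of the uniqueness proofs in \cite{fgn1,bdgjl3}.
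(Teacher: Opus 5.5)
Your argument is correct, but it is not the paper's main route. What you call your fallback is in fact what the paper does.

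\textbf{The paper's route.} It never tests with $\what w$. It tests the weak formulation only with the time-independent eigenfunctions $V_i$ of Appendix~\ref{AppendixBasisEigen}, which gives an explicit integral equation for each coefficient $\langle \overline m_t, V_i\rangle$. It then differentiates the truncated sums $G_n(t)=\sum_{i\le n}\big(\langle\overline m_t,V_i\rangle^2+\langle\overline\phi_t,V_i\rangle^2\big)$. The nonlinear term is absorbed by Young's inequality with weights $a\lambda_{i,k}$ together with Bessel's inequality for the family $(\partial_{e_k}V_i/\sqrt{\lambda_{i,k}})_i$. Finally it lets $n\to\infty$ and applies Gronwall. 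Because only smooth, time-independent test functions are used, this route needs no time regularity of $\what w$ and raises no admissibility or density question. The price is that it depends on the explicit product eigenbasis of the cylinder.

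\textbf{Your route.} The Lions--Magenes energy argument is shorter and independent of the geometry. It does, however, genuinely require three things: $\what w\in(\W^{1,2}_{[0,T],0,-})^2$, Lemma~\ref{lemma:inegrationByPartsRoubicek}, and the identification of the continuous representative at $t=0$ with $0$.

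\textbf{One step to fill in.} Equality of the traces on $\Ga^-$ is not part of the definition of $\mc A^{\what H}_{[0,T]}$, so you should not simply put it into your hypotheses. In your argument it is exactly what removes the $\Ga^-$ boundary term and places $\what w$ in $L^2([0,T],\mathcal H_{0,-})$. Derive it from the weak formulation as follows.
\begin{itemize}
\item Test with $\what G_t(u)=\psi(t)\,g_\varepsilon(u)$, where $\psi$ is compactly supported in $(0,T)$.
\item Choose $g_\varepsilon$ vanishing on $\Ga^-$, with $\partial_{e_1}g_\varepsilon$ equal there to a fixed smooth function $h$, and supported in a strip of width $\varepsilon$ near $\Ga^-$.
\item Then $\|g_\varepsilon\|_{H^1}=O(\varepsilon^{1/2})$. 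After integrating $\langle\what\rho,\Delta\what G\rangle$ by parts, every term except $\int\psi\int_{\Ga^-}(\what\rho_t-\what b)\,h\,dS\,dt$ vanishes as $\varepsilon\to0$.
\end{itemize}
The paper also uses this fact implicitly, when it asserts that $\overline m,\overline\phi\in H^1_{0,-}$.

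Finally, you treat existence explicitly, via limit points of the perturbed process; the paper's proof of this proposition leaves existence implicit.
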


    \begin{proof}
      From the Appendix \ref{AppendixBasisEigen}, there exists a family of functions $\left(V_n\right)_{n\in\N^d}$ that are eigensolutions to the problem
      \[
        \left\{ \begin{array}{rl}
          - \Delta \phi  =& \la \phi ,  \\
          \phi|_{\Ga_-} =& 0 , \\
          \partial_{e_1}\phi|_{\Ga^+} =& 0 ,
        \end{array}\right.
      \]
      and this family forms a complete, orthonormal system in the Hilbert space $L^2(\La)$.
    Since $\N^d$ is countable, we may index this family by $\N$, with this notation, for any $U,W\in L^2(\La)$,
      \[
        \scal{}{U}{W} = \sum_{i=0}^{+\infty} \scal{}{U}{V_i} \scal{}{W}{V_i}.
      \]
      Furthermore, if $U$ and $W$ belong to $\mathcal H^1_{0,-}(\La)$,
      \[
        \sum_{k=1}^d \scal{}{\partial_{e_k} U}{\partial_{e_k} W} = \sum_{i=0}^{+\infty} \la_i \scal{}{U}{V_i} \scal{}{W}{V_i} .
      \]
      From the explicit formula \eqref{A-Vn}, for all $i\in\N$, the function $V_i$ is a product function and we can write the eigenvalue as $\la_i = \sum_{k=1}^d \la_{i,k}$, such that for all $k\in\{1,\dots,d\}$, the family of functions $\left(\frac{\partial_{e_k}V_i}{\la_{i,k}}\right)_{i\in\N}$ is a Hilbert basis of $L^2(\La)$.

      Let us consider $\what{\rho}_1 \coloneqq (m_1,\phi_1)$ and $\what{\rho}_2 \coloneqq (m_2,\phi_2)$, 2 weak solutions of the problem \eqref{eq:1}. Denote $\what{\overline\rho} \coloneqq \what\rho_1 - \what\rho_2$, $\overline{m}\coloneqq m_1 - m_2$ and $\overline\phi \coloneqq \phi_1 - \phi_2 $, observe that $\overline m$ and $ \overline \phi$ belongs to $H^1_{0,-}(\La)$. We introduce the sequence of functions
      \[
        G_n(t) \coloneqq \underbrace{\sum_{i=0}^n \scal{}{\overline m_t}{V_i}^2}_{\eqqcolon G_n^{\overline m}(t)}  + \underbrace{\sum_{i=0}^n \scal{}{\overline \phi_t}{V_i}^2}_{\eqqcolon G_n^{\overline\phi}(t)}  ,
      \]
        thus we have $G_n(t) \xrightarrow[n\to + \infty]{} G(t) \coloneqq \norm{L^2(\La)}{\overline m_t}^2 + \norm{L^2(\La)}{\overline \phi_t}^2 $. The goal is to show  that $G(t) = 0$ for all $t$ in $[0,T]$.

        We treat only the term  $G_n^{\overline m}$ corresponding to $\overline m$, the second term $ G_n^{\overline\phi}$ can be handled in the same way.
        Applying the formula \eqref{weakAlternative} we have, from the boundary conditions of $V_i$,
        \begin{align*}
          \scal{}{\overline m_t}{V_i} =  \int_0^t \scal{}{\Delta V_i}{\overline m_s} \, ds
          + \frac12 \sum_{\ell =1}^2\sum_{k=1}^d \int_0^t \scal{}{\partial_{e_k} V_i}{\left[ \Sigma_{1,\ell}(\what\rho_{1,s})-\Sigma_{1,\ell}(\what\rho_{2,s})\right] \big( E^\ell_k +2\partial_{e_k} H_{\ell,s}\big)} \, ds ,
        \end{align*}
        which is differentiable in time, and by the fundamental theorem of calculus, we get
        \begin{align*}
          \left(G_n^{\overline m}\right)(t) & =  -2 \int_0^t \Big\{\sum_{i=0}^n \la_i \scal{}{ V_i}{\overline m_s}\scal{}{ V_i}{\overline m_s}\Big\} ds \\
          \ & + \int_0^t \Big\{
          \sum_{\ell =1}^2\sum_{k=1}^d \sum_{i=0}^n\scal{}{\partial_{e_k} V_i}{\left[ \Sigma_{1,\ell}(\what\rho_{1,s})-\Sigma_{1,\ell}(\what\rho_{2,s})\right] \big( E^\ell_k +2\partial_{e_k} H_{\ell,s}\big)} \scal{}{ V_i}{\overline m_s} \Big\}\, ds.
        \end{align*}
The first sum in the right hand side converges to $-2 \sum_{k=1}^d\norm{L^2(\La)}{\partial_{e_k}\overline{m}_t}^2$, on the other hand,
using $d$ times the inequality \eqref{BasicSommeCarre},  and the Lipschitz continuity of $\Sigma_{1,\ell}$ for $\ell=1,2$, the second line in
the previous formula is bounded above, for all $a>0$ by
\begin{equation*}
\begin{aligned}
\ &\frac12 \sum_{\ell =1}^2\sum_{k=1}^d \sum_{i=0}^n
\int_0^t \Big(\Big\{
  {a\la_{i,k}} \scal{}{ V_i}{\overline m_s}^2\Big\} 
  +
  \Big<
 \frac{\partial_{e_k} V_i}
 {\sqrt{a \la_{i,k}}}
 ,\left[ \Sigma_{1,\ell}(\what\rho_{1,s}) - \Sigma_{1,\ell}(\what\rho_{2,s})\right] \big( E^\ell_k +2\partial_{e_k} H_{\ell,s}\big) \Big>^2\, \Big) ds
 \\
\ & \le a \sum_{k=1}^d \int_0^t \norm{L^2(\La)}{\partial_{e_k} \overline m_s}^2 ds +
 \frac{C}{a}\int_0^t \norm{L^2(\La)}{\what{\overline{\rho}}_s}^2\, ds ,
 \end{aligned}
\end{equation*}
where we have used  the fact that $\left(\frac{\partial_{e_k}V_i}{\sqrt{\la_{i,k}}}\right)_{i\in\N}$ is an Hilbert basis of $L^2(\La)$,
and $C=C_{\what E,\Sigma, \what H}$ is a constant depending on $\what E$,$\Sigma$ and $\what H$.
Taking $a=2$ and using the same arguments for $G_n^{\overline \phi}$,
we finally obtain,
 \[
          G(t) \leq  C_{\what E,\Sigma,\what H} \int_0^t G(s) \, ds,
        \]
for some positive constant $C_{\what E,\Sigma, \what H}$.  We conclude the proof of the uniqueness by the Grönwall's Lemma.
    \end{proof}

\section{Heat solution in \texorpdfstring{$I_\vare$}{Iepislon} a.e. in \texorpdfstring{$[\delta,T]\times \La$}{[delta,T] x Lambda}}

When looking up the heat equation \eqref{Eq:heat} for $\what E = \what 0$, the problem is no longer coupled and we can consider the equation corresponding to the second coordinate 

\begin{equation}\label{eq:heatPhi}
  \left\{ \begin{array}{rl}
    \partial_t \phi &= \; \Delta \phi, \\
    \phi_0 &= \; \gamma_2 \in [0,1], \\
    \phi\vert_{\Ga^-} &= \;b_2 \in [0,1] , \\
    \partial_{e_1} \phi\vert_{\Ga^+} &=\; 0 ,\\
    \phi(t,u) &\in [0,1] \text{ a.e. in $[0,T]\times \La$.}
  \end{array} \right.
\end{equation}

\begin{lemma}\label{lemma:heatBORNEE}

  Let $\phi$ be the unique solution of the heat equation \eqref{eq:heatPhi}.        
    Then, $\phi \in  \mathcal C^2([0,T]\times \La) \cap  \mathcal C([0,T]\times \overline\La)$ and for all $0<\delta< T$, there exists $\vare > 0 $ such that $\vare \leq \phi \leq 1 - \vare$ in $[\delta ,T] \times \La$.    
  \end{lemma}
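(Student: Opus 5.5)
We will prove Lemma \ref{lemma:heatBORNEE} in three steps: construct the solution explicitly by separation of variables, transfer regularity from the data, and then obtain the two-sided bounds from a strong maximum principle applied to $\phi$ and to $1-\phi$.

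\textbf{Step 1: explicit solution and regularity.} Write $\phi = \theta_2 + \psi$, where $\theta_2$ is the smooth extension of $b_2$ from \eqref{eq:cC}, whose trace on $\Gamma^-$ is $b_2$. The remainder $\psi$ then solves
\[
\partial_t\psi=\Delta\psi+\Delta\theta_2,\qquad \psi|_{\Gamma^-}=0,\qquad \partial_{e_1}\psi|_{\Gamma^+}=-\partial_{e_1}\theta_2|_{\Gamma^+},\qquad \psi_0=\gamma_2-\theta_2 .
\]
The inhomogeneous Neumann datum can be removed by a further smooth corrector, so we may assume homogeneous mixed conditions. We then expand $\psi$ in the eigenbasis $(V_n)$ of Appendix \ref{AppendixBasisEigen}; Duhamel's formula gives the coefficients explicitly. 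The factors $e^{-\lambda_n t}$ give absolute convergence of all spatial and temporal derivatives for $t>0$, so $\phi\in\mathcal C^{2}((0,T]\times\La)$. By uniqueness (Proposition \ref{Uniqueness} with $\what E=\what 0$, $\what H=0$), this series is the weak solution. Continuity up to $\overline\La$, and at $t=0$ where $\gamma_2$ is assumed continuous, follows by standard parabolic theory. Note that near $t=0$ one only gets continuity, which is why the lemma restricts to $[\delta,T]$.

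\textbf{Step 2: lower bound $\phi\ge\vare$.} First, the weak maximum principle gives $0\le\phi\le 1$, since the constants $0$ and $1$ are a sub- and a supersolution compatible with the boundary data $b_2\in[0,1]$ and the zero Neumann condition. Next we use \eqref{eq:cC2}, which gives $b_2\ge c^*>0$ on $\Gamma^-$, so $\phi$ is not identically zero. The strong maximum principle then yields $\phi>0$ in $(0,T]\times\La$, and the Dirichlet datum gives $\phi\ge c^*$ on $\Gamma^-$. On $\Gamma^+$, a minimum value $0$ at some $t>0$ would contradict Hopf's lemma, since $\partial_{e_1}\phi=0$ there. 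Hence $\phi>0$ on the compact set $[\delta,T]\times\overline\La$, and by continuity $\min\phi=:\vare_1>0$.

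\textbf{Step 3: upper bound and main obstacle.} The upper bound $\phi\le 1-\vare$ follows symmetrically by applying the same argument to $1-\phi$, using $b_2\le C^*<1$. The main obstacle is justifying the strong maximum principle and Hopf's lemma up to the boundary $\Gamma^+$, where regularity is needed. My first approach would be to reflect $\phi$ evenly across $u_1=1$. This turns the Neumann condition into an interior point of a heat equation on $(-1,3)\times\mathbb T^{d-1}$ with Dirichlet data on both ends, after which only the interior strong maximum principle is needed. A more quantitative alternative is a direct comparison: bound $\phi$ below by the solution with initial datum $0$ and the same boundary data. That solution is explicit via the series, and its first mode together with the Dirichlet data $\ge c^*$ gives a positive lower bound for $t\ge\delta$. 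Either route yields the required $\vare$.
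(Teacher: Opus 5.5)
Your proposal is correct and follows essentially the same route as the paper. The paper also writes $\phi=\theta_2+v$, though it chooses $\theta_2$ with $\partial_{e_1}\theta_2=0$ rather than adding a corrector; it then builds $v$ from the eigenfunction heat kernel plus a Duhamel term, identifies it with $\phi$ by uniqueness, obtains $\phi<1$ from the strong maximum principle (Protter--Weinberger, Thm.~3.3.5) by contradiction with the Dirichlet data on $\Gamma^-$, concludes by compactness of $[\delta,T]\times\overline{\Lambda}$, and handles the other bound via $1-\phi$. Your Hopf/even-reflection step at $\Gamma^+$ fills a point the paper's proof skips, since its argument only excludes interior extrema; and like the paper you really get regularity only for $t>0$, which is all the bounds on $[\delta,T]$ need.
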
 
  \begin{proof}
    The existence and uniqueness of a solution follows from the Theorem \ref{th-hy} and Proposition \ref{Uniqueness} with $\what E =\what 0$. 

    Remind the definition of $\what\theta$ in the description of the model, without loss of generality we can assume that $\partial_{e_1} \theta_2 = 0$ in $\overline \La$.  Consider the following nonhomogeneous boundary value problem 
    \begin{equation}\label{eq:vHeatAppendix}
      \left\{ \begin{array}{rl}
        \partial_t v & = \; \Delta v + \Delta \theta_2,\\
        v_0 & = \; \gamma_2 - \theta_2 , \quad\text{a.e. in $\La$,} \\
        v\vert_{\Ga^-} & = \; 0 , \\ 
        \partial_{e_1} v \vert_{\Ga^+} &= \; 0 , \\
        v(t,u) & \in [0,1] \; \text{a.e. in $[0,T]\times \La$.}
      \end{array} \right.
    \end{equation}
    
    A simple computation shows that if $v$ is a solution of \eqref{eq:vHeatAppendix} then $\phi \coloneqq v + \theta_2$ is solution of the heat equation \eqref{eq:heatPhi}, in particular the problem \eqref{eq:vHeatAppendix} has  at most one solution. From the regularity of $\theta_2$, to prove the first statement of the lemma it suffices to show that \eqref{eq:vHeatAppendix} has a solution in $ \mathcal C^2([0,T]\times \La) \cap  \mathcal C([0,T]\times \overline\La)$.  

    Recall the definition of $(V_n)_{n\in\N^d}$ in \eqref{A-Vn}, we saw in the appendix \ref{AppendixBasisEigen} that these functions are a complete family of eigensolutions of the problem 
    \begin{align*}
      \left\{ \begin{array}{rl}
        - \Delta \phi &= \la \phi, \\
        \phi\vert_{\Ga^-} &= 0 , \\ 
        \partial_{e_1} \phi \vert_{\Ga^+} &= 0 .
      \end{array}  \right.
    \end{align*}
    Because $\N^d$ is a countable set, we can denote this family of functions by $(V_n)_{n\in\N}$ with eigenvalues $(\la_n)_{n\in\N}$. We can now define the heat kernels on the time interval $(0,T]$, on the sets $\overline\La$  by the following expression 
    \begin{align*}
      p(t,u,v) &\coloneqq \sum_{n\in\N} e^{-\la_n t} V_n(u) V_n(v),
    \end{align*}
    for $t\in [0,T]$ and $u,v\in \overline\La$.
    One can check that on $(0,+\infty)$, the function $p$ solves $\partial_t p = \Delta_u p = \Delta_v p$ and that $p(0,u,v) = \delta_u(v)$. For $t\in (0,T]$, let  the function $\mathcal{V} : (0,T] \times \La \longrightarrow\R$ be defined by 
    \[
      \mathcal{V}(t,u) \coloneqq (p * (\gamma_2 - \theta_2))(t,u) + \int_0^t (p* \Delta \theta_2)(s,u) \, ds .
    \]
    
    A simple computation shows that $\mathcal{V} \in  \mathcal C^\infty((0,T]\times \La) $ and solves the equation  \eqref{eq:vHeatAppendix}. As a consequence, $\mathcal{V} + \theta_2$ solves the heat equation \eqref{eq:heatPhi} and by uniqueness, the solution of \eqref{eq:heatPhi} belongs to $ \mathcal C^2([0,T]\times \La) \cap  \mathcal C([0,T]\times \overline\La)$.

    We now prove the second property of the lemma. Let $\phi$ be the solution of \eqref{eq:heatPhi}, then by definition and by the previous result just proved, $0 \leq \phi \leq 1$ in $[0,T]\times \overline \La$. It follows from Theorem 3.3.5 of \cite{pw} that for all $(t,u) \in (0,T] \times \overline\La$, $\phi(t,u) < 1$. Indeed, suppose that there exists $(t,u) \in (0,T] \times \La$ such that $\phi(t,u) = 1$, then $\phi = 1$ on $[t,T]\times \overline\La$, which is in contradiction with boundary conditions \eqref{eq:cC}.

    Fix $T>\delta > 0$. Since the set $K_\delta \coloneqq [\delta,T]\times \overline\La$  is compact and $\phi$ is continuous on this compact, we have that $\displaystyle\max_{(t,u) \in K_\delta} \phi(t,u) < 1$, this concludes the proof of the fact that $\phi$ is bounded above by $1-\vare$ for some $\vare >0$. We prove similarly that the solution is bounded below by $\vare$ considering $\tilde \phi \coloneqq 1 - \phi$.
  \end{proof}
  
  \begin{lemma}\label{rho0Iepsilon}
    The unique solution $\what \rho^0\coloneqq (m^0,\phi^0)$ of the heat equation \eqref{Eq:heat} belongs to \linebreak
    $\left( \mathcal C^2([0,T]\times \La) \cap  \mathcal C([0,T]\times \overline\La)\right)^2$ and for all $0< \delta \leq T$, there exists $\vare > 0$, such that $(m^0,\phi^0)\in \mathbf I_\vare$ in $[\delta,T]\times \La$. 
  \end{lemma}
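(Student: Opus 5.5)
The plan is to reduce the statement to three decoupled scalar heat problems, each of the type treated in Lemma \ref{lemma:heatBORNEE}. When $\what E=\what 0$, the system \eqref{Eq:heat} is diagonal: $m^0$ and $\phi^0$ each solve a scalar heat equation with Dirichlet data on $\Ga^-$ and zero Neumann data on $\Ga^+$. Rather than treating $m^0$ and $\phi^0$ directly, I will work with the three ``occupation probabilities''
\[
p_+ \coloneqq \frac{\phi^0+m^0}{2},\qquad p_- \coloneqq \frac{\phi^0-m^0}{2},\qquad p_0\coloneqq 1-\phi^0 .
\]
Each is an affine combination of $m^0$ and $\phi^0$, so by linearity each solves $\partial_t p=\Delta p$ with $\partial_{e_1}p|_{\Ga^+}=0$. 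Their Dirichlet data on $\Ga^-$ are $(b_2\pm b_1)/2$ and $1-b_2$. By \eqref{eq:cC2} all three data are bounded below by a positive constant ($c^*/2$, resp.\ $1-C^*$). The initial data $(\gamma_2\pm\gamma_1)/2$ and $1-\gamma_2$ lie in $[0,1]$ because $\wgamma$ takes values in $\mathbf I$. Moreover, by Proposition \ref{th-hy} with $\what E=\what 0$, the solution takes values in $\bar{\mathbf I}$, which is exactly the statement $p_\pm,p_0\ge 0$ a.e.

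The first step is regularity. For $\phi^0$ it is given by Lemma \ref{lemma:heatBORNEE}. For $m^0$ I repeat that argument verbatim with $\theta_1$ in place of $\theta_2$: set $v=m^0-\theta_1$ (assuming, as there, $\partial_{e_1}\theta_1=0$), represent it through the heat kernel $p(t,u,v)=\sum_n e^{-\la_n t}V_n(u)V_n(v)$ built from the basis of Appendix \ref{AppendixBasisEigen}, and conclude by uniqueness (Proposition \ref{Uniqueness}). This gives $m^0,\phi^0\in \mathcal C^2([0,T]\times\La)\cap\mathcal C([0,T]\times\overline\La)$, and hence the same regularity for $p_\pm,p_0$.

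The second step shows strict positivity of each $p\in\{p_+,p_-,p_0\}$ on $(0,T]\times\overline\La$, which is the main point. Suppose $p(t,u)=0$ for some $t>0$. On $\Ga^-$ this is impossible, since there $p$ equals its strictly positive boundary datum and $p$ is continuous up to the boundary. If $u$ is interior, then $0$ is a minimum of the nonnegative solution, and the strong maximum principle (Theorem 3.3.5 of \cite{pw}, as used in Lemma \ref{lemma:heatBORNEE}) forces $p\equiv 0$ on $[0,t]\times\overline\La$. That contradicts the positive Dirichlet data on $\Ga^-$. If $u\in\Ga^+$, the Hopf boundary lemma would give $\partial_{e_1}p(t,u)<0$ unless $p$ is constant, contradicting the Neumann condition $\partial_{e_1}p=0$. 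I expect this boundary case on $\Ga^+$ to be the delicate step. An alternative that avoids Hopf is to reflect $p$ evenly across $\{u_1=1\}$, which turns $\Ga^+$ into an interior point of a heat solution on a larger cylinder, and then apply the interior strong maximum principle.

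The last step is compactness. Fix $\delta\in(0,T]$. Each $p$ is continuous and strictly positive on the compact set $[\delta,T]\times\overline\La$, so there is $\eta>0$ with $p_\pm,p_0\ge\eta$ there. Then $\phi^0-|m^0|=2\min(p_+,p_-)\ge 2\eta$ and $1-\phi^0=p_0\ge\eta$. Taking $\vare=\eta$ gives $(m^0,\phi^0)\in\mathbf I_\vare$ on $[\delta,T]\times\La$, which is the claim.
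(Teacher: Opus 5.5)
Your proof is correct and is essentially the paper's argument: the paper also passes to the scalar heat solutions $(\phi^0\pm m^0)/2$ together with $\phi^0$ (your $p_0=1-\phi^0$ simply encodes the upper bound $\phi^0\le 1-\vare$), applies Lemma \ref{lemma:heatBORNEE} to each of them, and takes the minimum of the resulting constants. The only difference is that you re-derive the maximum-principle step instead of citing that lemma. In doing so you treat points of $\Ga^+$ (via Hopf's lemma or even reflection), and the backward-in-time propagation in the strong maximum principle, more carefully than the paper's proof of Lemma \ref{lemma:heatBORNEE} does.
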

  \begin{proof}
    The unique solution to the heat equation \eqref{Eq:heat} is warranted by Theorem \ref{th-hy} and Proposition \ref{Uniqueness} with $\what E= \what 0$. Let $(m,\phi)$ be such a solution and fix $0< \delta \leq T$. Then, from the Lemma \ref{lemma:heatBORNEE}, $\phi \in  \mathcal C^2([0,T]\times \La) \cap  \mathcal C([0,T]\times \overline\La)$ and there exists $\vare >0$ such that $\vare \leq \phi \leq 1 - \vare$ in $[\delta,T]\times \overline \La$. Let us define $\phi_\pm \coloneqq \frac{\phi \pm m}{2}$, then both functions $\phi_-$ and $\phi_+$ take their values in $[0,T]$ and, by linearity, both functions are solution to the heat equations  \eqref{eq:heatPhi}. It follows from Lemma \ref{lemma:heatBORNEE}, that $\phi_-, \phi_+ \in  \mathcal C((0,T]\times \overline\La)$  and there exists $\vare_-,\vare_+>0$ such that $\vare_\pm \leq \phi_\pm \leq 1 - \vare_\pm$ in $[\delta , T]\times \La$. To conclude the proof, it is enough to take $\vare \coloneqq \min (\vare_-,\vare_+)$.  
  \end{proof}
  \section{Miscellaneous}\label{Miscellaneous}
  \begin{lemma}\label{supMfiniD0TM}
    Define for positive functions $J_1,J_2 \in  \mathcal C^\infty(\Lambda)$, the function on $\DM$, 
    \begin{equation}\label{definitionM}
      \mathfrak M_{\what J}(\what \pi) \coloneqq \sup_{0\leq s \leq T} \left\{ \scal{}{\pi^2_s}{J_1} - \scal{}{1}{J_1} + \scal{}{\abs{\pi^1_s}}{J_2} - \scal{}{\pi^2_s}{J_2} \right\}, 
    \end{equation}
    where $\what J \coloneqq (J_1,J_2)$. Then for all $\what \pi \in \DM$, 
    \[
      \sup_{\what J} \mathfrak M_{\what J}(\what \pi) < + \infty \qquad \iff \qquad \what \pi \in \mathcal{D}([0,T],\wm) ,
    \]
    where the supremum is taken over all positive functions $J_1,J_2 \in  \mathcal C^\infty (\La)$. In that case $\sup_{\what J} \mathfrak M_{\what J}(\what \pi) = 0$.
  \end{lemma}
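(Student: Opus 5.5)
We prove the two implications separately, working at each fixed time $s$ and then taking the supremum over $s\in[0,T]$. The guiding observation is that the integrand in $\mathfrak M_{\what J}$ is built to test the two pointwise constraints $\phi\le 1$ and $|m|\le\phi$ that define $\bar{\mb I}$ in \eqref{cC0b}. The $J_1$-part is $\langle \pi^2_s - du, J_1\rangle$, which penalizes $\pi^2_s$ exceeding Lebesgue measure. The $J_2$-part is $\langle |\pi^1_s| - \pi^2_s, J_2\rangle$, which penalizes $|\pi^1_s|$ exceeding $\pi^2_s$. Here $|\pi^1_s|$ denotes the total variation measure.

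For the easy direction, assume $\what\pi\in\mathcal D([0,T],\wm)$. Then for every $s$ we have $\pi^1_s=m_s\,du$ and $\pi^2_s=\phi_s\,du$ with $|m_s|\le\phi_s\le 1$ a.e., and $|\pi^1_s|=|m_s|\,du$. The bracket equals
\[
\int_\La J_1(\phi_s-1)\,du+\int_\La J_2(|m_s|-\phi_s)\,du\le 0,
\]
because $J_1,J_2\ge0$. So $\sup_{\what J}\mathfrak M_{\what J}(\what\pi)\le 0$. For the reverse inequality, take $J_1=J_2=\eta$ constant and let $\eta\downarrow 0$ (or use an infimum argument if only strictly positive $J$ are allowed). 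The bracket is then bounded below by $-\eta(|\La|+4)$, which tends to $0$. Hence the supremum equals $0$.

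For the converse, assume $\sup_{\what J}\mathfrak M_{\what J}(\what\pi)=A<\infty$. Since $\what J\mapsto\mathfrak M_{\what J}$ is positively homogeneous of degree one, replacing $\what J$ by $t\what J$ and letting $t\to\infty$ forces $\mathfrak M_{\what J}(\what\pi)\le 0$ for every admissible $\what J$. Taking $J_2\downarrow0$ gives $\langle\pi^2_s,J_1\rangle\le\langle 1,J_1\rangle$ for all $s$ and all positive smooth $J_1$. Taking $J_1\downarrow 0$ gives $\langle|\pi^1_s|,J_2\rangle\le\langle\pi^2_s,J_2\rangle$. The second inequality forces $\pi^2_s\ge0$. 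By density of positive smooth functions in nonnegative continuous functions (monotone class), we get $|\pi^1_s|\le\pi^2_s\le du$ as measures. Therefore $\pi^2_s$ is absolutely continuous with density $\phi_s\in[0,1]$, and $\pi^1_s\ll\pi^2_s\ll du$ with density $m_s$ satisfying $|m_s|\le\phi_s$ a.e. This shows $\what\pi_s\in\wm$ for every $s$, hence $\what\pi\in\mathcal D([0,T],\wm)$.

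The main obstacle is the step involving $|\pi^1_s|$. The functional $\pi\mapsto\langle|\pi^1_s|,J_2\rangle$ is only lower semicontinuous, so the inequality has to be interpreted through the total variation measure. Passing from smooth positive test functions to the measure inequality needs care near $\Gamma$, since the test functions live on $\La$ while $\pi$ may charge the boundary. If boundary mass is not already excluded by the definition of $\mc M(\La)$, I would handle it with $J$'s that do not vanish up to $\overline\La$.
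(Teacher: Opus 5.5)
Your proof is correct and follows essentially the same route as the paper: scale $\what J\mapsto t\what J$ to force $\mathfrak M_{\what J}(\what\pi)\le 0$, switch off one of $J_1,J_2$ at a time to obtain $\pi^2_s\le du$ and $|\pi^1_s|\le\pi^2_s$, and approximate indicators to get absolute continuity with $|m_s|\le\phi_s\le 1$. The differences are minor: you send $J_2,J_1\downarrow 0$ by a limit where the paper sets them equal to $0$, which also covers strictly positive test functions, and you spell out the Radon--Nikodym step and the lower bound giving $\sup_{\what J}\mathfrak M_{\what J}(\what\pi)=0$, both of which the paper leaves implicit.
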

  \begin{proof}
    If $\what \pi \in \mathcal{D}([0,T],\wm)$, then from the definition of $\wm$, $\what \pi$ belongs to $\overline{\mathbf I}$ defined in \eqref{cC0b}, thus $\sup_{\what J} \mathfrak{M}_{\what J}(\what \pi) = 0$.  Conversely, for $\what \pi \in \DM$  such that $\sup_{\what J} \mathfrak{M}_{\what J}(\what \pi) $ is finite, taking $J_2 = 0$,  there exists $C>0$ verifying  
    \[
        \sup_{0\leq s \leq T }\scal{}{\pi^2_s}{J} - \scal{}{1}{J} \leq C ,
    \]
    for all positive function $J\in  \mathcal C^\infty(\La)$. In particular for all positive function $J\in  \mathcal C^\infty(\La)$ and $A>0$ 
    \[
      \sup_{0\leq s \leq T }\scal{}{\pi^2_s}{ A J} - \scal{}{1}{ A J} \leq C ,
    \]
    dividing all this inequality by $A$, we get 
    \[
        \sup_{0\leq s \leq T }\scal{}{\pi^2_s}{J} - \scal{}{1}{J} \leq \frac{C}{A} .
    \]
    Taking $A\to + \infty$, we actually have that for all positive function $J\in  \mathcal C^\infty(\La)$, 
    \[
      \forall~ s \in [0,T], \qquad \scal{}{\pi^2_s}{J}   \leq \scal{}{1}{J} .
    \] 
    Additionally, applying the same arguments taking $J_1 = 0$, we have for all positive function $J\in  \mathcal C^\infty(\La)$ that 
    \[
      \forall ~ s \in [0,T], \qquad \scal{}{\abs{\pi^1_s}}{J} \leq \scal{}{\pi^2_s}{J}.
    \]
    In particular, for all $s\in [0,T]$, we have that $\pi^2_s$ is a positive measure and approximating any indicator function by positive smooth function, we get that $\pi^2_s$ is absolutely continuous with respect to the Lebesgue measure. Moreover, from the last inequality we also get that for all $s\in [0,T]$,  $\abs{\pi^1_s}$ is absolutely continuous with respect to the Lebesgue measure, same applies for $\pi^1_s$ and the inequalities give us that $\what \pi \in \mathcal{D}([0,T],\wm)$. 
  \end{proof}

\bigskip

\noindent \textbf{Acknowledgments} The authors thank the referees for their careful reading of
the manuscript and their comments which helped to improve presentation of the
article.

\medskip
\noindent \textbf{Funding} No funding was received for this manuscript.
\medskip

\noindent \textbf{Conflict of Interest} The authors declare no conflicts of interest.

\medskip

\noindent \textbf{Data availability} This article has no associated data.

\bigskip

\bibliographystyle{abbrvurl}
\bibliography{biblio.bib}

\begin{thebibliography}{10}

\bibitem{bdgjl3}
L.~Bertini, A.~De~Sole, D.~Gabrielli, G.~Jona-Lasinio, and C.~Landim.
\newblock Large {Deviations} for the {Boundary} {Driven} {Symmetric} {Simple} {Exclusion} {Process}.
\newblock {\em Mathematical Physics, Analysis and Geometry}, 6(3):231--267, aug 2003.
\newblock \href {https://doi.org/10.1023/A:1024967818899} {\path{doi:10.1023/A:1024967818899}}.

\bibitem{blm09}
L.~Bertini, C.~Landim, and M.~Mourragui.
\newblock Dynamical large deviations for the boundary driven weakly asymmetric exclusion process.
\newblock {\em Ann. Probab.}, 37(6):2357--2403, 2009.
\newblock \href {https://doi.org/10.1214/09-AOP472} {\path{doi:10.1214/09-AOP472}}.

\bibitem{b1}
M.~Blume.
\newblock Theory of the first-order magnetic phase change in u${\mathrm{o}}_{2}$.
\newblock {\em Phys. Rev.}, 141:517--524, Jan 1966.
\newblock \href {https://doi.org/10.1103/PhysRev.141.517} {\path{doi:10.1103/PhysRev.141.517}}.

\bibitem{beg}
M.~Blume, V.~J. Emery, and R.~B. Griffiths.
\newblock Ising model for the $\ensuremath{\lambda}$ transition and phase separation in ${\mathrm{he}}^{3}$-${\mathrm{he}}^{4}$ mixtures.
\newblock {\em Phys. Rev. A}, 4:1071--1077, Sep 1971.
\newblock \href {https://doi.org/10.1103/PhysRevA.4.1071} {\path{doi:10.1103/PhysRevA.4.1071}}.

\bibitem{c1}
H.~Capel.
\newblock Phase transitions in spin-one {Ising} systems.
\newblock {\em Physics Letters}, 23(5):327--328, Oct. 1966.
\newblock URL: \url{https://linkinghub.elsevier.com/retrieve/pii/0031916366900230}, \href {https://doi.org/10.1016/0031-9163(66)90023-0} {\path{doi:10.1016/0031-9163(66)90023-0}}.

\bibitem{cjlmm}
E.~N.~M. Cirillo, N.~Jävergård, R.~Lyons, A.~Muntean, and S.~A. Muntean.
\newblock 3d morphology formation in a mixture of three differently averse components.
\newblock {\em Modelling and Simulation in Materials Science and Engineering}, 33(5):055014, jun 2025.
\newblock \href {https://doi.org/10.1088/1361-651X/ade4e6} {\path{doi:10.1088/1361-651X/ade4e6}}.

\bibitem{CLMM}
E.~N.~M. Cirillo, R.~Lyons, A.~Muntean, and S.~A. Muntean.
\newblock {\em Pattern Formation in Three-State Systems: Towards Understanding Morphology Formation in the Presence of Evaporation}, pages 71--85.
\newblock Springer Nature Switzerland, Cham, 2025.
\newblock \href {https://doi.org/10.1007/978-3-031-84379-2_6} {\path{doi:10.1007/978-3-031-84379-2_6}}.

\bibitem{derrida_large_2002}
B.~Derrida, J.~L. Lebowitz, and E.~R. Speer.
\newblock Large {Deviation} of the {Density} {Profile} in the {Steady} {State} of the {Open} {Symmetric} {Simple} {Exclusion} {Process}.
\newblock {\em Journal of Statistical Physics}, 107(3-4):599--634, May 2002.
\newblock URL: \url{https://link.springer.com/10.1023/A:1014555927320}, \href {https://doi.org/10.1023/A:1014555927320} {\path{doi:10.1023/A:1014555927320}}.

\bibitem{erignoux_hydrodynamics_2020}
C.~Erignoux, P.~Gonçalves, and G.~Nahum.
\newblock Hydrodynamics for {SSEP} with non-reversible slow boundary dynamics: {Part} {I}, the critical regime and beyond.
\newblock {\em Journal of Statistical Physics}, 181(4):1433--1469, Nov. 2020.
\newblock arXiv:1912.09841 [math-ph].
\newblock URL: \url{http://arxiv.org/abs/1912.09841}, \href {https://doi.org/10.1007/s10955-020-02633-w} {\path{doi:10.1007/s10955-020-02633-w}}.

\bibitem{flm}
J.~Farfan, C.~Landim, and M.~Mourragui.
\newblock Hydrostatics and dynamical large deviations of boundary driven gradient symmetric exclusion processes.
\newblock {\em Stochastic Processes and their Applications}, 121(4):725--758, 2011.
\newblock \href {https://doi.org/10.1016/j.spa.2010.11.014} {\path{doi:10.1016/j.spa.2010.11.014}}.

\bibitem{fgln}
T.~Franco, P.~Gon{\c{c}}alves, C.~Landim, and A.~Neumann.
\newblock Dynamical large deviations for the boundary driven symmetric exclusion process with {Robin} boundary conditions.
\newblock {\em ALEA, Lat. Am. J. Probab. Math. Stat.}, 19(2):1497--1546, 2022.
\newblock \href {https://doi.org/10.48550/arXiv.2203.14417} {\path{doi:10.48550/arXiv.2203.14417}}.

\bibitem{fgn1}
T.~Franco, P.~Gon\c{c}alves, and A.~Neumann.
\newblock Hydrodynamical behavior of symmetric exclusion with slow bonds.
\newblock {\em Annales de l'I.H.P. Probabilit\'es et statistiques}, 49(2):402--427, 2013.
\newblock \href {https://doi.org/10.1214/11-AIHP445} {\path{doi:10.1214/11-AIHP445}}.

\bibitem{fgn3}
T.~Franco, P.~Gonçalves, and A.~Neumann.
\newblock Large deviations for the ssep with slow boundary: the non-critical case.
\newblock {\em Latin American Journal of Probability and Mathematical Statistics}, 20:359, 01 2023.
\newblock \href {https://doi.org/10.30757/ALEA.v20-13} {\path{doi:10.30757/ALEA.v20-13}}.

\bibitem{gpv}
M.~Z. Guo, G.~C. Papanicolaou, and S.~R.~S. Varadhan.
\newblock Nonlinear diffusion limit for a system with nearest neighbor interactions.
\newblock {\em Commun. Math. Phys.}, 118(1):31--59, 1988.
\newblock \href {https://doi.org/10.1007/BF01218476} {\path{doi:10.1007/BF01218476}}.

\bibitem{kuh3}
P.~C. Hemmer, M.~Kac, and G.~E. Uhlenbeck.
\newblock On the van der waals theory of the vapor‐liquid equilibrium. iii. discussion of the critical region.
\newblock {\em Journal of Mathematical Physics}, 5(1):60--74, 01 1964.
\newblock \href {https://doi.org/10.1063/1.1704065} {\path{doi:10.1063/1.1704065}}.

\bibitem{kuh}
M.~Kac, G.~E. Uhlenbeck, and P.~C. Hemmer.
\newblock On the van der waals theory of the vapor‐liquid equilibrium. i. discussion of a one‐dimensional model.
\newblock {\em Journal of Mathematical Physics}, 4(2):216--228, 02 1963.
\newblock \href {https://doi.org/10.1063/1.1703946} {\path{doi:10.1063/1.1703946}}.

\bibitem{kl}
C.~Kipnis and C.~Landim.
\newblock {\em Scaling limits of interacting particle systems}, volume 320 of {\em Grundlehren Math. Wiss.}
\newblock Berlin: Springer, 1999.
\newblock \href {https://doi.org/10.1007/978-3-662-03752-2} {\path{doi:10.1007/978-3-662-03752-2}}.

\bibitem{liggett}
T.~M. Liggett.
\newblock {\em Stochastic {Interacting} {Systems}: {Contact}, {Voter} and {Exclusion} {Processes}}, volume 324 of {\em Grundlehren der mathematischen {Wissenschaften}}.
\newblock Springer Berlin Heidelberg, Berlin, Heidelberg, 1999.
\newblock \href {https://doi.org/10.1007/978-3-662-03990-8} {\path{doi:10.1007/978-3-662-03990-8}}.

\bibitem{lcm}
R.~Lyons, E.~N. Cirillo, and A.~Muntean.
\newblock Phase separation and morphology formation in interacting ternary mixtures under evaporation: Well-posedness and numerical simulation of a non-local evolution system.
\newblock {\em Nonlinear Analysis: Real World Applications}, 77:104039, 2024.
\newblock \href {https://doi.org/10.1016/j.nonrwa.2023.104039} {\path{doi:10.1016/j.nonrwa.2023.104039}}.

\bibitem{LMN}
R.~Lyons, A.~Muntean, and G.~Nika.
\newblock A {Bound} {Preserving} {Energy} {Stable} {Scheme} for a {Nonlocal} {Cahn{\textendash}Hilliard} {Equation}.
\newblock {\em Comptes Rendus. M\'ecanique}, 352:239--250, 2024.
\newblock \href {https://doi.org/10.5802/crmeca.265} {\path{doi:10.5802/crmeca.265}}.

\bibitem{mm1}
R.~Marra and M.~Mourragui.
\newblock Phase segregation dynamics for the blume–capel model with kac interaction.
\newblock {\em Stochastic Processes and their Applications}, 88(1):79--124, 2000.
\newblock \href {https://doi.org/10.1016/S0304-4149(99)00120-9} {\path{doi:10.1016/S0304-4149(99)00120-9}}.

\bibitem{mo1}
M.~Mourragui and E.~Orlandi.
\newblock Boundary driven kawasaki process with long-range interaction: dynamical large deviations and steady states.
\newblock {\em Nonlinearity}, 26(1):141, nov 2012.
\newblock \href {https://doi.org/10.1088/0951-7715/26/1/141} {\path{doi:10.1088/0951-7715/26/1/141}}.

\bibitem{msv}
M.~Mourragui, E.~Saada, and S.~Velasco.
\newblock Hydrodynamic and hydrostatic limit for a generalized contact process with mixed boundary conditions.
\newblock {\em Electron. J. Probab.}, 28:44, 2023.
\newblock Id/No 155.
\newblock \href {https://doi.org/10.1214/23-EJP1025} {\path{doi:10.1214/23-EJP1025}}.

\bibitem{pw}
M.~H. Protter and H.~F. Weinberger.
\newblock {\em Maximum {Principles} in {Differential} {Equations}}.
\newblock Springer New York, New York, NY, 1984.
\newblock \href {https://doi.org/10.1007/978-1-4612-5282-5} {\path{doi:10.1007/978-1-4612-5282-5}}.

\bibitem{Roubi}
T.~Roubíček.
\newblock {\em Nonlinear {Partial} {Differential} {Equations} with {Applications}}, volume 153 of {\em {ISNM} {International} {Series} of {Numerical} {Mathematics}}.
\newblock Birkhäuser-Verlag, Basel, 2005.
\newblock \href {https://doi.org/10.1007/3-7643-7397-0} {\path{doi:10.1007/3-7643-7397-0}}.

\bibitem{kuh2}
G.~E. Uhlenbeck, P.~C. Hemmer, and M.~Kac.
\newblock On the van der waals theory of the vapor‐liquid equilibrium. ii. discussion of the distribution functions.
\newblock {\em Journal of Mathematical Physics}, 4(2):229--247, 02 1963.
\newblock \href {https://doi.org/10.1063/1.1703947} {\path{doi:10.1063/1.1703947}}.

\end{thebibliography}

\end{document}